\numberwithin{equation}{section}
\newtheorem{Thm}{Theorem}[section]
\newtheorem{Cor}[Thm]{Corollary}
\newtheorem{Lem}[Thm]{Lemma}
\newtheorem{Prop}[Thm]{Proposition}
\newtheorem{MainThm}[Thm]{Main Theorem}
\theoremstyle{definition}
\newtheorem{Def}[Thm]{Definition}
\newtheorem{Setup}[Thm]{Setup}
\newtheorem{Rk}[Thm]{Remark}
\newtheorem{Ex}[Thm]{Example}
\newtheorem{Not}[Thm]{Notation}
\newtheorem{Obs}[Thm]{Observation}
\newtheorem{Algorithm}[Thm]{Algorithm}
\numberwithin{equation}{section}  
\newcommand{\tnu}{\widetilde{\nu}}
\newcommand{\fp}{\mathfrak{p}}
\newcommand{\hR}{\widehat{R}}
\newcommand{\hy}{\widehat{y}}
\newcommand{\hf}{\widehat{f}}
\newcommand{\maxIdeal}{M}%{\mathfrak{M}}
\newcommand{\Resfield}{{k}}
\newcommand{\resfield}{{k(x)}}
\newcommand{\resfielt}{{k(x')}}
\newcommand{\RSP}{r.s.p.}%{regular system of parameters}
\newcommand{\IN}{\mathbb{N}}	
\newcommand{\N}{\mathbb{N}}	
\newcommand{\F}{\mathbb{F}}
\newcommand{\INinfty}{\mathbb{N}_\infty}	
\newcommand{\IQinfty}{\mathbb{Q}_\infty}	
\newcommand{\INN}{\IN^\IN}	
\newcommand{\A}{\mathbb{A}}
\newcommand{\IQ}{\mathbb{Q}}
\newcommand{\IP}{\mathbb{P}}	
\newcommand{\IE}{\mathbb{E}}	
\newcommand{\IR}{\mathbb{R}}	
\newcommand{\R}{\mathbb{R}}	
\newcommand{\IZ}{\mathbb{Z}}	
\newcommand{\cO}{\mathcal{O}}
\newcommand{\cB}{\mathcal{B}}
\newcommand{\cI}{\mathcal{I}}
\newcommand{\cX}{\mathcal{X}}
\newcommand{\cZ}{\mathcal{Z}}
\newcommand{\vp}{\varphi}
\newcommand{\io}{\iota}			%name of invariant
\newcommand{\ioo}{\iota_0}		%first part, HS and so on
\newcommand{\ioc}{\iota_{hs}}	%second part, old curve part
\newcommand{\iop}{\iota_{poly}}	%third part coming from polyhedron
\newcommand{\slope}{s}
\newcommand{\ideal}{I}
\newcommand{\nus}{\nu^\ast} 
\newcommand{\car}{\mathrm{char}}
\newcommand{\ord}{\mathrm{ord}}
\newcommand{\Dir}{\mathrm{Dir}}
\newcommand{\Rid}{\mathrm{Rid}}
\newcommand{\Spec}{\mathrm{Spec}}
\newcommand{\wrt}{with respect to }
\newcommand{\WLOG}{without loss of generality }
\newcommand{\poly}[3]{\Delta(  #1  ; #2 ; #3  ) }
\newcommand{\cpoly}[2]{\Delta(  #1  ; #2  ) }
\newcommand{\rot}[1]{{\textcolor{black}{#1}}}
\newcommand{\blau}[1]{\textcolor{black}{#1}}
\newcommand{\cyan}[1]{\textcolor{black}{#1}}
\begin{document}

\title[\resizebox{5in}{!}{A strictly decreasing invariant for resolution of singularities in dimension two}]
{
	A strictly decreasing invariant for resolution of singularities in dimension two
}

\author{Vincent Cossart}
\address{V. Cossart: Professeur \'em\'erite at Laboratoire de Math\'emathiques LMV UMR 8100, Universit\'e de Versailles, 45 avenue des \'Etats-Unis,78035 VERSAILLES Cedex, France.}
\email{cossart@math.uvsq.fr}

\author{Bernd Schober}
\thanks{The second author is supported by a Research Fellowship of the Deutsche Forschungsgemeinschaft}
\address{B. Schober: 
Institut f\"ur Algebraische Geometrie,
Leibniz Universit\"at Hannover,
Welfengarten 1,
30167 Hannover,
Germany.}
\email{schober@math.uni-hannover.de}
%\email{schober.math@gmail.com}}}

\subjclass[2010]{Primary 14J17; Secondary 14B05, 14E15.}
\keywords{Surface singularities, resolution of singularities, invariants for singularities, Hironaka's characteristic polyhedra.}

%%%%$$$$$$$$$$$$$$$$$$$$$$$$$$$$$$$$$$$$
%%%%$$$$$$$$$$$$$$$$$$$$$$$$$$$$$$$$$$$$
%%%%$$$$$$$$$$$$$$$$$$$$$$$$$$$$$$$$$$$$
%%%%$$$$$$$$$$$$$$$$$$$$$$$$$$$$$$$$$$$$
%%%%$$$$$$$$$$$$$$$$$$$$$$$$$$$$$$$$$$$$
%%%%$$$$$$$$$$$$$$$$$$$$$$$$$$$$$$$$$$$$
%%%%$$$$$$$$$$$$$$$$$$$$$$$$$$$$$$$$$$$$
%%%%$$$$$$$$$$$$$$$$$$$$$$$$$$$$$$$$$$$$
%%%%$$$$$$$$$$$$$$$$$$$$$$$$$$$$$$$$$$$$
%%%%$$$$$$$$$$$$$$$$$$$$$$$$$$$$$$$$$$$$
%%%%$$$$$$$$$$$$$$$$$$$$$$$$$$$$$$$$$$$$
%%%%$$$$$$$$$$$$$$$$$$$$$$$$$$$$$$$$$$$$

\maketitle

\begin{abstract}
	We construct a local invariant for resolution of singularities of \rot{two}-dimensional excellent \blau{Noetherian} schemes with boundary.
	We prove that the invariant strictly decreases at every step of the algorithm %for resolution of singularities in dimension two 
	of Cossart, Jannsen and Saito.
\end{abstract}

\tableofcontents      

\section*{Introduction}
\label{Intro}
Jannsen, Saito and the first author \cite{CJS} have proved that every Noetherian excellent scheme of dimension at most two with boundary admits a canonical resolution of singularities via a finite sequence of permissible blow-ups.
\rot{For a precise formulation of the statement and the meaning of canonical, we refer to \cite{CJS} Theorems 0.1, 0.2, 0.3.
	In particular, the algorithm commutes with any Zariski or \'etale localization and also commutes with completion.}
This result applies to the embedded as well as the non-embedded case.
(For the notion of a boundary in the non-embedded case we refer to Remark \ref{Rk:nonembedded} and the preceding paragraph).
In the algorithm of \cite{CJS}, the centers of the successive blow-ups are determined by the geometry of the maximal singular locus plus the history of the preceding resolution process.
\blau{(See Algorithm \ref{Algorithm_CJS} and Remark \ref{Rk:global}).}
This method is based on Hironaka's work for resolution of hypersurface singularities in \cite{HiroBowdoin}.
The work of \cite{CJS} is different from other approaches to resolution of singularities in the sense that 
\rot{the center for blowing up is not given as the maximal locus of a sophisticated invariant.}
Finiteness of the sequence of blow-ups constructed is proved indirectly by deducing a contradiction from the hypothesis that the sequence be infinite.

Earlier results on resolution of singularities in positive characteristic are due to Abhyankar \cite{AbhyDim2}, Lipman \cite{Lipmandim2}, and Hironaka \cite{HiroBowdoin}.
For a comparison of the different approaches, see \cite{CGO}.
In \cite{DaleDim2} Cutkosky gives a simplified version of Abhyankar's proof.
The result due to Lipman is valid for \rot{two}-dimensional excellent schemes, but in contrast to \cite{CJS}, involves not only blowing up regular centers but also normalizations.
It is therefore not clear whether the proof extends to the embedded situation.

More recently, Benito and Villamayor \cite{AngOrlDim2} gave another proof for resolution of singularities in dimension two using the techniques of Rees algebras and generic projections. 
Kawanoue and Matsuki applied and simplified these methods in \cite{IFPdim2} for their idealistic filtration program.
Note that \cite{AngOrlDim2} and \cite{IFPdim2} apply to schemes of finite type over a perfect field.

In characteristic zero, the centers for the resolution are determined by a certain invariant, which strictly decreases after blowing up.
Due to the lack of maximal contact, the proof does not apply in positive characteristic.
Moh \cite{Moh} even shows that a natural candidate for an invariant in positive characteristic may increase after certain blow-ups with regular centers,
although he also proves that there is a bound on the increase.
{In \cite{Moh1}, he defines a so-called adjustor in order to overcome the detected increase in dimension three for special hypersurfaces\rot{. See} also \cite{HerWagn} for a variant in dimension two.}

Already in his thesis \cite{VincentThesis}, the first author introduces a new invariant $ \omega $ for resolution of singularities that is not affected by the increase phenomena.
Moreover, $ \omega $ plays a key role in Piltant and his proof of the existence of a resolution of singularities in dimension three\rot{. See} \cite{CPmixed}, \cite{CPmixed2}. 

For varieties of dimension two over perfect fields, the proof of Kawanoue and Matsuki \cite{IFPdim2} provides a strictly decreasing invariant.
In this article, we construct a strictly decreasing invariant for the resolution of singularities 
\rot{for the general excellent \blau{Noetherian} scheme of dimension two.}

\smallskip

Let $ X $ be a reduced excellent \blau{Noetherian} scheme of dimension at most two embedded in a regular scheme $ Z $.
Let 
$ \cB $ be a boundary on $ Z $ (Definition \ref{Def:boundary}, Remark \ref{Rk:nonembedded}), which is distinguished into old and new components via a so-called history function $ O_X $ (Definition \ref{Def:history_function}).
Based on a precise study of the algorithm of \cite{CJS}, we develop a local invariant $ \io = \io (X, Z, (\cB, O_X), x ) $, for $ x \in X $.
Since our invariant is local, we reduce the non-embedded to the embedded case $ X \subset Z $ by the Cohen structure theorem.
The invariant consists of three parts, $ \io  = ( \, \ioo, \, \ioc, \, \iop \, ) $:
\begin{itemize}
	\item 	$ \ioo \in \INN \times \IN^3 $ is a rough measure for the complexity of the singularity of $ X $ at $ x$,
	%	
	%\vspace{3pt}
	%
	\item	$ \ioc \in \INN \times \IN^3 \times \IQinfty^2 $ ($ \IQinfty := \IQ_{\geq 0} \cup \{ \infty \} $) measures the singularity of certain distinguished components of the maximal singular locus of $ X $ at $ x $. 
	\item	$ \iop \in \IQinfty^4 $ comes from Hironaka's characteristic polyhedron and takes care of the singularities which arise during the resolution process.
\end{itemize}

We prove that those entries of $ \iota $ which seem to \rot{depend} on the embedding are actually independent of it (Theorems \ref{Thm:alphabetainvariant_1_deltaonly} and \ref{Thm:alphabetainvariant_part2_nodelta}).
Hence we can omit $ Z $ and write $ \iota = \iota(X, (\mathcal{B}, O_X) , x) $.
For simplicity, we abbreviate  $ \iota(X, \mathcal{B}, x) := \iota(X, (\mathcal{B}, O_X) , x) $ without explicitly mentioning the history function. 

The last six entries in $ \IQ_\infty^2 \times \IQ_\infty^4 = \IQ_\infty^6 $ are in fact contained in  $ \left( \frac{1}{N} \cdot \IN_\infty \right)^6 $ ($ \IN_\infty :=  \IN \cup \{ \infty \}$), 
where $N$ is a natural number depending on the first entries (see Corollary~\ref{Cor:delta_in_1/N_Z^e}, the end of Definition~\ref{Def:ioc}, and inequality \eqref{blabla}): our invariant $ \io ( X,\cB, x)$
cannot strictly drop infinitely many times.

The assumption that $ X $ has dimension at most two is crucial here.
Because of this assumption the ``new" singularities which appear during the blow-up process have a very nice form.
More precisely, the new irreducible components of the maximal singular locus are already regular and intersect the exceptional divisors of the preceding blow-ups (or more generally, the boundary components) transversally.
Therefore $ \iop $ is an effective tool to measure singularities in dimension two.

Our main result is

\begin{MainThm}
\label{MainThm}
Let $ X \subset Z $, $ \cB$ a boundary on $ Z $, and $ \pi_Z : Z' \to Z $ be the blow-up {with center $ D \subset X $} following the algorithm of \cite{CJS}.
Let {$ x \in D $} and $ x' \in \pi_Z^{ -1 } (x ) $.
Then we have
$$
	\io ( X', \cB', x' ) < \io ( X, \cB, x ).
$$
\end{MainThm}

Here we use the lexicographical order on $ \INN \times \IN^3 \times \INN \times \IN^3 \times \IQ_\infty^2 \times \IQ_\infty^4 $, where we equip $ \INN $ with the product order 
(this is a partial order).
The entries of the invariant with values in $ \INN $ are certain Hilbert-Samuel functions.
By the upper semi-continuity property, the maximal Hilbert-Samuel locus 
$ X_{\mathrm{max}}:=\bigcup_{\nu \in \Sigma_X^{\mathrm{max}}} X(\nu) $ 
consists of finitely many mutually disjoint components
$X(\nu)$, for $\nu\in \Sigma_X^{\mathrm{max}}$ (where $ \Sigma_X^{\mathrm{max}} $ denotes the set of maximal values of the Hilbert-Samuel function on $ X $). 
We use Corollaries~{5.18} and {5.19} of \cite{CJS} which state that, to get (canonical, functorial) resolution of singularities, it is enough \rot{to} decrease the Hilbert-Samuel function over every component $ X (\nu) $ of $ X_{\mathrm{max}} $
(``$\nu$-elimination": see \cite{CJS} Definition 5.14).
The latter implies that we can restrict our attention to a single maximal value. % for
{By Theorem \ref{MainThm}, the invariant $ \io $ (which is not necessarily upper semi-continuous, Example \ref{Ex:e_not_usc}) measures the improvement for the singularities lying above the center of the blow-up in every step of \cite{CJS}.}
\rot{Using Abhyankar's notion of good and bad points, Theorem \ref{MainThm} leads to a direct proof for the finiteness of the sequence of blow-ups constructed by \cite{CJS} (section 6).}

\smallskip

In the first three sections we recall and study the important notions for \cite{CJS}:
%\begin{itemize}
	%\item	
	(1) the log-Hilbert-Samuel function and its behavior under permissible blowing ups, the construction of $ \ioo $;
	%\item	
	(2) the \cite{CJS} algorithm and why dimension two is a particular good situation;
	%\item	
	(3) Hironaka's characteristic polyhedron and its invariants in the two dimensional case.
%\end{itemize}
In \S 4 we construct $ \ioc $  and prove that it strictly improves until there are only new components in the locus of maximal singularity.
\rot{In} \S 5 we discuss in detail how to obtain $ \iop $ and prove its improvement in the remaining \cite{CJS}-algorithm until the log-Hilbert-Samuel functions drops strictly at every point. 
\rot{Finally, we provide a short proof for the termination of the \cite{CJS}-algorithm in \S 6.}
%\newpage

%
%
%
%
%
%
%
%
%		The first measure for the complexity of the sing
%       
%
%
%
%
%
%
%
%

%\bigskip

%\bigskip

\section{First measure for the complexity of the singularity}

We first recall some definitions and results of \cite{CJS}.
Using them we deduce the first rough measure $ \ioo \in \INN \times \IN^3 $ for the singularity.

\smallskip

Let $ X $ be an excellent Noetherian scheme.
Since we want to achieve resolution of singularities only by using blow-ups centered in $ \mathrm{Sing}(X) $, we may suppose that $ X $ is connected.

%\smallskip

The first entry for $ \ioo $ is the Hilbert-Samuel function $ H_X : X \to \INN $ of $ X $.
In order to be able to compare the values in $ \INN $ we equip it with the product order, i.e., for $ \nu, \mu \in \INN $ we have $ \nu \leq \mu $ if and only if $ \nu ( n ) \leq \mu ( n ) $ for every $ n \in \IN $. 

Before we recall the precise definition of $ H_X $ let us mention the following:
Since $ X $ is excellent it is in particular catenary. 
Thus for any two irreducible closed subschemes $ Y \subset Z $ of $ X $ all maximal chains of irreducible closed subschemes $ Y = Y_0 \subset Y_1 \subset \ldots \subset Y_r = Z $ have the same length $ r $ denoted by $ \mathrm{codim}_Z ( Y ) $.
For three of them, $ Y \subset Z \subset W $, one has $ \mathrm{codim}_W ( Y ) = \mathrm{codim}_W ( Z ) + \mathrm{codim}_Z ( Y ) $.

\begin{Def}[\cite{CJS} Definition 1.28]
\label{Def:HS_function}
Let $ X $ be an excellent Noetherian scheme and fix an integer $ N \in \IN $ with $ N \geq \dim ( X ) $.
\begin{itemize}
	\item[(1)]	Let $ x \in X $ and denote by
				$ ( \cO = \cO_{X,x}, \maxIdeal, \resfield) $ the local ring of $ X $ at $ x $.
				The {\em Hilbert-functions $ H_\cO^{ (t) } $ of $ \cO $}, for $ t \in \IN $, are then defined as the elements of $ \INN $ given by
				$$ 
					H_\cO^{(0)} ( n ) = \dim_\resfield ( \maxIdeal^n / \maxIdeal^{ n + 1 } ) \,,
					\hspace{10pt}
					n \in \IN,
				$$
				and, for $ t \geq 1 $, via the formula
				$$	
					H_\cO^{ (t) } ( n ) = \sum_{i=0}^n H_\cO^{ ( t - 1 ) } (i) \,,
					\hspace{10pt}
					n \in \IN.
				$$
		
	%\vspace{6pt}
	
	\item[(2)]	Define $ \phi_X : X \to \IN $ by 
				$$ 
					\phi_X ( x ) := N  - \psi_X ( x ) ,
				$$ 
				$$				
					\psi_X ( x ) := \min \{  \mathrm{codim}_Z ( x ) \mid Z \in \{\mbox{irreducible components of $ X $ containing $ x $}\} \}
				$$
		
	%\vspace{6pt}
	
	\item[(3)]	The {\em Hilbert-Samuel function $ H_X : X \to \IN^\IN  $ of $ X $} is defined by
				$$
					H_X ( x ) = H_{ \cO_{X,x} }^{( \phi_X (x) ) } \in \INN
				$$ 
		
	%\vspace{6pt}
	
	\item[(4)]	The level sets of the Hilbert-Samuel function for an element $ \nu \in \INN $ are defined as
				$$
					X ( \geq \nu ) := \{ x \in X \mid H_X ( x ) \geq \nu \} 
						\hspace{10pt}
						\mbox{ and }
						\hspace{10pt}
					X (  \nu ) := \{ x \in X \mid H_X ( x ) = \nu \}  .
				$$
				Moreover, we put $ \Sigma_X := \{ H_X ( x ) \mid x \in X \} $, and denote by $ \Sigma^{max}_X $ the set of maximal elements in $ \Sigma_X $.
				Then $ X_{max} = \bigcup_{ \nu \in \Sigma^{max}_X } X ( \nu ) $ is called the {\em Hilbert-Samuel locus of $ X $}.
				We also call it the {\em maximal Hilbert-Samuel locus of $ X $}.
\end{itemize} 
\end{Def}

In fact, in the original definition the scheme $ X $ is only required to be locally Noetherian and catenary. 
{For explanations on the role of the integer $ N$, we refer the reader to \cite{CJS} Remark 1.29(a).}

Note that since we assume $ X $ to be Noetherian $ \Sigma_X $ is {\em finite}, by Theorem~\ref{Thm:specializationHS} below.

\smallskip

The Hilbert-Samuel function of $ X $ measures how far $ X $ is apart from being regular. 
In order to recall this statement we further need to introduce the following functions (\cite{CJS} Definition 1.13):

For $ t \in \IN $, define the functions $ \Phi^{ ( t ) } \in \INN $ via, 
{$ \Phi^{ ( 0 ) } (0) = 1 $ and $ \Phi^{ ( 0 ) } ( n ) = 0 $},
for $ n > 0 $, and similarly to the above, 
$  \Phi^{ (t) } ( n ) := \sum_{i=0}^n  \Phi^{ ( t - 1 ) } (i) $, for $ n \in \IN $ and $ t \geq 1 $.

%Then we have

\begin{Lem}[\cite{CJS} Lemma 1.31, Remark 1.32]
	Let $ X $ and $ N $ be as in Definition \ref{Def:HS_function}.
	For $ x \in X $, we have $ H_X ( x ) \geq \Phi^{ ( N ) } $, and $ H_X ( x ) = \Phi^{ ( N ) } $ if and only if $ x $ is a regular point.
	In particular, $ X $ is regular if and only if $ X ( \nu ) = \emptyset $, for all $ \nu \in \INN $ except for $ \nu = \Phi^{ ( N ) } $.
\end{Lem}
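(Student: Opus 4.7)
The plan is to reduce the claim to a pointwise lower bound on the Hilbert function of the associated graded ring of $\cO_{X,x}$. Write $(\cO, \maxIdeal, \resfield) = \cO_{X,x}$ and set $d := \dim \cO$. The associated graded $A := \mathrm{gr}_\maxIdeal \cO$ is a standard graded $\resfield$-algebra of Krull dimension $d$, with $\dim_\resfield A_n = H^{(0)}_\cO(n)$. By graded Noether normalization (after the harmless faithfully flat base change $\cO \to \cO[T]_{(\maxIdeal, T)}$ to ensure the residue field is infinite if necessary), there exist $d$ algebraically independent elements $f_1, \ldots, f_d \in A_1$ such that $\resfield[f_1, \ldots, f_d] \hookrightarrow A$ is a finite extension. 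Consequently $H^{(0)}_\cO(n) \geq \binom{n+d-1}{d-1} = \Phi^{(d)}(n)$ for all $n$, and iterating the partial-sum operation yields $H^{(t)}_\cO(n) \geq \Phi^{(d+t)}(n)$ for every $t \geq 0$.

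Next I compare $d$ with $\psi_X(x)$. Every irreducible component $Z$ of $X$ through $x$ corresponds to a minimal prime $\fp \subset \cO_{X,x}$, and $\mathrm{codim}_Z(x) = \dim \cO_{X,x}/\fp \leq d$; taking the minimum over $Z$ gives $\psi_X(x) \leq d$, hence $\phi_X(x) = N - \psi_X(x) \geq N - d$. The telescoping identity $\Phi^{(t+1)}(n) - \Phi^{(t)}(n) = \sum_{i < n} \Phi^{(t)}(i) \geq 0$ shows that $\Phi^{(t)}$ is nondecreasing in $t$ in the product order on $\INN$, so combining the two estimates yields
\[
	H_X(x) = H^{(\phi_X(x))}_{\cO_{X,x}} \;\geq\; \Phi^{(d + \phi_X(x))} \;\geq\; \Phi^{(N)},
\]
proving the first half of the lemma.

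For the equivalence, suppose $H_X(x) = \Phi^{(N)}$. Then both of the inequalities above must be equalities: the second forces $\psi_X(x) = d$, so $\cO_{X,x}$ is equidimensional; the first forces $H^{(0)}_\cO(n) = \binom{n+d-1}{d-1}$ for all $n$. Taking $n = 1$, the embedding dimension equals $d$, so lifting a $\resfield$-basis of $\maxIdeal/\maxIdeal^2$ produces a surjection $\resfield[T_1, \ldots, T_d] \twoheadrightarrow \mathrm{gr}_\maxIdeal \cO$ of standard graded $\resfield$-algebras whose Hilbert functions agree in every degree, hence an isomorphism; this makes $\cO_{X,x}$ a regular local ring of dimension $d$. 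Conversely, if $\cO_{X,x}$ is regular of dimension $d$ it is a domain, so a unique irreducible component passes through $x$ and $\psi_X(x) = d$; combined with $H^{(0)}_\cO = \Phi^{(d)}$ and $N - d$ iterated partial sums, this gives $H_X(x) = \Phi^{(N)}$. The \emph{in particular} clause is then immediate, since $X$ is regular iff it is regular at every point. I expect the main technical hurdle to be the lower bound $H^{(0)}_\cO \geq \Phi^{(d)}$, which rests on the degree-one form of graded Noether normalization — routine but dependent on the residue field being infinite, handled by the base change noted above.
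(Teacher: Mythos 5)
This lemma is recalled from \cite{CJS} (Lemma 1.31 and Remark 1.32) and is not proved in the present paper, so there is no in-paper argument to compare against; I assess your proof on its own.

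Your structure is the standard one and is correct in substance: reduce the pointwise bound $H^{(0)}_{\cO_{X,x}} \geq \Phi^{(d)}$ (with $d := \dim \cO_{X,x}$) to a degree-one graded Noether normalization of $gr_\maxIdeal(\cO_{X,x})$, combine this with the elementary bound $\psi_X(x) \leq d$ and the monotonicity of the functions $\Phi^{(t)}$ under the product order, and characterize equality via the criterion that $gr_\maxIdeal(\cO)$ is a polynomial ring if and only if $\cO$ is regular. The recovery of $H^{(0)}_{\cO} = \Phi^{(d)}$ from $H^{(\phi_X(x))}_{\cO} = \Phi^{(N)}$ by successive differencing, together with pinning down $\psi_X(x) = d$ using that the $\Phi^{(t)}$ are pairwise distinct, is exactly right, and the converse direction is clean.

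One concrete slip needs fixing. The base change you wrote, $\cO \to \cO[T]_{(\maxIdeal, T)}$, localizes $\cO[T]$ at the maximal ideal $(\maxIdeal, T)$: its residue field is still $k = \cO/\maxIdeal$, so it does \emph{not} make the residue field infinite, and its dimension is $d+1$ rather than $d$, which would throw off the bookkeeping with $\psi_X$ and $\phi_X$. The correct Nagata extension is $\cO \to \cO(T) := \cO[T]_{\maxIdeal \cdot \cO[T]}$, localization at the prime ideal $\maxIdeal \cdot \cO[T]$ (which is not maximal, since $\cO[T]/\maxIdeal\cO[T] \cong k[T]$). That ring is faithfully flat and local over $\cO$, has residue field $k(T)$, the same dimension $d$, and $gr_{\maxIdeal\cO(T)}(\cO(T)) \cong gr_\maxIdeal(\cO) \otimes_k k(T)$, so the Hilbert function is unchanged and the linear-forms Noether normalization applies. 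With that substitution the argument closes.
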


The following crucial result holds.

\begin{Thm}[\cite{CJS} Theorem 1.33]
\label{Thm:specializationHS}
	Let $ X $ be an excellent Noetherian scheme.
	\begin{itemize}
		\item[(1)]
				If $ x \in X $ is a specialization of $ y \in X $, i.e., $ x \in \overline{ \{ y \} } $, then $ H_X ( x ) \geq H_\rot{X} ( y ) $.
			
	%\vspace{6pt}
	
		\item[(2)]
				For any $ y \in X $, there exists a dense open subset $ U \subset  \overline{ \{ y \} } $ such that $ H_X ( x ) = H_X ( y ) $ for every $ x \in U $.
			
	%\vspace{6pt}
	
		\item[(3)]
				The function $ H_X : X \to \INN $ is upper semi-continuous, i.e., for every $ \nu \in \INN $, $ X ( \geq \nu ) $ is closed in $ X $.
	\end{itemize}
\end{Thm}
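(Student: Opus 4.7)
The plan is to prove the three assertions in sequence; (3) will follow formally from (1) and (2) once we exploit that $X$ is Noetherian, so the essential content lies in the first two parts. The central algebraic ingredient is Bennett's comparison theorem for the Hilbert function of an excellent Noetherian local ring under localization at a prime, which controls how $H^{(t)}_R$ relates to $H^{(t)}_{R_\fp}$ with shifts in $t$ governed by $\dim R/\fp$.

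For (1), fix $x \in \overline{\{y\}}$, let $\fp \subset \cO_{X,x}$ be the prime corresponding to $y$, and set $c := \dim \cO_{X,x}/\fp$. The role of the shift $\phi_X$ is precisely to make the Hilbert--Samuel functions at $x$ and $y$ comparable: after $\phi_X(\cdot)$ integrations, both sides become polynomials of degree at most $N-1$ asymptotically, and the minimum in the definition of $\psi_X$ is chosen exactly so that this calibration is compatible with catenarity. Using catenarity, for any irreducible component $Z$ of $X$ containing $y$ (hence $x$), one has $\mathrm{codim}_Z(x) = \mathrm{codim}_Z(y) + c$; taking the minimum over such $Z$ and comparing with the (a priori smaller) minimum over all $Z$ containing $x$ gives $\psi_X(x) \leq \psi_X(y) + c$. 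Combining Bennett's inequality applied to $\cO_{X,x}$ at $\fp$ with this numerical relation and the monotonicity $H^{(t+1)} \geq H^{(t)}$ of iterated summation then yields the sought inequality $H_X(x) \geq H_X(y)$ in the product order on $\INN$.

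For (2), restrict to $Y := \overline{\{y\}}$ and seek a dense open $U \subset Y$ on which $H_X$ is constant. The shift $\phi_X$ depends only on the combinatorial data of which irreducible components of $X$ pass through a point and with what codimension there, and this data is constant on a dense open of $Y$ by Noetherianity (remove the pairwise intersections of components of $X$ meeting $Y$, which form proper closed subsets of $Y$ unless they contain $Y$). The underlying Hilbert function $H^{(0)}_{\cO_{X,u}}$ is then generically constant along $Y$ by the classical theorem on generic freeness applied to the graded pieces of the ideal sheaf of $Y$ in $X$, a statement valid in the excellent setting. Since integration $\phi_X$ times preserves this generic constancy, one obtains the required $U$.

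Finally, (3) is formal once (1) and (2) are in place. By (2) and Noetherian induction, each level set $X(\geq \nu)$ is constructible in $X$; by (1), each is stable under specialization. A constructible subset of a Noetherian scheme stable under specialization is closed, so $X(\geq \nu)$ is closed, giving upper semi-continuity. The same Noetherian induction also shows $\Sigma_X$ is finite. The main obstacle I expect is (1): the careful numerology --- verifying that $\phi_X$ indeed absorbs the codimensional shift in Bennett's theorem when components through $x$ need not pass through $y$ --- is precisely what forces the use of the \emph{minimum} in the definition of $\psi_X$ and is where the catenary hypothesis on $X$ enters essentially.
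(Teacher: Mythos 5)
Your overall architecture is the right one and matches the standard proof (the one given in [CJS], going back to Bennett, Hironaka, and Singh): Bennett's theorem on Hilbert functions under localization at a prime in an excellent local ring for~(1), generic flatness of $\mathrm{gr}_{I_Y}(\cO_X)$ over $Y$ for~(2), and the formal passage ``constructible $+$ stable under specialization $\Rightarrow$ closed'' for~(3). Your argument for~(1) is correct: Bennett gives $H^{(c)}_{\cO_{X,y}} \leq H^{(0)}_{\cO_{X,x}}$ with $c = \mathrm{codim}_{\overline{\{y\}}}(x)$, catenarity gives $\psi_X(x) \leq \psi_X(y) + c$, and combining yields $H_X(y) \leq H_X(x)$. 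Part~(3) is also fine.

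Part (2) as written contains a genuine error. You claim that $\phi_X$ is constant on a dense open $U\subset Y := \overline{\{y\}}$ and that $H^{(0)}_{\cO_{X,u}}$ is constant on $U$; both are false whenever $\dim Y > 0$. For $u \in Y$ away from components of $X$ not containing $Y$, catenarity gives $\psi_X(u) = \psi_X(y) + \mathrm{codim}_Y(u)$, so $\phi_X(u) = \phi_X(y) - \mathrm{codim}_Y(u)$, which varies with $u$. Likewise $\dim \cO_{X,u} = \dim \cO_{X,y} + \mathrm{codim}_Y(u)$ varies, so the raw Hilbert function $H^{(0)}_{\cO_{X,u}}$ cannot be constant. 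In particular ``integration $\phi_X$ times preserves this generic constancy'' is not even well-posed, since the number of integrations changes with $u$. What is actually true, and what you need, is the \emph{relation} $H^{(0)}_{\cO_{X,u}} = H^{(\mathrm{codim}_Y(u))}_{\cO_{X,y}}$ on a dense open of $Y$ --- this follows from generic normal flatness of $X$ along $Y$ together with generic regularity of $Y$ (both valid since $X$ is excellent), as in Theorem~\ref{Thm:2.2.(2)_CJS}/Theorem 2.3 of the paper. Feeding this into the two varying shifts, $\phi_X(u) = \phi_X(y) - \mathrm{codim}_Y(u)$ and the $\mathrm{codim}_Y(u)$-fold integration, the $\mathrm{codim}_Y(u)$ contributions cancel and one gets $H_X(u) = H_X(y)$ on $U$. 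So the tools are all in hand --- indeed you used exactly this codimension bookkeeping correctly in~(1) --- but the intermediate claims in~(2) need to be replaced by the shifted equality above rather than two separate constancies.
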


\begin{Rk}
\label{Rk:LevelSetsProp}
Part (3) of the theorem implies that the level sets $X(\nu)$ are closed in $ X $, for $ \nu \in \Sigma^{\max}_X $.
Further, we obtain $X(\nu) \cap X(\mu) = \emptyset $, for $ \nu, \mu \in \Sigma^{\max}_X $,  $ \nu \neq \mu $, by (1).
\end{Rk}

%\smallskip

Let $ x \in X $.
By passing to the completion of $ \cO_{X,x} $ and by the Cohen structure theorem we may assume that locally at $ x $ we are in an embedded situation, i.e., the singularity is given by an ideal $ J \subset R $ in a complete regular local ring $ ( R ,\maxIdeal, \resfield = R / \maxIdeal ) $ (which is necessarily excellent).

The next natural candidate for the study of singularities is the tangent cone $ C_{x}(X) $ of $ X $ at $ x $. 
This is the cone living in the graded ring $ gr_\maxIdeal ( R ) = \bigoplus\limits_{t \geq 0} M^t / M^{t+1} $ and is defined by the ideal 
$$
	In_\maxIdeal ( J ) := \langle \, in_\maxIdeal ( f )  \mid f \in J \, \rangle,
$$
where $ in_\maxIdeal ( f ) = f \mod \maxIdeal^{ \nu + 1 } \in gr_\maxIdeal (R) $ denotes the {\em initial form of $ f $ \wrt $ \maxIdeal $} and
$ \nu := \nu ( f ) := \ord_\maxIdeal ( f ) = \max \{ m \in \IN \mid f \in \maxIdeal^ m \} $, for $ f \neq 0 $, and $ in_\maxIdeal ( 0 ) = 0 $.

Going back to Hironaka we have the notion of the directrix of a cone.

\begin{Obs} \label{Obs:Directrix} 
The directrix is an extremely natural notion: 
one is interested in a minimal set $ ( Y_1,\ldots,Y_r ) $ of variables needed to write generators of $ In_M(J) \subset gr_M(R)$. 
In fact, $ \langle Y_1,\ldots,Y_r \rangle $ is the ideal of the biggest subvector space $ W $ of $ \Spec ( gr_M(R)) \cong \A_{k(x)}^{\mathrm{dim}(R)} $, which leaves the tangent cone $ C_x (X)\subset \A_{k(x)}^{\mathrm{dim}(R)} $ stable under translation, i.e., for which $ C_x (X) + W = C_x (X) $ holds.
This vector space is {\em the directrix $ \Dir_x ( X ) $ of the tangent cone at $ x $}.
Sometimes we also call it the directrix of $ J $ at $ M  $ and write $ \Dir_\maxIdeal ( J ) $. 

There is a more intrinsic definition: 
$C_{x}(X)$ is naturally embedded in the Zariski tangent space $T_x(X) := \Spec (k(x)[\frac{M_x}{M_x^2}]) $, where $M_x$ is the maximal ideal of $\mathcal{ O}_{X,x}$.
We define the directrix as the biggest subvector space of $ T_x ( X) $ leaving $C_{x}(X)$ stable under translation.  
As $T_x(X)$ is a subspace of $ \Spec ( gr_M(R) )$, both definitions coincide.  
See \cite{CJS} Definition 1.26.
\end{Obs}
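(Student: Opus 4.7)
The core computation is reformulating the translation–stability condition $C_x(X)+W=C_x(X)$ in terms of generators of the initial ideal. Fix coordinates, so that $gr_M(R) = k(x)[Z_1,\dots,Z_n]$ with $n=\dim R$, and identify $\Spec(gr_M(R))$ with $\A^n_{k(x)}$. First I would observe that since $In_M(J)$ is a homogeneous ideal and $C_x(X)=V(In_M(J))$ contains the origin, any subvector space $W$ satisfying $C_x(X)+W=C_x(X)$ automatically lies inside $C_x(X)$ (take $c=0$), hence inside the Zariski tangent space $T_x(X)$. This already reduces the question of equivalence of the two definitions (ambient in $\Spec(gr_M(R))$ vs.\ ambient in $T_x(X)$) to a triviality: both characterize the same largest subspace.

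Next I would prove the algebraic reformulation: a linear subspace $W\subset\A^n_{k(x)}$ stabilizes $C_x(X)$ under translation if and only if the ideal $In_M(J)$ admits a system of generators lying in the subring $k(x)[Y_1,\dots,Y_r]$, where $Y_1,\dots,Y_r\in gr_M(R)_1$ are linear forms whose common zero locus is exactly $W$. One direction is clear: polynomials in $Y_1,\dots,Y_r$ are invariant under translation by any $w\in W=V(Y_1,\dots,Y_r)$, so $V(Y_1,\dots,Y_r)$ stabilizes $V$ of any ideal in $k(x)[Y_1,\dots,Y_r]$. For the converse, write any homogeneous $F\in In_M(J)$ as $F=\sum_\alpha c_\alpha(Y) \cdot Z^\alpha$ after choosing complementary coordinates $Z$ with $W=V(Y)\cap \{Z\text{-axes}\}$; the stability condition forces $F(Y,Z+w)=F(Y,Z)$ modulo $In_M(J)$ for all $w$, and Taylor-expanding plus the fact that $F\in In_M(J)$ is itself homogeneous yields that each ``coefficient'' $\partial^\beta F/\partial Z^\beta$ lies in $In_M(J)$; running an induction on total $Z$-degree one replaces $F$ by a polynomial purely in $Y$.

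From this equivalence, the largest stabilizing $W$ corresponds to the smallest ideal $\langle Y_1,\dots,Y_r\rangle \subset gr_M(R)$ generated by linear forms such that $In_M(J)$ admits generators in $k(x)[Y_1,\dots,Y_r]$. Minimality of $r$ and uniqueness of $\langle Y_1,\dots,Y_r\rangle$ follow by a standard argument: if two choices $\langle Y_1,\dots,Y_r\rangle$ and $\langle Y'_1,\dots,Y'_{r'}\rangle$ both allow writing $In_M(J)$, one shows by linear algebra on $gr_M(R)_1$ that the intersection also does, contradicting minimality unless the two coincide. Hence the minimal ideal of the form $\langle Y_1,\dots,Y_r\rangle$ is well-defined, and its zero scheme $W=\Dir_x(X)$ is the desired biggest stabilizing subspace.

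The main technical obstacle is the converse in paragraph~2, namely extracting from the stability condition $C_x(X)+W=C_x(X)$ the existence of a generating system of $In_M(J)$ purely in the variables dual to a complement of $W$. The careful bookkeeping of the Taylor expansion together with the homogeneity of $In_M(J)$ must be handled so as not to lose degrees; once that is in place, minimality and the identification of the two definitions are routine.
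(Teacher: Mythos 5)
The paper does not actually prove Observation~\ref{Obs:Directrix}: it is stated as a recall of Hironaka's notion with a citation to \cite{CJS}~Definition~1.26, and the one-line justification ``As $T_x(X)$ is a subspace of $\Spec(gr_M(R))$, both definitions coincide'' is the only argument given. Your proposal therefore supplies a proof where the paper outsources one, and the route you take is the standard one (Hironaka, Giraud): observe $0\in C_x(X)$ so any stabilizing $W$ sits inside $C_x(X)\subset T_x(X)$, then reformulate stability as closure of $In_M(J)$ under Hasse derivations $D^\beta_Z$ in the complementary coordinates, and extract generators in $k(x)[Y]$ by Taylor expansion and induction on $Z$-degree. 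That core computation is right, and it is what makes the two definitions coincide.

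One small point on the uniqueness step: you propose to show ``by linear algebra on $gr_M(R)_1$'' that if both $\langle Y\rangle$ and $\langle Y'\rangle$ allow generators then so does $\langle Y\rangle\cap\langle Y'\rangle$. As stated this is not obviously a linear-algebra fact about ideals; the clean way is to dualize and argue on the geometric side: if $W+C_x(X)=C_x(X)$ and $W'+C_x(X)=C_x(X)$ then $(W+W')+C_x(X)=C_x(X)$, so the stabilizing subspaces form a lattice with a unique maximum, and $W+W'=V(\langle Y\rangle_1\cap\langle Y'\rangle_1)$ then gives the ideal-theoretic statement for free via your equivalence from the previous step. This is not a gap so much as a reordering of the logic, but it avoids needing a separate argument for the ideal intersection.
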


\begin{Def}
\label{Def:dir}
	Let $ J \subset R $ be the ideal which defines $ X $ locally at $ x $.
	\begin{itemize}
	\item[(1)]
	A system of regular elements $ ( y ) = ( y_1, \ldots, y_r ) $ in $ R $ is said to determine the directrix $ \Dir_x ( X ) $ of $ X $ at $ x $ if the generators of $ In_\maxIdeal ( J ) \subset gr_\maxIdeal ( R ) $ are contained in $ \resfield [ Y ] $, $ Y_j = y_j \mod \maxIdeal^2 $, for $ 1 \leq j \leq r $,
	$$
		( \, In_\maxIdeal ( J ) \cap \resfield [ Y ] \,) \, gr_\maxIdeal ( R ) = In_\maxIdeal ( J ),
	$$
	and, moreover, $ ( y ) $ is required to be minimal with this property, i.e., the number of elements $ r $ has to be minimal.
		
	%\vspace{6pt}
	
	\item[(2)]	
	We denote by $ e_X ( x ) := \dim_\resfield ( \Dir_x ( X ) ) \leq \dim ( X ) $ the dimension of the directrix.
	Since we are in the embedded situation, we have $ e_X ( x ) = n - r $, where $ n $ is the dimension of $ R $.
		
	%\vspace{6pt}
	
	\item[(3)]
	Let $ K/ \resfield $ be a field extension.
	Then $ e_X ( x )_{K} $ denotes the dimension of the directrix associated to the homogeneous ideal $ In_\maxIdeal ( J )_{K} $.
	\end{itemize}
\end{Def}

If $ X \subset Z $ for some regular Noetherian scheme $ Z $, then an alternative invariant to the Hilbert-Samuel-function $ H_X $ is Hironaka's $ \nu^\ast $-invariant.
Locally one can define it as follows:

\begin{Def}
\label{Def:standardbasis}
	Let $ X \subset Z $ and $ x \in X ${.} 
	Let $ J \subset R $ be the ideal in $ R = \cO_{Z,x} $ determining the local situation at $ x $.
	\begin{itemize}
		\item[(1)]	A system of element{s} $ ( f ) = ( f_1, \ldots, f_m ) $ in $ J $ is called a {\em standard basis for $ J $} if {the} system of their initial forms $ ( in_\maxIdeal ( f) ) = ( in_\maxIdeal ( f_1 ) , \ldots, in_\maxIdeal(f_m) ) $ is a standard basis for $ In_\maxIdeal ( J )  \subset gr_\maxIdeal ( R ) $, i.e.,
		
%		\medskip
			\begin{itemize}
			\item[(a)]	{$ In_\maxIdeal ( J )  = \langle in_\maxIdeal ( f_1 ) , \ldots, in_\maxIdeal(f_m) \rangle $},
			\item[(b)]	$ in_\maxIdeal ( f_i ) \notin \langle in_\maxIdeal ( f_1 ) , \ldots, in_\maxIdeal(f_{i - 1} ) $, for $ 2 \leq i \leq m $, and
			\item[(c)]	$ \nu_1 \leq \nu_2 \leq \ldots {\leq} \nu_r $, where $ \nu_i = \ord_\maxIdeal ( f_i ) $.
		\end{itemize}	
%		\medskip
	
	\item[(2)] Let $ ( f ) $ be a standard basis of $ J $.
			 We define the $ \nu^\ast $-invariant by
			 $$
			 	\nu^\ast_x ( X,Z ) := \nu^\ast ( J, R ) := ( \nu_1, \nu_2, \ldots, \nu_r , \infty, \infty, \ldots ) \in \INinfty^\IN, 
			 	\hspace{15pt} (\INinfty := \IN \cup \{ \infty \}).
			 $$
	\end{itemize}
\end{Def}

In fact, a standard basis of $ J $ also generates the ideal $ J $, see \cite{HiroCharPoly} Corollary (2.21.d).

The definition of $ \nu^\ast_x ( X,Z ) $ as it is stated here looks as if it depends on the choice of a standard basis.
By \cite{CJS} Lemma 1.2 this is not the case and $ \nu^\ast_x ( X,Z )$ is really an invariant of the singularity.

\smallskip

A candidate for an invariant for the strategy in \cite{CJS} should behave well under the blow-ups which are performed during their process.
These are so called permissible blow-ups.

\begin{Def}[\cite{CJS} Definition 2.1]
	Let $ X $ be an excellent Noetherian scheme and $ D \subset X $ a closed reduced scheme.
	Denote by $ I_D \subset \cO_X $ the ideal sheaf of $ D $ in $ X $, $ \cO_D = \cO_X / I_D $ and $ gr_{ I_D } ( \cO_X ) = \bigoplus_{ t \geq 0} I_D^t / I_D^ {t + 1 } $.
	
	\begin{itemize}
		\item[(0)]	The normal cone of $ D \subset X $ is defined as $ C_{D}(X) := \Spec (gr_{ I_D } ( \cO_X )) $.
						
%	\vspace{6pt}
		
		\item[(1)]	
					$ X $ is {\em normally flat along $ D $ at $ x \in D $ } if $ gr_{I_D} (  \cO_X )_x $ is a flat $ {\cO}_{D,x} $-module.
					$ X $ is normally flat along $ D $ if $ X $ is normally flat along all points of $ D $.
						
%	\vspace{6pt}
	
		\item[(2)]
					$ D \subset X $ is {\em permissible at $ x \in D $} if $ D $ is regular at $ x $ and $ X $ is normally flat along $ D $ at $ x $, and if $ D$ contains no irreducible component of $ X $ containing $ x $.
					$ D \subset X $ is permissible if $ D $ is permissible at all points of $ D $.
						
%	\vspace{6pt}
	
		\item[(3)]	The blow-up $ \pi_D : Bl_D ( X ) \to X $ with a permissible center $ D \subset X $ is called a {\em permissible blow-up}.
	\end{itemize}	 
\end{Def}

In order to give the reader a feeling for the notion of normal flatness, we recall the following two results.

\begin{Thm}[\cite{CJS} Theorem 2.2(2)]
\label{Thm:2.2.(2)_CJS}
	 Let $ X $ and $ D $ {be} as in the previous definition and
	 suppose $ X $ is a closed subscheme of a regular Noetherian scheme $ Z $.
	 Further, $ D $ is assumed to be irreducible.
	 Let $ x \in D $.
	 Denote by $ ( R = \cO_{Z,x}, \maxIdeal, \resfield ) $ the local ring of $ Z $ at $ x $, and let $ J \subset R $ (resp.~$ \fp \subset R $) be the ideal which defines $ X \subset Z $ (resp.~$ D \subset Z $) locally at $ x $.
	 Then the following are equivalent
	 \begin{itemize}
	 	\item[(1)] 	$ X $ is normally flat along $ D $ at $ x $. 
	 		
	%\vspace{6pt}
	
	 	\item[(2)]	$ T_x ( D ) \subset \Dir_x ( X ) $ and the natural map $ C_x(X) \to C_{D}(X) \times_D x $ induces an isomorphism $ C_x (X)/T_x(D) \to C_D(X) \times x $, where $ T_x (D) $ acts on $ C_x(X) $ by the addition in $ T_x (X) $.

	%\vspace{6pt}
	 	
	 	\item[(3)] Let $ u : gr_\fp ( R ) \otimes_{R/\fp} \resfield \to gr_\maxIdeal ( R ) $ be the natural map. 
	 	Then $ In_\maxIdeal ( J ) $ is generated in $ gr_\maxIdeal ( R ) $ by $ u ( In_\fp ( J ) ) $.
	 		
	%\vspace{6pt}
	
	 	\item[(4)]	There exists a standard basis $ ( f )= ( f_1, \ldots, f_m ) $ of $ J $ such that $ \ord_\maxIdeal ( f_i ) = \ord_\fp ( f_i ) $, for all $ i \in \{ 1, \ldots, m \} $.
	 \end{itemize}
\end{Thm}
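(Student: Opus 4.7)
The plan is to prove the theorem by establishing (3) $\Leftrightarrow$ (4), (3) $\Leftrightarrow$ (2), and (1) $\Leftrightarrow$ (3) after fixing convenient coordinates, with the last implication being the deepest and resting on the Hironaka--Bennett Hilbert-function criterion for normal flatness. Since $D$ is regular at $x$, I extend an \RSP{} $(\bar z_1,\ldots,\bar z_s)$ of $R/\fp = \cO_{D,x}$ to a regular system of parameters $(y_1,\ldots,y_r,z_1,\ldots,z_s)$ of $R$ with $\fp = \langle y_1,\ldots,y_r \rangle$; setting $Y_i = y_i \bmod \maxIdeal^2$ and $Z_j = z_j \bmod \maxIdeal^2$ identifies $gr_\maxIdeal(R) \cong \resfield[Y,Z]$ and $gr_\fp(R) \cong (R/\fp)[Y]$, so that $u$ becomes the inclusion $\resfield[Y] \hookrightarrow \resfield[Y,Z]$.

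For (4) $\Rightarrow$ (3), given a standard basis $(f_i)$ with $\nu_i = \ord_\maxIdeal(f_i) = \ord_\fp(f_i)$, expand $f_i \equiv \sum_{|I|=\nu_i} a_{I,i}\, y^I \pmod{\fp^{\nu_i+1}}$; only unit coefficients $a_{I,i}$ survive modulo $\maxIdeal^{\nu_i+1}$, so $in_\maxIdeal(f_i) = u(in_\fp(f_i) \otimes 1)$, and these generate $In_\maxIdeal(J)$ by hypothesis. Conversely, for (3) $\Rightarrow$ (4), start with any standard basis $(g_i)$, use (3) to write each $in_\maxIdeal(g_i)$ as a combination of images under $u$ of elements of $In_\fp(J)$, and lift these expressions iteratively to elements $\tilde f_i \in J$ with $in_\maxIdeal(\tilde f_i) = in_\maxIdeal(g_i)$ and $\ord_\fp(\tilde f_i) \geq \nu_i$; the inclusion $\fp \subseteq \maxIdeal$ forces $\ord_\fp \leq \ord_\maxIdeal$, so equality $\ord_\fp(\tilde f_i) = \nu_i$ holds and $(\tilde f_i)$ is the required standard basis.

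For (3) $\Leftrightarrow$ (2), condition (3) is equivalent, in the chosen coordinates, to the existence of a generating set of $In_\maxIdeal(J)$ inside $\resfield[Y] \subset \resfield[Y,Z]$. Geometrically, this says no generator involves any $Z_j$, i.e., $C_x(X)$ is invariant under translation in the $Z$-directions and hence $T_x(D) = V(Y_1,\ldots,Y_r) \subset \Dir_x(X)$. Under this condition, the coordinate ring of $C_x(X)/T_x(D)$ is $\resfield[Y]/(In_\maxIdeal(J) \cap \resfield[Y])$ while that of $C_D(X) \times_D x$ is $\resfield[Y]/u^{-1}(In_\fp(J))$, and the natural morphism is an isomorphism exactly when these ideals coincide, which is once more (3).

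For the main obstacle (1) $\Leftrightarrow$ (3): normal flatness is the flatness of $(gr_\fp(R)/In_\fp(J))_x$ over $\cO_{D,x}$, and by the local flatness criterion this amounts to each graded piece being free of constant rank. Passing to the closed fibre at $x$ yields $\resfield[Y]/u^{-1}(In_\fp(J))$, whose Hilbert function must agree degree-by-degree with that of the image of $gr_\fp(R)/In_\fp(J)$ in $gr_\maxIdeal(R)/In_\maxIdeal(J)$; this numerical coincidence is precisely the compatibility in (3). The hard point is translating the flatness hypothesis into the algebraic equality of ideals, which requires controlling $J \cap \fp^n$ via an Artin--Rees argument in the complete setting; I would invoke the classical Hironaka--Bennett characterization of normal flatness (or CJS Theorem 2.2(1)) as a black box here rather than re-derive it.
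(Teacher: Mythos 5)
The paper does not actually prove this theorem — it is quoted verbatim as \cite{CJS} Theorem 2.2(2), so there is no internal argument to compare yours against. On its own merits, your sketch gets the coordinate set-up and the logical skeleton right: choosing a regular system of parameters $(y,z)$ with $\fp = \langle y \rangle$ so that $u$ becomes the inclusion $\resfield[Y]\hookrightarrow \resfield[Y,Z]$ is indeed how all proofs of this result proceed, and the computation behind $(4)\Rightarrow(3)$ (only unit coefficients of $y^I$, $|I|=\nu_i$, survive both $in_\fp$ then $u$ and $in_\maxIdeal$) is correct.

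Two points in the $(2)\Leftrightarrow(3)$ sketch are imprecise and worth fixing. First, condition (3) is strictly stronger than ``$In_\maxIdeal(J)$ has a generating set inside $\resfield[Y]$''; that weaker statement is equivalent only to $T_x(D)\subset\Dir_x(X)$, i.e.\ to the first clause of (2), while (3) requires in addition that the ideal $In_\maxIdeal(J)\cap\resfield[Y]$ equals the image ideal $u\bigl(In_\fp(J)\bigr)\cdot\resfield[Y]$ — which is exactly the second clause of (2). Second, the coordinate ring of $C_D(X)\times_D x$ is $\bigl(gr_\fp(R)/In_\fp(J)\bigr)\otimes_{R/\fp}\resfield \cong \resfield[Y]/\bigl(u(In_\fp(J))\cdot\resfield[Y]\bigr)$ (right-exactness of $\otimes$); the expression $\resfield[Y]/u^{-1}(In_\fp(J))$ you wrote does not parse, since $u$ goes the other way.

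The genuine gap is $(1)\Leftrightarrow(3)$, which is the actual content of the theorem, and you explicitly defer it to a black box. Worse, one of your candidate black boxes, ``CJS Theorem 2.2(1),'' cannot be used here without circularity if it is another clause of the very theorem you are trying to prove. The alternative you name — the Hironaka--Bennett characterization via the Hilbert--Samuel function being constant along $D$ (which in this paper appears as the theorem labelled \cite{CJS} Theorem 2.3) — would in principle close the loop, but you would then still owe the passage from the numerical statement (Hilbert function of the fibre constant, hence each graded piece $\fp^n/(\fp^{n+1}+J\cap\fp^n)$ free of rank independent of the point of $D$) to the ideal-theoretic statement (3). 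That passage is where the real work lives: one shows the canonical map $u(In_\fp(J))\cdot gr_\maxIdeal(R)\hookrightarrow In_\maxIdeal(J)$ is a degreewise inclusion of finite-dimensional $\resfield$-vector spaces whose dimensions agree by freeness, hence an equality. Your proposal names the ingredients (local flatness criterion, Artin--Rees control of $J\cap\fp^n$, complete local ring) but does not carry out this dimension-count step, so as written it is a plausible roadmap rather than a proof.
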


\begin{Thm}[\cite{CJS} Theorem 2.3]
	 Let $ X $ and $ D $ {be} as in the previous definition.
	 Assume $ D $ is regular.
	 Let $ x \in D $ and let $ y \in D $ be the generic point of the irreducible component of $ D $ containing $ x $.
	 Then the following are equivalent
	 \begin{itemize}
	 	\item[(1)] 	$ X $ is normally flat along $ D $ at $ x $. 
	 		
	%\vspace{6pt}
	
	 	\item[(2)]	$ H_{ \cO_{X,x} }^{ (0) } = H_{ \cO_{X,y} }^{ (\mathrm{codim}_Y (x) ) } $, where $ Y = \overline{ \{ y \} } $ is the closure of $ y $ in $ X $ .
	 		
	%\vspace{6pt}
	
	 	\item[(3)]	$ H_X ( x ) = H_X ( y ) $.
	 \end{itemize}
\end{Thm}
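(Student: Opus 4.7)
The plan is to prove the three-way equivalence by first establishing $(1)\Leftrightarrow(2)$ directly — this is the substantive content, essentially Bennett's theorem on the numerical characterisation of normal flatness — and then obtaining $(2)\Leftrightarrow(3)$ by tracking the normalisation $\phi_X = N - \psi_X$.

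For $(1)\Leftrightarrow(2)$ I would set $A := \cO_{X,x}$, $I := I_{D,x}$, and $d := \mathrm{codim}_Y(x) = \dim(A/I)$, so that $A/I = \cO_{D,x}$ is a regular local ring (hence a domain with generic point $y$). Normal flatness asserts that $gr_I(A) = \bigoplus_n I^n/I^{n+1}$ is flat over $A/I$; since each graded piece is finitely generated over the Noetherian local ring $A/I$, this is equivalent to each $I^n/I^{n+1}$ being free, which by Nakayama amounts to the equality of its special- and generic-fibre ranks for every $n$. Summing over $n$ and massaging each side — the special-fibre side becomes $H^{(0)}_A$ via the surjection $u$ of Theorem \ref{Thm:2.2.(2)_CJS}(3), while the generic-fibre side converts, after a $d$-fold iterated summation reflecting $\dim(A/I) = d$, into $H^{(d)}_{\cO_{X,y}}$ — turns the per-degree equalities into the identity in $(2)$.

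For $(2)\Leftrightarrow(3)$ I would unfold $H_X(\cdot) = H^{(\phi_X(\cdot))}_{\cO_{X,\cdot}}$; since iterated summation is injective on $\IN^\IN$ (one recovers $H^{(0)}$ from $H^{(t)}$ by finite differences), the equality $H_X(x) = H_X(y)$ is equivalent to $H^{(0)}_{\cO_{X,x}} = H^{(\phi_X(y)-\phi_X(x))}_{\cO_{X,y}}$, so everything reduces to the codimension identity $\psi_X(x) - \psi_X(y) = \mathrm{codim}_Y(x)$. Every irreducible component $Z$ of $X$ containing $y$ automatically contains $Y = \overline{\{y\}}$ and hence $x$, and on such $Z$ catenarity gives $\mathrm{codim}_Z(x) = \mathrm{codim}_Z(y) + \mathrm{codim}_Y(x)$; to rule out that $\psi_X(x)$ is realised on a component through $x$ avoiding $y$, I would invoke the inclusion $T_x(D) \subset \Dir_x(X)$ of Theorem \ref{Thm:2.2.(2)_CJS}(2) — implied by any of $(1)$, $(2)$, $(3)$ — which forces every component of $X$ through $x$ to contain the component of $D$ through $x$, and hence $y$.

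The main obstacle is Bennett's identification in the second paragraph of the Hilbert series of $gr_I(A)\otimes k(x)$ and of $gr_I(A)\otimes k(y)$ with $H^{(0)}_{\cO_{X,x}}$ and $H^{(d)}_{\cO_{X,y}}$ respectively; the cleanest organisation is through the Artinian quotients $A/(\maxIdeal_x^{m+1} + I^{n+1})$, whose lengths can be computed either via the $\maxIdeal_x$-filtration (giving the Hilbert-Samuel function at $x$) or via the $I$-filtration using a lift of a regular system of parameters of $A/I$ (giving, after summation, the shifted Hilbert-Samuel function at $y$), flatness being precisely the coincidence of the two counts.
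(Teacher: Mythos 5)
Since the paper merely cites \cite{CJS} for this result without reproducing the argument, there is no in-paper proof to compare against; the following assesses your outline on its own merits. Your reduction of $(1)\Leftrightarrow(2)$ to Bennett's theorem (with the sketch via Artinian quotients) is the right plan, but there is a genuine gap in the derivation of the codimension identity $\psi_X(x)-\psi_X(y)=\mathrm{codim}_Y(x)$ needed for $(2)\Leftrightarrow(3)$: the assertion that $T_x(D)\subset\Dir_x(X)$ forces every irreducible component of $X$ through $x$ to contain $Y$ is simply false. Take $X=V(z^2-zx^2)=V(z)\cup V(z-x^2)\subset\A^3$, $D=V(y,z)$ the $x$-axis, and $x$ the origin. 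The tangent cone is $V(Z^2)$, hence $\Dir_x(X)=V(Z)$, which does contain $T_x(D)=V(Y,Z)$; yet the component $V(z-x^2)$ meets $D$ only in the origin and does not contain $Y=D$. (Normal flatness fails here — one has $x^2\cdot\bar z=0$ in $I_D/I_D^2$ over $\cO_D\cong k[x]$ — but the point is that the directrix inclusion is strictly weaker than normal flatness and cannot carry the argument you want.) There is also a circularity in the direction $(3)\Rightarrow(2)$: Theorem~\ref{Thm:2.2.(2)_CJS} yields the directrix inclusion only \emph{from} normal flatness, and condition $(3)$ is not among its equivalent conditions, so you would be invoking a consequence of what you are trying to prove.

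What actually produces the component containment is the freeness forced by normal flatness, not the directrix. Put $A:=\cO_{X,x}$ and $\fp:=I_{D,x}$, so $A/\fp\cong\cO_{D,x}$ is a regular local domain. Normal flatness makes each $\fp^n/\fp^{n+1}$ a free $A/\fp$-module. If some minimal (or associated) prime $\mathfrak q$ of $A$ were not contained in $\fp$, choose $a\in\mathfrak q\setminus\fp$ and $0\neq b\in A$ with $\mathrm{ann}(b)=\mathfrak q$; Krull's intersection theorem places $b\in\fp^n\setminus\fp^{n+1}$ for some $n$, and then $\bar a\cdot\bar b=0$ in the free $A/\fp$-module $\fp^n/\fp^{n+1}$ with $\bar a\neq0$ in the domain $A/\fp$ forces $\bar b=0$, a contradiction. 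Hence every minimal prime of $A$ lies in $\fp$, every component of $X$ through $x$ contains $Y$, and catenarity gives $\psi_X(x)=\psi_X(y)+\mathrm{codim}_Y(x)$. Alternatively, for $(3)\Rightarrow(2)$ one may bypass the identity entirely: combine the unconditional Bennett inequality $H^{(0)}_{\cO_{X,x}}\geq H^{(\mathrm{codim}_Y(x))}_{\cO_{X,y}}$ with your upper bound $\psi_X(x)-\psi_X(y)\leq\mathrm{codim}_Y(x)$ and the monotonicity of $t\mapsto H^{(t)}_{\cO_{X,y}}$ to force equality throughout.
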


%\smallskip
 
Bennett, Hironaka and Singh showed the following result on the behavior of the $ H_X $ under the permissible blow-ups:

\begin{Thm}[\cite{CJS} Theorem 2.10]
\label{Thm:BennHiroSingh}
Let $ X $ be an excellent Noetherian scheme and $ D \subset X $ a permissible closed subscheme, and let $ \pi_X : X' = Bl_D ( X ) \to X $ be the blow-up with center $ D $.
Consider $ x \in D $ and $ x' \in \pi_X^{ - 1 }(x) $ and set $ \delta_{x'/x} = \mathrm{trdeg}_{\resfield} (\resfielt) $, where $ \resfield $ (resp.~$ \resfielt $) denotes the residue field of $ x $ (resp.~$ x' $).
Then
\begin{enumerate}
	\item[(1)] $ H_{X'} ( x' ) \leq H_X ( x ) $.
		
	%\vspace{6pt}
	
	\item[(2)] If $ H_{X'} ( x' ) = H_X ( x ) $ holds, then for any field extension $ K/\resfielt $ we have $ e_{X'} ( x' )_K \leq e_X ( x )_K - \delta_{x'/x} $.
\end{enumerate}

Suppose $ X $ is embedded in a regular scheme $ Z $ and let $ \pi_Z : Z' = Bl_D ( Z ) \to Z $ be the blow-up of $ Z $ with center $ D $.
Then 
\begin{enumerate}
	\item[(3)]	$ \nus_{ x'} ( X', Z' ) \leq \nus_{ x } ( X, Z ) $, and
		
	%\vspace{6pt}
	
	\item[(4)]	$ \nus_{ x'} ( X', Z' ) = \nus_{ x } ( X, Z ) $ holds if and only if $ H_{ X' } ( x') = H_X ( x ) $.
\end{enumerate}
\end{Thm}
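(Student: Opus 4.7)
\medskip

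\noindent\textbf{Plan of proof.} The statement is local at $ x' $, so I would first replace $ R = \cO_{Z,x} $ by its completion and work with the ideal $ J \subset R $ defining $ X $ locally. Using Theorem~\ref{Thm:2.2.(2)_CJS}(4), I would choose a standard basis $ (f) = (f_1,\ldots,f_m) $ of $ J $ such that $ \nu_i := \ord_\maxIdeal(f_i) = \ord_\fp(f_i) $, where $ \fp \subset R $ cuts out (the irreducible component of $D$ through $x$ on) $ D $. Thus each $ f_i $ lies in $ \fp^{\nu_i} $ and may be divided by the $ \nu_i $-th power of a local equation of the exceptional divisor to form the strict transform $ f_i' \in R' := \cO_{Z',x'} $. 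Since the $ f_i' $ generate the ideal $ J' $ of the strict transform $ X' $ locally at $ x' $, we have $ \ord_{\maxIdeal'}(f_i') \leq \nu_i $. Ordering these orders and padding with $ \infty $'s produces an element $ \leq \nus_x(X,Z) $ lexicographically, which dominates $ \nus_{x'}(X',Z') $. This proves (3).

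\medskip

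\noindent For (1), the Hilbert-Samuel function is controlled by the initial forms of a standard basis through the formula $ H_{\cO_{X,x}}^{(0)}(n) = \dim_\resfield(\maxIdeal^n/\maxIdeal^{n+1}) - \dim_\resfield(In_\maxIdeal(J)_n) $ for $ n $ large, and summing up through the $ \phi_X(x) $-fold integral; the contribution of $ In_\maxIdeal(J) $ depends only on the $ \nu_i $ (this is Hironaka's additivity for standard bases). Consequently, $ H_X $ is determined by $ \nus_x(X,Z) $ once we fix the embedding dimension, and the inequality in (3) produces the inequality in (1). Moreover, equality $ \nus_{x'}(X',Z') = \nus_x(X,Z) $ occurs exactly when each $ \ord_{\maxIdeal'}(f_i') = \nu_i $ and the $ in_{\maxIdeal'}(f_i') $ still form a standard basis of $ In_{\maxIdeal'}(J') $, which translates to $ H_{X'}(x') = H_X(x) $. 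This yields (4), and combined with (3) gives the ``only if'' of~(4); the ``if'' direction follows from the fact that a strict drop in any $ \ord_{\maxIdeal'}(f_i') $ forces $ H_{X'}(x') < H_X(x) $.

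\medskip

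\noindent For (2), assume $ H_{X'}(x') = H_X(x) $, so that by (4) the transforms $ f_i' $ again form a standard basis with the same orders. I would analyze the passage from $ In_\maxIdeal(J) $ to $ In_{\maxIdeal'}(J') $ along the blow-up. Choose a \RSP{} $ (y_1,\ldots,y_r,u_1,\ldots,u_s) $ of $ R $ such that $ (y_1,\ldots,y_r) $ determines the directrix $ \Dir_x(X) $ and $ \fp = (y_1,\ldots,y_r,u_1,\ldots,u_{s'}) $ for some $ s' $; normal flatness guarantees compatibility. After extending scalars to $ K $ and base changing to the residue field at $ x' $, the initial forms of the $ f_i' $ live in a polynomial ring where, thanks to the equality of orders and the geometry of the blow-up chart around $ x' $, the exceptional parameter $ t $ is forced to enter the directrix of $ X' $, decreasing the dimension of the directrix by exactly $ \delta_{x'/x} $ more than the naive count. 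Carrying out this bookkeeping (treating $ t $ and the transformed $ y_j/t $ as new directrix coordinates, and accounting for the $ \delta_{x'/x} $ new algebraically independent residues at $ x' $ over those at $ x $) gives $ e_{X'}(x')_K \leq e_X(x)_K - \delta_{x'/x} $.

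\medskip

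\noindent The main obstacle is part (2): tracking the directrix after blow-up requires a careful choice of coordinates respecting both $ \Dir_x(X) $ and $ \fp $, together with a residue-field calculation isolating the $ \delta_{x'/x} $-dimensional contribution coming from the transcendence degree extension. The remaining items (1), (3), (4) reduce essentially to the behavior of a standard basis under transform, which is a routine, though technical, initial-form computation once one has the compatible standard basis furnished by Theorem~\ref{Thm:2.2.(2)_CJS}.
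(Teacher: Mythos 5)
The paper does not prove this theorem; it is stated as a citation of \cite{CJS} Theorem~2.10, which in turn collects results of Bennett, Hironaka, and Singh. So there is no proof in the paper to compare against, and your proposal must stand on its own.

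There is a concrete error at the heart of your derivation of (1) and (4). You assert that ``the contribution of $In_\maxIdeal(J)$ depends only on the $\nu_i$'' and that consequently ``$H_X$ is determined by $\nus_x(X,Z)$ once we fix the embedding dimension.'' This is false. Take $R = k[[x,y]]$, $J_1 = \langle x^2, y^2\rangle$, $J_2 = \langle x^2, xy\rangle$. Both have standard bases consisting of two elements of order $2$, so $\nus(J_1,R) = \nus(J_2,R) = (2,2,\infty,\ldots)$ in the same ambient dimension, yet $H^{(0)}_{R/J_1} = (1,2,1,0,0,\ldots)$ while $H^{(0)}_{R/J_2} = (1,2,1,1,1,\ldots)$. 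In other words, the initial forms of a standard basis generate $In_\maxIdeal(J)$ but are \emph{not} a regular sequence, and the Hilbert function of $gr_\maxIdeal(R)/In_\maxIdeal(J)$ genuinely depends on the algebra structure of the initial ideal, not just on the degrees $\nu_i$. With that claim removed, your argument neither gives (1) from (3) nor supplies the ``if'' direction of (4). Part (1) is Bennett's theorem and (2) is the Hironaka--Singh directrix inequality; both require a direct comparison of initial ideals before and after blow-up (exploiting normal flatness), not just a count of $\nu_i$'s. Your sketch of (2) admits it is vague, but (2) is precisely the hard, substantive part of the theorem. Finally, even for (3) there is a gap: the strict transforms $f_i'$ generate $J'$ with $\ord_{\maxIdeal'}(f_i') \leq \nu_i$, but the initial forms $in_{\maxIdeal'}(f_i')$ need not generate $In_{\maxIdeal'}(J')$ (generators are not automatically a standard basis), so passing from the orders of the $f_i'$ to $\nus_{x'}(X',Z')$ requires an additional argument that you do not supply.
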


{In \cite{CJS} Theorem 1.15 it is shown that the set of all Hilbert-Samuel functions is well-founded, which means that every strictly descending sequence has to be finite.
The proof for this relies on a result by Maclagan \cite{Maclagan}.}

{Hence by the previous theorem we only need to study those points where the Hilbert-Samuel function did not drop and, moreover, one can also use the $ \nus $-invariant in order to detect them.}

\smallskip

The dimension of the directrix is not stable under field extensions $ k' / k(x) $.
In general, one only has $ e_X(x)_{k'}\leq e_X(x)$ according to Lemma~1.10 in \cite{CJS}.

\begin{Def}
	Let the situation be as in the previous theorem.
	Consider {$ x \in D $ and} $ x' \in \pi_X^{ - 1 }(x) ${.}
	\begin{itemize}
			\item[(1)] If $  H_{X'} ( x' ) = H_X ( x ) $ then $ x '$ is called {\em near to $  x $}.
				
	%\vspace{6pt}
	
			\item[(2)] If $ x' $ is near to $ x $ and if we have additionally $ e_{X'} ( x' ) + \delta_{x'/x} = e_X ( x )_{k'} = e_X ( x ) $, for $ k' = k(x')$, then $ x' $ is said to be {\em very near to $ x $.}
	\end{itemize}
\end{Def}

%\smallskip

In order to achieve an improvement of the singularities one needs a precise knowledge of the locus of near points.
The following result due to Hironaka (and later improved by Mizutani) gives some control on this locus.

\begin{Thm}[\cite{CJS} Theorem 2.14]
\label{Thm:HiroMizutani}
Let $ x' $ be near to $ x $.
Assume that either $ \car ( k (x ) ) = 0 $ or $ \car ( k(x)) \geq \dim (X)/2 + 1 $.
Then $ x ' $ lies in the projective space associated to the vector space determined by the directrix and the tangent space of the center,
$$
	x' \in \IP ( \Dir_x ( X ) / T_x ( D ) ) \subset \pi^{ -1 }_X ( x ).
$$
\end{Thm}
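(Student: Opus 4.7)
The plan is to work locally in the embedded setting and prove the contrapositive: if $x' \notin \mathbb{P}(\mathrm{Dir}_x(X)/T_x(D))$, then $x'$ is not near to $x$. Pass to the completion so $X = V(J) \subset Z = \mathrm{Spec}(R)$ with $(R, M, k)$ a complete regular local ring and $\mathfrak{p} \subset R$ defining $D$. By Theorem~\ref{Thm:2.2.(2)_CJS}, permissibility yields $T_x(D) \subset \mathrm{Dir}_x(X)$ together with a standard basis $(f_1, \ldots, f_m)$ of $J$ such that $\nu_i := \mathrm{ord}_M(f_i) = \mathrm{ord}_\mathfrak{p}(f_i)$. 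Choose a \RSP\ $(y_1, \ldots, y_n)$ of $R$ so that $\mathfrak{p} = (y_1, \ldots, y_d)$ and the initial forms $F_i := \mathrm{in}_M(f_i)$ lie in $k[Y_1, \ldots, Y_r]$, where $(Y_1, \ldots, Y_r)$ determines $\mathrm{Dir}_x(X)$; note $r \le d$.

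The fibre $\pi_Z^{-1}(x) = \mathbb{P}^{d-1}_{k(x)}$ contains $\mathbb{P}(\mathrm{Dir}_x(X)/T_x(D))$ as the linear subspace cut out by $Y_1 = \cdots = Y_r = 0$. Thus $x' \notin \mathbb{P}(\mathrm{Dir}_x(X)/T_x(D))$ means the projective coordinates $(c_1 : \cdots : c_d)$ of $x'$ satisfy $c_j \ne 0$ for some $j \le r$. After relabelling, $x'$ lies in the $y_1$-chart with local parameters $y_1$, $y_j' := y_j/y_1 - c_j$ for $2 \le j \le d$ (with $c_j \in k(x')$), and $y_j$ for $j > d$. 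The strict transform is $f_i' := f_i/y_1^{\nu_i}$, and writing $R_i := f_i - F_i \in M^{\nu_i + 1} \cap \mathfrak{p}^{\nu_i}$, the homogeneity of $F_i$ gives
\[
  f_i' = \widetilde F_i(y_2', \ldots, y_r') + R_i/y_1^{\nu_i}, \qquad \widetilde F_i(T_2, \ldots, T_r) := F_i(1, c_2 + T_2, \ldots, c_r + T_r).
\]

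Nearness of $x'$ forces $\nu^*_{x'}(X', Z') = \nu^*_x(X, Z)$ by Theorem~\ref{Thm:BennHiroSingh}(4), hence $\mathrm{ord}_{M'}(f_i') = \nu_i$. Choosing the $f_i$'s in a \emph{vertex-normalized} form via Hironaka's characteristic polyhedron ensures that $R_i/y_1^{\nu_i}$ cannot contribute low-$M'$-order terms that spuriously cancel those coming from $\widetilde F_i$; the order condition therefore propagates, forcing each $\widetilde F_i$ to be homogeneous of degree $\nu_i$ in $(T_2, \ldots, T_r)$. Expanding the Taylor series of $F_i(1, c_2 + T_2, \ldots, c_r + T_r)$ around the origin, this is equivalent to the linear system $(D^\beta F_i)(1, c_2, \ldots, c_r) = 0$ for every $i$ and every $\beta = (0, \beta_2, \ldots, \beta_r)$ with $|\beta| < \nu_i$, where $D^\beta$ denotes the Hasse derivative.

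The main obstacle is to turn these relations into a contradiction with the minimality of $r$ in the definition of the directrix, and this is precisely where the characteristic hypothesis enters. Under $\mathrm{char}(k) = 0$ or $\mathrm{char}(k) \ge \dim(X)/2 + 1$ (Mizutani's refinement of Hironaka's original bound $\mathrm{char} > \max \nu_i$), the relevant binomial/Hasse coefficients are invertible. One then closes the ideal $\langle F_1, \ldots, F_m \rangle \subset k[Y_1, \ldots, Y_r]$ under Hasse differentiation in $Y_2, \ldots, Y_r$, obtaining a homogeneous ideal whose directrix is still determined by $(Y_1, \ldots, Y_r)$; combining the vanishing relations above with Euler's identity $\nu_i F_i = \sum_j Y_j \partial_{Y_j} F_i$, the closed ideal can be shown to be generated by polynomials in $k[Y_2, \ldots, Y_r]$ alone—contradicting the minimality of $r$. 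In small positive characteristic this closure argument fails because additive forms such as $Y_1^p$ can belong to the ridge of $\mathrm{In}_M(J)$ without reducing the directrix, which is exactly the phenomenon underlying Moh's counterexamples and the reason the characteristic bound is indispensable.
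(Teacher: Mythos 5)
The paper does not prove this theorem itself; it is quoted directly from \cite{CJS} (Theorem 2.14) as a known result of Hironaka, improved by Mizutani, so there is no ``paper proof'' to compare against. Evaluating your reconstruction on its own merits, the overall architecture (pass to the completion, pick a standard basis with $\ord_M(f_i)=\ord_{\fp}(f_i)$, compute the transform at a point $x'$ with some $c_j\neq 0$, extract vanishing conditions on Hasse derivatives of the $F_i$, and contradict minimality of $r$) is the right skeleton, but two of the steps are genuine gaps rather than suppressed routine details.

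First, the handling of the remainder $R_i=f_i-F_i$ is where most of the actual work lives, and your appeal to ``vertex-normalized form via Hironaka's characteristic polyhedron'' does not close it. Preparedness of $(f,u,y)$ at vertices of $\poly fuy$ is a statement relative to the coordinate system $(u,y)$ centred at $x$; after blowing up and localizing at a point $x'$ with $c_j\neq 0$ for some $j\le d$, the relevant local coordinates $(y_1,\ y_j/y_1-c_j,\ \dots)$ are different, and terms of $R_i/y_1^{\nu_i}$ of $M'$-order strictly below $\nu_i$ appear and can a priori cancel against the low-degree part of $F_i(1,c_2+T_2,\dots,c_r+T_r)$. Ruling this out requires a separate argument (Hironaka's original proof devotes substantial combinatorics to exactly this; the standard modern treatments go through the ridge, showing near points always lie in $\IP(\Rid_x(X)/T_x(D))$ without any characteristic assumption, and only then descend to the directrix). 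You need to either quote that result or fill in a concrete computation; as written, ``the order condition therefore propagates'' is an assertion, not a proof.

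Second, and more seriously, the Euler-identity route establishes the theorem only under Hironaka's original bound $\car(k)>\max_i\nu_i$, and your claim that ``the relevant binomial/Hasse coefficients are invertible'' under $\car(k)\ge \dim(X)/2+1$ is false. In dimension $2$ the Mizutani bound is $p\ge 2$, i.e.\ vacuous, while $\max_i\nu_i$ is unbounded; so for $p$ small relative to the $\nu_i$ the numbers $\nu_i$ (hence the Euler relations $\nu_iF_i=\sum_jY_j\partial_{Y_j}F_i$ and the higher-order versions you invoke) are not invertible. Mizutani's improvement is a genuinely different argument that does not reduce to upgrading $Y_2,\dots,Y_r$-derivative vanishing to full derivative vanishing via Euler. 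So even once the cancellation issue is repaired, the proof as outlined gives a weaker theorem than the one being cited. Since the paper only uses the result in $\dim X\le 2$, where the characteristic hypothesis is automatic, it is especially important that the bound being proved is really the Mizutani one, not the Hironaka one.
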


Recall: $ D $ being normally flat along $ D $ at $ x \in D $ implies $ T_x (D) \subset \Dir_x ( X ) $ 
(Theorem \ref{Thm:2.2.(2)_CJS} (1)$ \Rightarrow$(2)).

If $ \dim ( X) = 2 $, then $ \dim(X)/2 + 1 = 2 $ and the assumptions on {the characteristic of the residue field is always fulfilled.}

\smallskip

During the resolution process we have to take care of the arising exceptional divisor.
This is important in order to obtain a canonical process to resolve singularities.

\begin{Ex}
	Consider the singularity given by $h:= z^3 + x^2 y^2 z + x^3y^3 \in k[x,y,z]$, where $ k $ is a field of any characteristic.
	The singular locus is the union of the two lines $L_1 :=V(z,x)$, $L_2:=V(z,y)$ and this is also the highest order locus.
	Localizing at the generic points of these lines, it is clear that you have to blow them up. 
	But, when, resp.~in which order? 
	
	If this is the beginning of the process, then these two lines are indistinguishable.
	Thus to obtain a canonical resolution process via blow-ups in regular centers one has to blow up the origin $ V(x,y,z) $.
	Then, at the point of parameters $(z',x,y'):=(\frac{z}{x},x,\frac{y}{x})$ in the corresponding affine chart, the strict transform has the equation $ h'={z'}^3 + x^2 {y'}^2 z' + x^3{y'}^3$. 
	Apparently, nothing changed. 
	
	But the resolution process provides us more data.
	Namely, the component $ L_3 := V (z', x) $ is contained in the exceptional divisor of the last blow-up which is given by $ V ( x ) $.
	The strict transform of $ L_1 $ is empty at this point and $ L_3 $ is a newly created curve only lying above the center of the last blow-up.
	Therefore we can distinguish the two irreducible curves $ L_2' = V (z', y' ) $ and $ L_3 $ of the singular locus of $ h' $ since one is ``older" than the other.  
	
	The philosophy of \cite{CJS} is to consider always the oldest irreducible components. 
	Thus following their algorithm the next center would be $ L_2' $.

\end{Ex}

Another reason to take the exceptional divisors into account is that we only want to blow up in centers having at most normal crossings (short n.c.) with the exceptional divisors.
Also it might be important for applications to start not only with $ X $ but also with a n.c.~divisor on $ Z $.
In \cite{CJS}, this is done via the notion of boundaries.

In order to simplify the presentation we suppose in the following that $ X \subset Z $ is embedded in a regular scheme $ Z $.
The invariants are stable under localization and completion at $x$, \rot{and} they may be defined with the only datum $ R/J $. 
\rot{This fact is well-known} for the Hilbert-Samuel function, the  directrix;
\rot{For} the specific $\alpha,\beta,\gamma,\delta$, see Theorems \ref{Thm:alphabetainvariant_1_deltaonly} and \ref{Thm:alphabetainvariant_part2_nodelta} below.

\begin{Def}[\cite{CJS} Definition 3.3]
	\label{Def:boundary}
A {\em boundary $ \cB $ on $ Z $} is a collection of regular divisors on $ Z $, $ \cB = \{ B_1, \ldots, B_d \} $, such that they have at most n.c.~singularities, i.e., each of them is regular and they are intersecting transversally.
\end{Def}

Since the divisors in $ \cB $ have n.c.~we may choose the coordinates $ ( z_1, \ldots, z_n) $ at a point $ x $ such that $ (B_i)_x = V ( z_i) $, for $ 1 \leq i \leq t \leq n $ and $ \cB (x) := \{ B \in \cB \mid x \in B \} = \{ B_1, \ldots, B_t \} $.
 
\begin{Def}
	A closed subscheme $ D \subset X $ is called {\em $ \cB $-permissible} if it is permissible and has at most n.c.~with the boundary $ \cB $.
	The corresponding blow-up with center $ D $ is called a {\em $ \cB $-permissible blow-up}. 
	
	After a $ \cB $-permissible blow-up the {\em transform $ \cB' $ of a boundary $ \cB $} is defined to be the union of the strict transform of the components of $ \cB $ and the exceptional divisor of the blow-up.
\end{Def}

 The notion of transversality which is crystal clear in the embedded case is very subtle in the non embedded case. 
 This is the difficult point which is overcome in \cite{CJS}~section~4.    
 In the non embedded case, a boundary $\mathcal B$ is defined as a multiset of locally principal closed subschemes of $X$, \cite{CJS} Definition~4.1.
 See \cite{CJS} Definition~4.2 for the definition of normal crossing of a regular subscheme $D\subset X$ with $\mathcal B$. 
 Of course, $D\subset X$ is permissible for $\mathcal B$ at $x\in X$ if it is normal crossing with $\mathcal B$ and permissible for $X$ at $x$. 
 The total transform of $\mathcal B$ after a permissible blowing up is the union of the strict transforms of the components of $\mathcal B$ and of the exceptional divisor. 
 Remark~4.22 in \cite{CJS} clarifies this problem: 
 it says that we may assume  that $X$ can be embedded in a regular excellent scheme $Z$ with simple normal crossings boundary $\mathcal {B}_Z$ and that $\mathcal {B}$ is the pull back of  $\mathcal {B}_Z$ to $X$. 
 Let us recall this remark.

 \begin{Rk}[\cite{CJS}~Remark~4.22]\label{Rk:nonembedded}
 Let $X$ be an excellent scheme, let $\mathcal B$ be a boundary on $X$, and let $x\in X$. 
 Assume a property concerning $(X,\mathcal B,x)$ can be shown by passing to the local ring $\mathcal{O}:=\mathcal{O}_{X,x}$
 and its completion $\widehat{\mathcal{O}}$. 
 Then the following construction is useful. 
 The ring $\widehat{\mathcal{O}}$ is the quotient of a regular excellent ring $R$. 
 Let $\mathcal {B}(x):=\{\{\mathcal {B}_1,\ldots,\mathcal {B}_r \}\}$ be the multiset of components of $\mathcal B$ passing through $x$, and let $f_1,\ldots,f_r$ the local functions defining them in  $\mathcal{O}:=\mathcal{O}_{X,x}$. 
 We denote by $ \widehat{\mathcal{B}(x)} $ the corresponding boundary on $ \widehat{\mathcal{O}} $.
 Then we get a surjection:
 $$
	 R[X_1,\ldots,X_r] \twoheadrightarrow \widehat{\mathcal{O}}
 $$
 mapping $X_i$ to $f_i$, and the functions $X_i$ define a simple normal crossing boundary $\mathcal {B}_Z$ on $ Z:= \Spec ( R[X_1,\ldots,X_r]) $ such that $\widehat{\mathcal{B}(x)}$ is its pull-back under 
 $ \Spec ( \widehat{\mathcal{O}}) \hookrightarrow Z$.
 \end{Rk}

%\smallskip

It is important to keep track of the resolution process and to remember at which step a boundary component occurred and how it is related to the maximal value of the Hilbert-Samuel function.
(We explain this more precisely after the next definition).
In order to get hands on this issue, one can use the following notion:
 
\begin{Def}[\cite{CJS} Definition 3.6]
	\label{Def:history_function}
	Let $ \cB $ be a boundary on $ Z $.
	A {\em history function} for a boundary $ \cB $ on $ X $ is a function 
	$$ 
		O_X: X \to \{ \mbox{ subsets of } \cB \, \}; \; \; x \mapsto O_X(x) 
	$$ 
	which satisfies the following conditions:
	\begin{itemize}
		\item[(1)]	For any $ x \in X $, $ O_X(x) \subset \cB (x) $.
			
	%\vspace{6pt}
	
		\item[(2)]	For any $ x,y \in X $ with $ x \in \overline{ \{ y \} } $ and $ H_X ( x ) = H_X ( y ) $, we have $ O_X(y) \subset O_X(x) $.
			
	%\vspace{6pt}
	
		\item[(3)]	For any $ y \in X $, there exists a non-empty open subset $ U \subset \overline{ \{ y \} } $ such that $ O_X(x) = O_X(y) $ for all $ x \in U $ such that $ H_X ( x ) = H_X ( y ) $.
\end{itemize}
	For such a function, we set, for $ x \in X $, 
	$$
		N_X ( x ) := \cB ( x ) \setminus O_X ( x ).
	$$
	A component of $ \cB $ is called {\em old for $ x $} if it is a component of $ O_X ( x) $,
	while a component of $ N_X ( x ) $ is called {\em new for $ x $.}
\end{Def}

\noindent
If no confusion is possible, we set $ O(x):= O_X (x) $ and $ N(x) := N_X(x) $.
 
Let us recall the history function which \cite{CJS} \rot{uses} \rot{(}see loc.~cit.~Lemma 3.7 and Lemma 3.14 and the part before\rot{)}:
If we are at the beginning of the resolution process or if the maximal value of the Hilbert-Samuel function just dropped to $ \nu $, then all boundary components are old, $ O ( x ) = \cB ( x ) $.

Let $ \pi: Z' \to Z $ be a blow-up with center $ D \subset X $, $ x \in D $.
Consider a point $ x' \in X' $ which is near to $ x $, $ \pi ( x' ) = x $ and  $ H_{X'} ( x' ) = H_X ( x ) = \nu $.
The boundary $ \cB'(x') $ on $ Z' $ consists of the strict transforms of the components in $ \cB(x) $ and the exceptional divisor of the last blow-up. 
The strict transforms of old components stay old, whereas the new exceptional divisor is defined to be new.

This continues for further blow-ups and near points; 
the strict transforms of old components remain to be old, and the new exceptional divisors and their strict transforms which arose after $ \nu $ appeared for the first time are defined to be new.
As soon as the Hilbert-Samuel function drops, say to $ \nu' < \nu $, this process starts over and all the boundary components are old (since they appeared before $ \nu' $ became a maximal value of $ H_X $).

Note that in the notation of \cite{CJS} Definition 3.15 we consider the complete transform  $ ( \cB', O' ) $ of $ ( \cB, O ) $.

\begin{Def}
\label{Def:logHS_function}
	\begin{itemize}
	\item[(1)]
	The {\em log-Hilbert-Samuel function} $ H^O_X : X \to \INN \times \IN $ is defined by 
	$$ 
		H^O_X ( x ) = ( H_X ( x ) , \, | O(x) | ) .
	$$
	Here $ \INN \times \IN $ is equipped with the lexicographical order.
	(But still we use the product order on $ \INN $).
		
	%\vspace{6pt}
	
	\item[(2)]
	For $ \tnu \in \INN \times \IN $, we define
	$
		X ( \geq \tnu ) = X^O ( \geq \tnu ) := \{ x \in X \mid H_X^O ( x ) \geq \tnu \} 
	$
	and
	$
		X (  \tnu ) = X^O (  \tnu ) := \{ x \in X \mid H_X^O ( x ) = \tnu \}  
	$
	and
	$ \Sigma_X^O := \{ H_X^O ( x ) \mid x \in X \} $ and $ \Sigma^{max,O}_X $ denotes the set of maximal elements in $ \Sigma_X^O $ and $ X_{max}^O := \bigcup_{ \tnu \in \Sigma^{max,O}_X } X^O ( \tnu ) $.
		
	%\vspace{6pt}
	
	\item[(3)]	
	In the local situation at a point $ x \in X $ ($ R = \cO_{Z,x}, J \subset R $ defining  ideal of $ X $) we set 
	$$ 
		J^O := J \cdot I_{O(x)} ,
	$$
	where $ I_{O(x)} $ denotes the ideal defining the divisor given by the old components of $ \cB ( x ) $, i.e., $ I_{O(x)} = \langle \, \vp \, \rangle $ with $ \vp = \vp_1 \cdots \vp_d $ and the regular elements $ \vp_1, \ldots, \vp_d \in R $ are generators of the old boundary components $ O(x) $.
	
	We denote by $ \Dir^O_x ( X ) $ the directrix associated to $ J^O $, by $ e^O_X ( x ) $ its dimension and by $ I (\, \Dir^O_x ( X ) \, ) $ the corresponding ideal in the graded ring.
	\end{itemize}
\end{Def}

\noindent
We use the notation $ e ( J )  := e_X ( x )$ and $ e ( J^O ) := e^O_X ( x ) $.

\smallskip

Analogous to Theorem \ref{Thm:specializationHS} one has

\begin{Thm}[\cite{CJS} Lemma 1.34 and Theorem 3.8]
\label{Thm:HS^O_usc}
	Let $ X $ be an excellent Noetherian scheme, $ \cB $ be a boundary, and $ O $ a history function.
	\begin{itemize}
		\item[(1)]
				If $ x \in X $ is a specialization of $ y \in X $, i.e., $ x \in \overline{ \{ y \} } $, then $ H_X^O ( x ) \geq H_x^O ( y ) $.
			
	%\vspace{6pt}
	
		\item[(2)]
				For any $ y \in X $, there exists a dense open subset $ U \subset  \overline{ \{ y \} } $ such that $ H_X^O ( x ) = H_X^O ( y ) $ for every $ x \in U $.
			
	%\vspace{6pt}
	
		\item[(3)]
				The function $ H_X^O  : X \to \INN \times \IN $ is upper semi-continuous and takes only finitely many values. 
				
	%\vspace{6pt}
	
		\item[(4)]
				For every $ \tnu \in \INN \times \IN $, $ X^O ( \tnu ) $ is locally closed, with closure contained in $ X^O (\geq \tnu ) $.
				In particular, $ X^O ( \tnu ) $ is closed if $ \tnu \in \Sigma^{max,O}_X $ is a maximal element, and $ X_{max}^O $ is closed.
	\end{itemize}
\end{Thm}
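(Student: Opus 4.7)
The plan is to bootstrap everything from the analogous statements for the bare Hilbert--Samuel function $H_X$ (Theorem~\ref{Thm:specializationHS}) using the three defining properties of the history function $O_X$ listed in Definition~\ref{Def:history_function}. The lexicographic order on $\INN \times \IN$ makes the first two parts almost immediate, after which the upper semi-continuity in (3) is a standard consequence obtained via the Noetherian structure of $X$.

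For (1), suppose $x \in \overline{\{y\}}$. Theorem~\ref{Thm:specializationHS}(1) gives $H_X(x) \geq H_X(y)$. If this inequality is strict, the lexicographic order already yields $H_X^O(x) > H_X^O(y)$. Otherwise $H_X(x) = H_X(y)$, and then property~(2) of Definition~\ref{Def:history_function} forces $O_X(y) \subset O_X(x)$, so $|O_X(x)| \geq |O_X(y)|$ and (1) still holds. For (2), I would intersect the dense open $U_1 \subset \overline{\{y\}}$ on which $H_X = H_X(y)$ supplied by Theorem~\ref{Thm:specializationHS}(2) with the dense open $U_2 \subset \overline{\{y\}}$ coming from property~(3) of Definition~\ref{Def:history_function}; irreducibility of $\overline{\{y\}}$ makes $U := U_1 \cap U_2$ a dense open on which both $H_X$ and $O_X$ agree with their values at $y$, and hence so does $H_X^O$.

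For the upper semi-continuity in (3), fix $\tnu \in \INN \times \IN$ and put $Y := X^O(\geq \tnu)$. Since $X$ is Noetherian, the closure $\overline Y$ decomposes as a finite union of irreducible closed subsets $Z_1, \ldots, Z_r$; let $\eta_i$ denote the generic point of $Z_i$. Applying (2) to $\eta_i$ yields a dense open $U_i \subset Z_i$ on which $H_X^O \equiv H_X^O(\eta_i)$. Density of $Y \cap Z_i$ in $Z_i$ (because $Z_i$ is an irreducible component of $\overline Y$) produces a point $y' \in Y \cap U_i$, forcing $H_X^O(\eta_i) = H_X^O(y') \geq \tnu$. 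Now for any $x \in Z_i$, part~(1) applied to the specialization $x \in \overline{\{\eta_i\}}$ gives $H_X^O(x) \geq H_X^O(\eta_i) \geq \tnu$. Hence $Z_i \subset Y$ and therefore $\overline Y = Y$. The finiteness statement in (3) follows from the fact that $\Sigma_X$ is finite by Theorem~\ref{Thm:specializationHS} together with the obvious bound $|O_X(x)| \leq |\cB|$.

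Finally for (4), the finiteness of $\Sigma_X^O$ ensures that for a maximal $\tnu \in \Sigma^{max,O}_X$ one has $X^O(\tnu) = X^O(\geq \tnu)$, which is closed by (3), and therefore $X_{max}^O$ is a finite union of closed sets. For a general $\tnu \in \Sigma_X^O$, the identity
$$
X^O(\tnu) = X^O(\geq \tnu) \setminus \bigcup_{\tnu' \in \Sigma_X^O,\ \tnu' > \tnu} X^O(\geq \tnu')
$$
writes $X^O(\tnu)$ as a difference of a closed set and a finite union of closed sets, hence locally closed, with closure contained in $X^O(\geq \tnu)$. I do not anticipate any serious obstacle here: the content sits entirely in the hypotheses, and the only point worth watching is that the Noetherian reduction in the USC step genuinely uses both (1) and (2) in tandem, rather than either one alone.
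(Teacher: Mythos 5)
Your proof is correct. The paper offers no proof of its own here, only the citation of \cite{CJS} Lemma 1.34 and Theorem 3.8, and your argument reconstructs exactly the standard bootstrap used there: (1) and (2) follow from Theorem~\ref{Thm:specializationHS} combined with the axioms of Definition~\ref{Def:history_function}, while (3) and (4) are then deduced via the generic points of the irreducible components of $\overline{X^O(\geq\tnu)}$, using only (1), (2) and Noetherianity.
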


%\smallskip

\begin{Lem}
	Let the situation be as in the previous Definition part (3);
	$ J \subset R = \cO_{Z,x} $ locally defining $ X $ at $ x $, and $ \vp_1, \ldots, \vp_d \in R $ are generators of the old boundary components $ O(x) $.
	Denote by $ \maxIdeal \subset R $ the maximal ideal of $ R $.
	Then we have
	$$ 
		I(\, \Dir_x^{O} ( X ) \,) = I( \, \Dir_x ( X ) \, ) +  \langle \; in_\maxIdeal ( \vp_i) \mid 1\leq i \leq d \; \rangle .
	$$
\end{Lem}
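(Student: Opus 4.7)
The plan is to compute the initial ideal $In_\maxIdeal(J^O)$ explicitly and then apply the intrinsic geometric description of the directrix from Observation \ref{Obs:Directrix}. Since $I_{O(x)} = \langle \vp \rangle$ with $\vp = \vp_1 \cdots \vp_d$ is principal, the product $J^O = J \cdot I_{O(x)}$ equals $\vp J$, so any element of $J^O$ has the form $\vp h$ with $h \in J$. As $R$ is regular, $gr_\maxIdeal(R)$ is a polynomial ring over $\resfield$ and hence an integral domain, so orders add: $\ord_\maxIdeal(\vp h) = \ord_\maxIdeal(\vp) + \ord_\maxIdeal(h)$, giving $in_\maxIdeal(\vp h) = in_\maxIdeal(\vp) \cdot in_\maxIdeal(h)$. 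It follows that
$$
	In_\maxIdeal(J^O) = in_\maxIdeal(\vp) \cdot In_\maxIdeal(J).
$$
Because the boundary $\cB$ has at most normal crossings, $\vp_1, \ldots, \vp_d$ form part of a regular system of parameters of $R$, so the linear forms $L_i := in_\maxIdeal(\vp_i) \in gr_\maxIdeal(R)$ are linearly independent and $in_\maxIdeal(\vp) = L_1 \cdots L_d$.

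Passing to the tangent space $T_x(Z) = \Spec(gr_\maxIdeal(R))$, this factorization yields the set-theoretic equality
$$
	V\bigl(In_\maxIdeal(J^O)\bigr) \;=\; C_x(X) \,\cup\, \bigcup_{i=1}^d V(L_i).
$$
By Observation \ref{Obs:Directrix}, $\Dir_x^O(X)$ is the largest linear subspace $W \subset T_x(Z)$ leaving this union stable under translation. Since $W$ is irreducible, the translate $p + W$ is irreducible for every $p$, hence contained in a single irreducible component of the union; applying this to a general point of each component forces $W$ to stabilize every irreducible piece separately. Stabilizing the hyperplane $V(L_i)$ under translation is equivalent to $W \subset V(L_i)$, and stabilizing every irreducible component of $C_x(X)$ is equivalent to $W \subset \Dir_x(X)$. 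Conversely, any $W \subset \Dir_x(X) \cap \bigcap_i V(L_i)$ trivially stabilizes the union, so
$$
	\Dir_x^O(X) \;=\; \Dir_x(X) \,\cap\, \bigcap_{i=1}^d V(L_i),
$$
and taking ideals in $gr_\maxIdeal(R)$ yields exactly the claimed equality.

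The step that deserves the most care is the component-by-component stability argument for $W$: the key is that $W$ is irreducible and the irreducible pieces of $C_x(X) \cup \bigcup_i V(L_i)$ are finitely many distinct closed cones, so $W$ cannot mix them under translation. Once this is in place, the remaining manipulations—the formula for $In_\maxIdeal(\vp J)$ and the elementary fact that the ideal of an intersection of linear subspaces is the sum of their ideals—are purely formal.
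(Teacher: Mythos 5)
Your computation $In_\maxIdeal(J^O) = in_\maxIdeal(\vp)\cdot In_\maxIdeal(J)$ and the factorization $in_\maxIdeal(\vp)=L_1\cdots L_d$ into linearly independent linear forms are correct. The problem is the geometric step that follows. You pass to the \emph{set-theoretic} equality $V(In_\maxIdeal(J^O)) = C_x(X)\cup\bigcup_i V(L_i)$ and then invoke Observation~\ref{Obs:Directrix} as if the directrix were the largest subspace stabilizing this union as a subset. But the directrix (Definition~\ref{Def:dir}) is an ideal-theoretic notion: it is the largest $W$ such that the homogeneous ideal has a generating set inside $\resfield[Y]$ where $W=V(Y)$; correspondingly the translation condition $C_x(X)+W=C_x(X)$ in Observation~\ref{Obs:Directrix} has to be read scheme-theoretically, as stability of the ideal rather than of the underlying set. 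The set-theoretic translation stabilizer only sees the radical of the initial ideal, and can strictly contain $\Dir_x(X)$ whenever $In_\maxIdeal(J)$ is not radical. For instance with $In_\maxIdeal(J)=\langle Y_1^2, Y_1Y_2\rangle$ one has $\Dir_x(X)=V(Y_1,Y_2)$, whereas $V(In_\maxIdeal(J))=V(Y_1)$ is set-stabilized by the strictly larger $V(Y_1)$. Thus your step ``stabilizing every irreducible component of $C_x(X)$ is equivalent to $W\subset\Dir_x(X)$'' fails, and the same confusion occurs when you identify $\Dir_x^O(X)$ with the set stabilizer of the union at the start. (In this example the two errors happen to cancel and the lemma's conclusion remains true, but the argument does not prove it.)

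The paper's proof never leaves the ideal side: given an arbitrary $\Psi=\prod_i\Phi_i^{a(i)}\cdot G\in In_\maxIdeal(J^O)$, a suitable Hasse--Schmidt derivation extracts $\Phi_i^{a(i)}$, a lemma of Giraud places that element in the ideal of the ridge $I(\Rid_x^O(X))\subset I(\Dir_x^O(X))$, and primeness of $I(\Dir_x^O(X))$ yields $\Phi_i\in I(\Dir_x^O(X))$. A purely algebraic version of your own factorization idea would also close the gap: every element of $in_\maxIdeal(\vp)\cdot In_\maxIdeal(J)$ has the form $in_\maxIdeal(\vp)\cdot F$ with $F\in In_\maxIdeal(J)$, and since $\resfield[Y]$ is factorially closed in $gr_\maxIdeal(R)\cong\resfield[U,Y]$, such a product can lie in $\resfield[Y]$ only if both $in_\maxIdeal(\vp)$ and $F$ do; unwinding this gives $\Dir_x^O(X)=\Dir_x(X)\cap\bigcap_i V(L_i)$ at the level of ideals. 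But as written, the set-theoretic detour does not establish the lemma.
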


Hence $ \Dir^O_x ( X ) $ (resp.~$ e^O_X ( x ) $) is exactly as the $ \Dir^O_x ( X ) $ (resp.~$ e^O_X ( x ) $) as defined in \cite{CJS} Definition 3.9.

\begin{proof}
Take any $\Psi \in In_\maxIdeal (J^O ) $ ($ J^O = J \cdot I_{O(x)} $), then $ \Psi = \left( \prod_{1\leq i \leq d}\Phi_i^{a(i)} \right) \cdot G $, with integers $ a(i) > 0 $ and 
$ \Phi_i=in_\maxIdeal (\vp_i) $, for $ 1 \leq i \leq d $, and $ G $ not divisible by any of the $\Phi_i$.
There exists a Hasse-Schmidt derivation $ D_i $ such that $ D_i(\Psi)= \Phi_i^{a(i)} $, so by a result of Giraud (see \cite{GiraudMaxPos} Lemma 1.6 or \cite{ComputeRidge} Corollary 2.3),
$ \Phi_i^{a(i)} \in I(\, \Rid_x^{O} ( X ) \, ) $, $ 1 \leq i \leq d$.
(Here $ \Rid_x^{O} ( X ) $ denotes the ridge associated to $ J^ O $, the latter is a generalization of the directrix, where the linear forms $ (Y_1, \ldots, Y_r ) $ in Definition \ref{Def:dir} have to be replaced by additive polynomials $ ( \sigma_1, \ldots \sigma_s ) $, see also Remark \ref{Rk:ridge}).
This leads to $ \Phi_i \in I(\, \Dir_x^{O} ( X ) \,)$, and
$  \langle \; in_\maxIdeal ( \vp_i) \mid 1\leq i \leq d \; \rangle \subset I(\, \Dir_x^{O} ( X ) \,) $.

A similar argument shows 
$ I( \, \Dir_x ( X ) \, ) \subset I(\, \Dir_x^{O} ( X ) \,) $.
Hence we get the inclusion ``$\supset$" of the desired equality.
On the other hand, the opposite inclusion 
$I(\, \Dir_x^{O} ( X ) \,) \subset I( \, \Dir_x ( X ) \, ) +  \langle \; in_\maxIdeal ( \vp_i) \mid 1\leq i \leq d \; \rangle $
is obvious and we obtain the result.
\end{proof}

\begin{Lem}
	Let $ ( R, \maxIdeal) $ be an excellent regular local ring and $ J \subset R $ a non-zero ideal. 
	For an element $ \vp \in R $, we put $ \mu_\vp  := \ord_\maxIdeal ( \vp ) $ and
	$$
			J^\vp := J \cdot \langle \, \vp \, \rangle .
	$$
	Let $ ( f ) = ( f_1, \ldots, f_m ) $ be a standard basis of $ J $ (Definition \ref{Def:standardbasis}) and $ \nu_i := \ord_\maxIdeal ( f_i ) $, for $ 1 \leq i \leq m $.
	We have
	\begin{enumerate}
			\item[(1)]
				$ (\, \vp \cdot f_1, \, \ldots,\, \vp \cdot f_m \,) $ is a standard basis of $ J^\vp $, and

			%\vspace{4pt}			
			
			\item[(2)]	
				$ \nus ( J^\vp, R ) = (\, \mu_\vp  + \nu_1,\, \mu_\vp  + \nu_2, \ldots, \, \mu_\vp  + \nu_m ,\, \infty, \infty, \ldots \,)$.
	\end{enumerate}
\end{Lem}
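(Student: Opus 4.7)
The whole argument rests on a single structural fact: since $R$ is a regular local ring, the graded ring $gr_\maxIdeal(R)$ is a polynomial ring over $R/\maxIdeal$, hence an integral domain. From this I get two facts that I plan to use throughout. First, orders add: $\ord_\maxIdeal(\vp \cdot g) = \mu_\vp + \ord_\maxIdeal(g)$ whenever $g \neq 0$. Second, initial forms factor multiplicatively: $in_\maxIdeal(\vp \cdot g) = in_\maxIdeal(\vp) \cdot in_\maxIdeal(g)$ for nonzero $g$.

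For part (1), I would verify the three conditions of Definition \ref{Def:standardbasis} in order. For (a), I first observe that every element of $J^\vp = J \cdot \langle \vp \rangle$ has the form $\vp \cdot j$ with $j \in J$ (an arbitrary generator $r \vp j$ rewrites as $\vp (rj)$ with $rj \in J$). Multiplicativity of initial forms on nonzero elements then gives
\[
In_\maxIdeal(J^\vp) \;=\; \langle\, in_\maxIdeal(\vp) \cdot in_\maxIdeal(j) : j \in J \setminus \{0\}\,\rangle \;=\; \langle\, in_\maxIdeal(\vp)\,\rangle \cdot In_\maxIdeal(J).
\]
Using that $In_\maxIdeal(J) = \langle in_\maxIdeal(f_1), \ldots, in_\maxIdeal(f_m)\rangle$ by hypothesis, this equals $\langle in_\maxIdeal(\vp f_1), \ldots, in_\maxIdeal(\vp f_m)\rangle$, establishing (a). For (b), a hypothetical relation $in_\maxIdeal(\vp f_i) = \sum_{j<i} h_j \, in_\maxIdeal(\vp f_j)$ can be rewritten, via multiplicativity of initial forms, as
\[
in_\maxIdeal(\vp) \cdot \Bigl( in_\maxIdeal(f_i) - \sum_{j<i} h_j\, in_\maxIdeal(f_j) \Bigr) \;=\; 0.
\]
Since $gr_\maxIdeal(R)$ is a domain and $in_\maxIdeal(\vp) \neq 0$, cancellation forces $in_\maxIdeal(f_i) \in \langle in_\maxIdeal(f_1), \ldots, in_\maxIdeal(f_{i-1}) \rangle$, contradicting that $(f)$ is a standard basis of $J$. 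Condition (c) is immediate: adding the constant $\mu_\vp$ to the non-decreasing sequence $\nu_1 \le \cdots \le \nu_m$ preserves monotonicity.

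Part (2) is then a direct application of Definition \ref{Def:standardbasis}(2) to the standard basis $(\vp f_1, \ldots, \vp f_m)$ produced in part (1), together with the order computation $\ord_\maxIdeal(\vp f_i) = \mu_\vp + \nu_i$.

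I do not foresee any real obstacle; the only point that deserves care is ensuring the multiplicativity of initial forms is applied only to nonzero elements. That is automatic here: a standard basis consists of nonzero elements (each $\nu_i$ is a finite integer), and $\vp$ is nonzero because its order $\mu_\vp$ is defined.
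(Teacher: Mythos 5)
Your proof is correct and follows essentially the same route as the paper: factor every element of $J^\vp$ as $\vp \cdot j$ with $j \in J$, exploit that $gr_\maxIdeal(R)$ is a domain to get multiplicativity of initial forms and hence $In_\maxIdeal(J^\vp) = \langle in_\maxIdeal(\vp)\rangle \cdot In_\maxIdeal(J)$, and then check the standard-basis conditions. You merely spell out the cancellation argument for condition (b), which the paper dispatches with ``go through the definitions.''
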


In the lemma $ \vp $ can be \emph{any element} in $ R $ and is not forced to be a monomial $ \vp_1 \cdots \vp_d $ of regular elements in $ R $.
If $ \vp $ is the monomial given by the old boundary components as above, then $ J^\vp = J^O $.

\begin{proof}
An element in $ h \in J^\vp $ can be written as
$$
	h = A_1 \vp f_1 + \ldots + A_m \vp f_m = \vp \cdot ( A_1 f_1 + \ldots + A_m f_m  ),
$$ 
for certain $ A_i \in R $, $ 1 \leq i \leq m $.
Clearly, $ A_1 f_1 + \ldots + A_m f_m \in J $ and 
$$
	in_\maxIdeal ( h ) = in_\maxIdeal ( \vp ) \cdot in_\maxIdeal ( A_1 f_1 + \ldots + A_m f_m ),
$$
imply $ In_\maxIdeal ( J^\vp ) = \langle in_\maxIdeal ( \vp ) \rangle \cdot In_\maxIdeal ( J) \subset gr_\maxIdeal ( R ) $.
Now, we only have to go through the definitions of a standard basis and the $ \nus $-invariant (both Definition \ref{Def:standardbasis}) and obtain by using that $ ( f ) $ is a standard basis of $ J $ that $ (\, \vp \cdot f_1, \, \ldots,\, \vp \cdot f_m \,) $ is a standard basis of $ J^\vp $ and 
$ 
	\nus ( J^\vp, R ) = (\, \mu_\vp  + \nu_1,\, \mu_\vp  + \nu_2, \ldots, \, \mu_\vp  + \nu_m ,\, \infty, \infty, \ldots \,).
$
\end{proof}

\begin{Rk}
In fact, we even have that a center $ D = V ( \fp ) $, with an ideal $ \fp \subset R $, is permissible for $ J^\vp $ if and only if it is permissible for $ J $ as well as $ \vp $.

Moreover, if the $ \nus $-invariant of $ J^\vp $ drops after a permissible blow-up, then the $ \nus $-invariant of $ J $ or the one of $ \vp $ also drops, i.e., $
	\nus ( (J^\vp)', R')  < \nus ( J^\vp, R) $
	implies 
	$ \nus ( J', R')  < \nus ( J, R) 
	\mbox{ or }
	\nus ( \langle \vp \rangle', R')  < \nus ( \langle \vp \rangle \, R) ,
$
where $ R '$ is the local ring after the blow-up and $ (.)'$ denotes the strict transform of $ (.) $ in $ R' $.

If $ \vp = \vp_1 \cdots \vp_d $ is product of regular elements $ ( \vp_1, \ldots , \vp_d ) $ in $ R $ which can be extended to a regular system of parameters ({\RSP}) for $ R $, then the system $ ( \vp_1, \ldots , \vp_d ) $ can be extended to a system defining the directrix of $ J^\vp $

This can all be seen by using an interpretation of these things in the language of the idealistic exponents.
(For an introduction to the notion of the idealistic exponents we refer to the first sections of \cite{BerndThesis}, where also references to the original literature are given).
The problem of lowering the $ \nus$-invariant of $ J $ (and thus the problem of lowering the Hilbert-Samuel function, see Theorem \ref{Thm:BennHiroSingh}) is equivalent to the problem of resolving the idealistic exponent 
$$
	\IE = ( f_1, \nu_1 ) \cap \ldots \cap ( f_m, \nu_m ) .
$$
By passing to $ J^O = J \cdot I_{O(x)} $, $ I_{O(x)} = \langle \, \vp \, \rangle $, we consider the idealistic exponent
$$
	\IE^O = ( \vp \cdot f_1, \nu_1 + \mu_\vp ) \cap \ldots \cap ( \vp \cdot f_m, \nu_m + \mu_\vp ),
$$
for which we have the following equivalence (roughly speaking, two idealistic exponents are equivalent if they undergo the same resolution process)
$$
	\IE^O \sim  ( f_1, \nu_1 ) \cap \ldots \cap ( f_m, \nu_m ) \cap ( \vp, \mu_\vp) = \IE \cap  ( \vp, \mu_\vp).
$$
From this one can deduce the first two statements of the remark.

If $ \vp = \vp_1 \cdots \vp_d $ with regular elements as above, then $ ( \vp, \mu_\vp) \sim ( \vp_1, 1 ) \cap \ldots \cap ( \vp_d, 1 ) $.
(Note that $ \mu_\vp = \sum_{ i = 1 }^d 1 = d $).
This implies the third assertion.

The process of intersecting with old boundary components (or say exceptional divisors) is a well-known technique and is already used by others, e.g.~in the proof for constructive resolution of singularities over the fields of characteristic zero by Bierstone and Milman \cite{BM} (see also \rot{\cite{BerndThesis} Construction~3.3.2}, where this is explained in the setting of idealistic exponents), or the proof for resolution of singularities in dimension three by Piltant and the first author in \cite{CPmixed} and  \cite{CPmixed2}.
\end{Rk}

%\smallskip

As we mentioned before we do not have a control on the old components;
e.g.~for $ J = \langle \, y^p + u_1^p \cdot f ( u_1, u_2 ) \,\rangle $, with some $ f ( u_1, u_2 ) \in R $, $ D := V ( y, u_1 ) $ is a permissible center, but $ V (\,y + u_2^2\, ) $ may appear as an old boundary component in which case $ D $ is not $ \cB $-permissible.
Therefore we have to consider the singularity defined by $ J^O $. 
This corresponds to the process of transforming the maximal Hilbert-Samuel locus at the beginning into a $ \cB $-permissible center, which is in particular n.c.~with the boundary.
The aim is to lower the Hilbert-Samuel function or at least to make the number of appearing old boundary components strictly decrease.
(In fact, this means we want to lower the $ \nu^\ast$-invariant of $ J^O $).
Thus the points which have to be considered are the following:

\begin{Def}
\label{Def:very_O-near}
Let $ \pi : Z' = Bl_D ( Z ) \to Z $ be a $ \cB $-permissible blow-up of $ Z $ with center $ D \subset X $.
Consider {$ x \in D $ and} $ x' \in \pi_Z^{ - 1 }(x) $ .
\begin{itemize}
	\item[(1)] $ x' $ is called {\em $ O $-near to $ x $} if $ H_{X'}^O ( x') = H_{X}^O ( x) $.
	(Note that we denote the transform of the history function by $ O $ and not by $ O'$).
		
	%\vspace{6pt}
	
	\item[(2)] $ x' $ is called {\em very $ O $-near to $ x $} if it is $ O $-near, very near, and $$  e_{X'}^O ( x' ) = e_X^O ( x )_{k(x')} - \delta_{x'/x} .$$
\end{itemize} 
\end{Def}

Clearly, $ x ' $ is $ O $-near to $ x $ if and only if $ x' $ is near to $ x $ and contained in the strict transform of all $ B \in O ( x ) $.

%\smallskip

The analogous results as in Theorem \ref{Thm:HiroMizutani} and Theorem \ref{Thm:BennHiroSingh}(1) and (2) are true if we replace all the definitions by log-definitions with the history function:

\begin{Thm}[\cite{CJS} Theorem 3.18]
\label{Thm:HiroMizutani$^O$}
Let {$ x' \in \pi_Z^{ - 1 }(x) $} be $ O $-near to {$ x \in D $}.
Assume that we have either $ \car ( \resfield ) = 0 $ or $ \car ( \resfield ) \geq \dim (X)/2 + 1 $ 
($ \resfield $ denotes the residue field of $ x $).
Then
$$
	x' \in \IP ( \Dir_x^O ( X ) / T_x ( D ) ) \subset \pi^{ -1 }_Z ( x ).
$$
\end{Thm}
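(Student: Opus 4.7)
The plan is to reduce the statement to the classical Hironaka--Mizutani theorem (Theorem \ref{Thm:HiroMizutani}) combined with the explicit description of $\Dir_x^O(X)$ provided by the Lemma stating $I(\Dir_x^O(X))=I(\Dir_x(X))+\langle in_\maxIdeal(\vp_i)\mid 1\leq i\leq d\rangle$. Since the characteristic assumption is identical to that of Theorem \ref{Thm:HiroMizutani}, the classical result applies without change.

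First, since $x'$ is $O$-near to $x$, in particular $H_{X'}(x')=H_X(x)$, so $x'$ is near to $x$ in the classical sense. Theorem \ref{Thm:HiroMizutani} then yields
$$
    x'\in\IP(\Dir_x(X)/T_x(D))\subset\pi_Z^{-1}(x).
$$
Second, the definition of $O$-nearness forces the strict transform of every old component $B_i\in O(x)$ to pass through $x'$: indeed $O$-nearness means $|O(x')|=|O(x)|$, and the transform rule for the history function says that a component in $O(x)$ either contributes to $O(x')$ via its strict transform (when $x'$ lies on it) or is lost. The task is therefore to translate ``$x'$ lies on the strict transform of $B_i=V(\vp_i)$'' into a vanishing condition for $in_\maxIdeal(\vp_i)$ on the fiber $\IP(T_x(Z)/T_x(D))$.

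For this translation, choose an {\RSP} $(z_1,\ldots,z_n)$ of $R$ adapted to the $\cB$-permissible center: $D=V(z_1,\ldots,z_j)$, and each $\vp_i$ can be taken (after units) to be one of the $z_k$ by the normal crossings condition of $D$ with $\cB$. If $D\subset B_i$ then $\vp_i\in\{z_1,\ldots,z_j\}$, and a direct computation in the standard affine charts of the blow-up shows that $x'$ lies on the strict transform of $B_i$ if and only if $in_\maxIdeal(\vp_i)$ vanishes at $x'$ regarded as a point of $\IP(T_x(Z)/T_x(D))$; moreover $T_x(D)\subset T_x(B_i)=V(in_\maxIdeal(\vp_i))$, so this linear form descends to the quotient. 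If instead $D\not\subset B_i$, then $\vp_i\in\{z_{j+1},\ldots,z_n\}$, so $in_\maxIdeal(\vp_i)$ lies in $T_x(D)^\perp$ and maps to zero in $T_x(Z)/T_x(D)$; in that case every point of the fiber automatically lies on the strict transform of $B_i$ and the condition is vacuous, consistent with the fact that such $\vp_i$ adds nothing to $I(\Dir_x^O(X))/I(T_x(D))$.

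Combining the two steps, $x'$ belongs to
$$
    \IP\!\left(\,\bigl(\Dir_x(X)\cap\bigcap_{i=1}^d V(in_\maxIdeal(\vp_i))\bigr)\,/\,T_x(D)\,\right),
$$
and by the Lemma this intersection is precisely $\Dir_x^O(X)$, yielding the desired inclusion $x'\in\IP(\Dir_x^O(X)/T_x(D))$. The main technical obstacle is the local coordinate bookkeeping in the chart-by-chart analysis of the strict transform, in particular keeping track of which $\vp_i$ genuinely cut out a proper subspace of the fiber and which become vacuous after the quotient by $T_x(D)$; once this is done cleanly, the result is a direct consequence of Theorem \ref{Thm:HiroMizutani} applied to $X$ together with the ideal-theoretic identity for $I(\Dir_x^O(X))$.
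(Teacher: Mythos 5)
The theorem is quoted verbatim from \cite{CJS}~Theorem~3.18 and the present paper does not reproduce its proof, so there is no in-paper argument to compare against. Your reduction --- first invoking the classical Hironaka--Mizutani theorem (Theorem~\ref{Thm:HiroMizutani}) to place $x'$ in $\IP(\Dir_x(X)/T_x(D))$, then exploiting the characterization ``$x'$ is $O$-near iff $x'$ is near and lies on the strict transform of every $B\in O(x)$,'' and finally feeding this through the ideal-theoretic identity $I(\Dir_x^O(X))=I(\Dir_x(X))+\langle in_\maxIdeal(\vp_i)\rangle$ --- is exactly the natural route and matches the strategy used in \cite{CJS}. The coordinate bookkeeping for the case $D\subset B_i$ is correct: in adapted n.c.\ coordinates with $D=V(z_1,\ldots,z_j)$ and $\vp_i=z_k$ ($k\leq j$), a chart computation identifies the strict transform of $B_i$ over $x$ with the hyperplane $V(Z_k)$ in $\IP(T_x(Z)/T_x(D))$, and the intersection over all $i$ with $\IP(\Dir_x(X)/T_x(D))$ is $\IP(\Dir_x^O(X)/T_x(D))$.

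However, your treatment of the case $D\not\subset B_i$ is wrong. You assert that $in_\maxIdeal(\vp_i)=Z_k$ (with $k>j$) ``lies in $T_x(D)^\perp$ and maps to zero in $T_x(Z)/T_x(D)$'' and hence ``adds nothing to $I(\Dir_x^O(X))/I(T_x(D))$.'' In fact the opposite holds: a linear form descends to $T_x(Z)/T_x(D)$ precisely when it vanishes on $T_x(D)=V(Z_1,\ldots,Z_j)$, i.e.\ when it lies in $\langle Z_1,\ldots,Z_j\rangle$, and $Z_k$ with $k>j$ does \emph{not}. Were this case to occur, the Lemma would force $Z_k\in I(\Dir_x^O(X))$ while $T_x(D)\not\subset V(Z_k)$, so $T_x(D)\not\subset\Dir_x^O(X)$ and the quotient $\Dir_x^O(X)/T_x(D)$ appearing in the statement would be ill-defined. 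The correct observation, which you should use instead, is that the hypotheses \emph{exclude} this case: for a $\cB$-permissible center $D$ lying in the log-Hilbert-Samuel stratum $X^O(\tnu)$ of $x$, axiom~(2) of the history function (Definition~\ref{Def:history_function}) together with $|O(\eta)|=|O(x)|$ for the generic point $\eta$ of $D$ yields $O(\eta)=O(x)$, hence $\eta\in B_i$ and thus $D\subset B_i$ for every $B_i\in O(x)$. With that in place, only the first case arises, and your argument goes through.
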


\smallskip

\begin{Thm}[\cite{CJS} Theorem 3.23(1)]
\label{Thm:BennHiroSinghO}
	Let {$ x \in D $ and} $ x ' \in \pi_Z^{ -1 } ( x ) $.
	Suppose that either $ \car ( k (x) ) = 0 $ or $ \car ( k (x) ) \geq \dim ( X )/ 2 + 1 $.
	If $ x' $ is $ O $-near and very near to $ x $, then 
	$$  
		e_{X'}^O ( x' ) \leq e_X^O ( x )_{k(x')} - \delta_{x'/x} .
	$$
\end{Thm}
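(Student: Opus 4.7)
The plan is to reduce the statement to its non-boundary counterpart, Theorem \ref{Thm:BennHiroSingh}(2), by isolating the contribution of the old boundary components to the directrix. Let $\varphi_1, \ldots, \varphi_d \in R$ be the regular parameters defining the components of $O(x)$. Using the formula
\[
I(\Dir_x^O(X)) = I(\Dir_x(X)) + \langle in_\maxIdeal(\varphi_i) : 1 \leq i \leq d \rangle,
\]
established in the first lemma following Definition \ref{Def:logHS_function}, together with the fact that each $in_\maxIdeal(\varphi_i)$ is a linear form, I write $e_X^O(x) = e_X(x) - a$, where
\[
a := \dim_{k(x)} \langle \overline{in_\maxIdeal(\varphi_i)} : 1 \leq i \leq d \rangle,
\]
with residues taken modulo $I(\Dir_x(X))$. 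The same decomposition at $x'$ yields $e_{X'}^O(x') = e_{X'}(x') - a'$, where $a'$ is defined using the strict transforms $\varphi'_i$. Because linear (in)dependence of linear forms is invariant under base change, both $a$ and $a'$ are unaffected by extensions of scalars; combined with very nearness, this gives $e_X^O(x)_{k(x')} = e_X^O(x)$. Applying Theorem \ref{Thm:BennHiroSingh}(2) with very nearness yields $e_{X'}(x') = e_X(x) - \delta_{x'/x}$. After substitution, the inequality to prove collapses to $a' \geq a$.

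To establish $a' \geq a$, I fix a regular system of parameters $(z_1, \ldots, z_n)$ of $R = \cO_{Z,x}$ simultaneously adapted to all the data: $D = V(z_1, \ldots, z_s)$; $\Dir_x(X) = V(Z_1, \ldots, Z_r)$ with $r \leq s$ (possible since $T_x(D) \subset \Dir_x(X)$ by permissibility); and $\varphi_i = z_{\alpha_i}$ for pairwise distinct indices $\alpha_i$ (using n.c.~of $\cB$ with the RSP). In these coordinates, $a = \#\{i : \alpha_i > r\}$. By Theorem \ref{Thm:HiroMizutani$^O$}, $x'$ lies in $\IP(\Dir_x^O(X)/T_x(D))$ (interpreted in general as the image of $\Dir_x^O(X)$ in $\IP(T_x(Z)/T_x(D)) = \pi_Z^{-1}(x)$), and $O$-nearness forces $x'$ onto every strict transform $B'_i$ of $B_i$. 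Together, these constraints pin down an explicit affine chart of the blow-up with pivot $z_l$ satisfying $l \in \{r+1, \ldots, s\} \setminus \{\alpha_i\}$, in which the strict transforms read $\varphi'_i = z_{\alpha_i}/z_l$ for $\alpha_i \leq s$ (with $\alpha_i \neq l$) and $\varphi'_i = z_{\alpha_i}$ for $\alpha_i > s$; in particular, each $in_{\maxIdeal'}(\varphi'_i)$ is a linear form in the resulting RSP at $x'$.

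The main obstacle is the last step: verifying that the $d$ linear forms $in_{\maxIdeal'}(\varphi'_i)$ span a subspace of dimension at least $a$ modulo $I(\Dir_{x'}(X'))$. My strategy is to exploit that, in the very near regime, the blow-up chart realizes $\Dir_{x'}(X')$ naturally as a subspace of $\Dir_x(X)/T_x(D)$ compatibly with the projective position of $x'$ given by Theorem \ref{Thm:HiroMizutani$^O$}. Any linear dependence among the residues $\overline{in_{\maxIdeal'}(\varphi'_i)}$ modulo $I(\Dir_{x'}(X'))$ can then be pulled back to a linear dependence among the residues $\overline{in_\maxIdeal(\varphi_i)}$ modulo $I(\Dir_x(X))$, forcing $a' \geq a$. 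The delicate bookkeeping concerns the pivot $z_l$: since $l$ differs from every $\alpha_i$, the pivot direction is disjoint from the boundary defining parameters, which makes the correspondence between dependence relations at $x$ and at $x'$ clean. Once $a' \geq a$ is secured, the inequality $e_{X'}^O(x') + \delta_{x'/x} \leq e_X^O(x)_{k(x')} = e_X^O(x)$ follows immediately.
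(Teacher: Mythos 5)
The paper does not give a proof of this statement---it is cited verbatim as \cite{CJS}~Theorem~3.23(1)---so there is no argument in the present manuscript to compare against. Evaluating your proof on its own terms, the reduction you set up is sensible: the lemma following Definition~\ref{Def:logHS_function} does give $I(\Dir_x^O(X)) = I(\Dir_x(X)) + \langle in_\maxIdeal(\varphi_i) \rangle$, and therefore $e_X^O(x) = e_X(x) - a$ with $a$ as you define it; very-nearness pins down $e_X(x)_{k(x')} = e_X(x)$, and since $a$ is the rank of a set of linear forms modulo a fixed linear ideal, it is stable under base change, so $e_X^O(x)_{k(x')} = e_X^O(x)$. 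Combined with $e_{X'}(x') = e_X(x) - \delta_{x'/x}$ from Theorem~\ref{Thm:BennHiroSingh}(2), the inequality does indeed collapse to $a' \geq a$.

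The genuine gap is the last step. You write that ``the blow-up chart realizes $\Dir_{x'}(X')$ naturally as a subspace of $\Dir_x(X)/T_x(D)$'' and that any dependence of the $\overline{in_{\maxIdeal'}(\varphi'_i)}$ modulo $I(\Dir_{x'}(X'))$ ``can be pulled back''---but neither assertion is proved, and neither is automatic. The directrix at $x'$ lives in $T_{x'}(Z')$, which contains the exceptional direction and the directions transverse to $D$; it is not a subquotient of $\Dir_x(X)/T_x(D)$ in any obvious way, and the relationship between $\Dir_{x'}(X')$ and the data at $x$ in the very-near regime is precisely the hard content of the Bennett--Hironaka--Singh/Giraud theory that you are trying to invoke rather than reprove. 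Without a precise lemma describing how the directrix transforms (and, in particular, ruling out that a strict transform $\varphi'_i$ that was independent of $I(\Dir_x(X))$ could fall into $I(\Dir_{x'}(X'))$), $a' \geq a$ remains an unproved claim. There are also smaller unaddressed points: you implicitly assume $x'$ is the origin of a coordinate chart (so the strict transforms take the simple monomial form you write), which need not hold when $\delta_{x'/x} > 0$ or $x'$ is non-rational; and the simultaneous normalization ``$D = V(z_1,\ldots,z_s)$, $\Dir_x(X) = V(Z_1,\ldots,Z_r)$, $\varphi_i = z_{\alpha_i}$'' is not available in general---the boundary parameters $\varphi_i$ are fixed data and cannot always be aligned with a basis adapted to the directrix. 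As written, the proof outline identifies the right target quantity but does not close the argument.
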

 
%\smallskip

After recalling the necessary definitions and result of \cite{CJS} we can now define the first part of the invariant $ \io = ( \ioo, \ioc, \iop ) $ and prove that it does not increase under permissible blow-ups:

\begin{Def}[\bf First part of the invariant]
	Let $ x \in X  \subset Z $ and $ \cB $ be a boundary on $ Z $. 
	We define
	$$ 
	\begin{array}{c}
	\ioo := \ioo ( X, \cB, x )  := (\, H_X ( x ),\, | O_X (x) |,\, e_X (x) ,\, e^O_X ( x ) \,) =
	\\[9pt]
	\hspace{16pt} 
		 = (\, H_X^O ( x ),\, e_X (x) ,\, e^O_X ( x ) \,) 
	\end{array}
	$$
	Note that $ \ioo \in 
	\INN \times \IN \times \{ 0, 1, \ldots, \dim ( X ) \} \times \{ 0, 1, \ldots, \dim ( X ) \} \subset
	\INN \times \IN^3 $.
	Further, all entries are independent of the embedding of $ X $ in $ Z $.
\end{Def}
 
\begin{Prop}
\label{Prop:ioo_non_increase}
Let $ \pi : Z' = Bl_D ( Z ) \to Z $ be a $ \cB $-permissible blow-up of $ Z $ with center $ D \subset X $.
Suppose that either $ \car ( k (x) ) = 0 $ or $ \car ( k (x) ) \geq \dim ( X )/ 2 + 1 $.
Consider {$ x \in D $ and} $ x' \in \pi^{ - 1 }(x) $ .
Then we have
$$ \ioo ( X', \cB', x' ) \leq \ioo ( X, \cB, x ) .$$
\end{Prop}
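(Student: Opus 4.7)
The plan is to walk through the four entries of
$$\ioo = (\, H_X(x),\, |O_X(x)|,\, e_X(x),\, e^O_X(x) \,)$$
in lexicographic order: at each entry either strict decrease has already occurred (in which case the proposition is established), or the equality case propagates to the next entry, where a stronger hypothesis on $x'$ becomes available. The entire argument is a concatenation of the results on near and very near points already stated in the section.

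For the first two entries, which together form $H^O_X(x)$, Theorem~\ref{Thm:BennHiroSingh}(1) gives $H_{X'}(x') \leq H_X(x)$, and in the equality case $x'$ is near to $x$. Recalling from the paragraph following Definition~\ref{Def:history_function} that, under the complete transform, $O_{X'}(x')$ consists exactly of the strict transforms of components of $O_X(x)$ which pass through $x'$ (the new exceptional divisor is declared new), one has $|O_{X'}(x')| \leq |O_X(x)|$, with equality if and only if $x'$ lies on every strict transform of an old component, i.e., if and only if $x'$ is $O$-near to $x$. Hence in the remaining case $H^O_{X'}(x') = H^O_X(x)$.

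For the third entry, Theorem~\ref{Thm:BennHiroSingh}(2) applied with $K = k(x')$ yields
$$e_{X'}(x') \leq e_X(x)_{k(x')} - \delta_{x'/x} \leq e_X(x),$$
the rightmost inequality being \cite{CJS} Lemma~1.10. If the chain is strict we are done; otherwise equality throughout forces simultaneously $\delta_{x'/x} = 0$ and $e_X(x)_{k(x')} = e_X(x)$, which is precisely the condition that $x'$ be very near to $x$. Combined with $O$-nearness, Theorem~\ref{Thm:BennHiroSinghO}, whose hypothesis on $\car(k(x))$ is exactly the one assumed in the proposition, then delivers
$$e^O_{X'}(x') \leq e^O_X(x)_{k(x')} - \delta_{x'/x} = e^O_X(x)_{k(x')} \leq e^O_X(x),$$
completing the lexicographic comparison.

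The main obstacle is purely one of bookkeeping: verifying that equality in the three-term chain for $e_X(x)$ is precisely the very-near condition which unlocks Theorem~\ref{Thm:BennHiroSinghO}, and confirming that the characteristic assumption of the proposition is needed only at this final step (the earlier steps use only Theorem~\ref{Thm:BennHiroSingh}, which is characteristic-free).
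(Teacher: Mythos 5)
Your proposal is correct and takes essentially the same approach as the paper; the paper's proof is a one-line citation of Theorems~\ref{Thm:BennHiroSingh} and~\ref{Thm:BennHiroSinghO}, and you have carefully unwound precisely what that one-liner glosses over — the lexicographic case analysis, the behavior of $|O_X(x)|$ under the complete transform of the boundary, and the verification that equality in the third entry amounts exactly to very-nearness, which is the hypothesis needed to invoke Theorem~\ref{Thm:BennHiroSinghO}.
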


\begin{proof}
This is an immediate consequence of Theorem \ref{Thm:BennHiroSingh} and Theorem \ref{Thm:BennHiroSinghO}.
\end{proof}
 
%\smallskip

\begin{Rk}
\label{Rk:ridge}
	We remark that, in higher dimensions, it might be important to take not only the dimension of the directrix but also the dimension of the ridge into account when constructing a first invariant for singularities.
	The latter is a generalization of the directrix such that Theorem \ref{Thm:HiroMizutani} is true without the assumption on the characteristic of the residue field.
	The drawback of this is that the ridge is no longer generated only by the linear forms, and we have to take the additive polynomials as generators.
	{But in contrast to the dimension of the directrix the dimension of the ridge is upper semi-continuous, see Example \ref{Ex:e_not_usc}.}
\end{Rk}

%
%
%
%
%
%
%
%
%
%
%       CJS
%
%
%
%
%
%
%
%

%\bigskip

%\bigskip

\section{The \cite{CJS} algorithm}

After introducing the first measure for the complexity of a singularity, we recall the algorithm by \cite{CJS} in detail in this section.
Note that $ \ioo $ and its good behavior under $ \cB $-permissible blowing ups (Proposition \ref{Prop:ioo_non_increase}) 
is independent of a specific strategy to resolve singularities.

\begin{Algorithm}[\cite{CJS} Remark 5.29]
\label{Algorithm_CJS}
Let $X_0 $ be a reduced excellent Noetherian scheme of arbitrary dimension, embedded in a regular Noetherian scheme $ Z_0 $, $ X_0 \subset Z_0 $. 
Let $ \cB_0 $ a boundary on $ Z_0 $.
By Theorem \ref{Thm:HS^O_usc}(3) the log-Hilbert-Samuel function takes only finitely many maximal values and, by \ref{Thm:HS^O_usc}(1), the maximal strata are disjoint.

Let $ \tnu \in \Sigma^{max,O}_X $ be a maximal value and set $ Y_0 := X_0(\tnu ) $. 
Since $ X_0 $ is reduced, we have $ \dim ( Y_0 ) < \dim (X_0 ) $.
By the induction on dimension we obtain a finite sequence of $ \cB $-permissible blowing ups,
$$
	Z_0 \longleftarrow Bl_{D_0} (Z_0) =: Z_1 \longleftarrow \ldots \longleftarrow Bl_{D_{m_0-1}} (Z_{m_0 - 1}) =: Z_{m_0},
$$
such that the strict transform $ Y_{m_0}^{(0)} $ of $ Y_0 $ is $ \cB_{m_0} $-permissible, where $ \cB_{m_0} $ denotes the boundary at this step.
Therefore $ Y_{m_0}^{(0)} $ is an appropriate choice for the center of the next blowing up.

We denote by $ X_{m_0} $ the strict transform of $ X_0 $ in $ Z_{m_0} $.
During the previous sequence of blowing ups we possibly created new irreducible components in the $ \tnu $ stratum $ Y_{m_0} := X_{m_0} ( \tnu ) $.
In order to be able to handle them, the algorithm assigns to the appearing irreducible components certain labels which are determined by the first occurrence of these irreducible components,
$$
	Y_{m_0} = Y_{m_0}^{(0)} \cup Y_{m_0}^{(1)} \cup \ldots \cup Y_{m_0}^{(m_0)}.  
$$
Here $ Y_{m_0}^{(i)} $ are those irreducible components (resp.~their strict transforms) which occurred after the $ i $-th blowing up for the first time.
See also Remark \ref{Rk:label} below for more details in the case that $ X $ is two-dimensional.

The next center in the \cite{CJS} algorithm is $ D_{m_0} = Y_{m_0}^{(0)}  $,
$$
	Z_{m_0} \longleftarrow Bl_{D_{m_0}} ( Z_{m_0} ) =: Z_{m_0 + 1}.
$$
If $ Y_{m_0 + 1} := X_{m_0 +1} ( \tnu ) = \emptyset $ then the algorithm starts from the very beginning. 
Suppose $ Y_{m_0 + 1} \neq \emptyset $.
The labeling of the irreducible components in $ Y_{m_0 +1} $ goes as follows:
Let $ W \subset Y_{m_0 + 1} $ an irreducible component lying above the center $ D_{m_0} $.
If $ W $ dominates \rot{an irreducible component of}
$ D_{m_0} $ then $ W $ inherits the label of $ D_{m_0} $.
Otherwise, $ W $ gets the label $ m_0 + 1 $.
Hence we obtain
$$
	Y_{m_0 + 1} = Y_\rot{m_0+1}^{(0)} \cup Y_{m_0 + 1}^{(1)} \cup \ldots \cup Y_{m_0 + 1}^{(m_0)} \cup Y_{m_0 + 1}^{(m_0+ 1)},  
$$
where $ Y_{m_0 + 1}^{(i)} $ is the strict transform of $ Y_{m_0}^{(i)} $, for $ 1 \leq i \leq m_0 $.

In the next step the \cite{CJS} algorithm repeats the previous for the irreducible components of smallest label, say $ Y_{m_0 + 1}^{(i_1)} \neq \emptyset $, for some $ \rot{0} \leq i_1  \leq m_0 + 1 $ such that $ Y_{m_0 + 1}^{(j)} = \emptyset $, for all $ j < i_1 $.
That means, first we use induction in order to make the strict transform $ Y_{m_1}^{(i_1)} $ of $ Y_{m_0+1}^{(i_1)} $ $ \cB_{m_1} $-permissible\rot{,} and once this \rot{is} achieved\rot{,} we blow up with center $ Y_{m_1}^{(i_1)} $.
\end{Algorithm}

%\smallskip

\begin{Rk} 	
	\label{Rk:global}
\rot{In other approaches to resolution of singularities, the centers for blowing up are constructed locally in an affine neighborhood. 
	A crucial part is then to show that the local resolutions actually provide a global one,
	i.e., to show that the locally defined centers glue together to a global center.}
	
\rot{In contrast to this, there are no local constructions involved in the \cite{CJS}-algorithm.
	Therefore, the gluing problem does not appear
	and the above sequence of permissible blow-ups is global.}

\rot{Precisely, in dimension $\leq 2$, \cite{CJS} looks at  the log-Hilbert-Samuel stratum (the union of the maximal strata for $ H_X^O$)  in its entirety. 
It is the union of isolated points and irreducible curves.}

\rot{\cite{CJS} blows up the isolated points and if above one of them appears an exceptional curve,  then applies a ``fundamental sequence'' by \cite{CJS} Theorem 5.35, this will terminate.}

\rot{If there are components of dimension \blau{one} of the log-Hilbert-Samuel stratum, \cite{CJS} makes these curves,  and the new created by the blowing ups, normal crossing with the boundary by a sequence of blowing ups centered at closed points. When this is achieved,  \cite{CJS} destroys all dimension \blau{one} components by blowing up along all of them systematically.}

\rot{In a few words, once $ H_X^O$ is defined,  \cite{CJS} algorithm is given by the geometry of the log-Hilbert-Samuel stratum.}
\end{Rk} 

Following \cite{CJS} Remark 5.29 we do not make any assumptions on the dimension.
But we point out that the finiteness of the sequence of blowing ups constructed is proven only in dimension smaller than or equal to two \rot{(\cite{CJS} Theorems 5.35, 5.40)}.
In Examples \ref{Ex:dimensionthree} and \ref{Ex:dimensionthree_BM} below, we discuss some phenomena that appear in dimension three.

\smallskip

We like to point out that the algorithm of \cite{CJS} differs in characteristic zero from the usual one.

\begin{Ex}
\label{Ex:easy_difference}
Consider the variety $ X = V ( x^2 - y^2 z ) $ over a field of characteristic zero and with no boundary given.
Then \cite{CJS} chooses the singular locus $ V ( x, y ) $ as the center of the first blow-up.
On the other hand, $ V ( x ) $ has maximal contact with $ X $ and the coefficient ideal is defined by the monomial $ y^2 z $ and following the strategy of Bierstone and Milman (see \cite{BM}, \cite{BerndThesis}) we have to blow up with center $ V ( x, y ,z ) $.
Indeed the two algorithms are different.
\end{Ex}

{After a blow-up with center $ D $ the interesting points above $ x \in D $ are the $ O $-near points}, where the log-Hilbert-Samuel function does not drop.
If $ \dim ( X ) = 2 $, the assumption of Theorem \ref{Thm:HiroMizutani$^O$} on the characteristic of the residue field does always hold and the $ O $-near points are characterized by this theorem.
Therefore we get

\begin{Obs}(\cite{CJS}, Proof of Theorem 5.28, Step 1 and Step 2, Lemmas 3.20, 4.25, and 4.26){\bf .}
\label{Obs:GooOnear}
Let $ X $ be a Noetherian excellent scheme of dimension at most two embedded in a regular Noetherian scheme $ Z $ and let $ \cB $ be a boundary on $ Z $.
Let $ \pi : Z' \to Z $ be a $ \cB $-permissible blow-up with center $ D \subset X ( \tnu ) $, for some $ \tnu \in \Sigma_X^{max,O} $.
The center $ D \subset X ( \tnu ) $ is either a closed point or $ D $ is regular irreducible of dimension \rot{one} and n.c.~with $ \cB $.

If $ D = x \in X $ is a closed point, the good behavior of the near points is a consequence of \cite{CJS} Lemmas 3.20 and 4.25. 
Let us recall the situation:
$$ 
	D' := X' ( \tnu ) \cap \pi^{-1} ( x ) \subset \IP ( \Dir^O_x ( X ) ) \cong \IP^t_\resfield, 
$$
for $ t = e^O_X ( x ) - 1 \leq \dim ( X ) - 1 = 2 - 1 = 1 $.
If $ D' $ is empty, then there are no near points lying above $ x $.
If $ D' $ is non-empty, then it is either a union of closed points or a projective line over $ \resfield $.
In the latter case it is the whole reduced exceptional divisor of the blow-up $X'\longrightarrow X$.
Since the center is $ \cB $-permissible  we conclude that $ D' $ is $ \cB' $-permissible by \cite{CJS} Lemmas 3.20 and 4.25.

Now suppose $ D $ is regular irreducible of dimension \rot{one} and n.c.~with $ \cB $.
Denote by $ \eta $ its generic point and let $ x \in D $ be a closed point.
Then 
$$
	X' ( \tnu ) \cap \pi^{-1} ( x ) \subset \IP ( \Dir^O_x ( X )/ T_x ( D ) ) \cong \IP^s_\resfield, 
$$
for $ s = e^O_X ( x ) - 2 \leq 0 $.
Moreover, 
$$
	X' ( \tnu ) \cap \pi^{-1} ( \eta ) \subset \IP ( \Dir^O_\eta ( X ) ) \cong \IP^r_{k(\eta)}, 
$$
for $ r = e^O_X ( \eta ) - 1 \leq 0 $.
If $ X' ( \tnu ) \cap \pi^{-1} ( \eta ) \neq \emptyset $ then it consists of a unique point $ \eta' $ with $ k ( \eta' ) \cong k ( \eta ) $.
Thus $ \pi $ induces an isomorphism
$$
 X' ( \tnu ) \cap \pi^{-1} ( D ) =: D'  \stackrel{\cong}{\longrightarrow} D  .
$$
Therefore, we conclude that $ D' $ is either empty or a union of closed points or a curve isomorphic to $ D $ which is $ \cB' $-permissible by \cite{CJS} Lemma 4.26.
\end{Obs}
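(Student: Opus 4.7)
The plan is to reduce the statement to a combination of Theorem~\ref{Thm:HiroMizutani$^O$}, the normal flatness characterisation in Theorem~\ref{Thm:2.2.(2)_CJS}, and the normal crossings lemmas 3.20, 4.25 and 4.26 of \cite{CJS}. The structural input that makes everything work is the dimension constraint $\dim(X)\le 2$: it forces $e_X^O(x)\le \dim\cO_{X,x}\le 2$ at every closed point, so every projective space $\IP(\Dir_x^O(X)/T_x(D))$ appearing in the argument has dimension at most $1$.

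When $D=\{x\}$ is a closed point, Theorem~\ref{Thm:HiroMizutani$^O$} (with $T_x(D)=0$) gives
$$ D' := X'(\tnu)\cap\pi^{-1}(x) \;\subset\; \IP(\Dir_x^O(X)) \;\cong\; \IP^t_{\resfield}, \qquad t=e_X^O(x)-1\le 1. $$
The cases $t=-1$ and $t=0$ are immediate. For $t=1$, closedness of $D'$ in $\IP^1_{\resfield}$ forces $D'$ to be either a finite set of closed points or all of $\IP^1_{\resfield}$; the latter coincides with the reduced exceptional divisor of $\pi$. Lemmas~3.20 and~4.25 of \cite{CJS} then deliver $\cB'$-permissibility.

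When $D$ is a regular curve n.c.\ with $\cB$, I would separately analyse the fibres over a closed point $x\in D$ and over the generic point $\eta$. At $x$, the $\cB$-permissibility of $D$ combined with Theorem~\ref{Thm:2.2.(2)_CJS} gives $T_x(D)\subset \Dir_x^O(X)$, and Theorem~\ref{Thm:HiroMizutani$^O$} yields
$$ X'(\tnu)\cap\pi^{-1}(x) \;\subset\; \IP(\Dir_x^O(X)/T_x(D)) \;\cong\; \IP^{e_X^O(x)-2}_{\resfield}, $$
which is empty or a single closed point. At $\eta$ a parallel dimension count (using catenarity: $\dim\cO_{X,\eta}\le \dim X-\dim\overline{\{\eta\}}=1$) shows $e_X^O(\eta)\le 1$, so $\pi^{-1}(\eta)\cap X'(\tnu)$ is empty or a single point $\eta'$ with $k(\eta')\cong k(\eta)$. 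Assembling these fibrewise statements identifies $D'\to D$ as empty, a finite set of closed points, or a finite birational morphism onto $D$ that is an isomorphism on residue fields; since $D$ is regular this last possibility is in fact an isomorphism. Lemma~4.26 of \cite{CJS} then transports $\cB$-permissibility from $D$ to $\cB'$-permissibility of $D'$.

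The main obstacle is precisely this assembly step in the curve case: one must upgrade fibrewise control (``one point with correct residue field above each point of $D$'') into a global isomorphism $D'\cong D$. The equality $k(\eta')\cong k(\eta)$ is the crucial input --- without it one would only obtain a finite bijection --- and it reduces the problem to the standard fact that a finite birational morphism onto a regular scheme is an isomorphism. The closed-point case is easier because $D'$ is already built inside the ambient $\IP^1_{\resfield}$, where closedness alone determines its shape.
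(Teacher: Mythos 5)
Your proposal is correct and follows essentially the same route as the paper: Theorem~\ref{Thm:HiroMizutani$^O$} bounds the near-point locus inside the relevant projective space, the dimension constraint $\dim(X)\le 2$ forces that projective space to have dimension at most $1$ (closed-point case) or $0$ (curve case, both over closed points and over $\eta$), and Lemmas 3.20, 4.25, 4.26 of \cite{CJS} then supply $\cB'$-permissibility. Your extra gloss on the assembly step in the curve case (closedness of $D'$ plus quasi-finiteness giving finiteness, birationality at $\eta$, and normality of $D$ then upgrading to an isomorphism) is a faithful unpacking of what the paper leaves implicit in the single sentence ``Thus $\pi$ induces an isomorphism $D'\to D$''.
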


Having the previous in mind, let us have another look at the labeling procedure if $ \dim (X) = 2 $.

\begin{Rk}[\em Labels of the irreducible components of $ X ( \widetilde{ \nu } ) $ in dimension \rot{two}]
\label{Rk:label}
	\textcolor{white}{.}\\
	Let us assume that $ \dim (X_0) = 2 $ in Algorithm \ref{Algorithm_CJS}.
	We denote the strict transform of $ X_0 $ after $ m \geq 0 $ steps by $ X_m $.
	Suppose $ X_m ( \widetilde{ \nu } ) \neq \emptyset $.
	To get control on the new components in $ X_m ( \widetilde{ \nu } ) $, we label the irreducible component of $ X_m ( \widetilde{ \nu } ) $ according to \cite{CJS} as follows:
	\begin{itemize}
		\item At the beginning every irreducible component of $ X_0 ( \widetilde{ \nu } ) $ gets label 0.
	Hence we can assume in the remaining part $ m \geq 1 $.
		
		%	\vspace{6pt}
		
		\item	If the algorithm blows up an isolated closed point $ x $ in $ X_{m-1} ( \tnu ) $ with label $ j \in \IN $, then all the irreducible components in $ X_m ( \widetilde{ \nu } ) $ lying above $ x $ inherits the label $ j $ since they are dominating $ x $.
	
%	\vspace{6pt}
	
		\item	If the center of the blow-up is a $ \cB $-permissible curve $ C \subset X_{m-1} ( \tnu ) $ with label $ j \in \IN $, and suppose there exists a curve $ C' \subset X_{m} ( \widetilde{ \nu } ) $ with $ C' \cong C $, then $ C ' $ inherits label $ j $.
				If there is an isolated closed point lying above $ C $ in $ X_m ( \widetilde{ \nu } ) $, then the point cannot dominate the whole irreducible component $ C $.
				Therefore we give this point a new label, namely the current year (number of steps) in the resolution process.
		
%	\vspace{6pt}
				
		\item	Suppose the center is a closed point strictly contained in a bigger irreducible component of $ X_{m-1} ( \widetilde{ \nu } ) $.
				Whatever is lying above the center it cannot dominate a whole irreducible component of $ X_{m-1} ( \widetilde{ \nu } ) $ and the created components in $ X_m ( \widetilde{ \nu } ) $ get a new label as before.
	\end{itemize}
			
	This means once we have blown up the $ \cB_{m_0} $-permissible strict transform $ Y_{m_0}^{(0)} $ of $ Y_0 $ in Algorithm \ref{Algorithm_CJS}	the upcoming centers are uniquely determined by the lowest label if $ \dim (X_0)  = 2 $.

	\rot{When computing a resolution by hand, one will usually cover $ X_0 $ and its strict transforms under blow-ups by local affine charts.
		The labeling as described above is then rather complicated since one has to consider the global resolution process at every step.}
\end{Rk}
	
	Note that in higher dimensions the components of lowest label do not necessarily give a $ \cB$-permissible center.
	Already in dimension three there might appear singular curves in $ X' ( \tnu) $ after blowing up a closed point:

\begin{Ex}
\label{Ex:dimensionthree}
Consider the three-dimensional variety given by the polynomial $ t^2 + xy^2 + z^3 + x^5 y $ over a field of characteristic two without any boundary given.
The singular locus is the origin $ V ( t, x, y, z ) $.
% d_t = 0, d_x = y^2 + x^4 y , d_y = x^5 ,d_z = z^2
Hence \cite{CJS} take the origin as the center for the first blowing up.
At the origin of the $ X $-chart, we have coordinates $ (t',x,y',z') = (\frac{t}{x}, x, \frac{y}{x}, \frac{z}{x}) $ and the strict transform of the polynomial is
$$
	t'^2 + x y'^2 + x z'^3 + x^4 y'. 
$$
% d_t' = 0, d_x' = y'^2 + z'^3 , d_y' = x'^4 , d_z' = x'z'^2
Its singular locus is $ V ( t', x , y'^2 + z'^3 ) $ which is a singular curve.
\end{Ex}

	Moreover, even if the center of the blow-up is a curve $ C $, after the blow-up there might appear some curve $ C' $ which does not dominate $ C $.
	This also needs to get a new label.
	
\begin{Ex}
\label{Ex:dimensionthree_BM}
Consider the threefold $ X = V ( t^2 + x^4 + y^2 z^5 + x^2 z^3 + y^7 z ) $ over a field of characteristic zero and with no boundary given.
Show that the singular locus is given by $ V ( t, x, y ) $.
This is then the center of the \cite{CJS}-strategy.
After the blow-up, we consider the origin of the $ Y $-chart with coordinates $(t',x',y, z) = (\frac{t}{y}, \frac{x}{y}, y , z )$.
The strict transform of $ X $ is 
$$
 X' = V ( t'^2 + x'^4 y^2 + z^5 + x'^2 z^3 + y^5 z ) 
$$ 
and the singular locus is $ V(t', y, z ) $.
The latter is a curve which does not dominate the center $ V(t,x,y) $. 

Moreover, the reader who is familiar with the invariant introduced by Bierstone and Milman (\cite{BM},\cite{BerndThesis}) realizes that it increases if we compare the value at the origin before the blow up, $ ( 2, 0; 2, \ldots ) $, with the value at the origin of the $ Y$-chart, $ ( 2, 0; \frac{5}{2}, \ldots ) $.
Hence this is another example showing the difference between the usual characteristic zero algorithm for resolving singularities and \cite{CJS}.
\end{Ex}

Therefore the higher dimensional case is more delicate and so far it is not known if the procedure of \cite{CJS} is finite or not.

%\smallskip

\begin{Ex}
	Consider the variety over a field $ k $ given by
	$$
		f = x^2 + y^9 z^{{10}} = 0 .
	$$
	The maximal Hilbert-Samuel locus coincides with the locus of order two.
	There are two irreducible components $ L_1 := V ( x,y ) $ and $ L_2 := V ( x, z) $.
	Since we are at the beginning of the resolution process both get label 0.
	
	We have to blow up the origin in order to separate these two components.
	At the origin of the $ Z $-chart of the blow-up we have the coordinates $ (x',y',z) = (\frac{x}{z}, \frac{y}{z},z) $.
	The strict transform of $ f $ is $ f ' = x'^2 + y'^9 z^{17} $.
	The label of $ L_1' = V ( x', y' ) $ is still 0 and since $ L_3 := V ( x', z ) $ is lying on the exceptional divisor it gets the label 1.
	Note that the strict transform of $ L_2 $ is empty in this chart.
	Hence the component of the maximal Hilbert-Samuel locus with minimal label and thus the center for the next blow-up is $ V ( x', y' ) $.
	
	At the origin of the $ Y' $-chart we obtain the coordinates
	$ ( x'', y', z) = ( \frac{x'}{y'}, y', z)  $ and
	 the singularity is given by $ f'' = x''^2 + y'^7 z^{17} $.
	The line $ L_4 := V ( x'',y' ) $ lies in the singular locus and also on the exceptional divisor of the last blow-up.
	Moreover, it is dominating the center $ L_1' = V (x',y') $ of the last blow-up. 
	Thus we assign the label 0 to it.
	Further, the strict transform $ L_3' = V(x'', z) $ of $ L_3 $ still has the label 1.
	So, the center for the next blowing up following the algorithm of \cite{CJS} is $ L_4 = V ( x'', y' ) $.
\end{Ex}

If we skip the condition on the inheritance of the label when an irreducible component after a blow-up is dominating one before, then every new component in $ X' ( \widetilde{ \nu } ) $ would get a new label.
This means in the previous example that $ L_4 = V(x'', y') $ gets the label 2 (instead of 0) and the center for the next blowing up would be $ L_3' = V ( x'' ,z ) $ and not $ L_4 = V ( x'', y' ) $.

From a practical point of view this variant does not look very elegant.
For instance we have above, using the labels without inheritance,
$$
	f'' = x''^2 + y'^7 z^{17},
	\hspace{10pt}
	\mathrm{Sing}(f'') = V ( x'' ,z )^{(1)} \cup V ( x'', y' )^{(2)},
$$
where the $ (\ldots)^{(i)} $ indicates that the assigned label is $ i $.
Hence the center is $ V(x'', z) $.
At the origin of the $ Z $-chart, the coordinates are $ (x''',y',z') = (\frac{x''}{z}, y', z') $ and
$$
	f''' = x'''^2 + y'^7 z^{15},
	\hspace{10pt}
	\mathrm{Sing}(f''') = V ( x''' ,z )^{(3)} \cup V ( x''', y' )^{(2)}.
$$
Now, the next center is  $ V ( x''', y' ) $.
At the origin of the $ Y' $-chart, the coordinates are $ (x'''',y',z') = (\frac{x'''}{y'}, y', z') $ and
$$
	f'''' = x''''^2 + y'^5 z^{15},
	\hspace{10pt}
	\mathrm{Sing}(f'''') = V ( x'''' ,z )^{(3)} \cup V ( x'''', y' )^{(4)}.
$$
The next center is  $ V ( x'''', z ) $ and so on.
In other words, we alternately blow up the $ z $-axis and the $ y' $-axis of the corresponding charts until the exponent of $ y ' $ is smaller than two.

If we follow the original labeling, we would first make the exponent of $ y' $ decrease to one and after that the remaining blowing ups make the exponent of $ z $ decrease.

%\smallskip

\begin{Rk}[\textsc{Caution:} \em Different notions of old component] 
On one hand, we have the old {\em boundary components}, which do not necessarily have anything to do with the maximal Hilbert-Samuel locus. 
Their importance lies in the requirement that the center has to be n.c.~with the boundary.

On the other hand, we labeled the {\em irreducible components of $ X ( \widetilde{ \nu } ) $}.
But as we have seen in the third step of the example, these two labellings are different;
in fact, $ L_4 = V ( x'', y') $ is an irreducible component of the maximal Hilbert-Samuel locus with label zero,
while it is contained in the exceptional divisor $ V(y') $ of the last blow-up, which is new.  
\end{Rk}

%
%
%
%
%
%
%
%
%
%
%       Hironaka Char poly
%
%
%
%
%
%
%
%

%\bigskip

%\bigskip

\section{Hironaka's characteristic polyhedron}
\label{sec:HiroPoly} 

One of the main tools in the construction of our local invariant is the characteristic polyhedron, which reflects a refined nature of the singularities. 
Hironaka \cite{HiroCharPoly} introduces the notion of the characteristic polyhedron under the following situation (see also \cite{CJS} section 7): 

%\smallskip

{\em Let $ ( R, \maxIdeal, \Resfield = R / \maxIdeal ) $ be an arbitrary regular local ring with maximal ideal $ \maxIdeal $ and let $ (0) \neq \ideal \subset R $ be an ideal.}

\smallskip

As will be shown later in Example \ref{Ex:e_not_usc}, %and \ref{Ex:beta_not_usc}, 
our invariant is {\it not} upper semicontinuous.
Its strata may not even be constructible. 
We define the invariant at each point $ x \in X $. 
We will show that it is stable by localization and passing to completion, and {\it does not depend on the embedding}. 
This allow us to pass to the completion of the local ring $ \cO_{X,x} $.
By the Cohen structure theorem and Remark~\ref{Rk:nonembedded}, we can reduce \rot{our analysis} to the local situation mentioned above, with $ R $ even being complete.
Nonetheless, we do {\it not} require $ R $ to be complete in this section.

The characteristic polyhedron in which we are interested is the one associated to one of the ideal\rot{s} $ I \in \{ J^O = J \cdot I_{O_{{X}}} , \, I_C,\, I_C^O = I_C \cdot I_{O_C} \} $, where $ J \subset R $ is the ideal which defines $ X $ locally at some point $ x $,  $ I_{O_{{X}}}:= I_{O_{{X}}(x)} $ denotes the ideal defining the divisor given by the old boundary components of $ \cB ( x ) $, $ I_C \subset R $ is the ideal of the union of some components of the Hilbert-Samuel locus, and $ I_{O_C}:= I_{O_C(x)} $ denotes the ideal defining the divisor given by the boundary components of $ \cB ( x ) $ which are old for $ C $ (i.e., old \wrt the value $ H_C(x) $).

\smallskip

Let $ ( u , y ) = ( u_1, \ldots, u_e; y_1, \ldots, y_r ) $ be a {\RSP}~of $ R $.
In fact, Hironaka defines the characteristic polyhedron in a slightly more general setting, but for us the interesting case is the one where  the system $ ( y ) $ is chosen such that their initial forms $ Y_j := in_{\maxIdeal}  ( y_j ) = y_j \mod \maxIdeal^2 $ generate the ideal of the directrix $ \Dir_\maxIdeal ( {\ideal} ) $ of $ {\ideal} $ at $ \maxIdeal $ (Observation~\ref{Obs:Directrix}, Definition \ref{Def:dir}).
Therefore we restrict our attention to this case. 

For  $ g \in R $, we have an expansion in a {\em finite} sum
\begin{equation}
\label{eq:expansion}
	g = \sum_{(A,B) \in \IZ^{e + r }_{\geq 0 } } C_{A,B} \, u^A \, y^B
\end{equation}
with coefficients $ C_{A,B} \in R^\times \cup \{ 0 \} $.
Due to \cite{HiroCharPoly} at the beginning of \S 2, we observe that, if we fix the \rot{coordinate} system $ ( u, y ) $ and if we require the {number of} appearing exponents to be minimal, then the set $ \{ (A,B) \mid C_{A,B} \neq 0 \} $ is uniquely determined 
(even though the set $ \{ C_{A,B} \}  $ may vary).

\begin{Def}
	\label{Def:Poly} 
\begin{itemize}
	\item[(1)]	A {\em $ F $-subset} of $ \IR^e_{\geq 0 } $ is a closed convex subset $ \Delta \subset \IR^e_{\geq 0 } $ such that $ v \in \Delta $ implies $ v + w \in \Delta $ for every $ w \in \IR^e_{\geq 0 } $.
		
	%\vspace{6pt}
	
	\item[(2)]	Let $ g \in R $ be an element in $ R $ with $ g \notin \langle u \rangle $.
				Then we can expand $ g $ as in \eqref{eq:expansion} into a finite sum
				$
					g = \sum C_{A,B} \, u^A \, y^B
				$
				with coefficients $ C_{A,B} \in R^\times \cup \{ 0 \} $.
				Set \rot{$ \nu := \ord_{\overline{\maxIdeal}} ( \overline{g} ) $,
					where $ \overline{g} := g \mod \langle u \rangle \in R/\langle u \rangle $ and 
					$ \overline{\maxIdeal} := \maxIdeal \cdot R/ \langle u \rangle $.} 
				The {\em polyhedron associated to $ ( g ,u, y ) $} is then defined as the smallest $ F $-subset $ \poly guy $ containing the points
	$$
		\left\{
			\;	\frac{A}{\nu - |B|} \;\Big| \; C_{A,B} \neq 0 \, \wedge \, |B| < \nu
			\;
		\right\} .
	$$	
			
	%\vspace{6pt}
	
	\item[(3)]	Let $ ( f ) = ( f_1, \ldots, f_m ) $ be a system of elements in $ R $ with $ f_i \notin \langle u \rangle $.
				Then the {\em polyhedron {$ \poly fuy $} associated to $ ( f, u, y ) $} is defined to be the smallest $ F$-subset containing $ \bigcup_{ i = 1 }^m \poly{f_i}{u}{y} $.
		
	%\vspace{6pt}
	
	\item[(4)]	For an ideal $ {\ideal} \subset R $ we set:
				$$
					\poly {\ideal}uy := \bigcap_{ (f) }  \poly fuy,
 				$$
				where the intersection runs over all possible standard bases $ ( f ) = ( f_1, \ldots, f_m ) $ of $ {\ideal} $ (Definition \ref{Def:standardbasis}).

				Finally, the {\em characteristic polyhedron of $ ( {\ideal} ; u ) $} is defined by
				$$
					\cpoly {\ideal}u := \bigcap_{(y)} \poly {\ideal}uy,
				$$
				where the intersection ranges over all systems $ ( y ) $ extending $ ( u ) $ to a {\RSP}~of $ R $ and such that their initial forms generate the directrix $ \Dir_\maxIdeal ( {\ideal} ) $. 
\end{itemize}
\end{Def}

In \cite{BerndThesis} the second author introduced  the characteristic polyhedron for idealistic exponents and used it to show that the invariant of Bierstone and Milman for resolution of singularities in characteristic zero can be purely determined by these polyhedra.	

\begin{Rk}
\label{Rk:Only_std_not_u-std}
In general, one has to consider the so called $ ( u ) $-standard bases (\cite{CJS} Definition 6.7).
However, one can deduce \rot{the following} easily from its definition: 
If $ (u,y) $ above is such that $ ( y ) $ determines the directrix of $ {\ideal} $ then any standard-basis is already a $ ( u ) $-standard basis.
Therefore we can avoid recalling the lengthy and technical definition of a $ ( u ) $-standard basis and work only with standard bases.
\end{Rk}

By introducing the procedure of vertex preparation Hironaka was able to prove the following result.
(Again, this is also valid in the general case, but we state it here in our special case).

\begin{Thm}
\label{Thm:Hironaka} Let $ ( R, \maxIdeal, \Resfield = R / \maxIdeal ) $ be a %(complete) 
regular local ring with maximal ideal $ \maxIdeal $ and let
		 $ {\ideal} \subset R $ be a non-zero ideal and $ ( u, y ) $ be a {\RSP}~for $ R $ such that $ ( y ) $ yields the ideal generating the directrix of $ {\ideal} $.
		 
		 Then there exist a standard basis $ ( \hf ) = ( \hf_1, \ldots, \hf_m ) \in \hR^m$ of $ \widehat{\ideal} = {\ideal}\cdot \hR $ and a system of elements $ ( \hy ) = ( \hy_1, \ldots, \hy_r) $ in $ \hR$ such that $ ( u, \hy ) $ is a {\RSP}~of $ \hR $, $ ( \hy ) $ determines the directrix of $ {\ideal} $, 
		 \rot{$ (\hf; u; \hy) $ is well-prepared}
		 and
		$$
				\poly{\hf}{u}{\hy} = \cpoly {\ideal}u.
		$$ 
\end{Thm}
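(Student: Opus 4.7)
The plan is to follow Hironaka's vertex preparation procedure and pass to the completion in order to produce a limit standard basis and coordinate system attaining the characteristic polyhedron. Start with any standard basis $(f) = (f_1,\ldots,f_m)$ of $\ideal$ and any system $(y)$ extending $(u)$ to a {\RSP} of $R$ with $(y)$ determining $\Dir_\maxIdeal(\ideal)$, and consider the polyhedron $\poly{f}{u}{y}$. The goal is to eliminate, one by one, every vertex of $\poly{f}{u}{y}$ that does not lie in $\cpoly{\ideal}{u}$ by means of (i) admissible changes in the standard basis (replacing $f_i$ by $f_i + \sum_{j<i} a_{ij} f_j$ with $\ord_\maxIdeal(a_{ij}f_j)\geq \nu_i$) and (ii) admissible changes in $(y)$ of the form $y_j \mapsto y_j - \sum_A \xi_{j,A}\, u^A$ that preserve the property that the initial forms determine $\Dir_\maxIdeal(\ideal)$.

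Next I would formalize vertex preparation: for each vertex $v$ of $\poly{f}{u}{y}$, define the initial form $\mathrm{in}_v(f_i)$ living in an appropriate bigraded quotient and call $v$ \emph{prepared} when no admissible change of $(f)$ and $(y)$ removes it; otherwise $v$ is \emph{solvable}. The fundamental combinatorial lemma (\cite{HiroCharPoly}, \S{3}--\S{4}, or the version recalled in \cite{CJS} \S{7}) asserts that after a solvable change the vertex $v$ is strictly pushed towards the interior direction $(1,\ldots,1)$, so one obtains a new polyhedron strictly contained in the old one near $v$. By iterating at all solvable vertices in some order, one produces a sequence of standard bases $(f^{(n)})$ and coordinate systems $(y^{(n)})$ such that $\poly{f^{(n+1)}}{u}{y^{(n+1)}} \subsetneq \poly{f^{(n)}}{u}{y^{(n)}}$.

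The main obstacle is convergence: the procedure need not terminate in finitely many steps inside $R$, since one may have to add infinitely many correction terms to the $y_j$'s as vertices are cleared at successively higher distances from the origin. This is exactly where the passage to the completion $\hR$ is forced upon us. I would show that the successive corrections $y_j^{(n+1)} - y_j^{(n)}$ lie in $\maxIdeal^{c(n)}$ with $c(n)\to \infty$, hence the sequences converge $\maxIdeal$-adically in $\hR$ to elements $\hy_1,\ldots,\hy_r$, and similarly the $(f^{(n)})$ converge to a system $(\hf)$ in $\hR$. Stability of the defining properties of a standard basis under $\maxIdeal$-adic limits (the $\ord_\maxIdeal$'s are preserved, as are the initial forms, which stabilize at some finite stage) gives that $(\hf)$ is a standard basis of $\widehat{\ideal}$ and that $(u,\hy)$ is a {\RSP} of $\hR$ whose $\hy$-part still determines $\Dir_\maxIdeal(\ideal) = \Dir_{\widehat{\maxIdeal}}(\widehat{\ideal})$.

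Finally, I would verify that $\poly{\hf}{u}{\hy} = \cpoly{\ideal}{u}$. The inclusion $\supseteq$ is by definition since $(\hf,u,\hy)$ is one allowed choice and $\cpoly{\ideal}{u}$ is the intersection over all admissible data. For the inclusion $\subseteq$, one uses that every vertex of $\poly{\hf}{u}{\hy}$ is prepared by construction, together with the characterization (\cite{HiroCharPoly} Lemma~(4.3) and the discussion of prepared vertices, cf.~Remark~\ref{Rk:Only_std_not_u-std}) that a prepared vertex lies in every polyhedron $\poly{g}{u}{z}$ associated to any standard basis $(g)$ and any coordinate system $(z)$ of the allowed type; hence every vertex of $\poly{\hf}{u}{\hy}$ belongs to $\cpoly{\ideal}{u}$, and by $F$-convexity the whole polyhedron does. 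The delicate point throughout is keeping track that admissible $y$-changes preserve the directrix condition; this uses that $(y)$ determines $\Dir_\maxIdeal(\ideal)$ from the start and that the solving terms $\xi_{j,A} u^A$ have positive $\maxIdeal$-order, so the initial forms modulo $\maxIdeal^2$ are unchanged.
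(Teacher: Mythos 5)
Your proposal is correct and follows essentially the same route that the paper recalls in the remark immediately following the theorem statement: vertex preparation (normalization at a vertex to modify $(f)$, dissolving solvable vertices to modify $(y)$), $\maxIdeal$-adic convergence of the correction terms in $\hR$ when the process does not terminate, and an appeal to Hironaka's minimality of well-prepared polyhedra for the final equality. The paper itself does not reprove this theorem --- it cites \cite{HiroCharPoly} Theorem~(4.8) directly and only sketches the vertex-preparation construction in the subsequent remark --- so the comparison is against that recalled construction, which your outline matches point for point.
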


%\smallskip
\rot{This result relies on \cite{HiroCharPoly} Theorem (4.8) (Theorem \ref{Thm:Hiro4.8} below).}
In order to recall \rot{the latter and} the construction of $ (\widehat  f , \widehat y ) $ of Theorem \ref{Thm:Hironaka} briefly we need to introduce some more notions.

\begin{Def}
\label{Def:initial_vertex}
	Let $ I \subset R $ be an ideal as before, $ (u,y) $ a {\RSP} for $ R $ such that $ (y) $ determines the directrix of $ I $ at $ \maxIdeal $.
	Let $ (f) = (f_1, \ldots, f_m  ) $ be a standard basis for $ I $ and $ v \in \poly fuy $ be a vertex of the polyhedron associated to $ (f,u,y) $.
	
	Set $ \nu_i := \ord_\maxIdeal (f_i) $, and consider finite expansions $ f_i = \sum C_{A,B,i} \, u^A y^B $ as in \eqref{eq:expansion}, with $ C_{A,B,i} \in R^\times \cup \{ 0 \} $, $ 1 \leq i \leq m $. 
	
	\begin{enumerate}
		\item[(1)]
	The {\em initial forms of $ (f) $ at the vertex $ v $}, is defined as
	$$ 
	 in_v (f) :=  in_v (f)_{(u,y)} := ( \, in_v(f_1), \ldots, in_v(f_m) \,) ,
	$$
	where, for $ 1 \leq i \leq m $,
	$$
	in_v (f_i) := in_v (f_i)_{(u,y)} := 
	F_i(Y) +  \sum_{(A,B) : \frac{A}{\nu_i - |B|} = v } \overline{C_{A,B,i}} \, U^A \,Y^B   
	,
	$$
	and $ F_i(Y) = in_\maxIdeal (f_i) $ is the initial form of $ f_i $ at $ M $, $ \overline{C_{A,B,i}} \in k = R/\maxIdeal $ denotes the image in the residue field, and $ U_i := in_M (u_i) $ resp.~$ Y_j := in_M(y_j) $. 
	
		\item[(2)]	
	The vertex $ v $ is called {\em solvable} if \rot{$ v \in \IZ^e_{\geq 0} $ and} there exist $ \lambda_1, \ldots, \lambda_r \in k $ 
	such that
	$$
		in_v (f_i) = F_i ( Y_1 + \lambda_1 U^v , \ldots, Y_r + \lambda_r U^v),
	$$ 
	for all $ i \in \{ 1, \ldots, m \} $.
	We sometimes also abbreviate the right hand side of the previous equation by $ F_i (Y + \lambda U^v) $.
	
		\item[(3)]
	For a homogeneous element $ G = \sum_{D} \mu_D Y^D  \in k[Y]=k[Y_1, \ldots, Y_r ] $ the {\em leading exponent of $ G $} is defined as $ {LE}(G) := \max_{lex} \{ D \in \IZ^r_{\geq 0} \mid \mu_D \neq 0 \} $, where the maximum is taken \wrt the lexicographical order on $ \IZ^r_{\geq 0} $. 
	
	For a homogeneous ideal $ I \subset k[Y] $, we set
	$$
		 E(I) :=  \{ {LE}(G) \mid G \in I \mbox{ homogeneous}   \}
	$$
	
	\item[(4)]
		Let $ P_{B,i}(U) \in k[U] $ such that 
		$ in_v (f_i) = \sum\limits_{B : |B|= \nu_i } \alpha_{B,i} Y^B  + \sum\limits_{B : |B|< \nu_i } P_{B,i} (U) \, Y^B $, for $ 1 \leq i \leq m $ ($ \alpha_{B,i} \in k $). 
		The system $ ( f ) $ is called {\em normalized at the vertex $ v $} if 
		\begin{equation}
		\label{eq:normalized_condition}
			 \alpha_{B,i} \equiv 0
			 \hspace{5pt}\mbox{ and }\hspace{5pt}
			 P_{B,i}(U) \equiv 0 \,,
			 \hspace{10pt}
			 \mbox{ for all }
			 B \in E( \langle F_1, \ldots, F_{i-1} \rangle ),
		\end{equation}
		with $ F_j = F_j (Y) $ the initial forms at $ M $ above.
	\end{enumerate}
	If $ (f) $ is normalized at $ v $ and $ v $ is not solvable, then $ ( f;u;y ) $ is called {\em prepared at $ v $}.
	If $ (f;u;y) $ is prepared at every vertex of $ \poly fuy $ then $ (f;u;y) $ is called {\em well-prepared.}
\end{Def}

%\smallskip

From the definition, it is immediate that we have
\begin{equation}
\label{eq:initial_vertex_with_U^v}
	in_v (f_i) = 	F_i (Y) + \sum_{B : |B|< \nu_i } \lambda_{B,i}  \, U^{(\nu_i -\vert B\vert)\cdot v} \, Y^B ,
\end{equation}
for certain certain coefficients $ \lambda_{B,i} \in k $ that are non-zero only if $ (\nu-\vert B\vert)\cdot v \in \IZ^e_{\geq 0} $.
\rot{Note that the above notations can be defined for any point $ v \in \poly fuy $ that is contained in a face of the polyhedron.}

\smallskip 

\rot{Now, we are able to recall (for our special case)}

\begin{Thm}[\cite{HiroCharPoly} Theorem (4.8)]
	\label{Thm:Hiro4.8} 
	\rot{Let $ ( R, \maxIdeal, \Resfield = R / \maxIdeal ) $ be a %(complete) 
	regular local ring with maximal ideal $ \maxIdeal $ and let
	$ {\ideal} \subset R $ be a non-zero ideal and $ ( u, y ) $ be a {\RSP}~for $ R $ such that $ ( y ) $ yields the ideal generating the directrix of $ {\ideal} $.
	Let $ (f) = (f_1, \ldots, f_m ) $ be a standard basis for $ I $.}
	
	\rot{If $ v $ is any vertex of $ \poly fuy $ such that $ (f;u;y) $ is prepared at $ v $,
		then $ v $ is also a vertex of $ \cpoly Iu $.
		In particular, if $ (f;u;y) $ is well-prepared,
		then}
		$$
			\rot{\poly{f}{u}{y} = \cpoly {\ideal}u.}
		$$
\end{Thm}

\rot{In fact, Hironaka's result is slightly more general:
	The hypothesis on $ ( y ) $ is not on the directrix of $ I  \subset R $, but on the directrix of $ I \cdot R/\langle u \rangle \subset R/ \langle u \rangle $.
	(Further, recall Remark \ref{Rk:Only_std_not_u-std}).
	The tentative reader may observe that in the implication for the equality, Hironaka originally requires $ (f;u;y) $ to be {\em totally prepared}.
	The latter means that $ (f;u;y) $ is well-prepared and normalized along all bounded faces (see e.g.~\cite{CJS} Definition 7.15(5)).
	Since the normalization at a point, that is not a vertex, does not change the polyhedron, this stronger assumption is not needed.}

\smallskip

\rot{In the following remark, we explain how to achieve the desired elements $ (\hf; \hy) $ of Theorem \ref{Thm:Hironaka}.}

\begin{Rk}[\em Vertex preparation]
Suppose a given standard basis $ ( f ) $ of $ I $ is not normalized at some vertex $ v $.
Then some exponent $ B $ of a monomial $ U^A Y^B $ appearing in $ in_v( f_i) $ with non-zero coefficient coincides with the leading exponent of an element in the ideal generated by the previous elements $ (f_1, \ldots, f_{i-1} ) $, say $ h = a_1 f_1 + \ldots + a_{i-1} f_{i-1} $, $ a_j \in R $.
If we replace $ f_i $ by $ f_i' := f_i - C_{A,B,i} u^A h $ then the monomial $ U^A Y^B $ does not appear in $ in_v(f_i' ) $ with non-zero coefficient. 
In fact, in \cite{HiroCharPoly} Lemma (3.15) Hironaka proves that there exist $ a_{ij} \in R $ such that, for $ g_i := f_i - \sum_{j=1}^{i-1} a_{ij} f_j $, the system
$(g) =  ( g_1, \ldots, g_m ) $ is normalized at $ v $,
$ \poly guy \subset \poly fuy $, and 
any vertex of $ \poly fuy $ other than $ v $ is also a vertex of $ \poly guy $.
\rot{Note that, in contrast to the process of solving vertices, we do not necessarily eliminate $ v $ when normalizing, i.e., it is possible that $ v $ is a vertex of $ \poly guy $.}

If $ v $ is solvable then we must have $ v \in \IZ^e_{\geq 0} $ and at least one of the coefficients $ \lambda_i \neq 0  $ has to be non-zero.
By choosing any lift $ a_i \in R $ of $ \lambda_i \in R/\maxIdeal $ and replacing $ ( y ) $ by 
$$
	(z ) = (z_1, \ldots, z_r) := ( y_1 + a_1 u^v , \ldots, y_r + a_r u^v),
$$
we eliminate the vertex $ v $ in the polyhedron, i.e., $ v \notin \poly fuz $.
In fact, in \cite{HiroCharPoly} Lemma (3.10) it is shown that $ \poly fuz \subset \poly fuy $, $ v \notin \poly fuz $, and any vertex different from $ v $ is also a vertex of $ \poly fuz $.

Suppose $ ( f ) $ and $ ( y ) $ are given.
For the vertex preparation process we need to equip $ \IR^e_{\geq 0 } $ with a total order, e.g.~the one given by the lexicographical order of the entries of $ v = ( v_1 , \ldots , v_e ) \in \IR^e_{\geq 0} $.
Let $ v \in \poly fuy $ be the minimal vertex of $ \poly fuy $ \wrt the chosen total order.
First, we change $ (f) $ such that \rot{it} is normalized at $ v $.
If $ v $ is still a vertex after this then we 
\rot{follow the following procedure.}
\rot{If $ v $ is solvable,} 
we eliminate it as described before and consider the minimal vertex of the resulting polyhedron.
If $ v $ is not solvable, then we consider the next smallest vertex of the associated polyhedron and repeat the previous.

Unfortunately, this process is not necessarily finite.
For example, consider the hypersurface $ f = y^4 + y^2 + u_1^6 + u_2^5 $ over a field of characteristic two.
The vertex $ v = (3,0) $ of $ \poly fuy $ is solvable since $ in_v(f) = (Y + U_1^3)^2 $, but if we replace $ y $ by $ z_1 := y + u_1^3 $ we obtain $ z_1^4 + z_1^2 + u_1^{12} + u_2^5 $ and the vertex $ (6,0) $ is {also} solvable.
Thus the process is not finite in general.

Nevertheless the vertex preparation process yields elements $ (\widehat{f}, \widehat{y} ) $
 in $ \widehat{R} $ such that $ (\widehat{f}, u, \widehat{y} ) $ is well-prepared.
Then the proof of \cite{HiroCharPoly} Theorem (4.8) shows that the corresponding associated polyhedron coincides with the characteristic polyhedron of $ I $, i.e., $ \poly{\hf}{u}{\hy} = \cpoly {\ideal}u $.
Further, $ ( \widehat f ) $ is a standard basis for $ I\widehat{R} $ by \cite{HiroCharPoly} Corollary (3.17.4).
\end{Rk}

In \cite{CPcompl} it is shown that, in the case $ R $ a local $ G $-ring, $ m = 1 $, and $ r = 1 $ one can find the system $(\hy)$ in $ R $ such that the characteristic polyhedron is achieved $ \poly fuy = \cpoly {\ideal}u $, $ {\ideal} = \langle f \rangle $.

For our aim to prove the invariance of the numbers that we deduce from the characteristic polyhedron the following result is sufficient.
 
 \begin{Prop}\label{polyvide} 
 Let $I\subset R$ be an ideal of an {\em excellent} regular local ring $ (R,M) $ 
 such that $ \dim (R / I) \leq 2$.
 Let $\cX:=V(I)\subset \Spec (R) $ and $ x $ the closed point. 
 Suppose  that, with the notations of Theorem \ref{Thm:Hironaka} %{\bf notations in the definition of characteristic polyhedra......}
 $$
 	\Delta(I;u_1,\ldots,u_e)=\emptyset,\ e:=\dim( \Dir_M( I ) ).
 $$
 Then there exists a sequence of parameters $(y):=(y_1,\ldots,y_r)$ in $R$ such that:
\begin{enumerate}
		\item[(i)]	V$(y)\subset V(I) $.
		\item[(ii)]	V$(y)$ is the Hilbert-Samuel stratum of $\cX = V(I) $ in a neighborhood of $x$ and V$(y)$ is permissible for $\cX$ at $x$.
					Moreover, if ${\cX}'\longrightarrow {\cX}$ denotes the blowing up along V$(y)$, then there is no point $x'\in {\cX}'$ near to $x$.
		\item[(iii)]		For any system $(\widehat{z}) := (\widehat z_1,\ldots,\widehat z_r)$ in the completion $\widehat{R}$ such
 that 
 $$	
 	\Delta(I \hR;u_1,\ldots,u_e;\widehat z_1,\ldots,\widehat z_r) = 
 	\Delta(I;u_1,\ldots,u_e) = \emptyset,
 $$ 
 we have:
 $$
 	\langle y \rangle \cdot \widehat{R} = \langle \widehat z \rangle .
 $$
\end{enumerate}	 
\end{Prop}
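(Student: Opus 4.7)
The plan is to use Theorem \ref{Thm:Hironaka} to pass to the completion $\hR$: there exist a standard basis $(\hf) = (\hf_1, \ldots, \hf_m)$ of $I\hR$, with $\nu_i := \ord_\maxIdeal(\hf_i)$, and a system $(\hy) = (\hy_1, \ldots, \hy_r)$ completing $(u)$ to a {\RSP} of $\hR$ with $(\hy)$ determining the directrix, such that $\poly{\hf}{u}{\hy} = \cpoly{I\hR}{u} = \cpoly{I}{u} = \emptyset$. The first structural consequence is the following: writing $\hf_i = \sum C_{A,B,i}\, u^A \hy^B$ with $C_{A,B,i} \in \hR^\times \cup \{0\}$, emptiness of the polyhedron means that for every non-zero term we cannot have $|B| < \nu_i$ (otherwise $A/(\nu_i - |B|)$ would be a point of $\poly{\hf_i}{u}{\hy}$). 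Hence every non-zero term of $\hf_i$ satisfies $|B| \geq \nu_i$, i.e., $\hf_i \in \langle \hy \rangle^{\nu_i}$.

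Since $\nu_i = \ord_\maxIdeal(\hf_i) \geq \ord_{\langle \hy \rangle}(\hf_i) \geq \nu_i$, criterion (4) of Theorem \ref{Thm:2.2.(2)_CJS} applies, so $V(\hy) \subset \Spec(\hR)$ is a permissible center for $V(I\hR)$ at $x$, regular of dimension $e$, with $T_x V(\hy) = \Dir_x(\cX)$; by Theorem 2.3 of \cite{CJS} it lies in the Hilbert-Samuel stratum $\widehat \cX(\nu)$, with $\nu := H_\cX(x)$. The crucial geometric step is to show that $V(\hy)$ equals the entire stratum near $x$: any irreducible component of $\widehat \cX(\nu)$ through $x$ has tangent space inside $\Dir_x(\cX)$ and hence dimension at most $e$, so $V(\hy)$ is already a top-dimensional component; the hypothesis $\dim(R/I) \leq 2$ then rules out additional components through $x$ via a short case analysis on $e \in \{0,1,2\}$ (e.g.~for $e = 1$, any second permissible curve through $x$ in $\widehat \cX(\nu)$ would share the one-dimensional directrix as tangent at $x$ and so coincide with $V(\hy)$ in the regular ambient scheme). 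Combined with the closedness of $\cX(\nu)$ near $x$ (Theorem \ref{Thm:specializationHS}) and faithfully flat descent of regularity from $\hR/\langle \hy \rangle$ to the stalk $R_\fp$, this produces an ideal $\fp = \langle y_1, \ldots, y_r \rangle \subset R$ with $(u,y)$ a {\RSP} of $R$ and $\fp \hR = \langle \hy \rangle$, proving (i) and the permissibility part of (ii).

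The no-near-point half of (ii) is then immediate from Theorem \ref{Thm:HiroMizutani}: every near point above $x$ would lie in $\IP(\Dir_x(\cX)/T_x V(y))$, which is empty since $T_x V(y) = \Dir_x(\cX)$. For (iii), any $(\hat z)$ in $\hR$ with $\Delta(I\hR; u; \hat z) = \emptyset$ yields by the same argument a standard basis $(\hat g)$ of $I\hR$ satisfying $\hat g_i \in \langle \hat z \rangle^{\nu_i}$; hence $V(\hat z)$ is a permissible regular center of dimension $e$ contained in $\widehat\cX(\nu) = V(\hy) = V(\fp\hR)$, and since $\langle \hat z \rangle$ and $\fp \hR$ are both generated by parts of an {\RSP} of $\hR$ defining the same reduced subscheme, they coincide. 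The principal obstacle I anticipate is the identification of $V(\hy)$ with the \emph{entire} Hilbert-Samuel stratum locally at $x$---the passage from ``irreducible component'' to ``entire stratum'' genuinely uses $\dim(R/I) \leq 2$ and would fail in higher dimension.
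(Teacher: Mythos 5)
Your proposal shares the overall scaffolding with the paper: pass to $\hR$ via Theorem~\ref{Thm:Hironaka}, observe that $\cpoly{I\hR}{u} = \emptyset$ forces every non-zero term of $\hf_i$ to have $|B| \geq \nu_i$ and hence $\hf_i \in \langle \hy \rangle^{\nu_i}$, deduce permissibility of $V(\hy)$ from Theorem~\ref{Thm:2.2.(2)_CJS}(4), apply Theorem~\ref{Thm:HiroMizutani} to rule out near points, and descend to $R$ using excellence (the paper invokes \cite{CJS} Lemma~1.37 here rather than an ad hoc faithfully-flat argument, but the intent is the same).

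The genuine gap is in your justification of the central claim that $V(\hy)$ is the \emph{entire} Hilbert--Samuel stratum through $x$. You argue by tangent spaces: every component $Y$ of $\widehat\cX(\nu)$ through $x$ has $T_x(Y) \subset \Dir_x(\cX)$, hence ``for $e=1$, any second permissible curve through $x$ \dots would share the one-dimensional directrix as tangent at $x$ and so coincide with $V(\hy)$.'' That last inference is false: two regular curves in a regular ambient scheme can be tangent at $x$ without being equal (e.g.\ $V(y)$ and $V(y - u^2)$ in $\Spec k[[u,y]]$). Moreover, the containment $T_x(Y) \subset \Dir_x(\cX)$ via Theorem~\ref{Thm:2.2.(2)_CJS}(2) presupposes that $\cX$ is normally flat along $Y$ at $x$, which via Theorem~2.3 needs $Y$ to be \emph{regular} at $x$; your argument does not justify why a hypothetical extra component could not be singular at $x$. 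The paper closes exactly this gap by a blow-up argument that requires no regularity of $Y$: if $Y$ were a second component through $x$, then after blowing up $V(\hy)$ the strict transform of $Y$ is still in the Hilbert--Samuel locus and meets the exceptional fibre over $x$, producing a near point above $x$ and contradicting the already-established no-near-point statement. You should replace your tangent argument by this contradiction. A second, smaller point: in the paper the hypothesis $\dim(R/I) \leq 2$ enters to guarantee the characteristic hypothesis $\car k(x) \geq \dim(X)/2 + 1$ of Theorem~\ref{Thm:HiroMizutani}; your write-up locates the use of the dimension restriction in ``ruling out additional components,'' which is not where the paper actually needs it.
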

 
 \begin{proof}
 First, we show (iii) and explain the construction of $(y)$. 
	Let $ ( \widehat z ) $ be a system of elements in $ \widehat{R} $ as in the statement.
	Note: In particular, $ ( \widehat{z} ) $ determines the directrix of V$(I\widehat{R})=:\widehat{{\cX}}$ at $ x $.
	
	By Theorem \ref{Thm:2.2.(2)_CJS}(4), %\cite{CJS} Theorem~2.2~(2)(iv), 
	V$(\widehat z)$ is permissible for $\widehat{{\cX}}$ at the origin $ x $.
	Hence it is contained in the Hilbert-Samuel stratum of $ \widehat{{\cX}}$.  
	Let $\widehat{\pi}:\widehat{{\cX}'}\longrightarrow \widehat{{\cX}}$ be the blow up of $\widehat{{\cX}}$ along V$(\widehat{z})$.
	By Theorem \ref{Thm:HiroMizutani}, %(\cite{CJS} Theorem~2.14), 
	there is no point $x'\in \widehat{{\cX}'}$ near to $x$.
	(At this point we need that $ \dim ( R/I) \leq 2 $, because otherwise the assumptions of Theorem \ref{Thm:HiroMizutani} do not necessarily hold). 
	
	As a conclusion, V$(\widehat z)$ must be the whole Hilbert-Samuel stratum of $\widehat{{\cX}}$:
	Suppose there is another component $ Y $ with $ x \in Y $.
	Then the intersection of V$(\widehat z)$ and $ Y $ is non-empty. 
	The strict transform of $ Y $ under $ \widehat{\pi} $ is still contained in the Hilbert-Samuel locus and must have non-trivial intersection with the exceptional divisor. 
	Therefore a point $ x' $ contained in the intersection is near point to $ \widehat{\pi}(x') \in V(\widehat{z}) $. 
	This is a contradiction.
 	
 	The excellence of $ R $ implies that the Hilbert-Samuel stratum in the completion is the pull back of the Hilbert-Samuel stratum at $x$, see \cite{CJS} Lemma~1.37. 
 	So we can choose a sequence of parameters $ (y):=(y_1,\ldots,y_r) $ in $R$ such that $\langle y \rangle \cdot\widehat{R}=\langle \widehat z \rangle $.
 	This shows (iii).
 	Moreover, V$(y)$ is the Hilbert-Samuel stratum of $ \cX = V(I)$ in a neighborhood of $x$. 
 	Thus (i) and (ii) hold.
 \end{proof}

 \begin{Rk}\label{Rk:polyvide} 
 In case $e = \dim (R/I)$, under the hypotheses of Proposition~\ref{polyvide}, part (i) gives: 
 V$(y)= $V$(I)$ which is a regular scheme.
 \end{Rk}

 \begin{Prop}\label{Cor:blowuppolyvide} 
 Let $I\subset R$ be an ideal in an excellent regular local ring $R$  
 such that $ \dim (R / I) \leq 2$. 	
 Set $ \cX:=$V$(I)\subset \Spec(R) $ and let $x$ be the closed point. 
 %Assume $$ e:= \dim(\Dir(V(I))\leq 1 .$$
 
 Then, with the notations of Theorem \ref{Thm:Hironaka}, %({\bf notations in the definition of characteristic polyhedra......})
 there exists a sequence of parameters $(y):=(y_1,\ldots,y_r)$ in $R$ and a standard basis $(f):=(f_1,\ldots,f_m) $ of $I$ in $ R $ such that 
 \rot{$ (f;u;y) $ is well-prepared and}
 $$\Delta(I;u)=\Delta(I;u;y)=\Delta(f;u;y).$$
 \end{Prop}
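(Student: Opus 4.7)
The plan is to combine Theorem \ref{Thm:Hironaka} with Proposition \ref{polyvide}, descending the well-prepared data from $\hR$ to $R$ via an auxiliary ideal whose characteristic polyhedron is empty.

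First, I would apply Theorem \ref{Thm:Hironaka} to obtain a well-prepared standard basis $(\hf)=(\hf_1,\ldots,\hf_m)$ of $I\hR$ and a system $(\hy)=(\hy_1,\ldots,\hy_r)$ in $\hR$ extending $(u)$ to an \RSP and determining the directrix of $I$, with $\poly {\hf}u{\hy} = \cpoly Iu$. Second, in order to descend $(\hy)$ to $R$, I would construct an auxiliary ideal $\tilde I \supset I$ in $R$ by adjoining to $I$ suitable monomial generators in $(u)$, one associated to each of the finitely many vertices of $\cpoly Iu$. The adjoined monomials will be chosen so that $\cpoly {\tilde I}u = \emptyset$ while $(\hy)$ still determines the directrix of $\tilde I$. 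Since $\dim(R/\tilde I) \leq \dim(R/I) \leq 2$, Proposition \ref{polyvide} applies to $\tilde I$ and yields $(y)=(y_1,\ldots,y_r)$ in $R$ with $\langle y \rangle \hR = \langle \hy \rangle$; then $(u,y)$ is an \RSP of $R$, $(y)$ determines the directrix of $I$, and writing $\hy_j = \sum_k a_{jk}\,y_k$ with $(a_{jk}) \in GL_r(\hR)$ one checks $\poly Iuy = \cpoly Iu$ as follows: any monomial $\hy^B$ re-expanded in $(u,y)$ contributes in its lowest $y$-degree the same ``level'' $|B|$, hence preserves the denominator $\nu_i - |B|$ of the polyhedron points, while the $y$-dependent corrections of the entries of $A$ raise the $y$-degree and shift the numerator outward, producing only points lying componentwise above points already in $\cpoly Iu$.

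Third, to extract an honest standard basis $(f) \subset I$ in $R$ realizing $\poly fuy = \poly Iuy = \cpoly Iu$, I would start from any standard basis $(f^{(0)}) \subset I$ in $R$ and apply Hironaka's finite normalization process (\cite{HiroCharPoly} Lemma~(3.15)): at each of the finitely many vertices of $\poly{f^{(0)}}uy$ lying strictly outside $\poly Iuy$, subtract a suitable $R$-linear combination of the $f^{(0)}_j$'s ($j<i$) from $f^{(0)}_i$ to normalize away the offending monomial. Since only finitely many vertices are involved and each normalization step stays within $R$, after finitely many iterations the resulting standard basis $(f) \subset I$ achieves $\poly fuy = \poly Iuy$, which equals $\cpoly Iu$ by the previous step.

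The main obstacle is the construction of $\tilde I$ in the second step: simultaneously forcing $\cpoly {\tilde I}u = \emptyset$ and preserving that $(\hy)$ is directrix-defining for $\tilde I$ requires a careful vertex-by-vertex selection of the adjoined monomials, guided by the well-prepared data supplied by Theorem \ref{Thm:Hironaka}. The hypothesis $\dim(R/I)\leq 2$ is decisive here: it is inherited by $\tilde I$ and underpins Proposition \ref{polyvide} via Theorem \ref{Thm:HiroMizutani}, whose characteristic assumption is automatic in dimension $\leq 2$.
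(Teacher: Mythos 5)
Your approach differs substantially from the paper's. The paper's proof is a case split on $e := \dim(\Dir_\maxIdeal(I))$: for $e=0$ there is no $(u)$ and every polyhedron is empty; for $e=1$ the polyhedron $\cpoly{I}{u_1}$ is a half-line, so either Hironaka's preparation terminates after finitely many steps (yielding $(f),(y)$ already in $R$), or it is infinite, in which case $\cpoly{I}{u_1}=\emptyset$ and Proposition~\ref{polyvide} finishes; the essential case $e=2$ is delegated entirely to \cite{CSCcompl} Theorem~A. You instead attempt a uniform argument via an auxiliary ideal $\tilde I\supset I$.

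The construction of $\tilde I$ in your second step cannot work as described. You want $\cpoly{\tilde I}{u}=\emptyset$ and, simultaneously, that $(\hy)$ still determines the directrix of $\tilde I$; these two conditions are incompatible as soon as any pure $u$-monomial $u^A\notin I$ is adjoined. Write $\widehat Y_j:=in_\maxIdeal(\hy_j)$, so $gr_\maxIdeal(R)\cong \Resfield[U,\widehat Y]$. Then $u^A\in\tilde I$ gives $U^A\in In_\maxIdeal(\tilde I)$. On the other hand, $In_\maxIdeal(\tilde I)\cap\Resfield[\widehat Y]$ is contained in the irrelevant ideal of $\Resfield[\widehat Y]$, so every monomial occurring in an element of the ideal $(In_\maxIdeal(\tilde I)\cap\Resfield[\widehat Y])\cdot gr_\maxIdeal(R)$ has strictly positive $\widehat Y$-degree. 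Since $U^A$ has $\widehat Y$-degree zero, it cannot lie in that ideal, and the defining condition in Definition~\ref{Def:dir}(1) fails for $(\hy)$ with respect to $\tilde I$: the directrix ideal of $\tilde I$ properly contains $\langle\widehat Y\rangle$. No vertex-by-vertex choice of the $u^A$ can circumvent this, because the obstruction uses only that $u^A$ is a nonzero monomial in $(u)$. Consequently Proposition~\ref{polyvide} applied to $\tilde I$ would hand you parameters $(y)$ adapted to the wrong (larger) directrix ideal of $\tilde I$, and there is no reason to expect $\langle y\rangle\hR=\langle\hy\rangle$. Independently, it is also unclear why the adjunction should empty $\cpoly{\tilde I}{u}$: the element $u^A$ by itself contributes the point $A/|A|$, of modulus $1$, to the associated polyhedron, so the natural generating systems for $\tilde I$ acquire new low-lying points rather than being pushed off to infinity. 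The genuinely hard content — descending a well-prepared $(f,y)$ from $\hR$ to $R$ when $e=2$ — is precisely what \cite{CSCcompl} Theorem~A supplies in the paper, and your proposal does not replace it.
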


\begin{Rk} 
	\rot{Once we have the equality $ \poly guy = \cpoly Iu $, for some $ ( y ) $ as above and some standard basis $ ( g ) = (g_1, \ldots, g_m) $ of $ I $, none of the vertices is solvable.
		But $ (g; u; y) $ is not necessarily normalized at every vertex.
		After normalizing finitely many vertices, we obtain the desired well-prepared $ (f;u;y) $.}
\end{Rk}

\begin{proof}[Proof of Proposition \ref{Cor:blowuppolyvide}]
 In the case $e=0$, there is no $(u)$.
 Hence, for every $(y)$ and any standard basis $ ( f ) $ of $ I $, the polyhedron $\Delta(f;\emptyset;y)$ is empty, by definition.
 Thus there is nothing to prove in this extreme case.
 \rot{Note that $ ( f) $ is automatically well-prepared since there is no vertex.} 

 If $e=1$ then $(u)=(u_1)$.
 Let $ ( y) $ any system of elements in $ R $ extending $ (u_1) $ to a {\RSP} for $ R $. 
 If $ \Delta(I;u_1;y) = \emptyset $ then we are done.
 
 Let $ (f) $ be any standard basis for $ I $. 
 Suppose $\Delta(f;u_1;y)=\delta(f;u_1;y)+\R_{\geq 0}$, for some $ \delta(f;u_1;y) \in \IQ $. 
 Either Hironaka's process to construct the characteristic polyhedron (which is either a semi-line or $\emptyset$ in this case) is finite and we get the result, or Hironaka's process is infinite.
 But in the latter case we get $\Delta(I,u_1)=\emptyset$ and we can apply Proposition~\ref{polyvide}. 
 	
 For the case $e=2$, we refer to \cite{CSCcompl} Theorem~A.	
 \end{proof}

The following is an important invariant coming from the characteristic polyhedron and 
one of the key ingredients for the study of singularities.
 
\begin{Def}
\label{Def:delta}
	With notations of Theorem \ref{Thm:Hironaka}, we define
	$$\delta(I;u):=\mathrm{inf}\{\vert v \vert; v\in \Delta(I;u)\} \in \IQ_{\geq 0} \cup \{ \infty \} =:\IQ_\infty.$$
	
	Let $ ( f ) = (f_1, \ldots, f_m ) $ be generators of $ I $ and 	
	\rot{$ (w,z) = (w_1, \ldots, w_d, z_1, \ldots, z_s ) $ be any regular system of parameters for $ R $ such that $ f_i \notin \langle z \rangle $, for all $ i \in \{1, \ldots, m\} $. 
	Then $ \poly fwz $ is defined (Definition \ref{Def:Poly}(2)) and we introduce} 
	$$
		\rot{\delta(f;w;z):= \inf\{\vert v \vert; v\in \Delta(f;w;z)\} \in \IQ_\infty.}
	$$		
\end{Def}

\rot{Clearly, we have $ \delta(f;u;y) \leq \delta(I;u) $,
	if $ ( y) = (y_1, \ldots, y_r ) $ are elements in $ R $ extending $ ( u) $ to a {\RSP} for $ R$.
	Furthermore, we always have $ \delta(I;u) > 1 $ since we assume that $ ( y ) $ yields the ideal generating the directrix of $ I $.
	In contrast to this, we may have $ \delta(f;u;y) \leq 1 $, as the following example shows:}

\begin{Ex}
	\rot{Suppose $ (u_1, u_2, y_1, y_2) $ is a {\RSP} for $ R $.
		Consider
		$$
		I := \langle f_1, f_2 \rangle \subset R, \ \ \
		\mbox{ for } \
			f_1 := y_1^a + u_1^b,
			\ \
			f_2 := y_2^c + u_2^d,
		$$
		where $ a, b, c, d \in \IZ_+ $ with $ a < \min\{ b, c \} $ and $ c < d $. 
		Then $ (y_1, y_2) $ determines the directrix of $ I $, $ \poly fuy = \cpoly Iu $
		and $ \delta(I;u) = \min\{ \frac{b}{a}, \frac{d}{c} \} > 1 $.
		Let
		$$ 
			g_1 := f_1, \ \ \
			g_2 := f_2 + u_1 f_1 = y_2^c  + u_2^d + u_1 y_1^a + u_1^{b+1}.
		$$
		Then $ (g) = (g_1, g_2) $ are generators for $ I $ and
		$$
			\delta(g;u;y) =
			\min 
			\left\{ 
				\frac{b}{a}, \frac{d}{c}, \frac{1}{c-a}, 
				\frac{b+1}{c}
			\right\}
			\leq \frac{1}{c-a} \leq 1.
		$$
		}
\end{Ex}

In fact, Theorem \ref{Thm:Hironaka} provides already some useful information on $ \delta(I;u) $.
Namely, we get the following corollary from it.

\begin{Cor}
\label{Cor:delta_in_1/N_Z^e}
	Let $ I \subset R $ be a non-zero ideal in a regular local ring $ (R, M) $
	and let $ (u,y) $ be a {\RSP} for $ R $ such that $ (y) $ determines the directrix of $ I $ at $ M $.
	Suppose $ ( \hf ) = ( \hf_1, \ldots, \hf_m ) $ is the standard basis for $ \widehat{I} = I \cdot \hR $ and $ (\hy ) $ is the system of regular elements in $ \hR $ of the statement of Theorem \ref{Thm:Hironaka}.
	In particular, $ \poly{\hf}{u}{\hy} = \cpoly {\ideal}u $.
	
	Suppose 	$ \cpoly {\ideal}u \neq \emptyset $.
	If we set $ N := \ord_\maxIdeal (\hf_m) $, then
	$$
		\delta (I; u ) \in \frac{1}{N!} \cdot \IZ_{\geq 0}.
	$$ 
\end{Cor}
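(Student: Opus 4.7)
The plan is to exploit Theorem \ref{Thm:Hironaka} directly, read off the possible denominators from the construction of the associated polyhedron $ \poly{\hf}{u}{\hy} $, and check that the infimum defining $ \delta(I;u) $ is actually attained at a vertex whose coordinates have a controlled denominator.

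First I would note that, by Theorem \ref{Thm:Hironaka}, $ \cpoly{I}{u} = \poly{\hf}{u}{\hy} $, so one may compute $ \delta(I;u) $ from the associated polyhedron. Writing finite expansions $ \hf_i = \sum_{A,B} C_{A,B,i}\, u^A \hy^B $ with $ C_{A,B,i} \in \hR^\times \cup \{0\} $ and $ \nu_i := \ord_M(\hf_i) $, the polyhedron $ \poly{\hf}{u}{\hy} $ is, by definition, the smallest $ F $-subset of $ \IR^e_{\geq 0} $ containing the finite set of points
\[
	S := \left\{ \,\frac{A}{\nu_i - |B|} \;\Bigm|\; 1 \leq i \leq m, \; C_{A,B,i} \neq 0, \; |B| < \nu_i \,\right\}.
\]
Since the expansions and $ m $ are finite, $ S $ is finite, so $ \poly{\hf}{u}{\hy} = \mathrm{conv}(S) + \IR^e_{\geq 0} $ has only finitely many vertices, and each vertex lies in $ S $.

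Second, since $ \cpoly{I}{u} \neq \emptyset $ and the linear functional $ v \mapsto |v| = v_1 + \cdots + v_e $ is bounded below by $ 0 $ on $ \IR^e_{\geq 0} $, the infimum $ \delta(I;u) $ is attained, and attained at a vertex $ v^\star $ of $ \poly{\hf}{u}{\hy} $ (the minimum of a linear function bounded below on a polyhedron of the form $ \mathrm{conv}(S) + \IR^e_{\geq 0} $ is taken at an extreme point). Thus $ v^\star = A/(\nu_i - |B|) $ for some admissible $ (A,B,i) $.

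Finally, since $ ( \hf) $ is a standard basis ordered so that $ \nu_1 \leq \nu_2 \leq \cdots \leq \nu_m = N $, and since $ |B| < \nu_i $ gives $ 1 \leq \nu_i - |B| \leq \nu_i \leq N $, the integer $ \nu_i - |B| $ divides $ N! $. Therefore every coordinate of $ v^\star $ lies in $ \frac{1}{N!}\IZ_{\geq 0} $, whence
\[
	\delta(I;u) = |v^\star| \in \frac{1}{N!} \cdot \IZ_{\geq 0},
\]
as claimed. The only mildly subtle point is the attainment of the infimum at a vertex of $ S $, but this is immediate once we observe that $ S $ is finite; no appeal to the more delicate finiteness features of Hironaka's preparation process is needed, because Theorem \ref{Thm:Hironaka} already furnishes the well-prepared datum $ (\hf, u, \hy) $ in $ \hR $.
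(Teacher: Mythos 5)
Your proof is correct and follows essentially the same route as the paper's: identify $\delta(I;u)$ with $\delta(\widehat{f};u;\widehat{y})$ via Theorem \ref{Thm:Hironaka}, observe that the infimum of $|v|$ over $\poly{\hf}{u}{\hy}$ is attained at a vertex, note that each vertex has the form $A/(\nu_i - |B|)$ with $1 \leq \nu_i - |B| \leq \nu_m = N$, and conclude via $\frac{1}{\nu_i - |B|}\IZ \subset \frac{1}{N!}\IZ$. The only cosmetic difference is the way you justify attainment: you invoke finiteness of the generating set $S$ (which follows from the finiteness of the expansion \eqref{eq:expansion}), whereas the paper observes directly that the slice $\poly{\hf}{u}{\hy} \cap \{|v|=\delta\}$ is a nonempty face and hence contains a vertex; both are valid.
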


\begin{proof}
	Since $ \poly{\hf}{u}{\hy} = \cpoly {\ideal}u $ we have $ \delta(\hf;u;\hy) = \delta (\ideal;u) $.
	Further, the assumption $ \cpoly Iu \neq \emptyset $ implies $ \delta(I;u) < \infty $.
	 
	The intersection $ \poly{\hf}u{\hy} \cap \{ v \in \IR^e_{\geq 0 } \mid  |v| = \delta(\hf;u;\hy) \}$ is non-empty and defines a face of the polyhedron.
	In particular, it contains at least one vertex of $ \poly{\hf}{u}{\hy} $.
	The vertices of $ \poly{\hf}{u}{\hy} $ are of the form $ \frac{A}{\nu_i - |B|} $, for $ A \in \IZ^e_{\geq 0 }$, $ B \in \IZ^r_{\geq 0 } $, $ |B| \leq \nu_i $, and $ \nu_i := \ord_M (\hf_i ) $, $ 1 \leq i \leq m $.
	Since $ ( \hf ) $ is a standard basis we have $ \nu_1 \leq \ldots \leq \nu_m = N $.
	This implies the assertion.	
\end{proof}

%\smallskip

The transformation laws of the characteristic polyhedron are difficult to apprehend. 
This is one of the major difficulties of mixed or positive characteristics.
In Proposition \ref{Prop:originblowup} we study the easy case when we look at the origin of a chart after a permissible blowing up.
But first let us fix some notations. % and recall crucial results of \cite{CJS}.

\begin{Setup}
\label{Setup:blowup}
 Let $(R, \maxIdeal , k ) $ be an excellent regular local ring and ${\ideal}\subset R$ an ideal.
 Suppose there exists a standard basis $( f ) = ( f_1, \ldots, f_m ) $ of $ {\ideal}  $ and a system of elements $ ( y ) = ( y_1, \ldots, y_r) $ in $ R $ such that 
 \begin{itemize}
 	\item	$ ( u, y ) :=(u_1,\ldots,u_e,y_1, \ldots, y_r) $ is a {\RSP}~of $ R $, 
 	\item	$ ( y ) $ determines the directrix of $ {\ideal} $, and 
 	\item	$( f\rot{; u}; y )$ is well prepared (Definition \ref{Def:initial_vertex}).
 \end{itemize}
 \rot{By Theorem~\ref{Thm:Hiro4.8}, the last property implies 
 $
 \poly{f}{u}{y} = \cpoly {\ideal}u
 $.}
	\rot{For $1\leq i \leq m$, we set  $ \nu_i:=\ord_\maxIdeal(f_i)$.}
 
 Assume that, $D:=$V$(u_1,\dots,u_d,y)$, $1\leq d \leq e$, is a permissible center for $ \mathcal X := \Spec (R/ \ideal) $ at the origin $ x $.
 Let $ \mathcal X' \longrightarrow \mathcal X $ be the blowing up along $D$ and $x'$ be the point in $ \mathcal X'$ of parameters 
 $$
 	(u',y'):=\left(u_1,\frac{u_2}{u_1},\ldots,\frac{u_d}{u_1},u_{d+1},\ldots,u_e, \frac{y}{u_1}\right).
 $$ 
 Further, we set $( f' ) := \left( \dfrac{f_1}{u_1^{\nu_1}}, \ldots, \dfrac{f_m}{u_1^{\nu_m}} \right) $ and we denote the strict transform of $ I $ by $ I' $, 
$$
	I'= \langle f'  \rangle \subset R\left[\frac{u_2}{u_1},\ldots,\frac{u_d}{u_1}, \frac{y}{u_1}\right]_{\langle u' , y' \rangle }.
	%{\langle y/u_1,u_1,u_2/u_1,\ldots,u_d/u_1,u_{d+1},\ldots,u_e \rangle }.
$$ 
\end{Setup}

 Note: The assumption on the existence of such elements $ (f, y ) $ is always true if $ R $ is complete (Theorem \ref{Thm:Hironaka}), or if $ \dim (R/J) \leq 2 $ (Proposition \ref{Cor:blowuppolyvide}).

\begin{Prop}
\label{Prop:originblowup}
	Let the situation be as in Setup \ref{Setup:blowup}.
	{\em Assume that $x'$ is very near to $x$}.
 	\begin{enumerate}
 	\item[(1)] 	 Then  $( f' ) $ is a standard basis of the strict transform $ I' $ of $ I $ (at $ x' $) and $( f', \rot{u'} ,y')$  is well-prepared. % Hironaka's sense.
 	In particular,
 	$$
 		\poly{f'}{u'}{y'} = \Delta (I'; u').
 	$$
 	\item[(2)]
 	Suppose the center of the blow up is the origin (i.e., $ d = e $ in Setup \ref{Setup:blowup}) and that $ e = 1 $.
 	Then we have
 	$$
 		\delta(I';u') = \delta (I;u) - 1 .
 	$$ 
\end{enumerate} 	
 \end{Prop}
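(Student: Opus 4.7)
The strategy is to compute explicitly how the standard basis and its polyhedron transform under the blow-up at $x'$, then use the very near hypothesis together with the well-preparation of $(f, u, y)$ to pin down the new initial forms. Expand each $f_i = \sum C_{A,B,i}\, u^A y^B$ as in \eqref{eq:expansion}. Since $(y)$ determines the directrix of $I$ at $\maxIdeal$, every term with $|A|+|B| = \nu_i$ has $|A|=0$, so $f_i = F_i(y) + \sum_{|A|+|B|>\nu_i} C_{A,B,i}\, u^A y^B$. Permissibility of $D$ (Theorem \ref{Thm:2.2.(2)_CJS}(4)) yields $a_1 + \cdots + a_d + |B| \geq \nu_i$ for every nonzero term. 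Substituting $u_j = u_1' u_j'$ for $2 \leq j \leq d$ and $y_j = u_1' y_j'$, then dividing by $u_1^{\nu_i}$, transforms each monomial into
\[
	(u_1')^{a_1 + \cdots + a_d + |B| - \nu_i}\, (u_2')^{a_2} \cdots (u_e')^{a_e}\, (y')^B,
\]
with non-negative first exponent, and reading off the induced transformation of vertex points yields the fundamental map
\[
	\sigma \colon v \longmapsto (v_1 + v_2 + \cdots + v_d - 1,\, v_2, \ldots, v_e),
\]
so that the vertex points of $\Delta(f'; u'; y')$ are precisely the $\sigma$-images of those of $\Delta(f; u; y)$.

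Since $x'$ is near, Theorem \ref{Thm:BennHiroSingh}(4) gives $\ord_{M'}(f_i') = \nu_i$; since $x'$ is very near and $\delta_{x'/x} = 0$ (the residue field is unchanged), one has $e_{X'}(x') = e$. A direct expansion then yields
\[
	\mathrm{in}_{M'}(f_i') \;=\; F_i(Y') + G_i,
\]
where $G_i$ gathers the contributions $\overline{C_{A,B,i}} (U')^{A'}(Y')^B$ coming from terms with $|A|+|B|>\nu_i$ that happen to achieve new $M'$-order $\nu_i$; a direct check shows any such term has $A' \neq 0$, so $G_i$ lies outside $k[Y']$. The key point is that if some $G_i$ were nonzero, the condition $e_{X'}(x') = e$ would force the system $(F_i(Y') + G_i)$ to lie in $k[(Y'_j + \lambda_j {U'}^{v'})_j]$ for some common $\lambda \in k^r$ and some minimal vertex $v' \in \IZ_{\geq 0}^e$ of $\Delta(f'; u'; y')$; transferring back via $\sigma^{-1}$, the corresponding vertex $v \in \Delta(f; u; y)$ would be solvable with parameters $\lambda$, contradicting the well-preparation of $(f, u, y)$. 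Hence $G_i = 0$ for every $i$, so $\mathrm{in}_{M'}(f_i') = F_i(Y')$, $(f')$ is a standard basis of $I'$ at $x'$, and $(y')$ determines its directrix. For each remaining vertex $v$ of $\Delta(f; u; y)$, the identity
\[
	u^{(\nu_i - |B|) v}\, y^B \;=\; u_1^{\nu_i} \cdot (u')^{(\nu_i - |B|)\, \sigma(v)}\, (y')^B
\]
shows that $\mathrm{in}_v(f_i)$ transforms, under $Y \mapsto Y'$, $U \mapsto U'$, $v \mapsto \sigma(v)$, exactly into $\mathrm{in}_{\sigma(v)}(f_i')$; hence $v$ is solvable with parameters $(\lambda_j)$ iff $\sigma(v)$ is, and the normalization condition \eqref{eq:normalized_condition} transfers verbatim. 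Therefore $(f', u', y')$ is well-prepared, and Theorem \ref{Thm:Hironaka} (in the completion of the local ring at $x'$, combined with Proposition \ref{Cor:blowuppolyvide} to identify $\Delta(I'; u')$ with its counterpart in the completion) gives $\Delta(f'; u'; y') = \Delta(I'; u')$, proving (1). Specializing to $d = e = 1$ reduces $\sigma$ to $v \mapsto v - 1$ on $\R_{\geq 0}$, so
\[
	\delta(I'; u') = \delta(f'; u'; y') = \delta(f; u; y) - 1 = \delta(I; u) - 1,
\]
which proves (2).

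The main obstacle is the vanishing $G_i = 0$ in the second paragraph: it requires translating the very-near condition $e_{X'}(x') = e$ into a solvability statement at some integer minimal vertex $v'$ of the new polyhedron, and pulling this back through $\sigma$ to contradict well-preparation at the corresponding original vertex. Once this step is settled, the transfer of well-preparation at the remaining vertices and the equality $\Delta(f'; u'; y') = \Delta(I'; u')$ follow formally from the monomial identity above.
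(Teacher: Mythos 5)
Your treatment of the vertex correspondence, the transfer of normalization and non-solvability under the map $\sigma$, and part (2) all match the paper's computation. The genuine difference is in how you try to establish the first claim of (1), that $(f')$ is a standard basis of the strict transform $I'$. The paper simply invokes [CJS] Theorem~8.1 for this; you attempt a self-contained argument via the very-near hypothesis, and this is where the proposal breaks down.

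Your deduction that $G_i = 0$ rests on the condition $e_{X'}(x') = e$. But $e_{X'}(x')$ is the dimension of the directrix of $In_{M'}(I')$, whereas the constraint you want to impose concerns the homogeneous system $(in_{M'}(f'_i))$. These two objects are related precisely by the assertion that $(f')$ is a standard basis of $I'$ — the very thing you are trying to prove — so the argument is circular. (Absent that, one only has $\langle in_{M'}(f'_1),\ldots,in_{M'}(f'_m) \rangle \subseteq In_{M'}(I')$, and the directrix of the smaller ideal may be strictly larger.) A separate imprecision: you phrase the forced structure as membership in $k[(Y'_j + \lambda_j U'^{v'})_j]$ for a vertex $v'$ of the polyhedron, which is a vertex-solvability condition, while the directrix constraint is about \emph{linear} forms only; the two do not match unless $|v'|=1$ and $v'$ is a coordinate vector, which you would need to establish. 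Finally, even if one accepted $in_{M'}(f'_i) = F_i(Y')$, the jump to ``$(f')$ is a standard basis of $I'$'' is unjustified: you must also verify $In_{M'}(I') = \langle F_1(Y'),\ldots,F_m(Y')\rangle$, which is exactly the nontrivial content of Hironaka's theorem that [CJS]~Theorem~8.1 supplies. The correct repair is to cite that theorem as the paper does (after which the vertex-transfer portion of your argument stands), rather than to derive the standard-basis property from the very-near condition.
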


\begin{Rk}[\cite{CJS} Proposition 8.6]
\label{Rk:delta_and_(very)near}
	It is worth to point out that:
	\begin{enumerate}
		\item[(1)]	If $ \delta(f',u',y') \geq 1 $ then $ x' $ is near to $ x $.
		\item[(2)]	If $ \delta(f',u',y') > 1 $ then $ x' $ is very near to $ x $.
	\end{enumerate}
	Moreover, the converse of both implications is true if $ (f',u',y') $ is prepared at any vertex contained in $ \{v \in \IR^e_{\geq 0} \mid |v| \leq 1 \} $.
	
	In fact, \cite{CJS} show a more general result.
	Namely, they also consider points different from the origin of the $ U_1 $-chart.
	But then, one has to add in (2) the extra assumption $ e_{\cX}(x)=e_{\cX}(x)_{k(x')} $ (which is clearly true if $ x' $ is the origin of the $ U_1 $-chart). 
\end{Rk} 
 
 \begin{proof}[Proof of Proposition \ref{Prop:originblowup}]
 Theorem 8.1 in \cite{CJS} provides that $ (f') $ is a standard basis for $ I' $.
 (Remember also Remark \ref{Rk:Only_std_not_u-std}).
 Let us determine $ \poly{f'}{u'}{y'} $:
Let $ h \in \{ f_1, \ldots, f_m \} $ and set $ n = ord_\maxIdeal ( h ) $.
Consider an expansion as in \eqref{eq:expansion},
$$
	h = \sum_{A,B} C_{A,B} \, u^A \, y^B.
$$
In the coordinates of $ x '$, 
$ (u',y'):=(u_1,\frac{u_2}{u_1},\ldots,\frac{u_d}{u_1},u_{d+1},\ldots,u_e, \frac{y}{u_1}) $, 
the strict transform of $ h $ is given by
$$
	h' = \sum_{(A,B) } C_{A,B} \, (u_1')^{ A_1 + \ldots+  A_d + |B| - n } \,  (u_2')^{A_2} \ldots (u_d')^{A_d} \, (u_{d+1}')^{A_{d+1}} \ldots  (u_e')^{A_e}\, (y')^B.
$$
We observe that
\begin{equation}
\label{eq:Exponent_u_1_after_blowuporigin}
	\frac{A_1 + \ldots + A_d + |B| - n}{n - |B|} = \frac{A_1 + \ldots + A_d }{n - |B|} - 1.
\end{equation} 
 
 Let $v':=(v'_1,\ldots,v'_e)$ be a vertex of $  \poly{f'}{u'}{y'}$.
 The previous computation shows that there is a vertex $v=(v_1,\ldots,v_e)\in \poly{f}{u}{y}$ such that
 \begin{equation} 
 \label{eq:pointafterblowup}
 	v'_1=v_1+\ldots + v_d - 1,\ v'_i=v_i,\ 2\leq i \leq e.
\end{equation}
 By \eqref{eq:initial_vertex_with_U^v}, we have
 $in_{v}(f_i):=F_i(Y)+\sum_{B, \vert B\vert< \nu_i} \lambda_{B,i} \, U^{(\nu_i - \vert B\vert)v}\, Y^B$, where   $\lambda_{B,i} \in k(x)=k(x')$ and $1\leq i \leq m$.
 We get 
 \begin{equation}
 \label{eq:in_v'_easycompute}
   in_{v'}(f'_i)= F_i(Y')+\sum_{B}  \lambda_{B,i} \, U^{(\nu_i-\vert B\vert)v'}\,{Y'}^B.
\end{equation}
 As $(f)$ is normalized at $ v $ (Definition \ref{Def:initial_vertex}(4)), $(f')$ is normalized at the vertex $ v' $. 
 
 Furthermore: $v\in \IZ_{\geq 0}^e\Leftrightarrow v'\in \IZ_{\geq 0}^e$.
 In this case, $v$ is not solvable (Definition \ref{Def:initial_vertex}(2)), i.e., there does not exist $ (\lambda) = (\lambda_1, \ldots, \lambda_r) \in k(x)^r=k(x')^r $ such that $in_{v}(f_i) = F_i(Y+\lambda U^v)$, for all $ 1 \leq i \leq m $.
 Then \eqref{eq:in_v'_easycompute} implies that $v'$ is also not solvable.
 
 As a conclusion we get that $ (f',u',y') $ is prepared at $ v '$.
 This implies the first part of the proposition.
 
 For (2), suppose $ d = e = 1 $. 
 Using $ \poly{f'}{u'}{y'} = \Delta(I'; u') $, \eqref{eq:Exponent_u_1_after_blowuporigin}, and $ \poly{f}{u}{y} = \Delta(I; u) $, we obtain
$$ 
	\delta(I';u') = \delta ( f';  u ; z' ) =  \delta ( f;  u ; y ) - 1 = \delta(I;u) - 1.
$$
\end{proof}

%\smallskip

In the remaining part of this section we study the information on the singularity $ R/I $ that we can obtain from the characteristic polyhedron.
In Definition~\ref{Def:delta}, we have introduced
$  \delta(I;u) =\mathrm{inf}\{\vert v \vert; v\in \Delta(I;u)\} \in \IQ_{ \infty } $.

Note that $ \delta (  {\ideal};u ) = \infty $, implies $ \cpoly{\ideal}u = \emptyset $.
Hence we already have enough information in order to improve the singularity if $ \dim (R/I) \leq 2 $, see Proposition~\ref{polyvide}.
For the case $ \delta ( {\ideal};u ) < \infty $ we need the following notion in order to state the theorem:

\begin{Def}
\label{Def:In_delta}
Let $ {\ideal} \subset R $ be a non-zero ideal in an excellent %Noetherian 
regular local ring $ R $ (not necessarily complete) 
and let $ ( u, y ) = ( u_1, \ldots, u_e; y_1, \ldots, y_r ) $ be a {\RSP}~for $ R $ 
such that $ ( y ) $ yields the directrix of $ {\ideal} $. 

Set $ \delta :=  \delta ( {\ideal}; u  ) > 1  $. 
Assume $\delta \neq \infty$.
For $ \lambda \in \IQ _{\geq 0} $ we define 
$$
	\cI_\lambda := \left\langle \, u^A y^B \;\Big|\; |B| + \frac{|A|}{\delta} \geq \lambda \, \right\rangle,
	\hspace{10pt}
	\cI_\lambda^+ := \left\langle \, u^A y^B \;\Big|\; |B| + \frac{|A|}{\delta} > \lambda \, \right\rangle,
$$ 
which yields a graded ring which we denote by $ gr_\delta ( R ) $,
$$ 
gr_\delta ( R ):=\bigoplus_{\lambda \in \IQ^+} {\cI_\lambda  / \cI_\lambda^+}.
$$
(Note that $ \delta $ is a positive number).
The quotients $ {\cI_\lambda  / \cI_\lambda^+}$ are zero except for a discrete set of $\lambda$.

In fact, we can define a ``monomial valuation'' $v_{\delta}$ on $R$ by 
$$
	v_{\delta}(u^A y^B):=     |B| + \frac{|A|}{\delta} ,
$$ 
and, for $ 0\not=g = \sum C_{A,B} \, u^A  y^B $ (finite sum with $ C_{A,B} \in R^\times \cup \{ 0\} $, as in \eqref{eq:expansion}),
$$
	v_{\delta}(g):=\mathrm{inf}\left\{|B| + \frac{|A|}{\delta} \mathrel{\Big|} C_{A,B} \in R^\times \right\}.
$$
Then $ gr_\delta ( R )$ is the graded ring associated to the filtration defined by $v_{\delta}$.

We denote the initial of $ u_i $ and $ y_j $ by the corresponding capital letters.
If we put weight $ 1 $ on the variables $ Y $ and $ \frac{1}{\delta} $ on the variables $ U $ then we have an isomorphism of quasi-homogeneous rings:
$$
	gr_\delta ( R ) \cong \Resfield [ U, Y],
$$
where $ \Resfield = R / \maxIdeal $ is the residue field of $ R $.
Furthermore, we define, for an element $ 0\not=g = \sum C_{A,B} \, u^A  y^B \in R $ (as above) 
%$$ 	%
\begin{equation}
\label{eq:in_delta}
 	in_\delta ( g ) := \sum\limits_{ |B| + \frac{|A|}{\delta} \, =\, v_\delta ( g ) } \overline{C_{A,B}} \; U^A \,Y^B, 
\end{equation}	 	
$$ 
%	 	\\[16pt]
	 	In_\delta ( \ideal ) := \langle \, in_\delta ( g ) \mid g \in \ideal \, \rangle,	
%\end{array}
$$
where the sum ranges over those $ ( A,B ) \in \IZ^e_{\geq 0} \times \IZ^r_{\geq 0}$ fulfilling $ |B| + \frac{|A|}{\delta} = v_\delta ( g ) $ and $ \overline{C_{A,B}} = C_{A,B} \mod M \in \Resfield $.
\end{Def}

\begin{Thm}
\label{Thm:alphabetainvariant_1_deltaonly}
Let $ {\ideal} \subset R $ be a non-zero ideal in an excellent regular local ring $ R $ with $ \dim (R/I) \leq 2 $.
Let $ ( u, y ) = ( u_1, \ldots, u_e ; y_1, \ldots, y_r ) $ be a {\RSP}~for $ R $ such that $ ( y ) $ yields the directrix of $ {\ideal} $ at $ M $. 

Then the number $ \delta (  {\ideal}; u  ) \in \  ] 1,\infty]  $ is an invariant of the singularity $ R / {\ideal} $, i.e., the only datum of $ R / {\ideal} $ is enough to compute $ \delta ( \ideal; u ) $.
The same is true for the quasi-homogeneous ring $ gr_\delta ( R ) /  In_\delta ( \ideal )$ when $\Delta ( {\ideal}, u )\not= \emptyset$, $ \delta := \delta (  {\ideal}; u  ) $.
	%		
%
%			
%\end{enumerate} 
In particular, these notions are independent of the embedding.
\end{Thm}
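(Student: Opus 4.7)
The plan is to factor the dependence of $\delta(I;u)$ and $gr_\delta(R)/In_\delta(I)$ on auxiliary data into three separately manageable pieces: independence from the system $(y)$ realizing the directrix (already built into the definition of $\Delta(I;u)$ as an intersection over all such $(y)$); independence from the completion $(u)$ of $(y)$ to a regular system of parameters (the main step); and independence from the embedding $R\twoheadrightarrow R/I$. As a preliminary, Proposition~\ref{Cor:blowuppolyvide} (valid because $\dim R/I\le 2$) supplies $(f)$ and $(y)$ already in $R$ with $\Delta(f;u;y)=\Delta(I;u)$, so we may argue with honest elements of $R$ rather than in $\widehat R$.

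The extremal case $\delta=\infty$ is handled by Proposition~\ref{polyvide}: the vanishing $\Delta(I;u)=\emptyset$ is equivalent to the purely intrinsic condition that the Hilbert--Samuel stratum at $x$ coincides with the regular permissible subscheme $V(y)$ and that its blow-up admits no near points above $x$. Every ingredient (Hilbert--Samuel function, permissibility, nearness) is a classical invariant of $R/I$, so $\delta=\infty$ is detectable from $R/I$ alone and no graded ring is involved.

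Assume henceforth $\delta<\infty$, so every vertex of $\Delta(I;u)$ satisfies $|v|>1$. Two completions $(u)$, $(u')$ of $(y)$ to a regular system of parameters differ by a composition of three elementary moves: a linear change $u_i\mapsto\sum_j a_{ij}u_j$ with $(a_{ij})$ invertible; a shift $u_i\mapsto u_i+\sum_k b_{ik}y_k$; and a higher-order correction $u_i\mapsto u_i+\phi_i$ with $\phi_i\in M^2$. A direct monomial computation shows each move transforms a potential vertex $v=A/(\nu-|B|)$ into potential vertices $v'$ with $|v'|\ge|v|$: the linear case gives $|v'|=|v|$; the shift case gives $|v'|=(|A|-k)/(\nu-|B|-k)\ge|v|$ precisely because $|v|>1$; the higher-order case reproduces $v$ as leading term, while every other contribution has strictly greater $v_\delta$-weight (since $v_\delta(\phi_i)\ge 2/\delta>1/\delta=v_\delta(u_i)$). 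Applying the same bound to the inverse change yields $\delta(I;u)=\delta(I;u')$. On the graded ring only the linear move acts nontrivially at weighted degree $\delta$ — the shift and higher-order moves multiply by elements of positive $v_\delta$-weight and so vanish at the level of $gr_\delta(R)$ — and the linear move induces a $k$-linear automorphism of $k[U,Y]=gr_\delta(R)$ sending $In_\delta(I)$ to the ideal computed in the new coordinates, yielding the desired $k$-algebra isomorphism.

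Finally, to eliminate dependence on the embedding, pass through the minimal embedding of dimension $\dim_k M_{R/I}/M_{R/I}^2$: any other embedding arises by adjoining parameters $t_1,\ldots,t_s\in I$ whose order-one initial forms lie in the directrix; such $t_i$ must enter $(y)$ rather than $(u)$ and contribute no vertex to the polyhedron, preserving $\delta$ and $gr_\delta(R)/In_\delta(I)$. The main foreseen obstacle is the careful verification of the higher-order step: one must check that re-running Hironaka's normalization-and-solvability preparation after the substitution cannot produce a new vertex of strictly smaller $|\cdot|$-norm. This reduces to the weight estimate $v_\delta(\phi_i)>v_\delta(u_i)$ combined with the combinatorics of two-dimensional $F$-subsets, in the spirit of \cite{CSCcompl}.
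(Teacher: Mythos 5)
Your proposal takes a genuinely different route from the paper's. The paper proves invariance by Hironaka's trick (following \cite{CosRevista}): adjoin an auxiliary variable $T$, run an explicit double sequence of permissible blow-ups on $\Spec((R/I)[T])$ (first of closed points, then of permissible curves), and recover $\delta(I;u)=1+\lim_{l\to\infty} a(l)/l$ from the count $a(l)$ of second-stage blow-ups; the graded algebra $gr_\delta(R)/In_\delta(I)$ is then identified with the coordinate ring of a terminal exceptional locus via \cite{CosRevista}~Th\'eor\`eme~A.4. Since every ingredient in that recipe (Hilbert--Samuel stratum, permissibility, nearness, blow-up) is formulated purely in terms of $R/I$, invariance comes for free. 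You instead fix an embedding and carry out a monomial analysis of changes of $(u)$. For the number $\delta$ itself this is sound and more elementary: the linear / $y$-shift / $M^2$-correction decomposition, combined with the equivalence $|v|\ge\delta\Leftrightarrow v_\delta(u^Ay^B)\ge\nu$, shows each move sends potential vertices to ones of norm $\ge\delta(I;u)$, and the obstacle you flag at the higher-order step is not a real one---Hironaka's preparation only shrinks the polyhedron and hence can only raise $\delta$, so running the estimate on the inverse change as well closes the argument.

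Two genuine gaps remain. First, for $gr_\delta(R)/In_\delta(I)$ you only vary $(u)$ and dismiss $(y)$-dependence as "built into the definition." That is correct for $\Delta(I;u)$, which is an intersection over all admissible $(y)$, but $In_\delta(I)$ in Definition~\ref{Def:In_delta} is not read off $\Delta(I;u)$: it is built from the monomial valuation $v_\delta$ attached to a \emph{concrete} pair $(u,y)$. Two well-prepared systems $(f,y)$ and $(g,z)$ both realizing $\Delta(I;u)$ may a priori induce different valuations (an $M^2$-shift $z_j=y_j+\psi_j$ whose $\psi_j$ contains a $u$-monomial of degree between $2$ and $\delta$ already changes $v_\delta(y_j)$), and you never show the resulting quotient algebras agree---exactly the place where the preparation machinery of \cite{HiroCharPoly} and \cite{CosRevista} is invoked in the paper. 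Second, "pass through the minimal embedding" by adjoining parameters $t_i\in I$ only compares nested presentations; matching two incomparable regular excellent presentations of $R/I$ without completing (only excellence is assumed) needs a further step. The paper's intrinsic blow-up formulation sidesteps both issues because it never chooses coordinates in the first place.
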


\begin{Obs} \label{Obs:nocompletion}
The invariance of $ \delta := \delta ( {\ideal};u ) $ can be shown by using Hironaka's trick:  
add a variable $T$ and define $\delta$ with the only datum ${(R/J)}[T]$.
This shows that $\delta$ may be defined by ${R/J}$ only, 
i.e., $\delta$ does not depend on the embedding. 
This 
\rot{argument is carried out}
by the first author \rot{in} \cite{CosRevista}.
Unfortunately, \cite{CosRevista} is written in the case over a field of characteristic zero; 
though this hypothesis is unnecessary. 
In fact, the characteristic zero assumption implies the good behavior of the ``distinguished data'' under permissible blowing ups and 
\rot{avoids the process of} 
Hironaka's preparation after each blowing~up.
This is mainly used in the proof of \cite{CosRevista}~Lemma~B.2.7.

In the general case, without any assumption on $ \dim(R/I)$, % or on the dimension of the directrix,
Proposition~\ref{Prop:originblowup} is enough for the proof of the invariance of $ \delta $ to go through
(after eventually replacing $R$ by its completion). 
Indeed, in the process described in \cite{CosRevista}, all the points considered after some permissible blowing ups are at the origin of a chart. 
By Proposition \ref{Cor:blowuppolyvide}, the assumption $ \dim(R/I)\leq 2$ %of Theorem \ref{Thm:alphabetainvariant_1_deltaonly} 
implies that we can find a standard basis $(f_1,\dots,f_m)$ of $I$ and $(y)$ a sequence of parameters in ${R}$ such that $(f,\rot{u},y)$ is well-prepared: % in Hironaka's sense: 
$\Delta(f;u;y) =\Delta(I;u)$. That is all we need to make the computations of \cite{CosRevista}. %The reader will realize that the results of 
\end{Obs}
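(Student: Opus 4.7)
The plan is to follow Hironaka's trick exactly as indicated by the authors in Observation \ref{Obs:nocompletion}, adapting the first author's proof in \cite{CosRevista} (which was written in characteristic zero) to the general excellent setting under the hypothesis $\dim(R/I) \leq 2$. The core idea is to show that $\delta(I;u)$ can be read off from $R/I$ alone by exhibiting its invariance under adjoining a polynomial variable and, consequently, under replacing one presentation $R \twoheadrightarrow R/I$ by another.

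First, I would verify the ``adding a variable'' step: if $T$ is a new indeterminate and we consider $I \cdot R[T]_{\langle \maxIdeal, T \rangle}$, then a standard basis of $I$ remains a standard basis in $R[T]$, the system $(y)$ still determines the directrix, and any finite expansion as in \eqref{eq:expansion} is $T$-free. Consequently $\Delta(f;u,T;y)$ is the cylinder $\Delta(f;u;y)\times \IR_{\geq 0}$ along the $T$-axis, so $\delta$ is unchanged. Second, given two presentations $R_1/I_1 \cong R/I \cong R_2/I_2$, one forms a common larger presentation by completion and iterated variable-addition and compares the $\delta$s; because each individual step preserves $\delta$, all three values coincide. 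This shows $\delta$ depends only on $R/I$, hence is independent of the embedding. For the second assertion, observe that the monomial valuation $v_\delta$ descends to a valuation on $R/I$ whose associated graded ring is precisely $gr_\delta(R)/In_\delta(I)$; since both $\delta$ and the valuation filtration are intrinsic to $R/I$, so is this quasi-homogeneous quotient.

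The main obstacle, as flagged in Observation~\ref{Obs:nocompletion}, is that the argument of \cite{CosRevista} involves following the ideal through a finite sequence of permissible blowing-ups and repeatedly invoking well-preparedness of the transforms. In characteristic zero this is automatic; in positive or mixed characteristic it can fail, which is precisely the difficulty addressed in \cite{CosRevista}~Lemma~B.2.7. The way around this is to observe that in the trick only points at the \emph{origin} of a chart are considered, and at such a point Proposition \ref{Prop:originblowup} guarantees that well-preparedness is preserved and $\delta$ transforms predictably; moreover, whenever one needs a well-prepared standard basis in $R$ (rather than in $\hR$), Proposition \ref{Cor:blowuppolyvide} produces one, using exactly the hypothesis $\dim(R/I)\leq 2$. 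Substituting these two propositions for the role played by the characteristic zero assumption in \cite{CosRevista}, each step of that proof carries over unchanged, and the theorem follows. Finally, the lower bound $\delta > 1$ in the statement is a direct consequence of the assumption that $(y)$ determines the directrix, since for $\delta \leq 1$ the vertex would yield a lower-order element in the initial ideal, contradicting minimality of the directrix generators.
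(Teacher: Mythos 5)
Your first two paragraphs take a genuinely different route from the paper, and the route has a gap. The cylinder observation is correct but much weaker than what is needed: it only shows $\delta$ is preserved when $T$ is adjoined to the $u$-variables of a \emph{fixed} embedding. The real difficulty is that $\Delta(I;u)$ manifestly depends on the choice of transverse parameters $(u)$, so it is not a priori clear that $\delta(I;u)$ does not, and nothing in your sketch addresses this. The ``common larger presentation by completion and iterated variable-addition'' step is where this dependence would have to be resolved, but as stated it is circular: after completion, two embeddings of the same $R/I$ are related by adding superfluous variables \emph{and} by a coordinate change, and proving that $\delta$ is invariant under the coordinate change is precisely the content of the theorem. ``Each individual step preserves $\delta$'' is the claim to be proved, not an observation one can invoke.

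What the Observation is actually pointing at, and what is carried out in the proof of Theorem \ref{Thm:alphabetainvariant_1_deltaonly}, is Hironaka's trick in the concrete form of \cite{CosRevista}: adjoin $T$, run the two nested sequences of permissible blow-ups (S1), (S2) on $\Spec((R/I)[T])$ (first along the line $L_0$, then along the curve $D$ in the exceptional locus corresponding to $\IP(\Dir)$), and count the length $a(l)$ of (S2) after $l$ steps of (S1). The resulting formula $\delta(I;u) = 1 + \lim_{l\to\infty} a(l)/l$ \emph{is} the proof of invariance, because both blow-up sequences are defined entirely by $\Spec(R/I)$ and not by any embedding; your proposal never states this formula or any substitute for it. Your third paragraph correctly identifies the roles of Propositions \ref{Prop:originblowup} and \ref{Cor:blowuppolyvide} (well-preparedness at chart origins, and existence of a well-prepared standard basis in $R$ when $\dim(R/I)\leq 2$, replacing the characteristic-zero distinguished-data argument of \cite{CosRevista}), and your justification of $\delta > 1$ is essentially right. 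But those propositions are auxiliary to the blow-up construction, not a substitute for it, and the construction itself is missing from your proposal.
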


%\smallskip

 \begin{proof}[Proof of Theorem \ref{Thm:alphabetainvariant_1_deltaonly}]
Let us recall Hironaka's trick and the main arguments of  \cite{CosRevista}.
We introduce a new variable, say $ T $, and set $ \cZ_0 := \Spec ((R/I)[T]) $.
Let $ L_0 $  be the line of ideal $M_{R/I}\cdot (R/I)[T]\subset (R/I)[T]$.
First, we blow up the closed point $ z_0 \in \cZ_0 $ corresponding to the maximal ideal $ M_{R/I} + \langle T \rangle $.
Let $ \cZ_1 $ be the affine $ T $-chart of the blow-up and $ z_1 \in \cZ_1 $ its origin.
We observe that $ z_1 $ is the unique closed point given by the intersection of the strict transform of the line $ L_0 $ and the exceptional divisor of the last blowing up.
Note that $ z_1 $ is very near to $ z_0 $.

We repeat this process $ l $ times for some $ l \in \IZ_+ $; 
i.e., in the next step we blow up with center $ z_1 $, consider the affine chart $ \cZ_2 $ complementary to the strict transform of the preceding exceptional divisor (= the affine $T$-chart), and let $ z_2 \in \cZ_2 $ be the intersection of the strict transform of the line $ L_0 $ and the exceptional divisor of the last blowing up.
Hence we obtain a sequence of permissible blowing ups
$$
	\cZ_l  \longrightarrow \cZ_{l-1} \longrightarrow \ldots \longrightarrow \cZ_{1}   \longrightarrow \cZ_0
	\eqno(S1)
$$ 
and a sequence of closed points $ z_0, z_1, \ldots, z_l $, where $ z_j \in \cZ_j $ is very near to $ z_{j-1} $, $ 1 \leq j \leq l $.
In the exceptional locus of $\mathcal{Z}_l$, there appears the closed set $D$ corresponding to  $\IP(\Dir _{z_{l-1}}  (\mathcal{Z}_{l-1} ))$, see Theorem \ref{Thm:HiroMizutani}.
(For more details, see the canonical isomorphism at the beginning of the proof of \cite{CJS} Theorem~2.14, or \cite{GiraudEtude}~1.1.2, p.~II-1).  

Let us do the computations: 
we take any embedding of $R/I$.
By Proposition~\ref{Cor:blowuppolyvide}, there exist % 
a {\RSP} $(u,y)$ as in the statement and a standard basis $ (f) $ of $ {\ideal} $ such that $ (f, \rot{u}, y ) $ is well-prepared (Definition \ref{Def:initial_vertex}).
In particular, $ \poly fuy = \cpoly Iu $.
%p
By computing 
the transformation under the blow-ups (S1) and using  Proposition~\ref{Prop:originblowup} ($z_i$ is  the  origin of the $ T $-chart), we obtain $D= \mathrm{V} ( \frac{y}{T^l}, T ) $.
Further, $ D $ becomes permissible if $l$ is large enough 
(in fact, if $l\cdot (\delta ( {\ideal};u )-1)\geq 1$). 
Let $\eta$ be the generic point of $D$.

We blow up with center $ D $,
$$
 \pi_{l,1}:\mathcal{Z}_{l,1}  \longrightarrow \mathcal{Z}_{l,0}:=\mathcal{Z}_l  ,
$$
where $\mathcal{Z}_{l,1}$,  is the affine chart complementary of the strict transform of the preceding blowing-up (affine $T$-chart).
We define $z_{l,1}\in \mathcal{Z}_{l,1}$ as the point corresponding to $\IP(\Dir_{z_l}(\mathcal{Z}_l )/T_{z_l}(D))$ by the canonical isomorphism. 
(Again $ z_{1,l} $ is the unique closed point which is the intersection of the strict transform of the line $ L_0 $ and the exceptional divisor of the last blowing up).
It \rot{turns out} that in  $\mathcal{Z}_{l,1}  $  there is the point $\eta_1$ corresponding to   $\IP(\Dir _{\eta}  (\mathcal{Z}_{\eta} ))$ by the canonical isomorphism.
Set $ D_1 := \overline{\eta_1}$.

If  the  restriction map  $ \pi_{l,1} |_{D_1} : D_1\longrightarrow D $ is an isomorphism and $D_1$ is permissible at $z_{l,1}$, we blow  up along $D_1$.
Otherwise we stop.

We repeat this process and obtain after $ i \geq 1 $ steps a sequence of blowing ups
$$
	\cZ_{l,i}  \longrightarrow  \ldots \longrightarrow \cZ_{l,1}  \longrightarrow \cZ_{l,0}.  
	\eqno(S2)
	$$
We define  
$\eta_i\in \mathcal{Z}_{l,i}$ to be the point corresponding to $\IP(\Dir _{\eta_{i-1}}  ({\mathcal{Z}_{l,i-1}}))$, 
$D_i \subset \cZ_{l,i} $ to be the closure of $\eta_i$, 
and $z_{l,i}\in \mathcal{Z}_{l,i} $ to be the point corresponding to $\IP(\Dir_{z_{l,i-1}}(\mathcal{Z}_{l,i-1} )/T_{z_{l,i-1}}(D_{i-1}))$ by the canonical isomorphism.

If  $ D_i=\overline{\eta_i} $ is isomorphic to $ D_{i-1}\cong D $ and permissible for $ \mathcal{Z}_{l,i} $ at $ z_{l,i} $, then we  blow  up along $ D_i $.
Otherwise we stop.

This process is infinite if and only if $\delta(I;u) = \infty $ ($ \Leftrightarrow \cpoly Iu = \emptyset $). 
When the process is finite, set $a(l)$ as the number of the blowing ups in (S2).
It appears that
$$
	a(l)=\lfloor{l \cdot(\delta(I;u)-1)}\rfloor,\hspace{10pt} \delta(I;u) = 1+ \lim_{l\rightarrow \infty} \frac{a(l)}{l}.
$$
The last equality proves the invariance of $ \delta(I;u) $.
We write $ \delta := \delta(I;u) $ for simplicity.
In Corollary \ref{Cor:delta_in_1/N_Z^e}, we have seen that $ \delta \in \frac{1}{N!} \cdot \IZ_{\geq 0} $, for some $ N \in \IZ_+ $, if $ \cpoly Iu \neq \emptyset $.
Therefore $ \delta = 1 + \frac{a(N!)}{N!} $.

When $l\cdot \delta \in \IZ_{\geq 0 } $, let $E_{a(l)}$ be the last exceptional locus in the affine $T$-chart $ \cZ_{a(l),l} $.
Its ring of function is naturally isomorphic to $ { gr_\delta ( R ) / In_{\delta}(J)}$, by \cite{CosRevista}~Th{\'e}or{\`e}me~A.4(iii). 
Moreover, take any $\phi \in {R/I}$.
In the affine ring $\mathcal{O}_{\cZ_{a(l),l}}$ of the last chart, $T$ is a divisor of $ \phi $ and the order in $T$ defines a filtration on 
$R/I$
(\cite{CosRevista}~Th{\'e}or{\`e}me~A.4(ii)): 
for $n\in \N$, set $(R/I)_n:=\{\phi \in {R/I}\mid \mathrm{ord}_T(\phi)\geq n\}$. 
The graded ring associated to this filtration is naturally isomorphic to $ {gr_\delta ( R )/ In_{\delta}(I)}$. 
\end{proof}
%%%%%%%%%%%%%%%%%%%%%%%%%%%%%%%%%%%%%%%%%%5

%\smallskip

Let us now introduce further invariants of the polyhedron that we will use in the case $ e = 2 $ in order to get refined data on the singularity.
These and $ \delta(I;u) $ will be the key ingredients for the constructions of $ \iop $.

\begin{Def}
	\label{Def:alphabetagammas}
Consider a $ F $-subset $ \Delta \subset \IR^2_{ \geq 0} $  with finitely many vertices, say
$$ 
	v^{(1)}, v^{(2)}, \ldots, v^{(t)} \in \IR^2_{ \geq 0 } .
$$
Suppose the vertices are ordered by the lexicographical order of the coordinates $ v = ( v_1, v_2 ) $.
(In fact, since $ \Delta $ is living in dimension two it suffices to order them \wrt their first entry, i.e., $ v^{(1)}_1 < v^{(2)}_1 < \ldots < v^{(t)}_1 $;
this yields the same ordering).
Then we define

\begin{enumerate}
%\item[(0)]
%\vspace{5pt}
%
\item[(1)]
	$ \alpha_1 ( \Delta )  := \inf \{ v_1 \mid  v = ( v_1, v_2  ) \in \Delta \} $
%\vspace{5pt}

\item[(2)]
	$ \beta_1 ( \Delta )  := \inf \{ v_2 \mid  v = ( \alpha_1 ( \Delta ), v_2 ) \in \Delta \} $
%\vspace{5pt}

\item[(3)]
	$ \gamma_1 ( \Delta ) := \sup \{ v_2 \mid ( \delta ( \Delta ) - v_2, v_2 ) \in \Delta \} $

\item[(4)] 
	If $ t \geq 2 $ then we define $ s_1 ( \Delta ) \in \IR_+ $, %$ s_1 ( \Delta ) \geq 1 $, 
	to be the number such that the line passing through $ v^{(1)} $ and $ v^{(2)} $ has slope $ \frac{-1}{s_1 ( \Delta )} $.
	\\
	If $ t \leq  1 $ we put $ s_1 ( \Delta ) := \infty $.
\end{enumerate}
Note that $ ( \alpha_1 ( \Delta ), \beta_1 ( \Delta ) ) = v^{(1)} $ and $ ( \delta ( \Delta ) - \gamma_1 ( \Delta ), \gamma_1 ( \Delta ) ) $ are vertices of $ \Delta $.
%Pick

Analogously we define $ \alpha_2 ( \Delta ) $, $ \beta_2 ( \Delta ) $, $ \gamma_2 ( \Delta ) $, and $ s_2 ( \Delta ) $ by interchanging the role of $ v_1 $ and $ v_2 $.
\end{Def}

For $ \Delta = \cpoly {\ideal}u $ we abbreviate
\begin{equation}
\label{eq:Definition_alphabetagamma} \left\{\hspace{15pt}
\begin{array}{ccc}
\alpha_1 ( I;u) := \alpha_1 (  \cpoly {\ideal}u ), 
& \hspace{10pt}&
\beta_1 (I;u) := \beta_1 (  \cpoly {\ideal}u ),
\\[5pt] 
\gamma_1 (I;u) := \gamma_1 (  \cpoly {\ideal}u ),
& &
s_1(I;u) := s_1 (  \cpoly Iu ) ,
\end{array}\right.
\end{equation}
and analogously if the index is $ 2 $.
If $ \Delta(I;u ) = \emptyset $ then all this numbers are infinity. 
On the other hand, for $ \Delta(I;u ) \neq \emptyset $ we have $ \alpha_1(I;u), \beta_1(I;u), \gamma_1(I;u) \in \frac{1}{N!} \cdot \IZ^2_{\geq 0 } $ (with $ N $ as in Corollary \ref{Cor:delta_in_1/N_Z^e}) and $ s_1 (I;u) \in \IQ_{\geq 1} \cup \{ \infty \}  $.

One sees easily that $ s_1 ({\ideal};u ) $ depends on the choice of the coordinates. 
In particular{, it depends on the choice} of $ u_2 $ (e.g.~$ {\ideal} = \langle y^2 + u_1^3 u_2 \rangle = \langle y^2 + u_1^3 v_2 - u_1^5 \rangle $ for $ v_2 = u_2 + u_1^2 $, then $ s_1 (I;u_1,u_2) = \infty $, whereas $ s_1 (I;u_1,v_2) = 2 $). 
%Therefore

Nevertheless, analogous to Theorem \ref{Thm:alphabetainvariant_1_deltaonly} we have the following unpublished result due to Uwe Jannsen and the first author.

\begin{Thm}
\label{Thm:alphabetainvariant_part2_nodelta}
Let $ {\ideal} \subset R $ be a non-zero ideal in an excellent regular local ring $ (R,M) $, $ \dim (R/I) = 2 = \dim(\Dir_M(I)) $.
Let $ ( u, y ) = ( u_1, u_2; y_1, \ldots, y_r ) $ be a {\RSP}~for $ R $ such that $ ( y ) $ yields the directrix of $ {\ideal} $. 
Suppose $ V ( u_1 ) $ defines a boundary component and that $\Delta ( {\ideal}, u )\not= \emptyset$.

Then the numbers $ \alpha_1 ( {\ideal}; u ) $, $ \beta_1 ({\ideal}; u )  $, and $ \gamma_1 (   {\ideal}; u  ) $ are invariants of the singularity $ R / {\ideal} $ and the boundary component $ V( u_1 ) $, i.e., the only datum of $ R / {\ideal} , \langle u_1 \rangle R / {\ideal}$ is enough to compute these three numbers.
In particular, $ \alpha_1 (  {\ideal};u ) $, $ \beta_1 ( {\ideal}; u  ) $, and $ \gamma_1 (  {\ideal}; u  ) $  are independent of the embedding.
\end{Thm}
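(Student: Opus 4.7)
The plan is to adapt the Hironaka-trick argument used in the proof of Theorem~\ref{Thm:alphabetainvariant_1_deltaonly} (following \cite{CosRevista}), now exploiting the extra intrinsic datum given by the boundary component $V(u_1)$. Starting from the pair $(R/I,\, \langle u_1\rangle R/I)$, I introduce a new variable $T$ and form $\cZ_0 := \Spec((R/I)[T])$, then construct a sequence of permissible blowing ups whose centers are chosen using both $M_{R/I}$ and $\langle u_1\rangle R/I$. Proposition~\ref{Cor:blowuppolyvide} lets me fix a standard basis $(f)$ and parameters $(y)$ with $\Delta(f;u;y) = \Delta(I;u)$ without passing to the completion, and Proposition~\ref{Prop:originblowup} governs the transformation of the polyhedron at the origin of the $T$-chart.

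For $\alpha_1(I;u)$: I perform $l$ successive blowing ups of the closed point in the $T$-chart exactly as in the proof of Theorem~\ref{Thm:alphabetainvariant_1_deltaonly}, reaching a very near point $z_l$ with RSP $(u_1/T^l,\, u_2/T^l,\, y/T^l,\, T)$. At $z_l$ the strict transform of $V(u_1)$ is still distinguished by the boundary datum, so I blow up along the regular center $D_l^{(\alpha)} := V(u_1/T^l,\, y/T^l,\, T)$ whenever this is permissible for the strict transform of $I$; by Remark~\ref{Rk:delta_and_(very)near} applied to the projection of the polyhedron onto the $v_1$-axis, permissibility persists exactly as long as the corresponding vertex abscissa remains $\geq 1$, and iterating produces an integer $a^{(\alpha)}(l)$ with $a^{(\alpha)}(l) = \lfloor l \cdot \alpha_1\rfloor$. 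Since $a^{(\alpha)}(l)$ is defined purely from $(R/I,\, \langle u_1\rangle R/I)$ and $\alpha_1 = \lim_{l\to\infty} a^{(\alpha)}(l)/l$, the number $\alpha_1$ is intrinsic. For $\gamma_1$ one runs the same scheme but tracks the opposite (upper) end of the $\delta$-face: after $l$ point-blow-ups the $\delta$-face translates to $v_1+v_2=\delta-l$ by Proposition~\ref{Prop:originblowup}, and blowing along the $V(u_1)$-adapted center extracts $\gamma_1$ as a second asymptotic ratio, using that $\delta$ is already known to be an invariant. For $\beta_1$ one iterates the construction extracting $\alpha_1$ until the vertex $(\alpha_1,\beta_1)$ becomes the unique contribution to the polyhedron in a neighborhood of the $v_1=\alpha_1$ face, then applies the $\delta$-argument one more time to the resulting ``restricted'' one-dimensional polyhedron.

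The main obstacle will be the bookkeeping of the polyhedron along these longer and more asymmetric blow-up sequences: at every stage one must check that the chosen center is actually permissible (so Proposition~\ref{Prop:originblowup} applies), that the near / very-near condition of Remark~\ref{Rk:delta_and_(very)near} holds so the polyhedron transforms predictably, and that the intervening vertex-preparations (which may require passing to $\widehat R$, where Proposition~\ref{Cor:blowuppolyvide} guarantees that the essential computations still make sense) do not obscure the counting. A secondary subtlety is that, unlike $\delta$, the numbers $\alpha_1,\beta_1,\gamma_1$ are asymmetric in $(u_1,u_2)$: the centers of the blow-ups must genuinely use the hypersurface $V(u_1)$ in a way that would be forbidden for $V(u_2)$, and this asymmetric use of the boundary is precisely what converts ``independence of $u_2$'' into a theorem rather than a tautology.
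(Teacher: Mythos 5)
Your general strategy -- reusing the Hironaka-trick blow-up sequences from the proof of Theorem~\ref{Thm:alphabetainvariant_1_deltaonly}, now made asymmetric by exploiting the boundary datum $V(u_1)$ -- is the right spirit, and your final paragraph correctly locates where the real work is. But as written the plan has several gaps that block it, and the paper's proof resolves them by different means.

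First, for $\alpha_1$: your scheme of iterating blow-ups along $D_l^{(\alpha)} = V(u_1/T^l, y/T^l, T)$ can only start when the vertex of smallest abscissa has abscissa $\geq 1$ (so that the corresponding curve is permissible). When $0 < \alpha_1(I;u) \leq 1$ -- which is exactly the regime needed in the application (see the proof of Proposition~\ref{Prop:iop_decrease}, where $\alpha_1^O(x) < 1$ is assumed) -- no such center exists, and your counting loop never begins. The paper handles this by passing to the ramified cover $S := (R[W]/\langle W^d - u_1\rangle)_{\langle u,y,W\rangle}$, which rescales the abscissa by $d$ and reduces to the case $\alpha_1 > 1$; this extension is also what makes the identification $\alpha_1(I;u) = \delta((R/I)_\mathfrak{p})$ with $\mathfrak{p} = \langle u_1, y\rangle$ available (and, separately, the argument that the two-stage sequence (S1), (S2) computes $\delta$ at the generic point $\eta$ of $V(u_1,y)$ requires verifying that $(y)$ still determines the directrix at $\eta$ -- the paper devotes a nontrivial argument to ruling out the possibility that the directrix at $\eta$ is trivial or solvable in $k(\eta)$). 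Your proposal has no analogue of the $W$-extension, so it cannot close.

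Second, for $\beta_1$: the paper's argument is not "iterate the $\alpha_1$-extraction until the polyhedron is one-dimensional and then compute $\delta$ again." Instead it performs blow-ups in which $u_2$ takes the role of $T$ (a sequence of closed-point blow-ups along the strict transform of $V(u_1,y)$) and computes the transformation law $(v_1,v_2) \mapsto (v_1, v_2 + n(v_1-1))$; for $n \gg 0$ the polyhedron has the unique vertex $(\alpha, \beta + n(\alpha-1))$, and $\beta_1$ is then recovered from the already-invariant $\alpha$ and $\delta$ of the transform. Again, the case $\alpha \leq 1$ requires the ramified cover first. Your "restricted one-dimensional polyhedron" phrasing hints at something of this shape but leaves the actual mechanism (the shear of the polyhedron by $n(\alpha-1)$ and the detection of the single surviving vertex) unstated, which is where all the difficulty lives.

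Third, for $\gamma_1$: the paper's argument does not use blow-ups at all. It invokes the second conclusion of Theorem~\ref{Thm:alphabetainvariant_1_deltaonly}, namely that the quasi-homogeneous ring $gr_\delta(R)/In_\delta(I)$ is an invariant of $R/I$, and reads $\gamma_1(I;u)$ off as $\alpha_1$ of the localization $(gr_\delta(R)/In_\delta(I))_\mathfrak{M}$ with respect to $U_1$, using that $(in_\delta(f_1),\dots,in_\delta(f_m))$ is a standard basis of $In_\delta(I)$ (\cite{HiroCharPoly}~(3.17.3)). Your proposal to "track the opposite end of the $\delta$-face" via a second asymptotic ratio is a different and much vaguer route; nothing in the paper's machinery (Proposition~\ref{Prop:originblowup}, Remark~\ref{Rk:delta_and_(very)near}) tells you how the upper end of the $\delta$-face transforms under the blow-ups you propose, and I do not see how to make the counting robust, particularly under vertex preparation in $\widehat{R}$.

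In short: the proposal captures the Hironaka-trick flavor of the proof of $\alpha_1$, but omits the ramified cover that is indispensable when $\alpha_1 \leq 1$, sketches a $\beta_1$ argument that differs from the paper's shear argument and is not carried far enough to be checkable, and proposes a $\gamma_1$ argument that is genuinely different from (and substantially weaker than) the paper's reduction to the invariance of $gr_\delta(R)/In_\delta(I)$.
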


\begin{proof}
{\it Invariance of $\alpha_1(I;u)$.} 
Take a standard basis $(f_1,\dots,f_m)$ of $I$ and ${(y)}$ a sequence of parameters in ${R}$ such that $(f\rot{, u}, y)$ is well-prepared: % in Hironaka's sense: 
$\Delta(f;u;y) =\Delta(I;u)$ (Proposition \ref{Cor:blowuppolyvide}). 
%I

When $ \alpha_1(I;u) \geq 1 $,  $ {D}:=V ( u_1, y ) $ is a permissible center in Spec$ {(R/I)}=:{\mathcal{X}}$ and we denote its generic point by 
$ \eta $.  
By Theorem \ref{Thm:2.2.(2)_CJS}(3) and Theorem \ref{Thm:HiroMizutani},
we obtain that there is no near point to  $x$ on the strict transform of div$(u_1)$,
 %\cite{CJS}~Theorem~2.2(2)(iii)
if we blow up along this center $ {\mathcal{X}}'\longrightarrow {\mathcal{X}}$: 
$ {D}$ is exactly the intersection of the Hilbert-Samuel stratum and div$(u_1)$. %As 
%$\eta $. 
We note that 
$$
	\alpha_1(I;u)=1 \Rightarrow \mathrm{dim}(\Dir_\eta ( \mathcal X ) )=0.
$$
Set $ \fp := \langle u_1, y_1,\ldots,y_r \rangle $. 
By Theorem \ref{Thm:2.2.(2)_CJS}(3), %\cite{CJS}~Theorem~2.2(2)(iii), 
$\alpha_1(I;u)>1$ implies that {the ideal generated by the} initial {forms} of $(y_1,\ldots,y_r)$ in gr$_{\fp}(R_\fp)$ contains the ideal of the directrix 
of {$ {\mathcal X}_\eta$} in $C_{\eta} ( \mathcal X ) $. %(\
As dim$( \mathcal X_\eta )  = 1 $, %\m
the initial {forms} of $(y_1,\ldots,y_r)$ 
{determine}
%is the ideal of 
the directrix.
When $\alpha_1(\rot{I};u)=1$, suppose that $ \mathrm{dim}(\Dir_\eta ( \mathcal X ) ) \neq 0 $, i.e.,
$$
\mathrm{IDir}_\eta(\mathcal X)=(Y_1+\lambda_1U_1,\ldots,Y_r+\lambda_rU_1)\subset gr_{\fp}(R_\fp ),
$$ 
for $\lambda_i\in k(\eta) \cong R_\fp / \fp $. 
As $ R/\fp $ is a discrete valuation ring and $\overline{u_2}$ is an uniformizing parameter, $\lambda_i=\mu_i \overline{u_2}^{a_i}$, $\mu_i \in R$, $\mu_i=0$ or the residue of $\mu_i$ is invertible in  $ R/\fp $ and $a_i\in \IZ $, $1\leq i \leq r$. 
Then 
$({\mu_1} u_2^{a_1},\ldots,{\mu_r} u_2^{a_r})$ would be a solution for the vertex of abscissa $\alpha_1(I;u)$ (Definition \ref{Def:initial_vertex}(2)):
%\cite{HiroCharPoly} Definition~3.9: 
This is a contradiction as this vertex is not solvable.

When $ \alpha_1(I;u) > 1 $, we compute $\delta((R/I)_{{{\fp}}})$: 
As in the proof of Theorem \ref{Thm:alphabetainvariant_1_deltaonly}, we construct sequences of blowing ups (S1) (S2). 
Instead of doing the computations in the ring $R_{{\fp}}$, we do them in the ring $R\subset R_{{\fp}}$. 
We look at the  first sequence of  blowing ups (S1).
The first blow-up is made along the permissible center of ideal $(T,I(D))\subset (R/I)[T]$,
the second one is made along the permissible center of ideal $(T,I(D)/T)\subset (R/I)[T, I(D)/T]$, etc.:
$$
	\mathcal{Y}_l \longrightarrow \ldots \longrightarrow \mathcal{Y}_1 \longrightarrow 
 \mathcal{Y}_0:=\mathrm{Spec}((R /I)[T]) ,
 %\eqno(S1')
$$
where $\mathcal{Y}_i$, $l\geq i \geq 1$ is the affine chart complementary of the strict transform of the preceding blowing-up (= the affine $T$-chart), 
$x$ is the closed point of $\mathrm{Spec}( (R /I)[T])$, 
and let $x_i\in  \mathcal{Y}_i$  be very near to $x$ on the strict transform of div$(u_2)$. 
Then, after computations, $x_l$ is the closed point of Spec$(R [T,\frac{y}{T^l},\frac{u_1}{T^l}]_{\langle T,\frac{y}{T^l},\frac{u_1}{T^l},u_2\rangle})$.
The strict transform of $I\cdot R[T]$ is  generated by $( \frac{f_1}{T^{l \nu_1}},\ldots, \frac{f_m}{T^{l \nu_m}})$ ($ \nu_i = \ord_M(f_i) $) which is a well-prepared standard basis %in Hironaka's sense 
\wrt $ ( \frac{y}{T^l} ) $ by Proposition~\ref{Prop:originblowup}.

We look at the second sequence of blowing ups (S2):
$$
\mathcal{Y}_{l,a(l)}  \longrightarrow  \ldots \longrightarrow \mathcal{Y}_{l,1}  \longrightarrow \mathcal{Y}_{l,0}:=\mathcal{Y}_l, \ i\geq 1,  
%\eqno(S2)
$$
where  $\mathcal{Y}_{l,i}$  is the affine chart complementary of the strict transform of the preceding blowing-up (= the affine $T$-chart), and $x_{l,i}\in \mathcal{Y}_{l,i}$ is very near to $x$ on the strict transform of div$(u_2)$. 
Then, after computations, $x_{l,i}$ is the closed point of Spec$ (\, R [T, \frac{y}{T^{l+i}},\frac{u_1}{T^l}]_{\langle T,\frac{y}{T^{l+i}},\frac{u_1}{T^l},u_2 \rangle }\, )$, 
and the strict transform of $I\cdot R[T]$ is generated by $(\frac{f_1}{T^{(l+i) \nu_1}},\ldots, \frac{f_m}{T^{(l+i) \nu_m}})$ which is a well-prepared standard basis. % in Hironaka's sense.
Finally, we get 
$$ 
	\alpha_1(I;u) = 1+\lim_{l\rightarrow \infty} {a(l)\over l}=\delta((R/I)_{{\fp}}).
$$
 
\rot{If} $ 0 < \alpha_1(I;u) \leq 1 $, we introduce a new variable $ W $ and pass from $ R $ to
\begin{equation}
\label{eq:UwesTrick}
	S := ( R[W]/ \langle W^d - u_1 \rangle )_{\langle u, y, W \rangle },
\end{equation}
where we choose $ d \in \IZ_{+} $ large enough such that
\rot{$d\cdot\alpha_1(I;u)>1$
	and such that $ d $ is prime to the characteristic of the residue field $ k = R/\maxIdeal $ if $ \car(k) > 0 $.
	Further, if $ \alpha_1(I;u)< 1 $, we additionally may assume 
	that $d\cdot\alpha_1(I;u)\not\in \IZ_{\geq 0} $.}
Take a standard basis $(f_1,\dots,f_m)$ of $I$ and ${(y)}$ a sequence of parameters in ${R}$ such that $\Delta(f;u;y) =\Delta(I;u)$. 
It \rot{turns out} that $(f_1,\dots,f_m)$ is a standard basis of $I\cdot{S} \subset S $.
\rot{If $ \rot{v  := ( \alpha_1(I;u) , \beta)} $ denotes the vertex of smallest abscissa of $ \Delta(f;u_1,u_2;y) $,
	then
	the vertex of smallest abscissa of $\Delta(f;W,u_2;y)$ is} 
	$$ 
		\rot{{\tt v } := ( {\tt v}_1, {\tt v}_2) := ( d \cdot \alpha_1(I;u), \beta ).}
	$$ 
	
\rot{We claim that $ {\tt v} $ is not solvable and its initial is normalized (Definition~\ref{Def:initial_vertex}).
	Since we do not touch the elements $ ( y ) $, the second property is clear. 	
	If $ \alpha_1 (I;u) < 1 $ or $ \beta \notin \IZ_{\geq 0} $, then $ {\tt v} $ cannot be solvable
	(in the first case, use $d\cdot\alpha_1(I;u)\not\in \IZ_{\geq 0}$).
	Therefore, suppose that $ \alpha_1 (I;u) = 1 $ and that $ {\tt v} $ is solvable.
	Hence, there exist $ \lambda_1, \ldots \lambda_r \in k $ such that 
	\begin{equation}
	\label{eq:solv_tt_v}
	in_{{\tt v}} (f_i) = F_i (Y + \lambda (WU_2)^{{\tt v}}),
	\ \ \ \mbox{ for } i \in \{ 1, \ldots, m\}.
	\end{equation}
	This implies $ in_{{v}} (f_i) = F_i( Y + \lambda U^v) $, for $ 1 \leq j \leq m $.
	If we can show $ v \in \IZ^2_{\geq 0} $, the claim follows.
	Due to work of \rot{Giraud \cite{GiraudMaxPos} 3.3.4}, the ideal of the ridge of the homogeneous ideal $ \langle F_1, \ldots, F_m \rangle \subset k[Y_1, \ldots, Y_r] $ is generated by \rot{additive homogeneous} elements $ \sigma_1, \ldots, \sigma_s $ which are given by $ \sigma_j = P_j (D^Y_A(F_i)) $, for certain polynomials $ P_j $, $ 1 \leq j \leq s $,
	and Hasse-Schmidt derivatives $ D^Y_A $ \wrt the variables $ ( Y ) $ (see Observation \ref{Obs:Ridge}).
	Using \eqref{eq:solv_tt_v}, we get
	$$
		P_j (D^Y_A( in_{{v}} (f_i) )) = 
		\sigma_j(Y + \lambda U^v) \in k[U,\rot{Y}], 
		\ \ \mbox{ where } k=R/M.
	$$
	Since $ d $ is prime to the residue characteristic, if it is positive, and since $ {\tt v } \in \IZ^2_{\geq 0} $, we obtain that $ v \in \IZ^2_{\geq 0} $ which contradicts the fact that $ v $ is not solvable.
	This shows the claim.}

So, $ \alpha_1 (\ideal\cdot S ; W, u_2) = \alpha_1 ( \, \cpoly { \ideal\cdot S }{W, u_2} \, ) = d\cdot\alpha_1(I;u) > 1 $ and the previous result implies that $ \alpha_1(I;u) $ is an invariant of the singularity.

Since the non-negative number $ \alpha_1(I;u) $ is an invariant whenever $ \alpha_1(I;u) > 0 $ it also has to be an invariant of the singularity if $ \alpha_1(I;u) = 0 $. 
For simplicity, we may write $\alpha$ instead of $\alpha_1(I;u)$.

\smallskip
 
{\em Invariance of $\beta_1(I;u)$}. 
When $\alpha=0$, we use the extension (\ref{eq:UwesTrick}) above with $d>>0$. 
When $d$ is large enough,  
$ v:= (\alpha, \beta_1(I;u) )=(0, \beta_1(I;u))$ is the unique vertex of minimal modulus of 
$ \poly{{f}}{W,u_2}{y}$.
For every $i, \ 1\leq i \leq m$, we have $ in_v (f_i)\in k(x)[U_2,Y]$, and as $v$ is not solvable for $ (u_1,u_2,y) $, it is not for $ (W,u_2,y) $:
$\beta_1(I;u) =\delta(I\cdot S; W,u_2) $ is an invariant.

When $ 0< \alpha<1$, doing as in (\ref{eq:UwesTrick}) above, we \rot{are reduced to the case} $ \alpha\geq 1$.
In the case $\alpha\geq 1$, ${D} = V(u_1, y ) \subset \mathcal{X}$ is permissible at the origin of $ \mathcal{X} =\mathrm{Spec}(R/I) $ (Theorem \ref{Thm:2.2.(2)_CJS}(4)).

For $ n \in \IZ_+ $, let us make a sequence of blowing ups similar to (S1), where this time $ u_2 $ is taking the role of $ T $,
$$
 {\mathcal{X} _n} \longrightarrow \ldots \longrightarrow {\mathcal{X} _1} \longrightarrow {\mathcal{X} _0}:=\mathcal{X}.
 $$
 This means: 
 First, we blow up the closed point $ x_0 :=x $.
 Afterwards the center is the unique closed point $x_i\in {X_i}$ on the strict transform ${D_i}$ of ${D}\subset \mathcal{X}$ intersecting the exceptional divisor of the last blowing up (= origin of the $ U_2 $-chart).
 The strict transform of $I$ is generated by $(f^{(n)}) :=(\frac{f_1}{u_2^{n\nu_1}},\ldots,\frac{f_m}{u_2^{n\nu_m}})$ and the local ring at $x_n$ is  $R [\frac{y}{u_2^n},\frac{u_1}{u_2^n}] _{\langle \frac{y}{u_2^n},\frac{u_1}{u_2^n},u_2 \rangle }$.
 By Proposition~\ref{Prop:originblowup}, $(f^{(n)}, \frac{y}{u_2^n})$ is well prepared. % with respect to $u$. 
 The transformation law is easy:    
$$
	\Delta\left( f^{(n)};{\frac{u_1}{u_2^n}, u_2};{\frac{y}{u_2^n}} \right) = \mathrm{Conv}\left\{ \bigcup_{(v_1,v_2)\in \poly{f}{u}{y}}(v_1,v_2+n(v_1-1))+\R^2_{\geq 0 }\right\}.
	$$ 
Then for $n>>0$, it \rot{turns out} that $\Delta\left( f^{(n)};{\frac{u_1}{u_2^n}, u_2};{\frac{y}{u_2^n}} \right)$ has only one vertex which is 
$(\alpha, \beta_1(I;u)+n(\alpha-1))$.
 Since $\alpha$ and $ \delta(\mathcal{O}_{\mathcal{X}_n,x_n})=\alpha+\beta_1(I;u)+n(\alpha-1)$ are invariants of $(R/I,\langle u_1 \rangle \cdot R/I)$, the same is true for $\beta_1(I;u)$. %is also given by the only couple $(R/I,\langle u_1 \rangle \cdot R/I)$.

%When $1> \alpha>0$,   \ref{eq:UwesTrick}, 

\smallskip
{\em Invariance of $\gamma_1(I;u)$}. 
By Theorem \ref{Thm:alphabetainvariant_1_deltaonly}, $ {gr_\delta ( R )/ in_{\delta}(J)}$ is invariant up to isomorphism of graded algebras.
% Let us note that,
 Suppose $(f,y)$ well-prepared. % in Hironaka's sense.
 By \cite{HiroCharPoly}(3.17.3), $ ( in_{\delta}(f_1),\ldots,in_{\delta}(f_m)) $ is a standard basis of $In_{\delta}(I)\subset gr_{\delta}(R)$.
	Further, with natural notations,
	$$
		\gamma_1(I;u)=\alpha(In_{\delta}(I)\cdot gr_{\delta}(R)_{\langle Y,U \rangle})=\alpha({(gr_\delta ( R )/ In_{\delta}(I))}_\mathfrak{M},U_1),$$
where $\mathfrak{M}$ is the ideal of elements of positive weight and $U_1$ the initial of $u_1$. %The end of statement \ref{Thm:alphabetainvariant}(3) is straightforward.
\end{proof}

\begin{Obs}
The hypothesis  $\Delta ( {\ideal}, u )\not= \emptyset$ is fulfilled in what concerns us: when $  e_X^O ( x ) = e_X ( x ) = 2 $,  with the notations of Definition~\ref{Def:logHS_function}, $\cpoly {J^O}u\not= \emptyset$.
	Indeed,  by Remark~\ref{Rk:polyvide}, 
$\cpoly {J^O}u = \emptyset$ would imply that V$(J^O)$ is regular, so $X$ would be regular at $x$ and $O(x)$ empty. In our process, either $x$ is singular or $O(x)\not=\emptyset$: contradiction.

\end{Obs}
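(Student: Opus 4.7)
The plan is to argue by contradiction: suppose $\cpoly{J^O}{u} = \emptyset$, and aim to apply Remark~\ref{Rk:polyvide} to the ideal $J^O$. The hypotheses required are $\dim(R/J^O) \leq 2$ and $e = \dim(R/J^O)$; by assumption $e = e_X^O(x) = 2$. The tangent cone of $V(J^O)$ at $x$ is contained in the directrix $\Dir^O_x(X)$ of dimension $2$, so $\dim(R/J^O) \leq 2$; conversely $V(J^O) \supseteq V(J) = X$ with $\dim X = 2$. Hence $\dim(R/J^O) = 2 = e$, and Remark~\ref{Rk:polyvide} applies, yielding that $V(J^O)$ is a regular scheme at $x$.

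Next I would argue that this regularity simultaneously forces $X$ regular at $x$ and $O(x) = \emptyset$. Indeed, $V(J^O)$ regular means $R/J^O$ is a regular local ring, hence a domain, so $J^O$ is a prime ideal. Writing $J^O = J \cdot I_{O(x)}$ with $I_{O(x)} = \langle \phi_1 \cdots \phi_d \rangle$ and the $\phi_i$ regular parameters defining the old boundary components $B_i \in O(x)$, suppose $O(x) \neq \emptyset$. If some $V(\phi_i)$ coincides with an irreducible component of $V(J)$ at $x$, then $J^O$ contains $\phi_i^2$ and is not radical, contradicting primality. Otherwise each $V(\phi_i)$ is distinct from every irreducible component of $V(J)$ at $x$, so $V(J^O) = V(J) \cup \bigcup_i V(\phi_i)$ is reducible at $x$, again contradicting primality. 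Either way we reach a contradiction, so $O(x) = \emptyset$. Then $J^O = J$, and regularity of $V(J) = X$ at $x$ means $X$ is regular at $x$.

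Finally, this contradicts the setup of the \cite{CJS} process: the points $x$ under consideration lie in the maximal log-Hilbert-Samuel stratum, so either $X$ is singular at $x$ or $O(x) \neq \emptyset$, both of which are ruled out by the previous step. The main obstacle will be step~2, where one must simultaneously exploit radicality of $J^O$ (to exclude a $V(\phi_i)$ coinciding with a component of $V(J)$) and irreducibility of $V(J^O)$ at $x$ (to exclude distinct components coexisting); together these two conditions leave no room for any old boundary component at $x$.
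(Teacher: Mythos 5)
Your overall strategy matches the paper's: argue by contradiction, invoke Remark~\ref{Rk:polyvide} applied to $J^O$ to conclude that $V(J^O)$ is regular, and then deduce that $X$ is regular at $x$ and $O(x)=\emptyset$. Your step~2 is a welcome elaboration of the paper's terse ``so $X$ would be regular at $x$ and $O(x)$ empty''; the argument via primality of $J^O$ (as $R/J^O$ is a regular local ring, hence a domain) to rule out any old component is sound, though ``$J^O$ contains $\phi_i^2$'' should read ``$J^O\subseteq\langle\phi_i^2\rangle$''.

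There is, however, a genuine error in step~1. You assert that the tangent cone of $V(J^O)$ at $x$ is contained in $\Dir^O_x(X)$ and deduce $\dim(R/J^O)\leq 2$. The inclusion goes the other way: by Observation~\ref{Obs:Directrix}, the directrix is the largest linear subspace $W$ with $C_x+W=C_x$, and since $0\in C_x$ one gets $\Dir^O_x(X)=W\subseteq C_x(V(J^O))$, which only yields $\dim(R/J^O)\geq 2$. In fact $\dim(R/J^O)\leq 2$ does \emph{not} follow from $e_X^O(x)=e_X(x)=2$ in general: whenever $O(x)\neq\emptyset$ and the ambient regular local ring has dimension $n>3$ (i.e.\ $r=n-2>1$), $V(J^O)$ contains a boundary divisor $V(\phi_i)$ of dimension $n-1>2$, so $\dim(R/J^O)>2$ and the hypotheses of Proposition~\ref{polyvide} (whose proof needs $\dim(R/I)\leq 2$ in order to invoke Theorem~\ref{Thm:HiroMizutani}) fail. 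The paper does not prove this dimension bound either; it is an ambient hypothesis inherited from Theorem~\ref{Thm:alphabetainvariant_part2_nodelta}, to which the Observation is attached as a check of the remaining hypothesis $\Delta(I;u)\neq\emptyset$, rather than a consequence of the stated conditions on the directrix. So your attempt to \emph{derive} $\dim(R/J^O)\leq 2$ is both incorrect as stated and not actually a derivable fact; it should instead be recorded as part of the running assumptions.
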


\begin{Rk}
\begin{enumerate}
	\item[(1)]	By Theorems \ref{Thm:alphabetainvariant_1_deltaonly} and \ref{Thm:alphabetainvariant_part2_nodelta}, the numbers $ \delta (  R / {\ideal} ):= \delta ({\ideal}; u )$, $ \alpha_1 (  R / {\ideal}, u_1 ) := \alpha_1 ({\ideal}; u ) $, $ \beta_1 (  R / {\ideal}, u_1 ) := \beta_1 ( {\ideal}; u  ) $, and $ \gamma_1 ( R / {\ideal}, u_1 ) := \gamma_1 ( {\ideal}, u  ) $ are invariants of the singularity $ R / {\ideal} $ and the boundary component $ V( u_1 ). $ %: 
	%in 

	%\vspace{3pt}

	\item[(2)]
	As $ {gr_\delta ( R )/ In_{\delta}(I)}$ is invariant up to isomorphism of graded algebras, the ideal $ In_\delta ( \ideal ) \subset gr_\delta ( R ) $ defines a quasi-homogeneous tangent cone which is an invariant of the singularity $ R / \ideal $ and which is a refinement of the usual tangent cone given by the initial forms \wrt the maximal ideal.
	As we have seen in the proof above this carries crucial information on the singularity.
	Moreover, it appears also in other works,
	e.g.~it is used in the proof for resolution of threefold singularities by Piltant and the first author \cite{CPmixed} and \cite{CPmixed2}, and it is also hidden in the constructive proof for resolution of singularities over fields of characteristic zero.
	
\end{enumerate}
\end{Rk}

%
%
%
%
%
%
%
%
%
%
%       Control on old HS locus
%
%
%
%
%
%
%
%

%\bigskip

%\bigskip

\section{Control on the old Hilbert-Samuel locus}
\label{sectionnumberControl}

\noindent
{\em Starting from this section we suppose that $ X $ is reduced and of dimension at most two.}
%Of 
%
We consider the situation
\begin{equation}
\label{sequenceofblowups}
	X \to \ldots \to X_\ast \stackrel{\ast}{\to} X_{\ast -1} \to \ldots \to X_0 ,
\end{equation}
where $ X_0 $ is the scheme with which the resolution problem initially started.
Let $ \cB_0 $ be the initial boundary on $ X_0 $ and $ \cB_ , \ldots, \cB_\ast, \cB_{\ast - 1}, \ldots $ its respective transforms. 
\rot{Let $ x \in X, \ldots, x_\ast \in X_\ast, x_{\ast - 1} \in X_{\ast - 1} $ be the points (not necessarily closed) such that the one before lies above the one after.}
We assume that the value of the log-Hilbert-Samuel function strictly decreases at the $ \ast $-th stage (while it remains the same afterwards), i.e., we have
$$  
	\tnu  := H_{ X }^O ( x ) = \ldots =  H_{ X_\ast }^O ( x_\ast ) <  H_{ X_{\ast -1} }^O ( x_{\ast -1 } ) .
$$
(The decrease may in particular happen even if the Hilbert-Samuel function stays the same but the number of the old components decreases at $ x_\ast $).

Note that this assumption \rot{implies by Proposition \ref{Prop:ioo_non_increase}} 
$$
%	\ioo(X,x) \leq \ldots \leq \ioo (X_\ast, x_\ast ).
		\ioo(X, \cB, x) \leq \ldots \leq \ioo ( X_\ast, \cB_\ast, x_\ast ).
$$

In general, $ \tnu $ is not necessarily a maximal value of the log-Hilbert-Samuel function.
But, by Theorem \ref{Thm:HS^O_usc}(4) $ X_\ast (\tnu) $ is locally closed.
Thus let $ U_\ast \subset X_\ast $ be an open neighbourhood of $ x_\ast $ such that  
$$ 
	C_\ast := X_\ast^O (\tnu) \cap U_\ast \subset U_\ast
$$
is closed.
Therefore, if necessary, we pass to the local situation in $ U_\ast \subset X_\ast $  in the above sequence and may suppose that $ \tnu $ is a maximal value of the log-Hilbert-Samuel function.

Our goal in \S \ref{sectionnumberControl} is to make the strict transform of $ C_\ast $, which is denoted by $ C $ on $ X $, $ \cB $-permissible.
We construct an invariant $ \ioc $ measuring the improvement under the algorithm of \cite{CJS} for the one-dimensional excellent Noetherian schemes $ C $ with boundary $ \cB $.
In other words, we solve the problem in one dimension less (in the spirit of using the induction on dimension).

Note: During the resolution process for $ C $, we possibly create new irreducible components in the maximal Hilbert-Samuel locus of $ X $.
However, each of them is already $ \cB $-permissible (as long as the value of the log-Hilbert-Samuel function remains the same as $ \tnu $), see Observation \ref{Obs:GooOnear}.
How to measure the improvement of the singularities by blowing up these $ \cB $-permissible centers is the topic of \S \ref{sectionnumberRefine}.

\begin{Not}
We classify the situation of $ x \in X $ into the following four cases:
\begin{itemize}
	\item[(I)]	$ x = C \subset X $ a closed point.
	\item[(II)]	$ x \in C \subset X $, $C$ is a curve $ \cB $-permissible at $ x $ (hence irreducible and regular at $ x $).	
	\item[(III)]	 either $  C = \bigcup_{i=1}^t C_i \subset X $ (with $ t > 1 $) a family of irreducible curves (and hence reducible) at $ x $, or $ C = C_1 \subset X $ an irreducible curve which is not $ \cB $-permissible at $ x $.
	\item[(IV)]	$ x \notin C $.
	\item[(V)] {$ x \in C$ and $ \dim (C) =  2$}. 
\end{itemize}

{We remark that, if $ x \in C $ is not a closed point, i.e., $\overline{\{x\}}$ is an irreducible curve (resp.~an irreducible surface) then we are in Case (II) (resp.~(V))} 
\rot{or in Case (IV) (both cases).} % at $ x $ by convention.	
\end{Not}

\begin{Rk}
	\label{Rk:(3)(V)done}
\begin{enumerate}
\item[(1)]		Case (IV) may happen,  
\begin{itemize}
	\item 	if $ X $ is singular, but $ x \in X  $ is a regular point of $ X $, or
	\item	if $ C = \emptyset $.
\end{itemize}

On the other hand, if $ x \in X $ is in Case (IV), then we see at most newly created irreducible components of the maximal Hilbert-Samuel locus of $ X $ which we treat in \S \ref{sectionnumberRefine}.

\item[(2)] 	Moreover, in Case (I) or (II), $ C $ is $ \cB $-permissible at $ x $ and the goal of \S \ref{sectionnumberControl} is achieved.
			%Thus
			
\item[(3)] {In Case (V) we have $ C = X $. 
	Hence $ X  $ is regular at $ x $ and $ O(x) = \emptyset $.
	We claim that the resolution process is finished:
	Let $ x_\star \in X_\star $, $ \star \leq \ast $, be the point in the sequence of blowing ups \eqref{sequenceofblowups} such that $ x $ lies above $ x_\star $ and 
	$ H_X (x) = H_{X_\star} (x_\star) < H_{X_{\star-1}} (x_{\star-1}) $.
	Then $ X_\star $ is regular at $ x_\star $ and all boundary components are old, $ \cB_\star (x_\star ) = O(x_\star) $. 
	By considering  the local situation, one sees that all exceptional divisor of the blowing ups $ X \to \ldots \to X_\star $ passing through $ x $ are transversal to $ X $, i.e., $ X $ and $ \cB(x) = N(x) $ intersect transversally.}
\end{enumerate}
\end{Rk}

\noindent
This motivates

\begin{Def}[\bf Control on the old Hilbert-Samuel locus]
\label{Def:ioc}
	Let $ x \in X  \subset Z $ and $ \cB $ be a boundary on $ Z $.
	We define the invariant $ \ioc $ as follows:
	\begin{itemize}
		\item	If we are in Case (IV) {or (V)}, we set 
	$$
		 \ioc (X, \cB, x) := (0, \ldots , 0 , 0 ,0 ,0 , 0) \in \INN \times \IN^3 \times {\IQinfty^2} 
	$$
	
	\item	If we are in Case (I) or (II), we set 
	$$
		 \ioc (X, \cB, x) := (0, \ldots , 0 , 0 ,0 ,0 , 1) \in \INN \times \IN^3 \times {\IQinfty^2} 
	$$

	\item	If we are in Case (III), we set 
	$$ 
		\begin{array}{c}
			\ioc := \ioc (X, \cB, x) := 
				( H_C ( x ),\, | O_C (x) |,\, e_C (x) ,\, e^O_C ( x ), \, \delta_C ( x ) ,\, \delta^O_C ( x ) \,) 	=
			\\[9pt]
			\hspace{26pt}
	 = ( H_C^O ( x ),\, e_C (x) ,\, e^O_C ( x ), \,\delta_C ( x ), \,\delta^O_C ( x ))
	 = ( \ioo (C, \cB, x), \,\delta_C ( x ), \,\delta^O_C ( x )  ) .
	\end{array}
	$$
	\end{itemize}	
	Here, the last two entries are defined by
	$$ 
		\delta_C ( x ) := \delta(I_C; v) ,
		\hspace{20pt}
		\delta^O_C ( x ):= \delta(I_C^O; v^O) ,
		\hspace{10pt}
		\mbox{ (Definition \ref{Def:delta}) }
	$$
	where $ I_C \subset R = \cO_{Z,x} $ is the ideal which defines $ C $ locally at $ x $,
	$ (v,z) $ is a {\RSP} such that $ (z) $ determines the directrix of $C $ at $ x $,
	$ I_C^O := I_C \cdot I_{O_C(x)} $
	with $ I_{O_C(x)} $ the ideal which defines $ O_C(x) $ locally at $ x $, 
	and
	$ (v^O,z^O) $ a {\RSP} for $ R $ such that $ (z^O) $ determines the directrix of $ I_C^O $ at the maximal ideal of $ R $.
	
	Note that $ \ioc \in 
	\INN \times \IN \times \{ 0, 1\} \times \{ 0, 1 \} \times \frac{1}{t} \cdot \INinfty \times \frac{1}{t'} \cdot \INinfty \subset
	\INN \times \IN^3 \times \IQinfty^2 $ 
	for certain fixed integers $ t, t' \in \IN $ and $ \IQinfty = \IQ_{\geq 0} \cup \{ \infty \} $.
	Further, Theorem \ref{Thm:alphabetainvariant_1_deltaonly} implies that $ \ioc $ is an invariant only depending on $ X $ and hence is independent of the embedding in $  Z $.
	This justifies the notation $ \ioc(X,\cB, x) $ instead of $ \ioc(X,Z,\cB,x) $.
\end{Def}

%\smallskip

\begin{Rk}
\label{Rk:delta_infty}
\begin{enumerate}
	\item[(1)] 	In fact, in the first part of the definition we could have simply set $ \ioc (X, \cB,x) = 0 $, because all it is telling us is that $ x $ in not contained in $ C $ \rot{or} that the resolution process \rot{is over.}
	In particular, if $ x \notin C $ is a point in the maximal Hilbert-Samuel locus then it is contained in some newly created irreducible components which we study after this section.
	%we are not in Case (\cZWEI) or (\cDREI).
	But in order to make things comparable we need to have an element in $ \INN \times \IN^3 \times \IQinfty^2 $.

%\vspace{6pt}	
	
	\item[(2)] 	In Case (I) and (II), the algorithm of \cite{CJS} (locally) picks $ C $ as the center for the next blowing up. 
	After performing this blow-up, the strict transform of $ C $ is empty.
	Thus we are in Case (IV) and $ ( \ioo, \ioc) $ strictly decreases at every point lying above $ C $.
	
	In principle, we could also set $ \ioc = (0, \ldots , 0 , 0 ,0 ,0 , 0)  $ if we are in Case (I) or (II).
	But then we need to use the refinement of $ ( \ioo, \ioc ) $ (see \S \ref{sectionnumberRefine}) for detecting the improvement after blowing up $ C $. 
	With our variant, $ \ioc $ emphasizes the difference between $ x \in  C $ and $ x \notin C $.
	Moreover, we use the final refinement solely for the new irreducible components.
			
	%\vspace{6pt}	

	\item[(3)] 	If $ e_C (x) = 1 $ then the number $ \delta_C ( x ) $ measures how far $ C $ is away from being regular at $ x $.
	As soon as this is the case and if $ e^O_C (x) = 1 $, then $ \delta^O_C ( x ) $ measures how far $ C $ is away from being transversal to the old boundary.
	
%	%\vspace{6pt}
%	
\end{enumerate}
\end{Rk}

%\smallskip

\begin{Prop}
\label{Prop:ioc_non_increase}
	Let $ \pi : Z' \to Z $ be a $ \cB $-permissible blow-up {with center $ D \subset X $} following the \cite{CJS}-algorithm.
	Consider {$ x \in D $ and} $ x ' \in \pi^{ -1} ( x ) $.
	
	Then the invariant $ (\ioo,\ioc) $ does not increase, 
	$$
		(\ioo,\ioc)(X',\cB',x') \leq (\ioo,\ioc)(X,\cB,x),
	$$
	and the inequality is strict if $ x $ is in Case (I), (II), or (III).
\end{Prop}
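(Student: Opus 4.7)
The plan is to handle the two halves of the invariant separately. By Proposition~\ref{Prop:ioo_non_increase} we have $\ioo(X',\cB',x')\leq \ioo(X,\cB,x)$, so by lexicographic comparison it is enough to treat the case of equality. In that case $x'$ is very $O$-near to $x$ and in particular $x'\in X'(\tnu)$, and it then remains to establish $\ioc(X',\cB',x')\leq \ioc(X,\cB,x)$ together with strict inequality in Cases~(I), (II), (III).

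Cases~(IV) and~(V) are immediate: in~(IV), the hypothesis $x\notin C$ gives $\pi^{-1}(x)\cap C'=\emptyset$ since $C'=\overline{\pi^{-1}(C\setminus C\cap D)}$, so $x'$ remains in Case~(IV); Case~(V) is vacuous by Remark~\ref{Rk:(3)(V)done}(3) as the process terminates. For Cases~(I) and~(II) I would argue that the center $D$ provided by the \cite{CJS}-algorithm agrees locally at $x$ with $C$: trivially in Case~(I) since $C=\{x\}$, and in Case~(II) because $C$ is the sole $\cB$-permissible component of $X^O(\tnu)$ through $x$ (otherwise we would already be in Case~(III)) and is picked by the label convention. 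Then the strict transform of $C$ is empty along $\pi^{-1}(x)$, so $x'$ sits in Case~(IV) and $\ioc$ drops from $(0,\ldots,0,1)$ to $(0,\ldots,0,0)$.

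The heart of the argument is Case~(III), which I would split by $\dim D$. If $D$ is a curve, the label convention forces $D$ to be a $\cB$-permissible irreducible component $C_j$ of $C$ at $x$; then $C_j$ collapses under $\pi$ and at $x'$ the scheme $C'$ is either absent (Case~(IV)) or locally has strictly fewer irreducible branches than $C$ at $x$, in which case a direct multiplicity computation in the local ring of $C$ yields a strict drop of $H^O_C$ and hence of $\ioc$. If instead $D=\{x\}$, I would apply the one-dimensional analogues of Theorems~\ref{Thm:BennHiroSingh} and~\ref{Thm:BennHiroSinghO} to the reduced $1$-dimensional scheme $C$ to successively bound $H^O_C(x')$, $e_C(x')$, and $e^O_C(x')$; if all three remain equal at $x'$, then $x'$ lies at the origin of a standard chart for the blow-up of the closed point, and Proposition~\ref{Prop:originblowup}(2) (together with its obvious variant for $I_C^O$) forces $\delta_C(x')=\delta_C(x)-1$ and $\delta^O_C(x')=\delta^O_C(x)-1$, producing the required strict drop.

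The main obstacle will be this last cascade in Case~(III) with $D=\{x\}$: one must verify that ``$x'$ very $O$-near to $x$ for $X$'' combined with equality of $(H^O_C,e_C,e^O_C)$ at $x'$ really places $x'$ at the origin of a standard chart so that Proposition~\ref{Prop:originblowup}(2) applies and the $\delta$-invariants of $C$ each decrease by a full unit. This relies on the existence of a well-prepared standard basis for $I_C$ (respectively for $I_C^O$) at $x$ provided by Proposition~\ref{Cor:blowuppolyvide}, and on Remark~\ref{Rk:delta_and_(very)near}, which is the bridge between the $\delta$-invariants of a one-dimensional ideal and the very-near condition on the center of a permissible blow-up.
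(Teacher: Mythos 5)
Your global plan---deal with $\ioo$ via Proposition~\ref{Prop:ioo_non_increase}, then in case of equality cascade through the entries of $\ioc$---is essentially what the paper does. But there are two genuine gaps in how you handle Case~(III).

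First, your split ``by $\dim D$'' in Case~(III) is spurious. If $x$ is in Case~(III), then $C$ is either reducible at $x$ or a single irreducible curve that fails to be $\cB$-permissible at $x$; the \cite{CJS}-algorithm then does \emph{not} blow up a curve through $x$ but rather resolves by induction on dimension, which forces the next center through $x$ to be the closed point $x$ itself. The scenario you entertain---$D$ a $\cB$-permissible irreducible branch $C_j$ of $C$ through $x$---would put $x$ in Case~(II), not~(III). So in Case~(III) you must have $D=x$ locally, and this should be stated and used, not case-split on.

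Second, and more seriously, your concluding cascade relies on $\delta_C(x)-1<\delta_C(x)$ and $\delta_C^O(x)-1<\delta_C^O(x)$, which is false when these numbers are infinite. Proposition~\ref{Prop:originblowup}(2) with $\delta=\infty$ gives no decrease at all (an empty polyhedron stays empty). The paper handles this by successively peeling off the cases $e_C^O(x)=0$ (then Theorem~\ref{Thm:HiroMizutani$^O$} kills all $O$-near points), then $1<\delta_C(x)<\infty$ (decrease of $\delta_C$), then $\delta_C(x)=\infty$ and $1<\delta_C^O(x)<\infty$ (decrease of $\delta_C^O$), and finally the residual case $e_C(x)=e^O_C(x)=1$, $\delta_C(x)=\delta_C^O(x)=\infty$. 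That last case is not dispatched by a drop but by showing it cannot occur in Case~(III) at all: by Proposition~\ref{polyvide} and Remark~\ref{Rk:polyvide}, $\delta_C(x)=\infty$ with $e_C(x)=1$ forces $V(I_C)=V(z)$ to be a regular curve, and $\delta_C^O(x)=\infty$ forces $V(z)$ to be contained in the divisor of the old boundary components, which then makes $V(z)$ $\cB$-permissible---contradicting the \cite{CJS}-choice of the closed point $x$ as center. Without this contradiction argument, the proof has a hole in exactly the case where your proposal promises a decrease that does not happen. You would also need to note explicitly that if $x'$ leaves Case~(III) then $\ioc$ drops automatically (the first entry of $\ioc$ goes from $H_C(x)>0$ to $0$), so you can assume $x'$ is in Case~(III) before starting the cascade; the paper makes this reduction up front.
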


\noindent
Note: If $ x \notin D $ then the situation does not change and $ (\ioo,\ioc) $ stays the same.

Since $ \ioc $ cannot decrease infinitely many times, we get that after finitely many blowing ups the strict transform $ C' $ of $ C $ is $ \cB $-permissible.
In the next step \cite{CJS} will blow up with center $ C'$.
Hence the strict transform of $ C $ is then empty and $ \ioc (x'' ) = (0, 0, \ldots, 0) $, for every point $ x''$ on the strict transform $ X'' $ of the given surface $ X$.

\begin{proof}
	{Recall that by Remark \ref{Rk:(3)(V)done}(3), the resolution process ends if $ x $ is in Case (V).
		In particular, the \cite{CJS}-algorithm does no further blowing ups.}
		
By Proposition \ref{Prop:ioo_non_increase}, we have $ \ioo ( X', \cB', x' ) \leq \ioo ( X, \cB, x ) $.
If $ x $ is in Case (IV) then so is $ x' $ and $ \ioc(x) = \ioc(x') =  (0, \ldots , 0 , 0 ,0 ,0 , 0)  $, by definition.

As we explained in the previous remark, $ \ioc $ strictly drops if $ x $ is in Case (I) or (II) and $ H_{X'}^O(x') = H_X^O (x) $.
More precisely, we have $ D = C $ and thus 
$  \ioc(x') =  (0, \ldots , 0 , 0 ,0 ,0 , 0) <  (0, \ldots , 0 , 0 ,0 ,0 , 1) = \ioc(x) $.

The crucial case that needs to be understood is Case (III).
Therefore we may assume that $ C $ is a family of irreducible curves or a single curve which is not $ \cB $-permissible at $ x $.
{\em Then the center following the \cite{CJS} algorithm is the closed point $ D = x $.}

{If $ x' $ is not in Case (III) and $ \ioo(x') = \ioo (x) $, then \rot{$ \ioc(x') < \ioc (x) $, by Definition~\ref{Def:ioc}.} 
	Therefore without loss of generality, we may assume that $ x' $ is in Case (III), in particular, $ x' $ be a closed point}. 

Let $ I_C \subset R $ be the ideal which defines $ C $ locally at $ x $.
Further, let %$ ( g ) = (g_1, \ldots, g_m ) $ be a standard basis for $ I_C $ and 
\rot{$ (v, z ) = ( v_1, \ldots, v_d, z_1, \ldots, z_s ) $} 
be a  r.s.p.~for $ R $  such that $ (z) $ defines the directrix $ \Dir_x ( C ) $ of $ C $ at $ x $ and such that %$(
$$
	\poly{I_C}{v}{z} = \Delta(I_C; v),
$$ 
as obtained by Proposition \ref{Cor:blowuppolyvide}.
%Sin
We have $ d = e_C (x) $ and
$$
	 e_C^O (x) \leq  e_C (x) \leq 1 
$$

%Fir
%
%{\e
Suppose $ e_C^O(x) = 0 $.
%Si $.
Theorem \ref{Thm:HiroMizutani$^O$} implies $ H_{C'}^{O} (x') < H_{C}^O (x) $ and hence $ \ioc (x') < \ioc (x) $ at every point $ x'$ lying above the center $ x $.

%Onmallskip

{\em From now on, we may assume $ e^O_C(x) = 1 $ (which implies $ e_C (x) = 1  $).}
Suppose $ 1 < \delta_C ( x ) < \infty $.
After the blowing up with center \rot{$ D = x  = V(v_1, z_1, \ldots, z_s)$} 
(remember we are in Case (III)), the only point $x'$ where we possibly have
%\begin{equation}
%\label{eq:very-O-near_III}
$$
 ( H_{C'}^{O} ( x' ),\,  e_{C'} (x') , \, e^{O}_{C'} ( x' ) )
 =  ( H_C^O ( x ), \,  e_C (x) , \, e^O_C ( x ) )
$$
%\end{equation}
is the origin of the $ V_1 $-chart (Theorem \ref{Thm:HiroMizutani$^O$}).
If the previous equality holds, then $ x ' $ is very near to $ x $ since it is very O-near to $ x $ (see Definition \ref{Def:very_O-near}).
Moreover, we have $ e_{C'} (x') = e^{O}_{C'} ( x' ) = 1 $.
Proposition \ref{Prop:originblowup}(2) and Remark \ref{Rk:delta_and_(very)near}(2) yield
%Th
$$ 
	1 < \delta_{C'} ( x' ) = \delta_{C} ( x ) - 1 < \delta_{C} ( x )  ,
	\hspace{10pt} \mbox{ i.e., }
	\ioc (x') < \ioc (x) .
$$

%{\em 
{\em Hence we may assume additionally $ \delta_{C} ( x )  = \infty $.}
Then the characteristic polyhedron is empty $ \Delta(I_C; v_1) = \emptyset $.
Proposition~\ref{polyvide} yields coordinates $ ( z ) = (z_1, \ldots, z_s )$ in $ R $ such that 
$ 	\poly{I_C}{v_1}{z} = \Delta(I_C; v_1) = \emptyset $,
V$(z)$ is the (maximal) Hilbert-Samuel stratum of V$(I_C)$ in a neighborhood of $x$, and V$(z)$ is permissible for V$(I_C)$ at $x$. 	
In fact, by Remark \ref{Rk:polyvide}, $ V (I_C ) = V(z) $ since $ e_C (x) = 1 = \dim (C )$.

But $ V(z) $ cannot $ \cB $-permissible since the designated \cite{CJS}-center is the closed point 
\rot{$ x = V(v_1,z) $.}
This happens if there are boundary components that are old for $ C $ and that are not 
\rot{normal crossing}
with $ V(z) $ (e.g.~$ O_C(x) = \{ V ( z_1 + v_1^2)\} $).

Again, the only point $x'$ that we need to consider is the origin of the $ V_1 $-chart.
By Proposition \ref{Prop:originblowup} (applied for $ I_C $), we get $ \delta_{C'} ( x' ) = \delta_{C} ( x ) = \infty $.

If $ 1 < \delta_C^O ( x ) < \infty $, then applying the same arguments as in the previous case ($ 1 < \delta_C ( x ) < \infty $), but this time for $ \cpoly{I_C^O}{v_1} $, yields
%$$ 
%
%	\hspace{10pt} \mbox{ i.e., }
$
	\ioc (x') < \ioc (x) .
$	
%$$	

{\em Therefore, the remaining case is
$ e_C(x) = e_C^O (x) = 1  $ and
$ \delta_{C} ( x ) =  \delta_{C}^O ( x )  = \infty $ and
the \cite{CJS}-center is the closed point $ x $.}
Let us collect what we already know. 
Since $ \delta_{C} ( x )  = \infty $ and $ e_C(x) = 1 $, we have that $ V(I_C) = V(z) $ is a regular curve in a neighborhood of $x$.
By the assumption $ e_C^O(x) = 1 $, the initial forms of each old component is determined by the coordinates in $ (z) $.
Finally, $ \delta_{C}^O ( x )  = \infty $ implies that $ V(z) $ and the old boundary components cannot be tangent and thus $ V(z) $ is contained in the divisor defined by $ O_C(x) $;
	in particular, a local generator of a component in $ O_C(x) $ is contained in the ideal $ \langle z \rangle $.
	
The only reason why \cite{CJS} do not choose $ V(z) $ but $ V(z,v_1 ) $ as the center for the next blowing up is that the $ V(z) $ is not n.c.~with the boundary.
	There can be only one boundary component which is new for $ C $, say, without loss of generality, $ N_C(x) = \{ V(v_1) \} $.
	Clearly, $ V(z) $ is n.c.~with $ V(v_1) $.
	But then the intersection of $ V(z) $ with some of the old boundary components must yield the center $ V(z,v_1 ) $.
	This implies that the local generator of at least one old boundary component is {\em not} contained in $ \langle z \rangle $.
	Therefore we get a contradiction to the above.
	
	As conclusion we get obtain: 
	if $ e_C(x) = e_C^O (x) = 1  $ and $ \delta_{C} ( x ) = \delta_C^{O }(x)  = \infty $ then the \cite{CJS}-center cannot be the closed point $ x $.
	Hence we cannot be in Case (III).
	This ends the proof on the decrease of $ \ioc $.
\end{proof}

In dimension two, in order to have a good control on what is above the center of the blow-up, i.e., on the points on the {newly} created irreducible components, we use Hironaka's characteristic polyhedron and define $ \iop $ in the next section.

%
%
%
%
%
%
%
%
%
%       The refinement via Hiro char poly
%
%
%
%
%
%
%
%

%\bigskip

%\bigskip

\section{Refinement for the new irreducible components}
\label{sectionnumberRefine}

In this section, we show how our invariant is helpful to get hands on the newly created irreducible components in the Hilbert-Samuel locus,
%``t
and to end the process finally. 
The same arguments as in section \ref{sec:HiroPoly} allow us to reduce from $ x \in X $ with boundary $ \cB $ to the local embedded situation:

Let $ ( R, \maxIdeal, \Resfield = R / \maxIdeal ) $ be an excellent regular local ring with maximal ideal $ \maxIdeal $.
Note that we only assume $ R $ to be excellent but {\em not necessarily complete}. 
The characteristic polyhedron in which we are interested in is the one associated to the ideal $ J^O  = J \cdot I_O $, where $ J \subset R $ is the ideal which defines $ X $ locally at $ x $ and $ I_O:= I_{O(x)} $ denotes the ideal which defines the divisor given by the old boundary components of $ \cB ( x ) $ locally at $ x $.

By the previous section, $ (\ioo, \ioc ) $ drops strictly as long as there is an irreducible component 
 of $X^O(\tnu)$ (the log-Hilbert-Samuel stratum of $x\in X$)
 which is the strict transform of a component of  $X^O_*(\tnu)$ -- 
 for the notations we refer to the beginning of the last section.
Hence we may assume:
\begin{equation}
\label{eq:ALl_Irred_NEW}
\left\{ \hspace{10pt}
\parbox{280pt}{\em There exists no irreducible components of $X^O(\tnu)$  which is the strict transform of a component of  $X^O_*(\tnu)$.}
\right.
\end{equation}
This implies that each irreducible component of dimension \rot{one} is contained in some new boundary component (new for $ X $).

%%%%%%%%
%%%%%%%%
%%%%%%%%
%%%%%%%%
%%%%%%%%
%%%%%%%%
%%%%%%%%
%%%%%%%%
%%%%%%%%
%%%%%%%%
%%%%%%%%
%%%%%%%%
%%%%%%%%
%%%%%%%%
%%%%%%%%
%%%%%%%% %%%%%%%%%%%% BEGINNING OF NEW September 7th.
%%%%%%%%
%%%%%%%%

\begin{Def}[\bf Refinement for $ \bf e_X^O ( x ) \leq 1 $]\label{Def:iope=1}
	We set 
	$$
	\iop := \iop ( X,\cB, x) := 
	\left\{\hspace{5pt}
	\begin{array}{lc}
	(0,0,0,0), & \mbox{ if } e_X^O ( x ) = 0 , \\[3pt]
	%& \ ,\\
	%\\[5pt]
	(0, 0, 0, \delta_X^O ( x )), & \mbox{ if } e_X^O ( x ) = 1. %\\ %&
	\end{array}  %\in \IQ_+^3\times\IQ_{\infty} 
	\right. 
	$$ %\label{polyvide} 
\end{Def}
Recall that $ \delta_X^O(x) $ is an invariant of the singularity by Theorem \ref{Thm:alphabetainvariant_1_deltaonly}. 
Hence this justifies that we write $\iop ( X,\cB, x)$ instead of $\iop ( X,Z,\cB,x)$.

(In fact, it would suffice to take the values $ 0 $ \rot{(}resp. $ \delta_X^O ( x ) $\rot{)} instead of $ (0, 0, 0,0) $ \rot{(}resp. $ (0, 0, 0, \delta_X^O ( x )) $\rot{)}, but in order to make the invariant comparable with the one in the case $ e_X^O ( x ) = 2 $ we need to define an element in $ \IQinfty^4 $).

\begin{Prop}\label{Prop:e(x)=e(x_*)}
	Let $ \pi : Z' \to Z $ be a $ \cB $-permissible blow-up {with center $ D \subset X $} following the \cite{CJS}-strategy.
	Consider {$ x \in D $ and} $ x ' \in \pi^{ -1} ( x ) $.
	
	Suppose  $ e_X^O(x) \leq 1  $.
	Then $(\iota_0,\iota_{hs},\iop)(x')<(\iota_0,\iota_{hs},\iop)(x)$.
	
	%div$(u_1u_2)$ contains the new components of the boundary. 
\end{Prop}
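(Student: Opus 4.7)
The plan is as follows. Since $(\ioo,\ioc)$ is non-increasing by Propositions~\ref{Prop:ioo_non_increase} and \ref{Prop:ioc_non_increase}, the strict inequality is automatic whenever either of them drops, so it suffices to treat the case $\ioo(x')=\ioo(x)$ and $\ioc(x')=\ioc(x)$ and to prove $\iop(x')<\iop(x)$. Under assumption \eqref{eq:ALl_Irred_NEW} the old curve locus $C$ has no components, so every relevant point is in Case~(IV) of \S\ref{sectionnumberControl} and $\ioc\equiv(0,\ldots,0)$; the content of the proposition is therefore concentrated in $\ioo$ and $\iop$.

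First I would dispose of the case $e^O_X(x)=0$. Here $\iop(x)=(0,0,0,0)$, so a strict decrease of the triple cannot originate from $\iop$; it must originate from $\ioo$. But by Theorem~\ref{Thm:HiroMizutani$^O$} any $O$-near $x'$ lies in $\IP(\Dir^O_x(X)/T_x(D))$, which is empty because $\dim\Dir^O_x(X)=e^O_X(x)=0$. Hence $H^O_{X'}(x')<H^O_X(x)$ and $\ioo(x')<\ioo(x)$, as required.

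Next assume $e^O_X(x)=1$. If $\ioo$ drops we are done, so suppose $\ioo(x')=\ioo(x)$; then $x'$ is $O$-near and $e^O_{X'}(x')=1$. Since $T_x(D)\subset\Dir^O_x(X)$ and $\dim\Dir^O_x(X)=1$, the subcase $\dim D=1$ would force $T_x(D)=\Dir^O_x(X)$ and hence $\IP(\Dir^O_x(X)/T_x(D))=\emptyset$ by Theorem~\ref{Thm:HiroMizutani$^O$}, contradicting $O$-nearness of $x'$. Thus $D=\{x\}$ locally, the unique $O$-near candidate is the origin $x'$ of the $u_1$-chart, $k(x')=k(x)$, $\delta_{x'/x}=0$, and $x'$ is very $O$-near to $x$. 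After replacing $\cO_{Z,x}$ by its completion, which is harmless by Theorem~\ref{Thm:alphabetainvariant_1_deltaonly}, Theorem~\ref{Thm:Hironaka} supplies an RSP $(u_1;\hy)$ with $(\hy)$ determining $\Dir^O_x(X)$ and a well-prepared standard basis $(\hf)$ of $J^O\hR$ satisfying $\poly{\hf}{u_1}{\hy}=\cpoly{J^O}{u_1}$. Applying Proposition~\ref{Prop:originblowup}(2) to $I=J^O\hR$ with $e=d=1$ yields
\[
  \delta^O_{X'}(x')\;=\;\delta((J^O)';u_1')\;=\;\delta(J^O;u_1)-1\;=\;\delta^O_X(x)-1\;<\;\delta^O_X(x),
\]
which is exactly $\iop(x')<\iop(x)$.

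The hard part is to verify that the hypothesis of Proposition~\ref{Prop:originblowup}, namely that $x'$ be very near to $x$ for the scheme $V(I)$ with $I=J^O$, is actually implied by our assumptions. The directrix condition is the equality $e^O_{X'}(x')=e^O_X(x)=1$; for the Hilbert--Samuel condition one uses $J^O=J\cdot\varphi$ with $\varphi$ a product of $|O(x)|$ regular functions defining old boundary components, each of order one at $x$, the fact that the permissible center $\{x\}$ is contained in all of them, and the preservation of their number in $O(x')$ (equality of $\ioo$), so that the transformation formula for standard bases from the lemma in \S1 interacts with the origin blow-up componentwise. If this verification turns out to be technically delicate, one may bypass it and content oneself with the upper bound $\delta^O_{X'}(x')\leq\delta(\hf';u_1';\hy')=\delta^O_X(x)-1$, which already gives the required strict inequality.
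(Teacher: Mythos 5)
The overall plan is sound and the $e^O_X(x)=0$ case is handled correctly. The deduction ``$\ioo(x')=\ioo(x)\Rightarrow D=\{x\}$ locally'' via Theorem~\ref{Thm:HiroMizutani$^O$} is a clean way to pin down the center and matches the spirit of the paper, which instead fixes $\delta^O_X(x)<\infty$ first and derives that $x$ is isolated.
There are, however, two genuine gaps in the $e^O_X(x)=1$ case.

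First, you never address $\delta^O_X(x)=\infty$. Your conclusion $\delta^O_{X'}(x')=\delta^O_X(x)-1$ is vacuous when $\delta^O_X(x)=\infty$ (one gets $\infty=\infty$, no drop in $\iop$), and this situation is not a priori excluded by $D=\{x\}$ alone. The paper devotes a separate argument to it: when $\delta^O_X(x)=\infty$, Proposition~\ref{polyvide} produces $(y)$ in $R$ with $V(y)$ a \emph{curve} equal to the local log--Hilbert--Samuel stratum and permissible; under \eqref{eq:ALl_Irred_NEW} this curve lies in a new boundary component, hence is $\cB$-permissible by Observation~\ref{Obs:GooOnear}; so CJS takes $D=V(y)$, and then Theorem~\ref{Thm:HiroMizutani$^O$} (via Proposition~\ref{polyvide}(ii)) forces $H^O_X$ to drop at every $x'$. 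Without this argument, your implication ``$\ioo$ does not drop $\Rightarrow D=\{x\}$'' is not enough; you must also show that $D=\{x\}$ together with $\ioo$ not dropping is incompatible with $\delta^O_X(x)=\infty$.

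Second, the proposed ``escape hatch'' has the inequality reversed. By Definition~\ref{Def:delta} one always has $\delta(f;u;y)\leq\delta(I;u)$, because the characteristic polyhedron $\Delta(I;u)$ is \emph{contained} in every $\poly fuy$ and the infimum over a smaller set is larger. Consequently $\delta^O_{X'}(x')=\delta((J^O)';u_1')\geq\delta(\hf';u_1';\hy')=\delta^O_X(x)-1$, which is a \emph{lower} bound, not an upper one, and yields no strict decrease. The full strength of Proposition~\ref{Prop:originblowup}(1) (that the transformed data is again well-prepared, giving equality) is indispensable, so the ``very near for $V(J^O)$'' hypothesis cannot be dodged. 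The paper handles this more efficiently than your direct verification: Remark~\ref{Rk:delta_and_(very)near} is applied to the transformed polyhedron --- if its $\delta$ exceeds $1$ then $x'$ is very near automatically, while $\delta\leq 1$ forces $e^O_{X'}(x')=0$, contradicting $\ioo$-equality --- so the dichotomy ``$\delta^O_{X'}(x')=\delta^O_X(x)-1$ or $e_{X'}^O(x')=0$'' comes out for free.
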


\begin{proof}
	By the previous section we may assume that \eqref{eq:ALl_Irred_NEW} holds.
	
	If $ e_X^O ( x ) = 0 $, then, following the strategy of \cite{CJS}, $x$ is the center of the next blowing up $ \pi $.  
	Theorem \ref{Thm:HiroMizutani$^O$} implies that there are no $ O $-near points, i.e., $ H_{ X' }^O ( x' ) < H_X^O ( x ) $, for every point $ x' \in \pi^{ - 1} ( x ) $ lying above $ x  $.
		
	If $ e_X^O ( x ) = 1 $, then the characteristic polyhedron of $ J^O $ is \rot{one}-dimensional or empty.
	Recall that $ \delta_X^O ( x ) \in  \frac{1}{t} \IZ_{\geq 0} \cup \{\infty\} $, for some fixed $ t \in \IZ_+ $ (Corollary \ref{Cor:delta_in_1/N_Z^e}). 
		
	Assume $ \delta_X^O ( x ) < \infty $ {and $ x $ is a closed point}.
	Let $X'\to X$ be the blow up along $x$: 
		by Theorem \ref{Thm:HiroMizutani$^O$}, there is at most one point $ x'$ which is $ O $-near to $ x $.
		Further, Proposition \ref{Prop:originblowup}(2) and Remark \ref{Rk:delta_and_(very)near} imply $ \delta_{X'}^O ( x' ) = \delta_X^O ( x ) - 1 $ or
		$e_{X'}(x')=0$.
		This proves that $x$ is isolated in its log-Hilbert-Samuel-stratum. Following the strategy of \cite{CJS}, $x$ is the center of the next blowing up. 
		
		{If $ \delta_X^O ( x ) < \infty $ and $ x $ is a non-closed point then locally at $ x $ the center is $ D = \overline{\{ x \}} $.
			With the same arguments as above, we get 
			$ \ioo(x') < \ioo(x) $ or 
			$ \delta_{X'}^O ( x' ) = \delta_X^O ( x ) - 1 $, for every point $ x' $ lying above $ x $.
			  }
		
		A descending induction on $ (e_X^O,\delta_X^O) $ proves the result when $ e_X^O (x ) \leq 1 $ except in the subcase
		$$
			e_X^O ( x ) = 1,\  \delta_X^O ( x )=\infty.
		$$
	
	Let $ e_X^O ( x ) = 1 $ and $  \delta_X^O ( x )=\infty $.
	Suppose the point $x$ is an isolated {closed point or a non-closed point} in the log-Hilbert-Samuel stratum of $ X $. 
	By Proposition~\ref{polyvide}(ii), we have $\delta_X^O ( x )<\infty$, 
	which is a contradiction.
	
	Hence we may assume that {$ x $ is a closed point and} there is an irreducible component $D$ of dimension \rot{one} in the log-Hilbert-Samuel stratum of $ X $ going through $x$.
	Since $\delta_X(x)=\infty$ Proposition \ref{polyvide} implies that $D$ is permissible at $ x $ and locally coincides with the log-Hilbert-Samuel stratum of $ X $.
	Moreover, we find a sequence of parameters $ ( y ) $ such that $ D = V(y) $ and $ ( y) $ determines the directrix of $ \Dir_x^O(X) $.
	
	By \eqref{eq:ALl_Irred_NEW}, we know that $ D $ is contained in a new boundary component.
	Then Observation \ref{Obs:GooOnear} implies that $ D $ is $ \cB$-permissible.
	In particular, it is transversal to the boundary components. 
	
	Therefore $ D $ is $ \cB$-permissible and it is the only irreducible component of the log-Hilbert-Samuel stratum locally at $ x $.
	But this means $ D $ is the next center of the \cite{CJS}-algorithm.
	Theorem~\ref{Thm:HiroMizutani$^O$} yields that the log-Hilbert-Samuel function decreases strictly at every point lying above $ x $ after blowing up $ D $.
\end{proof}

%
%
%
%
%
%
%
%
%
%
%
%
%
%
%
%
%
%
%
%
%

%\smallskip

Hence it remains to consider the case $ e_X^O ( x ) = 2 $.
Since $ e_X^O(x) \leq e_X ( x) \leq 2 $, we get $ e_X(x) = 2 $. 
{Note that this implies that $ x $ is a closed point, whereas the situation of a non-closed point is included in our previous discussions.}
Furthermore, since 2 is the maximal value for the dimension of the directrix, it did not drop since $ \nu = H_X(x) $ became a maximal value for the first time. 

Thus, by \cite{CJS}~Theorem~3.23(2),
the new boundary components are automatically transversal to the directrix at $x$. 
This implies that we can choose the {\RSP} $ (u,y) = ( u_1, \ldots, u_e; y_1, \ldots, y_r ) $ of $ R $ ($ e := e_X^O (x) $)
such that $ ( y ) $ determines the directrix of $ J^O $ and each new boundary component $ D \in N(x) $ is (locally at $ x $) given by $ V( u_i ) $, for some $ i \in \{ 1· \ldots, e \} $. 

We make the following convention on the new boundary components that we will use from now on:
\begin{equation}
\label{eq:convention_new_boundary}
\left\{ \hspace{10pt}
\parbox{300pt}{ \em
	When $ |N(x)| = 1 $, then we choose $ (u_1, u_2 ) $ such that $ V(u_1) $ is the new boundary component.
	
	If $ |N(x)| = 2 $, we pick $ (u_1, u_2 ) $ such that the new boundary components are $  V(u_1) $ and $ V(u_2) $.		
}
\right.
\end{equation}

%%%%%%%%
%%%%%%%%
%%%%%%%%
%%%%%%%%

%%%%%%%%
%%%%%%%%
%%%%%%%%
%%%%%%%%

If $ V(J^O ) $ is regular at $ x $, the resolution process locally ends: 
$ X $ is regular at $ x $ and it transversal to the boundary at $ x $ 
{(see also Remark \ref{Rk:(3)(V)done}(3))}.
Note that we have in this case $ \delta_X^O(x) = \infty $.

\begin{Lem}
	\label{Lem:delta_not_infty_if_poly}
	We assume that $ x $ is a singular point of $ V(J^O ) $.
	Suppose \eqref{eq:ALl_Irred_NEW} holds and $ e_X^O ( x ) = 2 $.
	Then $ \delta_X^O ( x )<\infty$.
	In particular, $ \Delta ( J^O; u ) \neq \emptyset $.
\end{Lem}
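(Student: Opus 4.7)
\emph{Plan.} The plan is to argue by contradiction: assume $\delta_X^O(x)=\infty$, i.e.\ $\cpoly{J^O}{u}=\emptyset$, and deduce that $V(J^O)$ is regular at $x$, contradicting the standing hypothesis.

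First, by Proposition~\ref{Cor:blowuppolyvide} there exist a standard basis $(f)=(f_1,\ldots,f_m)$ of $J^O$ and a system of parameters $(y)$ extending $(u)$ to a \RSP{} of $R$, with $(y)$ determining the directrix of $J^O$, such that $\Delta(f;u;y)=\Delta(J^O;u)=\emptyset$. By the very definition of the polyhedron, emptiness forces every non-zero term $C_{A,B,i}\,u^A y^B$ appearing in the expansion \eqref{eq:expansion} of each $f_i$ to satisfy $|B|\geq\nu_i:=\ord_\maxIdeal(f_i)$. Consequently $f_i\in\langle y\rangle^{\nu_i}\subset\langle y\rangle$ for every $i$, so $J^O\subset\langle y\rangle$, i.e.\ $V(y)\subset V(J^O)$. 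Since $e_X^O(x)=2$ and $e_X^O(x)=\dim R-r$, the sequence $(y)$ has exactly $r=\dim R-2$ components, so $V(y)\subset \Spec R$ is a regular subscheme of dimension~$2$.

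Next, I would apply Proposition~\ref{polyvide} to $I:=J^O$. This requires $\dim R/J^O\leq 2$, which I arrange by choosing a suitable embedding at~$x$: the invariants involved are independent of the embedding (Theorems~\ref{Thm:alphabetainvariant_1_deltaonly} and~\ref{Thm:alphabetainvariant_part2_nodelta}, together with Remark~\ref{Rk:nonembedded}), so after passing to a minimal embedding we may assume the old boundary divisors contribute no component of dimension exceeding~$2$ to $V(J^O)=X\cup V(I_O)$. The proposition then tells us that $V(y)$ is the maximal Hilbert-Samuel stratum of $V(J^O)$ near $x$, is permissible at~$x$, and its blow-up admits no near point. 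Remark~\ref{Rk:polyvide}, applicable since $e_X^O(x)=2=\dim R/J^O$, yields $V(y)=V(J^O)$, so that $V(J^O)$ is regular at~$x$. This contradicts the assumption that $x$ is a singular point of $V(J^O)$, and therefore $\delta_X^O(x)<\infty$, i.e.\ $\Delta(J^O;u)\neq\emptyset$.

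\emph{Main obstacle.} The delicate point is securing the dimensional hypothesis $\dim R/J^O\leq 2$ of Proposition~\ref{polyvide}: old boundary components are divisors in $\Spec R$ of dimension $\dim R-1$, so one has to take~$R$ with $\dim R$ small enough (e.g.\ a minimal embedding) to ensure $\dim V(J^O)=2$. The invariance of $\delta_X^O(x)$ and of the characteristic polyhedron invariants under change of embedding is exactly what legitimizes this reduction; once it is in place, the conclusion follows formally from Proposition~\ref{polyvide} combined with Remark~\ref{Rk:polyvide}. Hypothesis \eqref{eq:ALl_Irred_NEW} enters implicitly, as it guarantees that we are already in the situation where the log-Hilbert-Samuel stratum near $x$ consists only of newly created (and hence $\cB$-permissible) components, ruling out pathological configurations in which $V(J^O)$ could have strictly larger dimension.
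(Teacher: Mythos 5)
Your proof takes a route that differs from the paper's, although both argue by contradiction from $\delta_X^O(x)=\infty$ and both pass through Propositions~\ref{polyvide} and~\ref{Cor:blowuppolyvide} to the conclusion that $V(y)$ is the log-Hilbert-Samuel stratum of $X$ near $x$ with $(y)$ determining the directrix of $J^O$. The paper's contradiction is then with \eqref{eq:ALl_Irred_NEW}: the paragraph preceding the Lemma arranges coordinates so that each new boundary component is some $V(u_i)$, hence transversal to the directrix $V(y)$; yet \eqref{eq:ALl_Irred_NEW}, together with Observation~\ref{Obs:GooOnear}, forces every one-dimensional component of the stratum near $x$ to lie inside a new boundary component. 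You instead contradict the other standing hypothesis, the singularity of $V(J^O)$ at $x$: you invoke the stronger Remark~\ref{Rk:polyvide} (which in addition needs $e=\dim R/J^O$) to identify $V(y)$ with $V(J^O)$ and conclude it is regular. This mirrors the argument in the unnumbered Observation immediately after Theorem~\ref{Thm:alphabetainvariant_part2_nodelta} rather than the Lemma's own proof, and it is a perfectly good route --- provided the Propositions apply.

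That is precisely where there is a gap. You rightly flag that Propositions~\ref{polyvide} and~\ref{Cor:blowuppolyvide} need $\dim R/J^O\leq 2$, but your claim that this can be secured by passing to a minimal embedding does not hold up. Since $V(J^O)=X\cup V(I_O)$ and each old boundary component $V(\vp_j)\subset Z$ is a divisor, one has $\dim V(I_O)=\dim Z-1$, so the requirement is $\dim Z\leq 3$. But $\dim Z$ for a minimal embedding is governed by the embedding dimension of $X$ at $x$ (plus possibly extra variables added for the boundary as in Remark~\ref{Rk:nonembedded}), which can exceed $3$ whenever $X$ is not a hypersurface at $x$ or several boundary components require new variables --- situations perfectly compatible with $e_X^O(x)=e_X(x)=2$ (e.g.\ $X$ cut out by two equations in $\A^4$ with two-dimensional directrix). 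The paper's own proof also elides this hypothesis when invoking the Propositions, so the issue is not unique to your attempt; but since you singled it out as the delicate point, you should be aware that the ``minimal embedding'' assurance is not a valid resolution and a genuinely different justification would be required.
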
	

\begin{proof}
	Assume $ \delta_X^O ( x ) = \infty $.
	%Then $ e_X^O (x ) \leq 1 $ by Remark~\ref{Rk:polyvide}.	
	%Moreover, 
	Propositions~\ref{polyvide}(ii) and \ref{Cor:blowuppolyvide} imply that $D = V(y) $ coincides with the log-Hilbert-Samuel stratum in a neighborhood of $ x $ and
	$(y) $ determines the directrix of $ J^O $.
	By the previous lines, new boundary components are transversal to the directrix.
	Hence we get a contradiction with assumption \eqref{eq:ALl_Irred_NEW}.
	%Th	
\end{proof}

%%%%%%%%
%%%%%%%%
%%%%%%%%
%%%%%%%%
%%%%%%%%
%%%%%%%%
%%%%%%%%
%%%%%%%%
%%%%%%%%
%%%%%%%%
%%%%%%%%
%%%%%%%%
%%%%%%%%

%%%%%%%%
%%%%%%%%
%%%%%%%%
%%%%%%%%
%%%%%%%%
%%%%%%%%
%%%%%%%%
%%%%%%%%
%%%%%%%%
%%%%%%%%
%%%%%%%%
%%%%%%%%

%%%%%%%%
%%%%%%%%
%%%%%%%%
%%%%%%%%
%%%%%%%%
%%%%%%%%
%%%%%%%%
%%%%%%%%
%%%%%%%%
%%%%%%%%
%%%%%%%%
%%%%%%%%

%%%%%%%%

The characteristic polyhedron of $ J^O $ is two-dimensional ($ e_X^O ( x ) = 2 $).
If $ V(u_1) $ is a new boundary component then, we set
$$ 
\alpha_1^O := \alpha_1 ( J^O;u), \hspace{10pt} 
\beta_1^O := \beta_1 (J^O;u), \hspace{10pt}
\gamma_1^O := \gamma_1 (J^O;u), \hspace{10pt}
s_1^O(u_2) :=  s_1(J^O;u).
$$
(See \eqref{eq:Definition_alphabetagamma} and Definition \ref{Def:alphabetagammas}).
We use the analogous notations if $ V(u_2) $ is a boundary component.
As we have seen in Theorem \ref{Thm:alphabetainvariant_part2_nodelta} $ \alpha_1^O, \beta_1^O, \gamma_1^O $ are invariants of the singularity and the boundary component.
On the other hand, $ s_1^O (u_2) $ depends on the choice of $ u_2 $.
But, we have

\begin{Obs}\label{Obs:s1} 
	Let $ (u_1, u_2; y_1, \ldots, y_r ) $ be a {\RSP} for $ R $ such that $ V(u_1) $ defines a boundary component and $ \poly{J^O}uy = \cpoly{J^O}{u} $.    
	Suppose that
	\begin{equation}
	\label{eq:beta_s_condition}
		\beta_1^O\geq 1 
		\hspace{10pt}
		\mbox{and} \hspace{10pt}
		s_1^O(u_2) \geq 1. 
	\end{equation}
	%,\
	The face of $ \cpoly{J^O}{u} $ of slope $ \frac{-1}{s_1^O(u_2)}$ is defined by the equation:
	$$
		\frac{1}{s_1^O(u_2)} \cdot x_1 +x_2 
		= \frac{\alpha_1^O}{s_1^O(u_2)} + \beta_1^O.
	$$
	%wh
	By \eqref{eq:beta_s_condition},
	the equation verifies condition $(\ast)$ of \cite{CosRevista} on the coefficients of defining equation for the face: 
	$0 < \frac{1}{s_1^O(u_2)} \leq \frac{\alpha_1^O}{s_1^O(u_2)} + \beta_1^O $ and 
	$ 0 < 1 \leq \frac{\alpha_1^O}{s_1^O(u_2)} + \beta_1^O$. Then \cite{CosRevista}~Theorem~B.2.2 states that this side is given by the only datum of the ideals $\langle \overline{u_1} \rangle,  \langle \overline{u_1} , \overline{u_2} \rangle \subset R/J^O$,
	where $ \overline{u_i} $ denotes the image of $ u_i $ in $ R/J^O $, $ i \in \{ 1, 2\} $.
	 
	This justifies the following definition: 
	\begin{equation}
	\label{eq:def_sigma}
		\sigma_1^O := \sigma_1^O (X,x) :=
		\left\{ 
		\hspace{5pt}
		\begin{array}{ccl}
			1, 
			& \hspace{5pt}& \mbox{when $\beta_1^O<1$}
			\\[2pt]
			\sup \{1, \sup_{\,\overline{u_2}\in R/I} \{ s_1^O(u_2)  \}\,\},
			&& \mbox{when $\beta_1^O \geq 1 $},
		\end{array}
		\right.
	\end{equation}
	where the second supremum in the second line is taken over all possible choices for $ \overline{u_2}$ and $ \mathrm{div}(\overline{u_1})\subset \Spec(R/I)$ fixed.
	
	Note: We do not exclude the case that $ V(u_2) $ is a boundary component in the previous considerations.
	We explain the need for this in Example~\ref{Ex:Why_sup_u2}.
	
	\noindent
	(Of course, we define $ \sigma_2^O $ in the analogous way).	
\end{Obs}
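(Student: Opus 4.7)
The plan is to verify, in sequence, the three points gathered in the Observation: (i) the formula for the face, (ii) the validity of condition $(*)$ from \cite{CosRevista}, and (iii) the intrinsic character of this face, from which the definition of $\sigma_1^O$ becomes meaningful.

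First I would derive the face equation directly from Definition \ref{Def:alphabetagammas}. By definition, $(\alpha_1^O, \beta_1^O)$ is the vertex of $\cpoly{J^O}u \subset \IR^2_{\geq 0}$ with smallest first coordinate, and $s_1^O(u_2)$ is defined so that the segment joining this vertex to the next one has slope $-1/s_1^O(u_2)$. The hypothesis $s_1^O(u_2) \geq 1$ together with the Observation's assertion $0 < 1/s_1^O(u_2)$ forces $s_1^O(u_2) < \infty$, so the face is genuinely one-dimensional; writing the line through $(\alpha_1^O, \beta_1^O)$ with slope $-1/s_1^O(u_2)$ yields the displayed equation at once.

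Next I would verify condition $(*)$ by writing the equation as $a x_1 + b x_2 = c$ with $a = 1/s_1^O(u_2)$, $b = 1$, $c = \alpha_1^O/s_1^O(u_2) + \beta_1^O$. Positivity of these coefficients is clear, and both $a \leq c$ and $b \leq c$ reduce (after clearing denominators) to elementary consequences of the hypotheses $\beta_1^O \geq 1$ and $s_1^O(u_2) \geq 1$. Having verified $(*)$, I would invoke \cite{CosRevista}~Theorem~B.2.2 to conclude that the coefficients of the face equation are determined only by the triple $(R/J^O,\, \langle \overline{u_1} \rangle,\, \langle \overline{u_1}, \overline{u_2} \rangle)$. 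In particular, $s_1^O(u_2)$ depends solely on the ideal $\langle \overline{u_1}, \overline{u_2} \rangle$ and not on the specific lift $u_2$, so the supremum in \eqref{eq:def_sigma} ranges over an intrinsic class of data and the resulting $\sigma_1^O$ is a well-defined element of $\IQ_{\geq 1} \cup \{\infty\}$.

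The main obstacle is the appeal to \cite{CosRevista}~Theorem~B.2.2, which in loc.~cit.~is proved under a characteristic-zero assumption. As already noted in Observation \ref{Obs:nocompletion}, that hypothesis is used only to ensure termination of a Hironaka preparation; replacing it by Proposition \ref{Cor:blowuppolyvide} (providing a well-prepared standard basis in dimension at most two) together with Proposition \ref{Prop:originblowup} (controlling the polyhedron after blowing up the origin of a chart), the Hironaka-trick computation underlying \cite{CosRevista}~B.2 should transfer to our setting. A careful line-by-line verification of this transfer is the substantive step.
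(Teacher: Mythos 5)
Your proposal is correct and follows the same route as the paper: the Observation is essentially self-contained, deriving the face equation from Definition~\ref{Def:alphabetagammas}, checking condition $(\ast)$ from the hypotheses $\beta_1^O\geq 1$ and $s_1^O(u_2)\geq 1$, and then invoking \cite{CosRevista}~Theorem~B.2.2 for the intrinsic characterization. You also correctly identify (as the paper does in Observation~\ref{Obs:nocompletion} for the analogous case of $\delta$) that the characteristic-zero hypothesis in \cite{CosRevista} is removable here via Propositions~\ref{Cor:blowuppolyvide} and~\ref{Prop:originblowup}, which is the substantive point.
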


Recall convention \eqref{eq:convention_new_boundary}.

\begin{Def}[\bf Refinement for $ \bf e_X^O ( x ) = e_X ( x ) = 2 $]
	\label{Def:iotapoly}
	Let $ x \in X  \subset Z $ and $ \cB $ be a boundary on $ Z $.
	The definition of $ \iop := \iop ( X, \cB, x ) $ splits into the following cases:
	%In the following we abbreviate $ *^O := * ( \cpoly {J^O}u ) $.
	When \eqref{eq:ALl_Irred_NEW} does not hold or $ |N(x)| = 0 $, we put
	$$
	\iop := ( \infty, \infty, \infty, \infty ).
	$$
	If \eqref{eq:ALl_Irred_NEW} holds and $ |N(x)| > 0 $, we define
	$$
		\iop :=
		\left\{ 
		\hspace{5pt}
		\begin{array}{ccl}

			( \,\beta_1^O , \, \gamma_1^O  ,\, \sigma_1^O,\, \alpha_1^O  \, ), 
			& 
			& \mbox{if } |N(x)| = 1,
			\\[2pt]
			\inf_{lex} \left\{ 
			( \,\beta_1^O , \, \gamma_1^O  ,\, \sigma_1^O,\, \alpha_1^O  \, ),
			( \,\beta_2^O , \, \gamma_2^O  ,\, \sigma_2^O,\, \alpha_2^O  \, ) \right\},
			 
			& 
			& \mbox{if } |N(x)| = 2 .
				
		\end{array}
		\right.
	$$
	%\begin{enumerate}
	%	
\end{Def}

\smallskip

	By Theorem \ref{Thm:alphabetainvariant_part2_nodelta} and Observation \ref{Obs:s1}, $ \iop = \iop(X,\cB,x) $ is an invariant of the singularity $ x\in X $.
	In particular, it is independent of the embedding into $ Z $.

%\smallskip

%\begin{Obs}
	The role played by $\beta_1 $ appeared for the first time in \cite{HiroCharPoly} for the case of hypersurfaces 
	(see also \cite{DaleBook} Chapter 7, and in \cite{CJS}). Moreover $\gamma_1$ appears  inside the computations of these  authors when they control the behavior of $\beta_1$ under blowing ups of closed points.
%	
%\end{Obs}

\smallskip

Before coming to the precise statement, let us give an example where we illustrate the decrease of $ \iop $ and the need of taking the supremum over $ \overline{u_2} $ in the definition of $ \sigma_1^O $ (even if there are two new boundary components).

\begin{Ex}
	\label{Ex:Why_sup_u2}
	Consider the affine surface $ X $ defined by the polynomial
	$$
		 h := y^2 + (u_2 + \lambda u_1 )^3 + u_1^7 \in k [u_1, u_2, y],
	$$
	for any field $ k $ and $ \lambda \in k^\times $.
	Since $ X $ is a hypersurface it suffices to consider the order to determine the maximal Hilbert-Samuel locus.
	The maximal order appearing is $ 2 $ and the singular locus of $ X $ is the origin $ x = V(u_1, u_2, y) $.
	
	Suppose $ V (u_1) $ and $ V(u_2 ) $ are the only boundary components and both are new for $ X $.
	The polyhedron $ \poly{h}{u_1, u_2}{y} $  has vertices 
	$ v^{(1)} := ( \frac{3}{2}, 0) $ and $ v^{(2)} = (0, \frac{3}{2}) $.
	Moreover, it is minimal \wrt the choice of $ y $ and hence coincides with the characteristic polyhedron $ \cpoly{h}{u_1, u_2} $.
	From this we obtain
	\begin{equation}
	\label{eq:in_Ex_not_sup}
		(   \beta_1^O, \gamma_1^O, s_1^O(u_2), \alpha_1^O ) 
		= ( \beta_2^O, \gamma_2^O, s_2^O(u_1), \alpha_2^O )
		= \left(\frac{3}{2} , \frac{3}{2}, 1  , 0 \right)
	\end{equation}
			
	We blow up the origin $ x $ and consider the $ U_1 $-chart.
	Let $ x' $ be the point with parameters 
	$ (y', u_1, v_2' ) := ( \frac{y}{u_1}, u_1, \frac{u_2}{u_1} + \lambda ) $.
	Clearly, $ x' $ lies above $ x $, and the strict transform of $ h $ at this point is 
	$$
		h' = y'^2  + u_1 v_2'^3 + u_1^5.
	$$
	(Note that $ ( \ioo, \ioc) (x) = ( \ioo, \ioc) (x') = (2,0,  2, 2; 0, \ldots, 0 ) $). 
	We see only one boundary component at $ x' $, namely $ V(u_1) $, and it is not hard to show that
	$$ 
		\iop(x') 
		= 
		(   {\beta_1^O}', {\gamma_1^O}', {s_1^O}'(v_2), {\alpha_1^O}' ) 
		= \left(\frac{3}{2} , \frac{3}{2}, \frac{4}{3}  , \frac{1}{2} \right) >_{lex}
		\left(\frac{3}{2} , \frac{3}{2}, 1  , 0 \right).
	$$	
	(Here, we use the obvious notations $ {\alpha_1^O}' := \alpha_1^O(x') $, etc.).
	
	The explanation for this increase is that \eqref{eq:in_Ex_not_sup} does not compute $ \iop(x) $.
	We need to consider the supremum \wrt the choice of $ \overline{u_2} $ for $ \sigma_ 1 $ (resp.~$ \overline{u_1} $ for $ \sigma_2 $).
	The latter is achieved for $ v_2 := u_2 + \lambda u_1 $ (resp.~$ v_1 := u_2 + \lambda u_1 $);
	more precisely, $ h = y^2 + v_2^3 + u_1^7 $ and
	$$
		\iop (x) = \left(\frac{3}{2} , \frac{3}{2}, \frac{7}{3}  , 0 \right).
	$$
	Indeed, we get a decrease, $ \iop (x') <_{lex} \iop(x) $, and thus $ \io(x) < \io(x') $.
\end{Ex}

%\smallskip

\begin{Prop}
	\label{Prop:iop_decrease}
	Let $ \pi : Z' \to Z $ be a $ \cB $-permissible blow-up {with center $ D \subset X $} following the \cite{CJS}-strategy.
	Consider {$ x \in D $ and} $ x ' \in \pi^{ -1} ( x ) $.
	
	Assume $ \ioo ( X', \cB',  x') = \ioo ( X,\cB, x ) $ and  $ \ioc ( X', \cB', x') = \ioc ( X, \cB, x ) $.
	Then
	$$
	\iop ( X',  \cB', x' ) < \iop ( X, \cB, x ).
	$$
\end{Prop}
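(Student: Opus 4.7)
The plan is first to dispose of the low-directrix case and then reduce everything to a transformation-law computation for Hironaka's characteristic polyhedron.

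By Proposition \ref{Prop:e(x)=e(x_*)}, if $e_X^O(x)\leq 1$ then $(\ioo,\ioc,\iop)(x')<(\ioo,\ioc,\iop)(x)$; combined with the hypothesis this already gives $\iop(x')<\iop(x)$. So I may assume $e_X^O(x)=e_X(x)=2$. Since $\ioc$ is preserved, Proposition \ref{Prop:ioc_non_increase} (together with Remark \ref{Rk:(3)(V)done}(3) ruling out Case (V)) forces $x$ to be in Case (IV), so $x\notin C$ and condition \eqref{eq:ALl_Irred_NEW} holds; moreover $|N(x)|\geq 1$ (otherwise $V(J^O)$ would be regular at $x$, contradicting the fact that $x$ lies in $X^O(\tnu)$ on a newly created component). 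By Lemma \ref{Lem:delta_not_infty_if_poly}, $\cpoly{J^O}{u}\neq\emptyset$. Using \cite{CJS}~Theorem~3.23(2) and convention \eqref{eq:convention_new_boundary}, fix an RSP $(u_1,u_2;y_1,\ldots,y_r)$ of $R$ with $(y)$ defining $\Dir_x^O(X)$ and the new boundary components among $\{V(u_1),V(u_2)\}$; by Proposition \ref{Cor:blowuppolyvide} pick a standard basis $(f)$ of $J^O$ realizing $\poly{f}{u}{y}=\cpoly{J^O}{u}$.

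Next I identify the center $D$ and the image point $x'$. The CJS algorithm blows up either the isolated closed point $D=\{x\}$, or a $\cB$-permissible curve $D$ through $x$ contained in a new boundary component (Observation \ref{Obs:GooOnear}); in the curve case, locally $D=V(u_i,y)$ for some $i\in\{1,2\}$, and permissibility forces $\alpha_i^O\geq 1$ via Theorem \ref{Thm:2.2.(2)_CJS}. Since $x'$ is very $O$-near to $x$, Theorem \ref{Thm:HiroMizutani$^O$} places $x'\in\IP(\Dir_x^O(X)/T_x(D))$, so $x'$ is the origin of some chart; relabelling $u_1\leftrightarrow u_2$ if necessary, assume $x'$ is the origin of the $U_1$-chart.

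Now I apply Proposition \ref{Prop:originblowup}: the very $O$-near point $x'$ carries a well-prepared standard basis $(f',y')$ with $\poly{f'}{u'}{y'}=\Delta((J^O)';u')$. By the vertex translation \eqref{eq:pointafterblowup}, each vertex $v=(v_1,v_2)$ of $\cpoly{J^O}{u}$ corresponds to a vertex of the new polyhedron at $(v_1+v_2-1,v_2)$ when $D=\{x\}$, and at $(v_1-1,v_2)$ when $D=V(u_1,y)$. In both cases the $v_2$-coordinates of vertices are preserved while all $v_1$-coordinates strictly drop, and the slopes of adjacent faces are preserved.

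Finally I verify the lexicographic decrease of $\iop=(\beta_1^O,\gamma_1^O,\sigma_1^O,\alpha_1^O)$, taking the minimizing branch when $|N(x)|=2$. The preservation of vertex ordinates yields $\beta_1^O(x')\leq\beta_1^O(x)$ and $\gamma_1^O(x')\leq\gamma_1^O(x)$; when both are equalities, Observation \ref{Obs:s1} together with the invariance of the slope data forces $\sigma_1^O(x')\leq\sigma_1^O(x)$; in the fully borderline case the abscissa drops strictly by $\beta_1^O\geq 1$ (point blow-up) or by $1$ (curve blow-up), yielding $\alpha_1^O(x')<\alpha_1^O(x)$. For $|N(x)|=2$ the argument applies componentwise; the subtlety is that a closed-point blow-up creates a new exceptional boundary component through $x'$, so the infimum at $x'$ must be bounded by the image of the minimizing component at $x$.

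The main obstacle will be the last step when $|N(x)|=2$ and $D=\{x\}$: one must verify that the infimum defining $\iop(x')$ is dominated by the transform of the minimizing branch at $x$, against the possibility that the exceptional divisor at $x'$ achieves a larger infimum value due to the supremum built into $\sigma_1^O$ (cf.\ Example \ref{Ex:Why_sup_u2}). A related delicate point is the stability of $\sigma_1^O$ under blow-up, which involves the supremum over coordinate choices $\overline{u_2}$ and relies on the detailed face analysis of \cite{CosRevista}~Theorem~B.2.2.
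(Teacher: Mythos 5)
Your reduction to the case $e_X^O(x)=e_X(x)=2$ with \eqref{eq:ALl_Irred_NEW} holding is correct, and the analysis of a curve center $D=V(u_i,y)$ is right. But the argument breaks down fundamentally at the closed-point blow-up, which is the heart of the proposition.

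The critical error is the claim that, because $x'\in\IP(\Dir_x^O(X)/T_x(D))$ with $D=\{x\}$, $x'$ must be ``the origin of some chart.'' When $D=\{x\}$, the exceptional fiber is a full projective line $\IP^1_{k(x)}$, and the very $O$-near point $x'$ can be \emph{any} closed point on it away from the strict transform of $\mathrm{div}(u_1)$ --- in particular, a rational point at general position $u_2'=\lambda\neq 0$, or a non-rational point with residue extension $k(x')/k(x)$ of degree $d>1$. Proposition~\ref{Prop:originblowup} only applies at the origin of a chart, so its conclusion that the transformed basis stays well-prepared is not available at such $x'$. The entire difficulty of the proof --- the monomial valuation $\nu$, the Giraud ridge machinery giving the additive generators $\sigma_j$, the three cases on the structure of the $G_j$, the derivation estimate \eqref{eq:inequi_beta_nu}, and the identification of a unique bad point $x'_{bad}$ --- exists precisely to control these general points. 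None of this is replaced by your relabelling argument: when $|N(x)|=1$ the only boundary direction is $V(u_1)$, so there is no symmetry $u_1\leftrightarrow u_2$, and a general point of the $U_1$-chart is not an origin of any chart.

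A second, independent flaw is the claimed chain of inequalities for $\iop$. At the origin of the $U_1$-chart the transformation law sends $(v_1,v_2)$ to $(v_1+v_2-1,v_2)$, which \emph{preserves} ordinates, so $\beta_1^O$ and $\gamma_1^O$ stay constant, and the abscissa becomes $\alpha_1^O(x')=\delta-1=\alpha_1^O(x)+\beta_1^O(x)-1$, which is $\geq\alpha_1^O(x)$ whenever $\beta_1^O(x)\geq 1$. So your ``borderline'' claim $\alpha_1^O(x')<\alpha_1^O(x)$ is false exactly in the hard case $\beta_1^O(x)\geq 1$. This is why the invariant is ordered $(\beta_1^O,\gamma_1^O,\sigma_1^O,\alpha_1^O)$ and why the paper must first show $\beta_1^O$ cannot increase and can stay equal only for the unique bad point (Lemma~\ref{Lem:beta=}), then feed the residual decrease into $\gamma_1^O$ and $\sigma_1^O$ via the slope analysis of \cite{CosRevista} before finally using $\alpha_1^O$. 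As written, your argument proves no decrease in this central situation.
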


\vspace{8pt}

In the proof we show that $ (\,\beta_1^O  , \,\gamma_1^O ,\, {\sigma_1^O}  , \, \alpha_1^O \,) $ strictly drops (\wrt the lexicographical order) in every chart of {the} blow-up {above.}  
This implies then the assertion and thus Theorem \ref{MainThm}.

As we will show, the difficult case is when the center in  \cite{CJS} process is the closed point.
In the case that the residue field extension is trivial {the} arguments for the proof for the decrease of $ \iop $ are the same as in the hypersurface case by Hironaka \cite{HiroCharPoly} (see \cite{DaleBook} chapter 7, in particular section 7.4).
Further, the missing part of a non-trivial residue extension is dealt with in \cite{CosToho}, and also in \cite{CJS} Theorem 13.4.
Nevertheless, we provide a complete proof for the decrease involving new arguments.

\begin{proof}
	As we mentioned at the beginning of the section, $ ( \ioo, \ioc) $ decreases strictly as long as there is an irreducible component with label 0 in the maximal Hilbert-Samuel stratum at $ x $ (see also Proposition \ref{Prop:ioc_non_increase}(2)).
	Moreover, we may assume that $ V(J^O ) $ is singular at $ x $.
	If this is not the case, $ X $ is regular at $ x $ and n.c.~with the boundary components containing $ x $, i.e., the resolution process ends at $ x $.
	
	Hence, we may assume that \eqref{eq:ALl_Irred_NEW} holds.
	In particular, there has to exist at least one new boundary component and $ \cpoly{J^O}{u} \neq \emptyset $ (Lemma \ref{Lem:delta_not_infty_if_poly});
	\WLOG $ V ( u_1 ) $ is a new boundary component for $ x $.

	{Since $ \ioo ( X', \cB',  x') = \ioo ( X,\cB, x ) $, we have $ e_{X'}^O(x') = e_X^O ( x ) = 2 $ and thus $ x' $ is a closed point.}

	By the good nature of the newly created singularities (Observation \ref{Obs:GooOnear}), 
	it remains to consider the case that the center of the blow-up is a whole irreducible component of the Hilbert Samuel locus (at $ x $) of label strictly bigger than 0.
	In particular, the center is either an irreducible curve or an isolated closed point.

	We abbreviate $ \alpha_1^O(x)  := \alpha_1^O(X,x) := \alpha_1 ({J^O};u ) $, and we define $ \beta_1^O(x), \gamma_1^O(x) $, $ \sigma_1^O (x) $ analogously.
	Similarly, we define
	$ \alpha_1^O (x'), \beta_1^O (x'), \gamma_1^O(x'), \sigma_1^O(x') $ for the situation at $ x ' $.
	
	%Sup
	Since $ \dim ( X ) = 2 $ Theorem \ref{Thm:HiroMizutani$^O$} implies that the $ O $-near points are contained in the projective space associated to the directrix: 
	Suppose $ V(u_i, y  \mid i \in \Lambda \subset \{ 1, 2\}) $ is the center of the blowing up and $ ( y) = ( y_1, \ldots, y_r ) $ determines the directrix of $ X $ at $ x $.  
	Then the previous implies that the $ O $-near points cannot lie in the $ Y_j $-charts.
	
	Let $ ( f) $ be a standard basis for $ J^O $ and $ ( y ) $ be part of a {\RSP} determining the directrix (of $ J^O $ at $ M $) such that the associated polyhedron is minimal 
	(i.e., it coincides with the characteristic polyhedron, $ \poly{f}{u}{y} = \cpoly{J^O}u $).
	Recall Proposition~\ref{Prop:originblowup}(1):
	If the origin of a chart of the blowing up is very near, then the polyhedron for the transformed data is still minimal.
	
	If the center is an irreducible curve, say $ V ( u_1, y ) $ (resp.~$ V ( u_2, y ) $), then the $ O $-near points can only be the origin of the $ U_1 $-chart (resp.~the origin of the $ U_2 $-chart).
	An easy computation (similar to the one in the proof of Proposition~\ref{Prop:originblowup}) shows that the point $ ( v_1, v_2 ) \in \cpoly {J^O}u $ is translated to $ ( v_1 - 1, v_2 ) $ under the blow-up in $ V ( u_1, y ) $ (resp.~to $ ( v_1, v_2 - 1 ) $ under the blow-up in $ V ( u_2, y ) $).
	In the first case we have 
	$
	\beta_1^O ( x') = \beta_1^O ( x ), \; \gamma_1^O ( x') = \gamma_1^O ( x ), \; \sigma_1^O ( x' ) = \sigma_1^O ( x) $ 
	(for the latter, recall Observation \ref{Obs:s1}),
	and
	$$
	\alpha_1^O ( x' ) = \alpha_1^O ( x ) - 1 < \alpha_1^O ( x ),
	$$
	and in the second we get $ \beta_1^O ( x') = \beta_1^O ( x) - 1 < \beta_1^O ( x) $.
	Therefore we obtain $ \iop ( x' ) < \iop (  x ) $ if the center of the blowing up is an irreducible curve.
	
	\smallskip
	
	For the remaining part of the proof we may assume that locally at $ x $ the maximal (log)-Hilbert-Samuel locus is an isolated closed point.
	The center of the considered blowing up is therefore the closed point $ x $.
	Clearly, once \eqref{eq:ALl_Irred_NEW} holds, we reach this situation in our resolution process after finitely many blowing ups of irreducible curves by the previous explanations.
	In particular, we get (using convention \eqref{eq:convention_new_boundary}): 
	\begin{itemize}
		\item	when $ |N(x)| = 1 $ then $\alpha_1^O (x) < 1 $, and
		\item	when $ |N(x)| = 2 $ then $\alpha_1^O (x) < 1 $ and $\alpha_2^O (x) < 1 $.
		
% A.	
	\end{itemize}
%	In b
%	
%	\smallskip
	%
	First we observe that $\beta_1^O ( x ) + \alpha_1^O ( x) \geq \delta_X^O ( x)>1$ and $\alpha_1^O ( x) < 1$ imply
	\begin{equation}
	\label{eq:beta_1>0}
		\beta_1^O ( x )>0.
	\end{equation}
		%Whe
	%%\smallskip
	
	Let $ ( f) = ( f_1, \ldots, f_m) $, $ (u, y ) $ be as in Setup \ref{Setup:blowup} (for $ I = J^O $), $ \nu_i = \ord_M (f_i) $, $ 1 \leq i \leq m$.
	% ( 
	Recall that the center of the blow-up is $ V(u,y) = V(u_1, u_2, y) $ and at least $ \mathrm{div}(u_1) $ defines a new boundary component.
	
	If $ x'$ is on the strict transform of div$(u_1)$, it is the origin of the $ U_2 $-chart (for any choice of $u_2$). 
	%Then, 
	The strict transform of $f_i$ is $ f_i' = \frac{f_i}{u_2^{\nu_i}}$, $1\leq i \leq m$.
	If 
	$f_i =\sum C_{a_1, a_2,B,i }\,u_1^{a_1}u_2^{a_2}y^B$ is a finite expansion of $ f_i $ as in \eqref{eq:expansion}, $ C_{a_1, a_2,B,i} \in R^\times \cup \{0\} $, then
	we get 
	$$
	f'_i:={f_i \over u_2^{\nu_i}}=
	\sum C_{a_1 a_2,B} \,{u'}_1^{a_1}{u_2}^{a_1+a_2+\vert B \vert - \nu_i}{y'}^B,
	$$
	where $ u_1' = \frac{u_1}{u_2} $ and $ y' = \frac{y}{u_2} $.
	The transformation law of the polyhedron is easy:    
	\begin{equation}
	\label{eq:trafo_f_i}
	\poly{f'_i}{u_1', u_2}{y'} = 
	\mathrm{Conv}
	\left\{  
	\bigcup_{(v_1,v_2)\in \poly{f}{u}{y}}(v_1,v_2+v_1-1)+\R^2_{\geq 0}
	\right\}. 
	\end{equation}
	Since we have $ \alpha_1^O ( x ) < 1 $ this leads to
	$$ 
	\beta_1^O ( x') = \beta_1^O ( x ) + \alpha_1^O ( x) - 1 < \beta_1^O ( x ) 
	$$ 
	and hence we obtain the desired decrease $ \iop(x') < \iop(x) $.
	
	%Otherwise 
	%
	Let us consider a point $ x '$ in the $ U_1 $-chart, i.e.,
	which is not on the strict transform of div$(u_1)$.
	In this chart, 
	$$
	f'_i:={f_i \over u_1^{\nu_i}}=
	\sum C_{a_1, a_2,B,i}\,{u}_1^{a_1+a_2+\vert B \vert - \nu_i}u_2'^{a_2}{y'}^B
	,
	$$
	with $ y' = \frac{y}{u_1} $, $ u_2' = \frac{u_2}{u_1} $.
	Note that $R[ \frac{y}{u_1},\frac{u_2}{u_1}]/\langle \frac{y}{u_1},u_1 \rangle \cong k(x)[\overline{\frac{u_2}{u_1}}]$ is a polynomial ring. 
	Since $ \ioo(x') = \ioo(x) $, $x'$ is a point very near to $x$ in the $ U_1 $-chart. 
	At $x'$ we have parameters 
	\begin{equation}
	\label{eq:parameters_x'}
	(y',u_1,\psi),\ \mathrm{ where}\ \psi \hbox{ has for residue } \phi(1,\overline{u'_2}) \ \mathrm{ modulo}\ \langle u_1,y' \rangle , 
	\end{equation}
	 and $\phi \in k(x)[U_1,U_2]$ is an irreducible homogeneous polynomial.
	 In order to make formulas easier readable, we set 
	 $$ 
		 \delta := \delta_X^O(x) , \ \
		 \alpha := \alpha_1^O(x), \ \
		 \beta := \beta_1^O(x).
	$$
	Up to multiplication by $ \frac{\delta}{\delta-1}$, the valuation $v_{\delta}$ (Definition~\ref{Def:In_delta}) extends to the monomial valuation $\nu$ on $ R[ \frac{y}{u_1},\frac{u_2}{u_1}] $ centered at $ \langle y',u_1 \rangle $ and defined by 
	\begin{equation}
	\label{eq:def_val_nu}
		\nu(y')=1, 
		\hspace{10pt} 
		\nu(u_1) = \frac{1}{\delta-1}.
	\end{equation}
	%	
	%
	% $
	Set $ F_i ( Y) := in_M ( f_i) \in k(x)[Y] $.
	We can write the initial form of $ f_i $ \wrt the valuation $ v_\delta $ 
	(see \eqref{eq:in_delta})
	as
	$$
	in_{\delta}(f_i)
	=
	F_i(Y) +
	\sum_{\vert B \vert< \nu_i} 
	\Phi_{i,B}(U_1,U_2)
	\,
	U_1^{a(i,B)}
	\, 
	Y^B 
	\in k(x)[Y,U_1,U_2],
	$$
	where
	$ a(i,B) \geq ( \nu_i - \vert B \vert) \cdot \alpha $,
	and 
	$\Phi_{i,B}(U_1,U_2) \equiv 0 $ 
	or homogeneous of degree 
	$ d_i := (\nu_i - \vert B \vert) \cdot \delta - a(i,B) $.
	Note that the inequality for $ a(i,B) $ implies 
	$$ 
		d_i \leq (\nu_i - |B|)\cdot (\delta - \alpha) \leq (\nu_i - \vert B \vert) \cdot \beta .
	$$ 
	From this, we get a formula for the initial form 
	$ F'_{i,\nu} := in_{\nu}(f'_i) $ 
	of $ f_i' $ \wrt the extended valuation $ \nu $: 
	\begin{equation}
	\label{eq:F_i_nu}
		F'_{i,\nu}
		%:=
		%in_{\nu}(f'_i)
		=
		F_i(Y') + \sum_{\vert B \vert< \nu_i} \Phi_{i,B}(1,\overline{u'_2}) \,{U_1}^{(\nu_i-\vert B \vert)(\delta-1)} \,{Y'}^B
		\in gr_{\nu}(R[y',u_2'])
	\end{equation}
	There are natural isomorphisms 
	$ gr_{\nu}(R[y',u_2'])
	\cong
	\frac{R[y',u_2']}{\langle y',u_1\rangle}[Y',U_1]
	\cong
	k(x)[Y',U_1,\overline{u'_2}].$
	%Let
	
	We study first the easy case  where $\delta = \delta_X^O(x)  \not\in \IZ_+ $: 
	the vertex of smallest abscissa of $\poly{f'}{u'_1,\psi}{y'}$ is not solvable and, with natural notations, we get:
	$$
	\alpha_1^O (x') = \delta-1,
	\hspace{10pt} 
	\beta_1^O ( x' ) \leq \inf \left\{
	\rot{\frac{\mathrm{ord}_{x'}( \Phi_{i,B}(1,\overline{u'_2}) )}{\nu_i-|B|}}
	\mid  \Phi_{i,B}\not=0 \right\}.
	$$
	Then $\beta_1^O ( x' )\leq \frac{\beta_1^O ( x )}{d} $, 
	where $ d:= [k(x'):k(x)] $. 
	As ${\beta_1^O ( x )}>0$ (see \eqref{eq:beta_1>0}), 
	there holds strict inequality 
	$\beta_1^O ( x' ) < \beta_1^O(x) $ 
	when $ x' $ is not rational over $ x $.
	
	Hence, in order to attain equality, $\beta_1^O ( x' ) = \beta_1^O(x) $,  we must have  
	$$
	\Phi_{i,B}(U_1,U_2) = \lambda_{i,B}(U_1-\lambda U_2)^{(\rot{\nu_i}-\vert B \vert)\beta},
	\hspace{10pt} \mbox{for all possible } i, B,
	$$ 
	with $ \lambda, \lambda_{i,B}  \in k(x) $.
	Then $x'$ is the point of parameters $(y',u_1,u'_2-\lambda)$,
	which means that $x'$ is unique if it exists: 
	call it $x'_{bad}$.
	Moreover, if $ (u) = (u_1,u_2) $ is replaced by $(t) := (t_1, t_2) = (u_1, u_2-\lambda u_1)$, 
	the polyhedron $ \poly{f}{t}{y}$ has only one vertex of smallest module, 
	namely $ w :=(\alpha, \beta) =(\alpha_1^O(x), \beta^O_1(x))$, and  
	${\slope_1^O}(t_2) > 1 $. 
	The study of  $x'_{bad}$ is made below before and in Lemma~\ref{Lem:beta=}.
	
	We come to the case $\delta\in \IZ_+$. 
	Then, as 
	%$\alpha_1^O(x) + \beta_1^O(x) 
	$ \alpha + \beta \geq \delta > 1 $ and 
	%$\alpha_1^O(x) 
	$ \alpha < 1$, we get 
	$$
		\delta\in \IZ_+ \,\Longrightarrow\, 
		%\beta_1^O(x) 
		\beta > 1.
	$$
	By computing the transform of the polyhedron at the origin $ x_0' $ of the $ U_1 $-chart (with parameters $ (y', u_1, u_2') = (\frac{y}{u_1}, u_1, \frac{u_2}{u_1})$), we get 
	$ \alpha_1 ( \poly{f'}{u_1, u_2'}{y'}) = \delta - 1 $.
	(Note: Since $ x_0' $ is not necessarily very near to $ x $ the number $ \alpha_1^O (x_0') $ is possibly not defined).
	This implies that 
	\begin{equation}
	\label{eq:alpha'_delta-1}	
		\alpha_1^O(x')=\delta-1.
	\end{equation} 
	%Let
	
	To control the behavior of $\beta_1^O(x') $, we do not  quote  \cite{DaleBook} or \cite{CosToho}: we give a new argument using Giraud's machinery of \cite{GiraudMaxPos}: % h
	\begin{Rk}
	Naively, the {\em ridge} ({\em fa\^\i te} in French) of an homogeneous Ideal  $I\subset k(x)[Y_1,\cdots,Y_r]$ is the biggest group  of translations of 
	$ \Spec ( k(x)[Y_1,\cdots,Y_r]) \cong \A^r_{k(x)}$ 
	leaving stable $ \Spec({k(x)[Y_1,\cdots,Y_r] / I})$, see the precise definition in  \cite{GiraudMaxPos}~1.5, or  \cite{GiraudEtude} D\'efinition~5.2 p.~I.24, or \cite{ComputeRidge}.
	\end{Rk}
	
	\begin{Obs}\label{Obs:Ridge}  %The ridge 
			Let us denote by $\mathrm{Rid}_x^O(X)$ %or simply $\Rid_x$ 
			the ridge of the homogeneous ideal 
			$\mathcal{I}:= \langle F_1,\cdots,F_m \rangle \subset k(x)[Y_1,\cdots,Y_r]$. 
			(Recall that $ F_i = in_M (f_i) $, for $ 1 \leq i \leq m $).
			By \cite{GiraudMaxPos}~3.1 and 3.3, there exist polynomials 
			$$ 
				P_1,\ldots,P_s 
				\in k(x) \left[ \, X_{A(1),1}, \ldots, X_{A(m),m} \, \Big|\,  A(i) \in \IZ_{\geq 0}^r : |A(i)|< \nu_i, \, \mbox{for } 1 \leq i \leq m \, \right]
			$$ 
			%in var
			which are homogeneous if we give to $X_{A(i),i}$ the degree $ \nu_i-\vert A(i) \vert$ and such that:
		\begin{enumerate}
			\item[(i)] 
			For simplicity, we abbreviate
			$$
				P_j(\mathrm{D}_A^{Y}(F_i)):= P_j \left( \, \mathrm{D}_{A(1)}^{Y}(F_1), \ldots, \mathrm{D}_{A(m)}^{Y}(F_m) \, \Big|\, A(i)  : |A(i)| < \nu_i,  1 \leq i \leq m \,\right)
			$$
			in the following.
			The elements
			$$	
			\sigma_j := P_j(\mathrm{D}_A^{Y}(F_i)), %:= P_j( \, \mathrm{D}_A^{Y}(F_1), \ldots, \mathrm{D}_A^{Y}(F_m) \,),
			\hspace{10pt}
			1\leq j \leq s, 
			$$
			generate the ideal of the ridge in $k(x)[Y_1,\cdots,Y_r]$, where $\mathrm{D}_A^{Y}$ is a differential operator defined by Taylor's formula \cite{GiraudMaxPos}~2.5 (nowadays, this is  called a Hasse-Schmidt derivation). For any $F\in S[Y_1,\cdots,Y_r]$, $S$ any commutative ring, any $A\in \N^r$, we define $ \mathrm{D}_A^{Y}(F)\in S[Y_1,\cdots,Y_r]$ by
			$$
			F(Y+Z)=:\sum_{A\in \N^r}   \mathrm{D}_A^{Y}(F)\, Z^A \in S[Y_1,\ldots,Y_r,Z_1,\ldots,Z_r].$$
				
			\item[(ii)] 
			Up to linear changes on the set of variables $ ( Y_1,\ldots,Y_r ) $, we have:
			$$ 
			\sigma_j = Y_j^{q_j}+\mu_{j+1,j}\, Y_{j+1}^{q_j}+\ldots+\mu_{r,j}\,Y_r^{q_j}, 
			$$
			$$ \mbox{for } \,  
			\mu_{i,j} \in k(x),\,\, 
			1\leq j \leq s, \,\, 
			j+1\leq i \leq r, \,\,
			\mbox{ and }
			 q_1\leq q_2 \leq \ldots \leq  q_s.
			 $$
			Moreover, the $\sigma_j$ are {\em additive} polynomials, i.e., each $q_j$ is a power of the characteristic when char$(k(x))=p>0$; 
			in characteristic $0$, the $\sigma_j$ are linear.
		\end{enumerate}
	\end{Obs}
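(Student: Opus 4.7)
The plan is to obtain both assertions by specializing Giraud's theory of the ridge developed in \cite{GiraudMaxPos} to our homogeneous ideal $\mathcal{I} = \langle F_1, \ldots, F_m \rangle \subset k(x)[Y_1, \ldots, Y_r]$. Recall the defining property of the ridge: $\mathrm{Rid}_x^O(X)$ is the largest additive subgroup of $\mathbb{A}^r_{k(x)}$ that stabilizes $V(\mathcal{I})$ by translation, so its defining ideal $\mathcal{J} \subset k(x)[Y]$ is the largest ideal contained in $\mathcal{I}$ that is generated by additive polynomials. Over any field of characteristic $p \geq 0$, an additive polynomial is a $k(x)$-linear combination of monomials of the form $Y_i^{p^e}$ (with $p^e = 1$ if $p = 0$); this is the algebraic reason $q_j$ in (ii) must be a power of $p$.

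First I would produce candidate generators of $\mathcal{J}$ by exploiting Hasse--Schmidt derivations. Taylor's formula, expanded in the translation variable $Z$, gives $F_i(Y+Z) = \sum_{A \in \IN^r} \mathrm{D}_A^Y(F_i)\, Z^A$; the condition that translation by $Z$ preserves $\mathcal{I}$ is equivalent to all the coefficients $\mathrm{D}_A^Y(F_i)$ with $|A| < \nu_i$ vanishing modulo $\mathcal{J}$. This is the input to Proposition~3.1 of \cite{GiraudMaxPos}, which provides \emph{universal} polynomials $P_1, \ldots, P_s$ in abstract variables $X_{A(i),i}$ (depending only on $r$, $m$, the integers $\nu_i$, and the characteristic) such that, upon specializing $X_{A(i),i} \mapsto \mathrm{D}_{A(i)}^Y(F_i)$, one obtains a generating set for $\mathcal{J}$. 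The weight statement in (i) — that $P_j$ is homogeneous when $X_{A(i),i}$ is assigned weight $\nu_i - |A(i)|$ — is forced by the observation that $\mathrm{D}_{A(i)}^Y(F_i)$ is itself homogeneous of this degree in $Y$, combined with the fact that $\mathcal{J}$ is a homogeneous ideal.

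Next I would put these generators into the normal form of assertion (ii) by applying Proposition~3.3 of \cite{GiraudMaxPos}, which is a purely algebraic statement about ideals generated by additive polynomials in $k(x)[Y_1, \ldots, Y_r]$. Concretely, one orders monomial powers $Y_i^{p^e}$ lexicographically, then performs Gaussian elimination on the additive generators: the leading monomial of each $\sigma_j$ can be cleared from all later generators by subtracting suitable $k(x)$-multiples, and after a linear change of coordinates on $Y$ one can arrange the leading monomials to be $Y_j^{q_j}$ in strictly increasing index $j$. The ordering $q_1 \leq q_2 \leq \cdots \leq q_s$ follows by ordering the generators by their degree. The only genuinely delicate point — and the one Giraud's argument handles — is checking that elimination does not leave non-additive remainders; this ultimately rests on the closure of the set of additive polynomials under the operations that occur in the elimination, which in characteristic $p$ requires the exponents to be $p$-th powers.

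The main obstacle is essentially bookkeeping rather than genuinely new mathematics: both (i) and (ii) are cited directly from \cite{GiraudMaxPos}. What needs to be verified is merely that the hypotheses of those results are met in our setting, namely that $\mathcal{I}$ is a homogeneous ideal over a field (here $k(x)$) and that the $F_i$ form a generating set compatible with the weights $\nu_i$ appearing in the standard basis. Both conditions hold by construction, so the proof reduces to a careful transcription of Giraud's statements into our notation.
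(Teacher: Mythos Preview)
Your proposal is correct and aligns with the paper's treatment: the paper does not give an independent proof of this Observation but simply cites \cite{GiraudMaxPos}~3.1 and 3.3, exactly as you recognize in your final paragraph. Your proposal is essentially an unpacking of what those citations say and why they apply to the homogeneous ideal $\mathcal{I} = \langle F_1, \ldots, F_m \rangle$, which is a helpful elaboration but not a departure from the paper's approach.
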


	%\smallskip
	Let us note that 
	$$
	\Dir_x (X) \subset \mathrm{Rid}_x(X)\subset  C_x(X) \subset T_x(X) 
	$$
	(and, of course, the statement holds also in the $ O $-setting).
	In the case we are concerned, the directrix and the tangent cone have both dimension \rot{two}.
	Hence the ridge and the directrix have both dimension \rot{two}: the number of $\sigma_j$ is equal to $r$, $ s = r $. 
	(In general, it may be strictly smaller).
	
	Let 
	$ S_j := P_j(\mathrm{D}_A^{Y'}( F'_{i,\nu} )  )$, $1\leq j \leq r$  ($ F'_{i,\nu}= in_{\nu}(f'_i) $ \eqref{eq:F_i_nu}). 
	Then $ S_j $ is homogeneous for the valuation $\nu$ defined in \eqref{eq:def_val_nu}, $\nu(S_j)=q_j$.
	We have 
	$$
	S_j = 
	{Y_j'}^{q_j}
	+ \mu_{j+1,j} \,{Y'_{j+1}}^{q_j}
	+ \ldots
	+ \mu_{r,j}\, {Y'_r}^{q_j} 
	+ \sum_{\vert B \vert < q_j} 
	S_{j,B}(\overline{u'_2})
	\,
	U_1^{(q_j-\vert B \vert)(\delta -1)} 
	\, {Y'}^B ,
	$$
	for $ S_{j,B} : = S_{j,B}(\overline{u'_2}) \in k(x)[\overline{u'_2}], $
	$ \deg_{\overline{u'_2}}(S_{j,B})\leq (q_j-\vert B \vert) \cdot \beta $. %, where $\beta_1:=\beta_1^O ( X, x )$.
	For simplicity, we write $q:=q_r$. 
	We consider the quasi-homogeneous ideal 
	$$
	\mathcal{J}:= 
	\big\langle \, S_1^{ \frac{q}{q_1} }, \, S_2^{ \frac{q}{q_2} }, \, \ldots, \, S_r \,\big\rangle 
	\in {gr}_{\nu}(R[y',u_2'])\cong 
	%{R\over (y',u_1)}[Y',U_1]\cong 
	k(x)[Y',U_1,\overline{u'_2}].
	$$ 
	We can find generators $ ( G ) := ( G_1,\ldots, G_r)$ of this ideal with
	\begin{equation}
	\label{eq:def_G_j}
	G_j = 
	{Y'_j}^{q}+\sum_{\vert B \vert <\rot{q}} G_{j,B}(\overline{u'_2}) \, U_1^{(q-\vert B \vert)(\delta -1)} \, {Y'}^B ,
	\hspace{10pt}
	\ 1\leq j \leq r,
	\end{equation}
	where $ G_{j,B}\in k(x)[\overline{u'_2}] $ and $ \deg_{\overline{u'_2}}(G_{j,B})\leq (q-\vert B \vert)\cdot \beta $.
	
	Recall the notations of \eqref{eq:parameters_x'} and \eqref{eq:F_i_nu}.
	We set 
	$$ 
		\Psi := \phi(1,\overline{u'_2}) 
	\hspace{10pt} \mbox{ and } \hspace{10pt} 
		(F'_\nu) := ( F'_{1,\nu}, \ldots, F'_{m,\nu}) .
	$$ 
		
	Consider the localization of $ gr_{\nu}(R[y',u_2'])$ at the ideal $ \langle Y', U_1, \Psi \rangle $ corresponding to $ x' $. 
	In there, we have:
	\begin{equation}
	\label{eq:polyedra_inclusion}
		\poly{G}{U_1,\Psi}{Z}\subset
		\poly{F'_\nu}{U_1,\Psi}{Z} \subset 
		(\delta-1,0)+\R_{\geq 0}^2,
	\end{equation}
	where $ ( Z ) = ( Z_1, \ldots, Z_r ) $ is defined by $Z:=Y'-U_1^{\delta-1}\Theta$,
	(by abuse of notation we use the latter expression when we mean $ Z_j := Y'_j -U_1^{\delta-1}\Theta_j $) with %$\Theta:=(\Theta_1,\cdots,\Theta_r)$, 
	$\Theta_j\in k(x)[\overline{u'_2}]_{\Psi} $, %\phi(1,\overline{u'_2})}$, 
	$1\leq j \leq r$.
		Suppose we make a translation on $ (y') $ to solve the vertex of abscissa $\delta -1 $ of 
	$\poly{f'}{u'_1,\psi}{y'}$, 
	say $ y'\mapsto z :=y'-u_1^{\delta-1}\theta $, for some $ \theta_j $ such that we have $\Theta_j \equiv \theta_j$ modulo $ \langle y',u_1 \rangle $, $1\leq j \leq r$.
	Then the vertex of abscissa $\delta-1$ is the same for 
	$ \poly{F'_\nu}{U_1,\Psi}{Z} $ and $ \poly{f'}{u'_1,\psi}{z}$.
	
	%\smallskip
		
	\begin{itemize}
		\item  \emph{Case 1}.
		For {all} $j \in \{ 1, \ldots, r \} $,
		$ G_j = {Y'_j}^q-U_1^{q(\delta-1)}\Theta_j^q$, $\Theta_j\in k(x)[\overline{u'_2}]$.
	\end{itemize}
	We claim that $\Delta(F'_{\nu};U_1,\Psi;Z)$ is minimal for all 
	$ \Psi = \phi(1,\overline{u'_2})$ (for all $x'$) 
	where $ Z := Y'-U_1^{\delta-1}\Theta $.
	Moreover, the ordinate of its unique vertex is  $\beta_1^O(x')$ and 
	$$
		\beta_1^O ( x' )\leq \frac{\beta_1^O ( x ) }{ d},
	$$ 
	where $ d := [k(x'):k(x)] $.
	Since $ ( f\rot{, u, y} ) $ is well-prepared it is, in particular, normalized at every vertex of the associated polyhedron (Definition \ref{Def:initial_vertex}).
	This and \eqref{eq:F_i_nu} imply that $ ( F'_\nu ) $ is normalized at the unique vertex of $\poly{F'_{\nu}}{U_1,\Psi}{Z}$.
	Let $(\delta-1,w)$ be this vertex and suppose it is solvable: 
	This means we could make a translation on $Z$, $Z':=Z-U_1^{\delta-1}\Psi^w$ to solve it.  
	Since $ G_j = Z_j^{q} $, for all $ 1 \leq j \leq m $, we obtain that $(\delta-1,w)$ would be the unique vertex of 
	$ \poly{G}{U_1, \Psi}{Z'} $.
	In particular,
	$ \poly{G}{U_1, \Psi}{Z'} \not\subset
	  \poly{F_{\nu}}{U_1,\Psi}{Z'} $ 
	 which contradicts \eqref{eq:polyedra_inclusion}.
	The reader ends the proof of the claim as in the case $ \delta \notin \IZ_+ $.

%	\smallskip
	
	\begin{itemize}
		\item  \emph{Case 2}. 
		For {some} $j \in \{ 1, \ldots, r \} $,
		$ G_j = {Y'_j}^q + \sum_{\vert B\vert <q} G_{j,B}(\overline{u'_2}) \, U_1^{(q-\vert B \vert)(\delta -1)} \, {Y'}^B$ 
		with $G_{j,B}\not=0$ for at least one $B\not=(0,\cdots,0)$.
	\end{itemize}
	Then, for any $Z:=Y'-U_1^{\delta-1}\Theta$, the ordinate of the vertex of 
	$ \poly{G}{ U_1, \Psi}{Z} $
	is smaller or equal than $\frac{\ord_{\Psi} (G_{j,B})}{q-\vert B \vert}$, with $\vert B \vert$ maximal such that $G_{j,B}\not=0$.
	As we have $ \deg_{\overline{u'_2}}(G_{j,B})\leq (q-\vert B \vert) \cdot \beta $, we get
	$$
		\beta_1^O ( x' )\leq \frac{\beta_1^O ( x )}{d}.
	$$ 

%	\smallskip

	\begin{itemize}
		\item \emph{Case 3}. 
		None of before: 
		for {all} $j \in \{ 1, \ldots, r \} $,
		$ G_j={Y'_j}^q-U_1^{q(\delta-1)}G_{j,0}$ and, 
		for at least one $j$, say $j_0$, $ G_{j_0,0} = G_{j_0,0} (\overline{u_2'} ) $ is not a $q$-power.
	\end{itemize}
	Let $ b:= \sup\{\ c \mid   \exists \, S \in k(x')[\overline{u'_2}],\ G_{j_0,0}={S}^{p^c} \, \}$. 
	Set $ S :=\sqrt[p^b]{ G_{j_0,0} }$, 
	$ S \in k(x)[\overline{u'_2}]$.
	Note that $ S $ has degree $\leq {q \cdot \beta \over p^b} $.
	As in \eqref{eq:polyedra_inclusion}, we consider some $ Z := Y' - U_1^{\delta - 1 } \Theta_ j $, for $ \Theta_j \in k(x)[\overline{u_2'}]_\Psi $.
	Now, the argument is classical: 
	there exists a derivation $ D $, 
	$D\in \mathrm{Der}_{k(x)/\F_p}$ or $D=\frac{\partial}{\partial \overline{u'_2}}$ such that 
	$ DS = D(S+\Theta_{j_0}^{q\over p^b})
	\not= 0 $. 
	When $x'$ is not rational, call $ \beta'_{\nu}$ the ordinate of the vertex of   $\poly{G}{U_1,\Psi}{Z}$.
	We have
	\begin{equation}
	\label{eq:inequi_beta_nu} \left\{ \hspace{20pt}
	\begin{array}{ll}
		\dfrac{q}{p^b} \cdot \beta'_{\nu}
		& \leq   \ord_{\Psi} ( S + \Theta_{j_0}^\frac{q}{p^b}) 
		\leq
		\\
		&\leq  	1 + \inf\left\{ \ord_{\Psi} (DS) \mid D\in \mathrm{Der}_{k(x)/\F_p}
		\mathrm{or\ } D=\frac{\partial }{ \partial \overline{u'_2}} \right\} 
		\leq
		\\
		&\leq 1+ \dfrac{q}{p^b} \cdot \dfrac{\beta}{d} \, ,
	\end{array} \right.
	 \end{equation}
	where $d$ is the degree of the residual extension. 
	As $ \frac{q}{p^b}\geq p$,
	we get 
	$\beta_\nu' \leq \frac{1}{p} + \frac{\beta}{d}$. 
	Since $ \beta  > 1 $ (and $ p \geq 2 $) this leads to $\beta_1^O(x') \leq \beta'_{\nu}< \beta = \beta_1^O(x)$ when $ d > 1 $.

	%%%%%%%%%%%%%%%%%%%%%%%%%%
	Hence,  $\beta_1^O(x')=\beta_1^O(x)$ implies that  $x'$ is rational over $x$: 
	we can choose $\lambda\in R$, $\lambda$ invertible or zero such that $(u_1, \frac{u_2}{u_1}-\lambda,  \frac{y}{u_1})$ are parameters of $x'$. 
	\rot{Replacing the \blau{variable}
	 $u_2$ by  $ u_2-\lambda u_1$, we may suppose that 
	$ x' $ is the origin of the $ U_1 $-chart, 
	i.e., the point of parameters $(u',y') :=(u_1, \frac{u_2}{u_1}  \frac{y}{u_1} )$.}
	
	By Proposition~\ref{Prop:originblowup}, \rot{$\poly{f'}{u'}{y'}$} is minimal, where $f'_i:=f_i/u_1^{\nu_i}$, $1\leq i \leq m$. The computation of the transform shows that, with natural notations, 
	$$
	 \rot{\poly{f'}{u'}{y'}}
	= \mathrm{Conv}\left\{
	\bigcup_{(v_1,v_2)\in \poly{f}{t}{y}}(v_1+v_2-1,v_2)+\R_{\geq 0 }^2\right\}.
	$$ 
		
	In a few words, the side of slope $-1$ of  \rot{$\poly{f}{u}{y}$} becomes vertical, vertices move horizontally, and 
	$ \beta_1^O ( x')$ is the ordinate of the vertex of smallest ordinate of  the side of slope $-1$ of  $\poly{f}{u}{y}$. 
	We have a decrease except in the extreme case where there is only one vertex on  the side of slope $-1$ of  \rot{$\poly{f}{u}{y}$}.
	That vertex is $ (\alpha , \beta ) = ( \alpha_1^O ( x), \beta_1^O ( x ))$, and $\gamma_1^O ( x )=\beta_1^O ( x )$. 
	
	As a consequence, 
	$$
	in_{\delta}(f_i) =F_i(Y)+\sum_{\vert B \vert < \nu_i} \lambda_{B,i} \, U_1^{\alpha(\nu_i-\vert B \vert )} U_2^{\beta(\nu_i-\vert B \vert )}Y^B ,\hspace{10pt}  \lambda_{B,i}\in k(x).
	$$
	By the following Lemma \ref{Lem:beta=}(2), there is at most one $x'$ $O$-near to $x$ such that  $ \beta_1^O (x') = \beta_1^O ( x ) $.
	We call it $x'_{bad}$.

	%%%%%%%%%%%%%%%%%NEW
	%%%%%%%%%%%%%%%%%NEW
	%%%%%%%%%%%%%%%%%NEW 30 July+ August 5
	Furthermore,  by \rot{Lemma} \ref{Lem:beta=}(1), 
	we can choose  \rot{$u_2$} such that $ {\slope_1^O}{ ( \rot{u_2}) }= \sigma_1^O ( x )$,  \rot{$ u_2 \in R $} when $ \sigma_1^O ( x )<\infty$, and $ \rot{u_2}\in \hR$ when $s_1^O ( x ) = \infty$.
	The last case is impossible:
	Indeed, $ \sigma_1^O ( x ) = \infty $ would imply $ \beta_1^O ( x ) \geq 1$ by definition of  $ \sigma_1^O ( x )$, see \eqref{eq:def_sigma} 
	In this situation, the characteristic polyhedron  \rot{$\cpoly{J^O\hR}{u_1,u_2}$} would have only one vertex, namely $ ( \alpha_1^O ( x ), \beta_1^O ( x ) ) $, and $ \beta_1^O ( x) = \alpha_2^O (  \cpoly{J^O\hR}{u_1, \rot{u_2}}) \geq 1$. 
	Then $ V( \rot{u_2}, y)$ would be permissible which contradicts the fact that the maximal (log)-Hilbert-Samuel locus is a closed point.

	To end the proof, the inequality 
	$\iop ( X', \cB',  x' )< \iop ( X, \cB, x )$ is clear except if $x'_{bad}$ exists.  
	In that case, we get
	\begin{equation}
	 \label{blabla}
		%( \,\be}) \, )
	( \,\beta_1^O(x'_{bad}) , \, \gamma_1^O(x'_{bad}),\, s_1^O{(\tfrac{\blau{u_2}}{u_1})}, \,  
	\alpha_1^O (x'_{bad}) \,)
	\leq_{lex}  
	(\beta,\,\beta,\,s_1^O{(\blau{u_2})} - 1 ,\delta -1).
	\end{equation}
	%Thus

	When $\beta = \beta_1^O ( x)<1$,  we have $ \sigma_1^O ( x ) = 1 = \sigma_1^O ( x'_{bad})$, by definition.  
	The following inequality yields the result: %and 
	$$ 
	\alpha_1^O ( x'_{bad} ) = \alpha_1^O ( x) + \beta_1^O ( x)  - 1 < \alpha_1^O ( x), 
	\hspace{10pt} \hbox{when} \ \beta_1^O ( x) < 1.
	$$
	
	If  $\beta_1^O ( x)\geq 1$ then $  \sigma_1^O ( x )={\slope_1^O}{ (\blau{u_2}) } >1$.
	We claim that  ${\slope_1^O}{ ( \frac{\blau{u_2}}{u_1}) }=\sigma_1^O(x'_{bad})$. %, where 
	Suppose there exist $z'_1,\cdots,z'_r,v'\in R[ \frac{y}{u_1},\frac{u_2}{u_1}]_{\langle  \frac{y}{u_1},\frac{u_2}{u_1} \rangle }$ such that 
	${\slope_1^O}{ (v') } > {\slope_1^O}{ ( \blau{\frac{u_2}{u_1}}) } $ 
	and $\cpoly{{J^O}'}{u'_1,v'}=\poly{\tilde{f'}}{u'_1,v'}{z'}$, for some prepared basis $ (\tilde{f'}) $ of $ {J^O}' $.
	Then the curve $V(z',v')$ transversal to $ \mathrm{div}(u_1)$ projects on some curve $V(z,v) \subset \Spec (R) $ transversal to div$(u_1)$ and  
	$ {\slope_1^O}{ (v) } = {\slope^O_1}{ (v') } + 1 > {\slope^O_1}{ (\blau{u_2}) } $.
	This is a contradiction to $ {\slope^O_1}{ (\blau{u_2}) } = \sigma_1^O ( x ) $.
	
	Up to a permutation of $u_1 $ and $ u_2$ in the case where $E=\mathrm{div}(u_1u_2)$ when there are two boundary components, this ends the proof.
\end{proof}

\begin{Lem}\label{Lem:beta=} We use the notations above. 
	\begin{enumerate}
		\item[(1)]
		Assume that, for all $i \in \{ 1, \ldots, m \} $, 
		\begin{equation}
		\label{eq:bad}
		in_{\delta}(f_i) 
		=
		F_i(Y) +
		\sum_{\vert B \vert < \nu_i} \lambda_{B,i} \, U_1^{\alpha(\nu_i-\vert B \vert )}\, U_2^{\beta(\nu_i-\vert B \vert )}\,Y^B ,
		\end{equation}
		for $ \lambda_{B,i}\in k(x) $.
		Let $v_2\in R$ be such that $(u_1,v_2, y)$ is a {\RSP} for $ R $ and 
		$$ 
			v_2 \, \equiv \,  U_2-\lambda U_1 \mod \langle y,u_1,u_2 \rangle^2,
			\hspace{10pt}		
			 \lambda \in k(x)^\times.
		$$
		Then $\cpoly{J^O}{u_1,v_2}$ has two distinct vertices on the side of slope $-1$,
		i.e., $ {\slope_1^O}{ (v_2) } = 1 $.
		
		\item[(2)] After blowing up the origin $x$, there is a strict decrease of $ \beta_1^O $ at every point very near to $x$ except maybe for the point $x'_{bad}$ of parameters $( \frac{y}{u_1},u_1, \frac{u_2}{u_1})$ (assuming it is $O$-near to $x$).
	\end{enumerate}

\end{Lem}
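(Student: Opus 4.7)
The plan is to prove both parts by a direct binomial expansion combined with the invariance results already established in Theorems \ref{Thm:alphabetainvariant_1_deltaonly} and \ref{Thm:alphabetainvariant_part2_nodelta}.

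\emph{Part (1).} Under the hypothesis \eqref{eq:bad}, the only monomials of $f_i$ of modulus $\delta$ sit at the single vertex $(\alpha,\beta)$ beyond the $F_i(Y)$-part. I would substitute $u_2 = v_2 + \lambda u_1$ (which is valid because $v_2$ has the correct leading weight-$1/\delta$ part $V_2 = U_2-\lambda U_1$, higher-order terms being of strictly larger weight) and expand
\[
u_1^{\alpha(\nu_i-|B|)}(v_2+\lambda u_1)^{\beta(\nu_i-|B|)} = \sum_{k=0}^{\beta(\nu_i-|B|)}\binom{\beta(\nu_i-|B|)}{k}\lambda^k\,u_1^{\alpha(\nu_i-|B|)+k}\,v_2^{\beta(\nu_i-|B|)-k}.
\]
The extremal indices $k=0$ and $k=\beta(\nu_i-|B|)$ contribute the monomials at $(\alpha,\beta)$ and $(\delta,0)$ respectively with coefficients $\lambda_{B,i}$ and $\lambda_{B,i}\lambda^{\beta(\nu_i-|B|)}\neq 0$, so both points enter $\poly{f}{u_1,v_2}{y}$ with non-zero coefficient. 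The invariance of $\delta$ (Theorem \ref{Thm:alphabetainvariant_1_deltaonly}) forbids any point of modulus $<\delta$ in $\cpoly{J^O}{u_1,v_2}$, and the invariance of $\alpha_1^O=\alpha$, $\beta_1^O=\beta$, $\gamma_1^O=\beta$ (Theorem \ref{Thm:alphabetainvariant_part2_nodelta}) keeps $(\alpha,\beta)$ as the first vertex. The non-solvability of $(\alpha,\beta)$ transfers verbatim from $(u_1,u_2,y)$- to $(u_1,v_2,y)$-coordinates, because the initial form $F_i(Y) + \sum \lambda_{B,i} U_1^{\alpha(\nu_i-|B|)} V_2^{\beta(\nu_i-|B|)} Y^B$ has formally the same shape as the original one. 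I would then deduce $s_1^O(v_2)=1$ by observing that were the first side of $\cpoly{J^O}{u_1,v_2}$ strictly less steep than slope $-1$, the minimum modulus would exceed $\delta$ at every point of ordinate $<\beta$; this contradicts the appearance of the non-zero term at $(\delta,0)$, since any solvability substitution $y_j\mapsto y_j - \mu_j u_1^\delta$ (only possible when $\delta\in\IZ_+$) reintroduces the $(\delta,0)$-contribution from $F_i(Y-\mu U_1^\delta)$, and handling this interaction carefully shows that a vertex on $v_1+v_2=\delta$ distinct from $(\alpha,\beta)$ must persist.

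\emph{Part (2).} Let $x'$ be very near to $x$. From the analysis already completed in the proof of Proposition \ref{Prop:iop_decrease}, the only candidate points at which $\beta_1^O(x')$ could equal $\beta_1^O(x)$ lie on the $U_1$-chart, are rational over $x$, and have local parameters of the form $(y/u_1,\,u_1,\,u_2/u_1-\lambda)$ with $\lambda\in k(x)$; the subcase $\lambda=0$ is precisely $x'_{bad}$, so I assume $\lambda\neq 0$. Pulling back \eqref{eq:bad} through the blow-up at the origin and restricting to the $U_1$-chart yields
\[
\tfrac{f_i}{u_1^{\nu_i}} = F_i(y') + \sum_{|B|<\nu_i} \lambda_{B,i}\, u_1^{(\delta-1)(\nu_i-|B|)}\,(u_2')^{\beta(\nu_i-|B|)}\,(y')^B + \text{h.o.t.},
\]
with $y'=y/u_1$, $u_2'=u_2/u_1$. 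The substitution $u_2'=v_2'+\lambda$ followed by binomial expansion produces, at $k=0$, the non-zero constant-in-$v_2'$ term
\[
\sum_{|B|<\nu_i} \lambda_{B,i}\,\lambda^{\beta(\nu_i-|B|)}\,u_1^{(\delta-1)(\nu_i-|B|)}\,(y')^B,
\]
contributing the point $(\delta-1,0)$ with non-zero coefficient to $\poly{f'}{u_1,v_2'}{y'}$. Since $\alpha_1^O(x')=\delta-1$ by \eqref{eq:alpha'_delta-1} and is invariant, the vertex $(\delta-1,0)$ cannot be solved away without lowering $\alpha_1^O(x')$. Therefore $\beta_1^O(x')\leq 0 < \beta=\beta_1^O(x)$, giving the strict decrease.

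\emph{Main obstacle.} The delicate step in both parts is identical: verifying that the point produced by the binomial expansion actually persists in the characteristic polyhedron, rather than being eliminated by some further preparation of the standard basis. The invariance statements of Section~\ref{sec:HiroPoly} are precisely the tool that forces persistence --- invariance of $\delta$ in part (1) pins the boundary of the polyhedron along $v_1+v_2=\delta$, and invariance of $\alpha_1^O(x')$ in part (2) fixes the vertical line $v_1=\delta-1$. Executing the solvability bookkeeping around $(\delta,0)$, and doing so separately in the subcases $\delta\in\IZ_+$ and $\delta\notin\IZ$, is where the bulk of the technical care will go.
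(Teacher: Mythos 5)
Your approach --- compute by binomial expansion after the linear change $u_2 = v_2 + \lambda u_1$ and then close via the invariance theorems --- is genuinely different from the paper's, which runs the argument through Giraud's ridge and a three-case analysis on the additive polynomials generating it. Your computation that $(\alpha, \beta)$ and $(\delta, 0)$ both appear in $\poly{f}{u_1, v_2}{y}$ is correct, as is the observation that invariance of $\alpha_1^O$ and $\beta_1^O$ pins $(\alpha, \beta)$ as the first vertex of $\cpoly{J^O}{u_1, v_2}$. When $\delta \notin \IZ_+$ the point $(\delta, 0)$ has non-integer abscissa, hence is never solvable, and it remains normalized after the substitution (the coefficients $\lambda_{B,i}$ are not touched by a linear change of $u_2$); so Part~(1) follows cleanly in that case, and the paper explicitly concedes this case as ``clear.''

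The gap is in the case $\delta \in \IZ_+$, the one the paper singles out as non-trivial. There $(\delta, 0)$ may be solvable, and your assertion that ``any solvability substitution $y_j \mapsto y_j - \mu_j u_1^\delta$ reintroduces the $(\delta,0)$-contribution'' is backwards: the solvability translation is \emph{defined} so that $F_i(Y - \mu U_1^\delta)$ cancels the $\sum_B \lambda_{B,i}\lambda^{\beta(\nu_i - |B|)} U_1^{\delta(\nu_i - |B|)}Y^B$-part exactly, after which $(\delta,0)$ is gone. What Part~(1) requires is that \emph{some} vertex of modulus $\delta$ with ordinate $< \beta$ survives every round of normalization and solving, and nothing in your argument forces this. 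For a concrete illustration, take $f = y^2 + u_1 u_2^3$ over a field of characteristic $2$, so $\alpha = 1/2$, $\beta = 3/2$, $\delta = 2$, and choose $\lambda = \tau^2$ a non-zero square. Then $(2,0)$ is solvable via $y \mapsto y + \tau^3 u_1^2$, and $\cpoly{\langle f\rangle}{u_1, v_2}$ has vertices $(1/2, 3/2)$ and $(3/2, 1/2)$ but \emph{not} $(2,0)$; the conclusion of Part~(1) holds, but not by persistence of $(\delta,0)$.

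The same gap falsifies the decisive step of your Part~(2). In the example above, passing to $x'$ with parameters $(y/u_1, u_1, u_2/u_1 - \lambda)$, $\lambda = \tau^2$, a direct computation gives $\beta_1^O(x') = 1/2$, not $\leq 0$. Your deduction that ``the vertex $(\delta-1,0)$ cannot be solved away without lowering $\alpha_1^O(x')$'' is incorrect: solving $(\delta-1,0)$ leaves $\alpha_1 = \delta - 1$ intact, because the monomials of the form $u_1^{(\delta-1)(\nu_i-|B|)} (v_2')^{k} (y')^B$ with $k \geq 1$ arising from the binomial expansion still contribute points on the line $v_1 = \delta-1$, while $\beta_1$ jumps to a positive value. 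Invariance of $\alpha_1^O(x')$ alone tells you nothing about $\beta_1^O(x')$.

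What replaces the ``handling this interaction carefully'' placeholder is precisely Giraud's ridge (set up in the proof of Proposition~\ref{Prop:iop_decrease}). The additive polynomials $\sigma_j = Y_j^{q_j} + \cdots$ generating the ridge yield, after transport to the graded ring of the blown-up situation, the quasi-homogeneous elements $G_j$ whose shape under any solving translation $Z = Y' - U_1^{\delta-1}\Theta$ is tightly constrained; the three cases (all $G_j$ pure $q$-powers of a translate; some $G_{j,B}$ with $B \neq 0$ non-zero; some $G_{j_0,0}$ not a $q$-power) then give $\beta_1^O(x') = 0$ in Cases~1--2 and $\beta_1^O(x') \leq 1/p < 1 < \beta$ in Case~3 via a Hasse--Schmidt derivation bound. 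When $\delta \in \IZ_+$, this additivity/derivation bookkeeping cannot be bypassed by invariance alone, and it is the part of the proof that your proposal leaves unwritten.
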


\begin{proof}
	Assertion (2) is a consequence of (1). 
	Assertion (1) is clear in the easy case, 
	where $\delta(x)\not\in \IZ_+$.
	
	By \eqref{eq:bad}, the definitions \eqref{eq:F_i_nu}, \eqref{eq:def_G_j}  become:
	\begin{equation}
	F'_{i,\nu}= 
		F_i(Y') + \sum_{\vert B \vert< \nu_i} 
		 \lambda_{i,B}\,\overline{u'_2}^{(\nu_i-\vert B \vert)\beta}
		  \,{U_1}^{(\nu_i-\vert B \vert)(\delta-1)} \,{Y'}^B,
		%\in gr_{\nu}(R[y',u_2'])
	\end{equation}
	%
	%F_i(Y')+ \sum_{\vert B \vert<n_i} {Y'}^B{U_1}^{(n_i-\vert B \vert)(\delta-1)} \lambda_{i,B}\overline{u'_2}^{(n_i-\vert B \vert)\beta }\in \mathrm{gr}_{\nu}(R[y/u_1,u_2/u_1]),\eqno(8)
	%
	\begin{equation}
	\label{eq:G_j_beta=}
	G_j = {Y'_j}^{q}+\sum_{\vert B \vert <q_j} \mu_{j,B}\,\overline{u'_2}^{(q-\vert B \vert)\beta} \, U_1^{(q-\vert B \vert)(\delta -1)} \, {Y'}^B ,
	\end{equation}
	with $ \mu_{j,B}\in k(x),\ 1\leq j \leq r $.
	%t_
	We come back to the three sub-cases of $\delta\in \IZ_+$.
	
%	\smallskip
	
	\begin{itemize}
		\item  \emph{Case 1}.
		For {all} $j \in \{ 1, \ldots, r \} $,
		$ G_j = {Y'_j}^q-U_1^{q(\delta-1)}\Theta_j^q$, $\Theta_j\in k(x)[\overline{u'_2}]$.
	\end{itemize}
	Then $\Theta_j=0$ or $\Theta_j=\gamma_j \overline{u'_2}^{\beta}$, for some $\gamma_j\in k(x)$.
	With the previous notations
	$$
	F'_{i,\nu}=F_i(Z)+ \sum_{\vert B \vert< \nu_i} \lambda'_{i,B}\overline{u'_2}^{(\nu_i-\vert B \vert)\beta} {U_1}^{(\nu_i-\vert B \vert)(\delta-1)} Z^B , \hspace{10pt} \lambda'_{i,B}\in k(x).
	$$
	By Case 1  above, $\poly{F'_{\nu}}{U_1,\Psi}{Z} $ is minimal for all $ \Psi = \phi(1,\overline{u'_2})$ (for all $x'$). 
	Hence 
	$\beta_1^O( \poly{F'_{\nu}}{U_1,\Psi}{Z} ) = 0 $,
	for all $x'\not=x'_{bad}$. 
	Suppose (1) is  wrong.
	Then there would exist some $ x' \not=x'_{bad}$ with 
	$\beta_1^O( \poly{F'_{\nu}}{U_1,\Psi}{Z} ) = \beta_1^O(x) > 1 $. 
	A contradiction.
	
%	\smallskip
		
		\begin{itemize}
			\item  \emph{Case 2}. 
			For {some} $j \in \{ 1, \ldots, r \} $,
			and for some $B\not=(0,\cdots,0)$,  $\mu_{j,B}\not=0$ in \eqref{eq:G_j_beta=}.
		\end{itemize}
	Then, as   $ G_{j,B}= \mu_{j,B}\overline{u'_2}^{(\nu_i-\vert B \vert)\beta }$,
	we get 
	$$
		\beta_1^O(\poly{F'_{\nu}}{U_1, \Psi }{Z})
		\leq 
	 \frac{\ord_{\Psi} (G_{j,B})}{ q-\vert B \vert} = 0,
	$$
	with $\vert B \vert$ maximal such that $G_{j,B}\not=0 (\Leftrightarrow \mu_{j,B}\not=0)$. 
	We conclude as above.
	
%		\smallskip
		
		\begin{itemize}
			\item \emph{Case 3}. 
			None of before: 
			for {all} $j \in \{ 1, \ldots, r \} $,
			$ T_j={Y'_j}^q-U_1^{q(\delta-1)}\cdot \mu_{j,0} \cdot \overline{u'_2}^{q\beta_1^O }$ and, 
			for at least one $j$, say $j_0$, 
			$\mu_{j_0,0}\cdot \overline{u'_2}^{q\beta_1^O}  $ 
			is not a $q$-power.
		\end{itemize}
	Then $ S $ of the proof in Case 3 before is a monomial and the inequalities
	\eqref{eq:inequi_beta_nu} become:
	$$
	\frac{q}{p^b}  \cdot \beta'_{\nu}
	\leq 
	\ord_{\Psi}( S + \Theta_{j_0}^{\frac{q}{p^c}} ) 
	\leq 
	1 + \inf \left\{ \ord_{\Psi} DS \mid \ D\in \mathrm{Der}_{k(x)/\F_p}\ \mathrm{or\ } D=\frac{\partial}{\partial \overline{u'_2}} \right\} 
	= 1.
	$$
	As $\frac{q}{p^b}\geq p$, we get $\beta'_{\nu}\leq \frac{1}{p} < 1 < \beta = \beta_1^O(x)$ and we conclude as above. 
\end{proof}

%%%%%%%%%%%%%%%%%

%\bi

%\smallskip

\begin{Rk}
%\begin{enumerate}
	%\item[(1)]
	As one sees from the proof, the role of $ \gamma_1^O ( X, x ) $ in the invariant is only to indicate if we have $ \beta_1^O ( X, x) > \gamma_1^O ( X, x ) $ or $ \beta_1^O ( X, x) = \gamma_1^O ( X, x ) $.
	Therefore one could alternatively replace $ \gamma_1^O ( X, x ) $ by
	$$
		\widetilde{\gamma}_1^O ( X, x ) := \left\{
			\begin{array}{cl}
				1,
				& \hspace{10pt}
				\mbox{ if } \beta_1^O ( X, x) = \gamma_1^O ( X, x ),
				\\[5pt]
				0	,
				&\hspace{10pt}
				\mbox{ if }	\beta_1^O ( X, x) > \gamma_1^O ( X, x )	.
			\end{array}					
		\right.
	$$
\end{Rk}

%
%
%
%
%
%
%
%
%
%
%
%
%
%
%
%

%\smallskip

In Example \ref{Ex:easy_difference} we have seen that the algorithm of \cite{CJS} differs in characteristic zero from the usual one.
Further, we explained some pathologies that appear in dimension three in Examples \ref{Ex:dimensionthree} and \ref{Ex:dimensionthree_BM}.

The following {example illustrates} that the dimension of the directrix and $ \beta_1^{O}(X,x) $ are not upper semi-continuous.
Therefore our local invariant $ \iota(X,\cB, x) $ is not upper semi-continuous.

\begin{Ex}
\label{Ex:e_not_usc}
Suppose there is no boundary $ \cB = \emptyset $.
Consider the singularity $ X := \Spec ( k[u_1, u_2, y ]/ h ) $ given by the polynomial
$$ 
	h:= y^p+ \lambda u_1^p \in k[u_1,u_2, y], \hspace{10pt} \lambda\not\in k^p,
$$ 
where $k$ is a {\it non-perfect field} of characteristic $ p > 0 $. %, $X:=$Spec${k[Y,U_1,U_2]\over h}$.
{Since $ X $ is a hypersurface the maximal Hilbert-Samuel locus coincides with the maximal order locus.
	Here, this is the locus of order $ p $ which is the line $L:=V(u_1, y)$.}
By the \cite{CJS}-algorithm, the center will be $ L $.

The dimension of the directrix is not stable along $ L $:
$e_X(x)=e_X^O(x) = 1 $ when $\lambda$ is not a $p$-power modulo the maximal ideal of $x \in L $.
On the other hand, it is $2$ if $\lambda$ is a $p$-power modulo the maximal ideal at $x$.

Hence the third part $\iota_{poly}$ of our invariant is also not stable along $L$:
  $\iota_{poly}(x)=(0,0,0,\delta_X(x))$ in the first case, $\iota_{poly}(x)=(\infty,\infty,\infty,\infty)$ in the second case. 
  This example shows that the locus where our invariant is maximum along $L$ may not even be constructible.    
 
  Moreover, when $\lambda$ is not a $p$-power in the residue field $k(x) $, the characteristic polyhedron is empty. 
  {Thus, $ \beta_1^O (X,x) = \infty $.}
  Suppose $\lambda$ is  a $p$-power in $ k(x) $.
	{For example, consider the point $ x $ with coordinates $ ( y, u_1, \phi) $, where $ \phi := \lambda - u_2^p $.
		Then, locally at $ x \in L $, we have
		$$ 
		h= z^p + \phi u_1^p \in k[u_1,u_2,y]_{\langle u_1,\phi, z \rangle},
		\hspace{20pt}
		 z := y +  u_1 u_2.
		$$	}
		 So, $\Delta(h;u_1,\phi)=(1,\frac{1}{p})+\R_{\geq 0}^2$ which yields 
		  $ \alpha_1^O(X,{x})=1$, $ \beta_1^O(X,{x})= {1 \over p}<1$, and $ \sigma_1^O(X,x)=1$.  	
 
 % \noindent 
 % 
 % \noindent 
\smallskip
The main trouble in the example is that over a non perfect-field $ k $ the directrix may change if we pass to an extension of $ k $.
Thus all the data obtained from it may change. 

\smallskip 

{
Now change slightly the equation:
$$
 h := y^p+ \lambda u_1^{2p} \in k[u_1,u_2, y], \hspace{10pt} \lambda\not\in k^p.
$$
By similar computations, we get
$e_X(x)=e_X^O(x) = 1 $  along $ L = V(u_1, y) $ and
$ \beta_1^O (X,x) =  \beta_1 (X,x) = 0$ when $\lambda$ is not a $p$-power modulo the maximal ideal of $x \in L $.
On the other hand, it is $>0$ if $\lambda$ is a $p$-power modulo the maximal ideal at $x$.
The number $ \beta_1 (X,x)$  may change  if we pass to an extension of $ k $ and $\beta_1$ is not upper semi-continuous along $L$.}
\end{Ex}

\section{Termination of the Cossart-Jannsen-Saito algorithm}
\label{sec:finite}

Let $ X $ be a reduced Noetherian excellent scheme of dimension at most two.
	Recall that $ X^O_{max} = \bigcup_{\tnu \in \sum_X^{max,O}} X^O (\tnu) $ denotes the maximal log-Hilbert-Samuel locus of $ X $ and $  X^O (\tnu) $ are the disjoint maximal log-Hilbert-Samuel strata, for $ \tnu \in \sum_X^{max,O} $ (Definition \ref{Def:logHS_function}(2)).
	The goal of this section is to explain

\begin{Rk} 
	\label{Rk:finiteness}
Theorem \ref{MainThm} provides a short and direct proof for the finiteness of the sequence of permissible blow-ups constructed in \cite{CJS} to resolve the singularities of $ X $.
\end{Rk}

	{Let us adapt an extremely simple idea by Abhyankar in our context: in dimension two, one has to look at only a finite number of points}

\noindent  - the generic points of components of dimension one of $X^O_{max} $,

\noindent  - a finite number of closed points called ``bad points''.

\begin{Def}
	\label{Def:good-bad}
	Let $ x \in X^O_{max} $ be a closed point.
	In the following cases, $ x $ is called a {\em bad point}: 
	\begin{itemize}
		\item 
		if $ x $ is isolated in the log-Hilbert-Samuel locus, or
		
		\item 
		if $ x \in C $, for some component $ C \subset X^O_{max} $ of dimension one, 
		and the algorithm restricted at $ x $ does not coincide with the algorithm restricted at the generic point of $ C $.
	\end{itemize}
	Otherwise, $ x $ is called a {\em good point}.
\end{Def}

\begin{Prop}
	\begin{enumerate}
		\item[(1)] 
			There are only finitely many bad points.	
			
		\item[(2)]
			The \cite{CJS}-algorithm terminates over every point.
	\end{enumerate}

\end{Prop}

\begin{proof}
	There are finitely many isolated points in the maximal log-Hilbert-Samuel locus of $ X $. 
	Hence the statement is true for bad points of the first type.
	
	Let $ C \subset X^O_{max} $ be a component of dimension one. 
	For the other kind of bad points, 
	the algorithm will blow up a finite number of closed points, say
	$$ 
		\{x_{0,1},\ldots,x_{0,n_0}\},
	$$ 
	of $ C $ and perhaps will blow up some centers projecting on  these closed points before a possible blowing up along the strict transform of $C$: $ \{x_{0,1},\ldots,x_{0,n_0}\}$ are bad points. These points are 
the singular points of $C$ and some of the points where $C$ intersects  the boundary.
	%and maybe will blow up some centers projecting on  these closed points. 
	When \cite{CJS} never blows up along the strict transform $ C' $ of $ C $,  $\{x_{0,1},\ldots,x_{0,n_0}\}$ are the  bad points of $C$ (in fact, this case will not exist,  $C'$ will be always blown up).

	Eventually, \cite{CJS} will blow up along the strict transform $ C' $ of $ C $.
	Let  $X_1$ be the strict transform of $X$ just after the blowing up of $C'$ and let $ \cB_1 $ be the corresponding boundary.	
	By Theorem \ref{Thm:HiroMizutani$^O$}, the components of the maximal log-Hilbert-Samuel locus lying above $ C' $ in $X_1$   and
	with the same value of the log-Hilbert-Samuel function 
	are  of the following form
	(see Observation \ref{Obs:GooOnear}):
	\cyan{either  a  finite number of isolated closed points or one component $ C_1 $ of dimension one.}

	\cyan{Suppose we are in the first case. % of $ C_1 $.
	%Let us denote by $ X_1 $ the strict transform of $ X $ after blowing up $ C' $ .
	 Let  $\{x_{1,1},\ldots,x_{1,n_1} \}\subset X_1$ be the closed points with the same value of the log-Hilbert-Samuel function. They are bad points.
	 \rot{If we denote by $\{x_{1,1}',\ldots,x_{1,n_1}' \}$ their projections on $C$, then}
	   $$ \{x_{0,1},\ldots,x_{0,n_0}\} \cup \{x_{1,1}',\ldots,x_{1,n_1}' \}$$
	 is the set of bad points of $C$. }

	\cyan{In the second case, let $\eta $ (resp.~$ \eta_1 $) be the generic point of $ C $ (resp.~$ C_1 $).
	By Theorem \ref{MainThm}, we have 
	\begin{equation}
		\label{eq:inequality}
		\iota(X_1,\cB_1,\eta_1) < \iota(X,\cB,\eta).
	\end{equation}
	(Note that the value of the invariant at a generic point $ \eta' $ of $ C' $ coincides with $ \iota(X,\cB,\eta) $
	and $ \eta_1 $ lies above $ \eta' $).
		 By a descending induction on the value of our invariant $\iota(X,\mathcal{B},.)$, the set of bad points lying on $ C_1 $ is finite.
		 These bad points $\{x_{1,1},\ldots,x_{1,n_1} \}$ project on bad points of $ C $.
		Let $\{x_{1,1}',\ldots,x_{1,n_1}' \}$ be their projections on $C$. As before
	 $$ \{x_{0,1},\ldots,x_{0,n_0}\} \cup \{x_{1,1}',\ldots,x_{1,n_1}' \}$$
	 is the set of bad points of $C$. }
	  Therefore, (1) is proven.
	 
	 Let us prove (2). %By \eqref{eq:inequality}, the resolution process above $ C_1 $ is finite. 
It remains to show the termination of the algorithm. If $\eta $ is the  generic point of $ C \subset X^O_{max} $  a component of dimension one, the result is clear by \eqref{eq:inequality}. 
	%\ro
If $ x \in X^O_{max} $ is a closed point, $x$ is good or bad. When $x$ is a good point, $x\in C$, as the resolution process above $ C $ is finite, it is the same above $x$, by definition of a good point.
	 When $x$ is a bad point, by the previous arguments, the \cite{CJS}-algorithm either terminates or, after finitely many steps, we obtain an isolated closed point $ x_1 \in X_1 $ that lies above $ x $ and with $ H_{X_1}^O(x_1) = H_X^O (x) $.
	By Theorem \ref{MainThm}, we have
	$$
		\iota(X_1,\cB_1, x_1) < \iota(X,\cB,x),	
	$$
	and, by induction, the assertion follows. 
\end{proof} 	

 Since the algorithm is finite over all points, the Noetherianity ends the proof of Remark \ref{Rk:finiteness}.

\subsection*{Acknowledgements}
The second author would like to thank Uwe Jannsen for many discussions and explanations on the original work \cite{CJS}.
Further, he is grateful to the Erwin Schroedinger Institute in Vienna for their support and hospitality during the Research in Teams {\em Resolution of Surface Singularities in Positive Characteristic} in Vienna in November 2012.
He thanks the other participants -- Dale Cutkosky, Herwig Hauser, Hiraku Kawanoue and Stefan Perlega -- for the things he learned during this time from them and for all their questions.

Both authors thank Edward Bierstone for valuable comments.
They send their warm thanks and compliments to the referee: her/his accurate reading, her/his numerous suggestions helped them to give a much more precise and pedagogical redaction.

Bernd Schober was supported by the Emmy Noether Programme ``Arithmetic over finitely generated fields" (Deutsche Forschungsgemeinschaft, KE 1604/1-1) and by Research Fellowships of the Deutsche Forschungsgemeinschaft (SCHO 1595/1-1 and SCHO 1595/2-1)

\end{document}